\documentclass[11pt,a4paper]{amsart}
\usepackage[utf8]{inputenc}
\usepackage[english]{babel}
\usepackage{amsmath}
\usepackage{url}
\usepackage{amsfonts}
\usepackage{amssymb}
\usepackage{xcolor}
\usepackage[left=2.5cm,right=2.5cm,top=3cm,bottom=3cm]{geometry}
\usepackage{enumitem}
\usepackage{epsfig}
\usepackage{nicefrac}

\usepackage[mathlines,pagewise]{lineno}
%
%

\title[$\LL_p$-maximal regularity for b.c.\ of mixed differentiability orders]{$\LL_p$-maximal regularity for parabolic and elliptic boundary value problems with boundary conditions of mixed differentiability orders}
\author{Bj\"orn Augner}
\address{Technische Universit\"at Darmstadt, Fachbereich Mathematik, Schlossgartenstra\ss{}e 7, 64289 Darmstadt.}
\email{augner@mma.tu-darmstadt.de}

 \newtheorem{theorem}{Theorem}[section]
 \newtheorem{remark}[theorem]{Remark}
 \newtheorem{proposition}[theorem]{Proposition}
 \newtheorem{corollary}[theorem]{Corollary}
 \newtheorem{lemma}[theorem]{Lemma}

 \newtheorem{assumption}[theorem]{Assumption}
 \newtheorem{example}[theorem]{Example}

 \newtheorem{definition}[theorem]{Definition}
 
 \renewcommand{\vec}{\bd}
 
 \DeclareMathOperator{\K}{\mathbb{K}}
 \DeclareMathOperator{\R}{\mathbb{R}}
 \DeclareMathOperator{\N}{\mathbb{N}}
 \DeclareMathOperator{\C}{\mathbb{C}}

 \renewcommand{\S}{\mathbb{S}}
 \DeclareMathOperator{\dv}{\operatorname{div}}

 \DeclareMathOperator{\B}{\Bcal}
 \DeclareMathOperator{\dom}{\operatorname{dom}}
 \DeclareMathOperator{\ran}{\operatorname{ran}}
 \renewcommand{\ker}{\operatorname{ker}}

 \newcommand{\bd}[1]{\boldsymbol{#1}}
 \newcommand{\bb}[1]{\boldsymbol{#1}}
 
 \renewcommand{\Re}{\operatorname{Re}\,}
 \renewcommand{\Im}{\operatorname{Im}\,}
 \newcommand{\dd}{\mathrm{d}}
 \newcommand{\norm}[1]{\left\| #1 \right\|} 
  
 \newcommand{\abs}[1]{\left| #1 \right|}

 \newcommand{\ii}{\mathrm{i}}
 \newcommand{\ee}{\mathrm{e}}
 \newcommand{\BB}{\mathrm{B}}
 \newcommand{\LL}{\mathrm{L}}
 \newcommand{\WW}{\mathrm{W}}
 \newcommand{\HH}{\mathrm{H}}
 \newcommand{\CC}{\mathrm{C}}
 \newcommand{\DD}{\mathrm{D}}
 
 \newcommand{\FF}{\mathrm{F}}
 \renewcommand{\SS}{\mathbb{S}}
 
 \newcommand{\Acal}{\mathcal{A}}
 \newcommand{\Bcal}{\mathcal{B}}
 \newcommand{\Ccal}{\mathcal{C}}
 \newcommand{\Dcal}{\mathcal{D}}
 \newcommand{\Ecal}{\mathcal{E}}
 \newcommand{\Fcal}{\mathcal{F}}
 \newcommand{\Gcal}{\mathcal{G}}
 \newcommand{\Hcal}{\mathcal{H}}
 \newcommand{\Kcal}{\mathcal{K}}
 \newcommand{\Pcal}{\mathcal{P}}
 \newcommand{\Rcal}{\mathcal{R}}
 \newcommand{\Scal}{\mathcal{S}}
 \newcommand{\Tcal}{\mathcal{T}}
 \newcommand{\HTcal}{\mathcal{HT}}
 \newcommand{\BIPcal}{\mathcal{BIP}}
 \newcommand{\Lcal}{\mathcal{L}}
 \newcommand{\Ical}{\mathcal{I}}
 
 \newcommand{\id}{I}
 \newcommand{\supp}{\operatorname{supp}}
 
\begin{document}
 \allowdisplaybreaks[1]

 \begin{abstract}
  We consider vector-valued elliptic boundary value problems and parabolic initial-boundary value problems on abstract Banach spaces of class $\HTcal$.
  For a very general class of boundary conditions, which includes, but is not limited to the classical cases of Dirichlet, Neumann and Robin boundary conditions,  we show $\LL_p$-maximal regularity of the system.
  We do not restrict ourselves to the homogeneous case, but also identify the temporal and boundary trace spaces, which are both sufficient and necessary to obtain a unique solution in the $\LL_p$-maximal regularity space.
  Our results are based on a (partial) Fourier--Laplace transform of the problem in the half-space case, leading to elliptic boundary value problems which have to satisfy variants of the Lopatinskii--Shapiro condition to be solvable.
  The parabolic theory is then developed with use of tools from modern harmonic analysis such as the bounded $\Hcal^\infty$-calculus and the Kalton--Weis theorem.
 \end{abstract}
 
\keywords{Elliptic boundary value problems, parabolic boundary value problems, $\LL_p$-maximal regularity, kernel estimates, non-standard boundary conditions.}

\subjclass[2010]{35K52, 35B65, 35D35, 35J58, 42B15}

 \maketitle
 
 Version of \today.
 
\section{Introduction}
\label{sec:introduction}

 We will consider a quite general class of elliptic and parabolic systems with values in a general Banach space of class $\HTcal$.
 To give a first motivation for the relevance of the investigation of such systems, let us start with a rather simple example which illustrates the main point:
 From the theory of parabolic equations it is well-known that, for a bounded domain $\Omega \subseteq \R^d$ with smooth boundary, both the \emph{Dirichlet heat equation}
  \begin{equation}
   \begin{cases}
    \partial_t u_1 - d_1 \Delta u_1
     = f_1
     &\text{in } (0,T) \times \Omega,
     \\
    u_1
     = g_1
     &\text{on } (0,T) \times \partial \Omega,
     \\
    u_1(0,\cdot)
     = u_1^0
     &\text{in } \Omega
   \end{cases}
   \label{eqn:Dirichtlet_heat_eqn}
   \tag{D}
  \end{equation}
 (for some $d_1 > 0$) and the \emph{Neumann heat equation}
  \begin{equation}
   \begin{cases}
    \partial_t u_2 - d_2 \Delta u_2
     = f_1
     &\text{in } (0,T) \times \Omega,
     \\
    - d_2 \partial_{\vec n} u_2
     = g_2
     &\text{on } (0,T) \times \partial \Omega,
     \\
    u_2(0,\cdot)
     = u_2^0
     &\text{in } \Omega
   \end{cases}
   \label{eqn:Neumann_heat_eqn}
   \tag{N}
  \end{equation}
 (for some $d_2 > 0$) have the property of $\LL_p$-maximal regularity on the base space $\LL_q(\Omega)$, where we consider parameters $p, q \in (1, \infty)$.
 This means that \eqref{eqn:Dirichtlet_heat_eqn} has a unique solution $u_1$ in the class $\WW_p^1((0,T);\LL_q(\Omega)) \cap \LL_p((0,T);\WW_q^2(\Omega))$ if and only if the data $f_1$, $g_1$ and $u_1^0$ belong to the classes $f_1 \in \LL_p((0,T);\LL_q(\Omega))$, $g_1 \in \WW_p^{1-1/2q}((0,T);\LL_q(\partial \Omega)) \cap \LL_p((0,T);\WW_p^{2-1/q}(\partial \Omega))$ and $u_1^0 \in \WW_q^{2-2/p}(\Omega)$ and $u_1^0$ and $g_1$ fullfil a certain compatibility condition.
 Similarly, the problem \eqref{eqn:Neumann_heat_eqn} has a unique solution $u_2 \in \WW_p^1((0,T);\LL_q(\Omega)) \cap \LL_p((0,T);\WW_q^2(\Omega))$ if and only if $f_2 \in \LL_p((0,T);\LL_q(\Omega))$, $g_2 \in \WW_p^{1/2 - 1/2q}((0,T);\LL_q(\partial \Omega)) \cap \LL_p((0,T);\WW_q^{1-1/q}(\partial \Omega))$ and $u_2^0 \in \WW_q^{2-2/p}(\Omega)$.
 Combining these two scalar results, it easily follows that the two-component system of parabolic equations
  \[
   \begin{cases}
    \partial_t \vec u - \bb D \Delta \vec u
     = \vec f
     &\text{in } (0,T) \times \Omega,
     \\
    u_1
     = g_1
     &\text{on } (0,T) \times \partial \Omega,
     \\
    - d_2 \partial_{\vec n} u_2
     = g_2
     &\text{on } (0,T) \times \partial \Omega,
     \\
    \vec u(0,\cdot)
     = u^0
     &\text{on } \Omega,
   \end{cases}
  \]
 where $\vec u = (u_1, u_2)$, $\vec f = (f_1, f_2)$, $\vec u^0 = (u_1^0, u_2^0)$ are two-component vector-fields and $\bb D = \operatorname{diag}(d_1, d_2)$ is a diagonal matrix, has a unique solution in the class $\vec u \in \WW_p^1((0,T);\LL_q(\Omega;\K^2)) \cap \LL_p((0,T);\WW_q^2(\Omega;\K^2))$ if and only if $\vec f \in \LL_p((0,T);\LL_q(\Omega;\K^2))$, $\vec u^0 \in \WW_q^{2-2/p}(\Omega;\K^2))$ and the components $g_1$ and $g_2$ of the function $\vec g$ on the boundary satisfy $g_1 \in \WW_p^{1-1/2q}((0,T);\LL_q(\partial \Omega)) \cap \LL_p((0,T);\WW_q^{2 - 1/q}(\partial \Omega))$ and $g_2 \in \WW_p^{1/2-1/2q}((0,T);\LL_q(\partial \Omega)) \cap \LL_p((0,T);\WW_q^{1 - 1/q}(\partial \Omega))$ and the respective compatibility conditions.
 However, as soon as we allow for non-diagonal, but, say, symmetric and positive definite, $\bb D \in \R^{2 \times 2}$, the situation becomes less clear, as then the two equations in $\Omega$ do \emph{not decouple} anymore.
 Still, one can \emph{expect} maximal regularity also for this case.
 \newline
 As similar situation occurs, if not the equations in the domain are coupled, but the boundary conditions are of mixed type, say
  \[
   \begin{cases}
    \alpha_1 u_1 + \alpha_2 u_2
     = g_1
     &\text{on } (0,T) \times \partial \Omega,
     \\
    - \beta_1 d_1 \partial_{\vec n} u_1 - \beta_2 d_2 \partial_{\vec n} u_2
     = g_2
   \end{cases}
  \]
 for some parameters $\alpha_1$, $\alpha_2$, $\beta_1$ and $\beta_2 \in \R$.
 We will consider a general class of such parabolic systems.
 More precisely, we will consider systems of the abstract form
  \[
   \begin{cases}
    \partial_t u + \Acal(t,\vec x,\DD) u
     = f
     &\text{in } (0,T) \times \Omega,
     \\
    \mathcal{B}_j(t,\vec x,\DD) u
     = g_j
     &\text{on } (0,T) \times \partial \Omega, \, j = 1, \ldots, m,
     \\
    u(0,\cdot)
     = u^0
     &\text{in } \Omega.
   \end{cases}
  \]
 Here, $\Acal(t,\vec x,\DD)$ is a parameter-elliptic differential operator of order $2m$, where the coefficients may, in general, both depend on the time variable $t \in [0,T]$ and on the spatial position $\vec x \in \Omega$, whereas the operators $\Bcal_j(t,\vec x,\DD)$ are boundary differential operators of order $2m-1$ at most.
 The mixed type boundary conditions are reflected by the boundary symbols $\Bcal_j(t, \vec x, \vec \xi)$ which will not necessarily have a homogeneous (in $\vec \xi$) principal part $\Bcal^\#_j(t,\vec x, \vec \xi)$ -- in contrast to related results in \cite{DeHiPr03} and \cite{DeHiPr07}, which only apply for systems with homogeneous principle parts.
 Note that in the abstract formulation of the above presented second-order in space two-component example, we have $m = 1$, so that the two (Dirichlet and Neumann) boundary conditions will be described by only \emph{one} boundary operator, which thus includes both these scalar boundary conditions at once.
\newline
 Before presenting further instances, for which such mixed type boundary conditions will be of importance, let us give a short overview over the development of the theory of elliptic boundary value problems and parabolic initial--boundary value problems and its close connection to modern harmonic analysis:
 In the middle of the 20th century some for the further development of the theory of partial differential equations, especially evolutionary partial differential equations, fundamentally important works were written, which were groundbreaking not only for the linear theory, but in particular for the treatment of semi- and quasilinear, up to fully nonlinear parabolic systems. Ladyshenskaya, Solonnikov and Ural'ceva \cite{LaSoUr68} basically completely solved the question of $\LL_p$-maximal regularity for the case of sufficiently regular bounded domains and an exhaustive class of scalar or $\R^n$-valued functions.
 Up to today, this work represents a milestone in the theory of parabolic equations. At a similar time, de Simon \cite{Sim64} was able to successfully treat the case of abstract parabolic initial boundary value problems with values in an abstract Hilbert space $X$ instead of the concrete Lebesgue space $\LL_p(\Omega)$, but at first the question remained open whether and to what extent an extension of his results to more general abstract Banach spaces is possible.
 From then on, at the latest, the question of characterizing $\LL_p$-maximal regularity constituted an intensively studied area of research. Sobolevskii \cite{Sob64} was able to show that (for a fixed base space $X$) $\LL_p$-maximal regularity of an operator does not depend on the chosen parameter $p \in (1, \infty)$, i.e.\ if an operator has $\LL_p$-maximal regularity for one $p \in (1, \infty)$, then it already has $\LL_q$-maximal regularity for all $q \in (1, \infty)$.
With the sum method for real interpolation spaces of da Prato and Grisvard \cite{PraGri75} rich subclasses of operators with the maximal regularity could be identified.
Using a functional calculus, Dore and Venni \cite{DorVen87}, Pr\"uss and Sohr \cite{PruSoh90} were able to show that any operator with bounded imaginary power of power angle $\theta_A < \frac{\pi}{2}$ has $\LL_p$-maximal regularity, provided the Banach space $X$ under consideration belongs to the class $\HTcal$, i.e., Banach spaces with a bounded Hilbert transform (which -- as it turned out in works by Bourgain \cite{Bou83} and Burkholder \cite{Bur83} -- are exactly the UMD spaces). 
Meanwhile, further sufficient conditions for $\LL_p$-maximal regularity have been provided for various relevant cases, including boundedness of the semigroup associated to an abstract Cauchy problem on all spaces $\LL_q(\Omega)$ for $1 \leq q \leq \infty$ by Lamberton \cite{Lam87} or Gaussian kernel estimate problems \cite{HiePru97}, \cite{CouDuo00}.
The question of actual characterization of $\LL_p$-maximal regularity, however, proved to be much more difficult: Kalton and Lancien \cite{KalLan01} first showed that de Simon's technique cannot be applied to abstract Banach spaces of class $\HTcal$, consequently not to more general Banach spaces.
For a generalisation of de Simon's results it is mandatory that the basic space $X$ is at least topologically \emph{isomorphic} to a Hilbert space, which is a far too restrictive constraint for many concrete problems. The breakthrough then came with Weis \cite{Wei01}, who was able to characterize maximal regularity of operators on Banach spaces of type $\HTcal$ completely via $\Rcal$-boundedness of the set $\{\ii \rho (\ii \rho + A)^{-1}: \, \rho \in \R \setminus \{0\} \}$. This property, called \emph{Riesz-property} or (later) \emph{randomized boundedness}, provides just the tightening of the notion of uniform boundedness as a requirement in the Mikhlin multiplier theorem, which is necessary for its generalisation to Banach spaces of class $\HTcal$.
Simultaneously, for families of bounded linear operators this gave a new approach to the proof of maximal regularity; namely, Kalton and Weis \cite{KalWei01} showed how to obtain maximal regularity for $\Rcal$-sectorial operators using the $\Hcal^\infty$-calculus.
 Based on these developments in modern harmonic analysis and multiplier theory on Banach spaces of class $\HTcal$, see, e.g., Berksen and Gillespie \cite{BerGil94}, Cl\'ement, de Pagter, Sukochev and Witvliet \cite{CleDePSukWit00}, Kalton and Weis \cite{KalWei01}, Denk, Hieber and Pr\"uss \cite{DeHiPr03},\cite{DeHiPr07} extended a large class of $\R^n$-valued results by Ladyshenskaya, Solonnikov and Ural'ceva \cite{LaSoUr68} on linear parabolic systems on bounded domains to the general vector-valued case.
 The boundary conditions considered there mainly includes boundary conditions of the same type, so either on the Dirichlet data of the functions itself or on their normal flux through the boundary, which corresponds to homogeneity (w.r.t.\ $\vec \xi$) of the principal part of the boundary symbols.
 \newline
 In a recent paper \cite{AugBot21a}, cf.\ Example \ref{exa:fast_surface_chemistry} below, however, while performing an analysis of fast-surface chemistry and fast-adsorption limit models for heterogeneous catalysis systems, we encountered a situation where the boundary conditions have a combined type:
 Instead of a Dirichtlet or Neumann condition in all components equally, a no-flux boundary condition occurred at certain linear combinations of the individual normal fluxes, i.e., the normal direction of the gradient of the individual components, together with nonlinear boundary conditions at the boundary trace of the individual concentrations due to equilibrium conditions in extremely fast surface chemical reactions.
 Though that problem can be solved by employing the results and techniques of Ladyshenskaya, Solonnikov and Uralceva \cite{LaSoUr68}, which are valid for finite-component systems of parabolic type, this problem stimulated the search for more abstract and more general results on vector-valued parabolic and elliptic boundary value problems in the spirit of \cite{DeHiPr03} and \cite{DeHiPr07}.
In contrast to the situation where, say the boundary is decomposed into a Dirichlet and a Neumann part, cf.\ e.g.\ the works by Auscher et al.\ \cite{Auscher_et_al_2015}, Egert \cite{Egert_2018}, Tolksdorff \cite{Tolksdorff_2018}, Bechtel, Egert and Haller-Dintelmann \cite{BeEgHa_2020} and related works, there seems to be no extension of these results for this particular \emph{mixed type} boundary conditions case, and we want to close this gap in the present manuscript.
 \newline
 As a further motivation for the subsequent development of an abstract theory, and before we comment on the organisation of this manuscript, let us shortly discuss more examples, where such mixed type boundary condition play a role, and for which the results of this paper shall be valuable.
 
 \begin{example}[Fast surface chemistry limits]
 \label{exa:fast_surface_chemistry}
  In \cite{AugBot21} and \cite{AugBot21a}, we derived and analysed a class of reaction-diffusion-systems inspired by heterogeneous catalysis, which includes, for example, the following three component model problem \cite{AugBot21}
   \[
    \begin{cases}
     \partial_t c_i - d_i \Delta c_i
      = 0,
      &i= 1, 2, 3, \, t \geq 0, \, \vec x \in \Omega
      \\
     \kappa_f c_1 c_2 - \kappa_b c_3
      = 0,
      &t \geq 0, \, \vec x \in \Sigma,
      \\
     - d_1 \partial_\nu c_1 + d_2 \partial_\nu c_2
      = 0,
      &t \geq 0, \, \vec x \in \Sigma,
      \\
     - d_1 \partial_\nu c_1 - d_2 \partial_\nu c_3
      = 0,
       &t \geq 0, \, \vec x \in \Sigma,
      \\
     c_i(0,\cdot)
      = c_i^0,
      &\vec x \in \Omega
    \end{cases}
   \]
  on some bounded domain $\Omega \subseteq \R^n$ with regular boundary $\Sigma = \partial \Omega$, and parameters $k_f$, $k_b$ and $d_i > 0$, $i = 1, 2, 3$.
  This model describes a chemical process inside a reactor with catalytic boundary $\Sigma$, at which an instantaneous reversible chemical reaction of the form $A_1^\Sigma + A_2^\Sigma \rightleftharpoons A_3^\Sigma$ takes place.
  A linearisation of this system leads to a system of parabolic-initial boundary value problems of the form
   \begin{equation}
    \begin{cases}
    \partial_t v_i - d_i \Delta v_i
     = f_i,
     &i = 1, 2, 3, \, t \geq 0, \, \vec x \in \Omega,
     \\
    \alpha_1 v_1 + \alpha_2 v_2 + \alpha_3 v_3
     = g_1,
     &t \geq 0, \, \vec x \in \Sigma,
     \\
    \beta_1 \partial_\nu v_1 + \beta_2 \partial_\nu v_2 = g_2,
     &t \geq 0, \, \vec x \in \Sigma,
     \\
    \gamma_1 \partial_\nu v_1 + \gamma_3 \partial_\nu v_3 = g_3,
     &t \geq 0, \, \vec x \in \Sigma,
     \\
    v_i(0,\cdot)
     = v_i^0,
     &i = 1, 2, 3, \, \vec x \in \Omega,
     \end{cases}
     \label{ast}
   \end{equation}
  and, evidently, the orders of the boundary differential operator differs between the second and the two subsequent lines.
  Similar parabolic systems with mixed type boundary conditions naturally appear also for more general reaction networks on the surface, see \cite{AugBot21a}.
  Whereas for the particular model \eqref{ast} one may exploit the special structure of the boundary conditions on the surface to derive maximal regularity from the scalar case, cf.\ the strategy used in \cite[Theorem 3.2]{AugBot21}, this will no longer be true for the latter, more general class of reaction-networks.
 \end{example} 
 
 \begin{example}[Navier--Stokes equations]
  In \cite{BotKoePru13}, the authors consider several energy-preserving boundary conditions of the abstract form $\mathcal{B}(u,p) = 0$ for the incompressible Navier--Stokes equation
   \[
    \begin{cases}
     \rho \partial_t \vec u + \dv (\rho \vec u \otimes \vec u - \bb S)
      = \rho f,
      &t \in (0,T), \, \vec x \in \Omega,
      \\
     \dv \vec u
      = 0,
      &t \in (0,T), \, \vec x \in \Omega,
      \\
     \vec u(0,\cdot)
      = \vec u^0
      &\vec x \in \Omega.
    \end{cases}
   \]
  The classes of boundary conditions considered there include
   \begin{itemize}
    \item
     \emph{no-slip boundary conditions} leading to the homogeneous Dirichlet boundary condition
      \[
       \vec u = \vec 0
        \quad
        \text{on } \Omega \times (0,T);
      \]
     \item
      \emph{free slip (or perfect slip) boundary conditions} which lead to mixed type boundary conditions of the form
       \[
        \vec u \cdot \vec n = 0
         \quad \text{and} \quad
        2 \mu P_\Gamma \bb D \nu = \vec 0
         \quad \text{on} \quad \partial \Omega \times (0,T).
       \]
      Here, $P_\Gamma(\vec x) = \bb I - \vec n(\vec x) \otimes \vec n(\vec x)$ is the orthogonal projection onto the tangent space $T_x \Gamma$ of $\Gamma = \partial \Omega$ at $\vec x$ and $\bb D = \frac{1}{2} \big( \nabla u + (\nabla u)^\mathsf{T} \big)$ is the \emph{rate of deformation tensor};
     \item
      \emph{Navier boundary conditions}
       \[
        \vec u \cdot \vec n = 0
         \quad \text{and} \quad
        P_\Gamma \vec u + \alpha P_\Gamma \bb S \vec n = \vec 0
         \quad \text{on} \quad \partial \Omega \times (0,T),
       \]
      where $\alpha > 0$ and $\bb S = 2 \mu \bb D - p \bb I$ is the total stress tensor;
     \item
      \emph{Neumann boundary conditions}
       \[
        \bb S \vec n
         = \vec 0
         \quad \text{on} \quad \partial \Omega \times (0,T).
       \]
   \end{itemize}
  The boundary symbols related to the Dirichlet resp.\ Neumann boundary conditions are homogeneous of order $0$ resp.\ order $1$.
  On the other hand, free slip conditions and the Navier boundary conditions are -- in general -- not homogeneous (neither of order $0$ nor $1$) due to the involvement of the orthogonal projection $P_\Gamma$ on the tangent space.
  \newline
  In this sense, mixed type boundary conditions appear naturally in the theory of Navier--Stokes equations.
 \end{example}
 
 \begin{example}[Pattern formation]
  In a recent research article \cite{KrKlMaHeGa21}, the authors motivate and investigate the pattern which are formed for a particular two component reaction-diffusion system of the form
   \[
    \begin{cases}
     \partial_t u - d_u \Delta u
      = f(u,v)
      &\text{in } (0,T) \times \Omega,
      \\
     \partial_t v - d_v \Delta v
      = g(u,v)
      &\text{in } (0,T) \times \Omega,
    \end{cases}
   \]
  depending on the type of boundary conditions which are imposed.
  One of the several types of boundary conditions considered there is given by the mixed type boundary condition
   \[
    u = R
     \quad \text{and} \quad
    \partial_{\vec n} v = 0
    \quad \text{on } (0,T) \times \partial \Omega,
   \]
  where $R \in \R$ is a constant.
  In their derivation of such specific boundary conditions, they make a case for their relevance.
  Note that in this particular example, maximal regularity for the quasi-autonomous linear system follows from the scalar theory, since the diffusion process is modelled by Fickean diffusion.
  However, as soon as cross-diffusion is allowed for, in the sense of a Fick--Onsager or Maxwell--Stefan diffusion model, this approach is not available.
 \end{example}
 
 \begin{example}[Thermo-diffusion systems]
  For thermo-diffusive reaction-diffusion systems like
   \[
    \begin{cases}
     \partial_t \vartheta - \kappa \Delta \vartheta
      = h(\vartheta, \vec c),
      &t \geq 0, \, \vec x \in \Omega,
      \\
     \partial_t \vec c - \dv (\bb D \nabla \vec c)
      = \vec r(\vartheta, \vec c),
      &t \geq 0, \, \vec x \in \Omega
    \end{cases}
   \]
  it is reasonable to fix boundary conditions as follows: For the temperature $\vartheta$, one might, for example, fix the temperature at the boundary $\partial \Omega$, i.e.\ impose Dirichlet boundary conditions $\vartheta|_\Sigma = \Theta$,  whereas for the concentrations $c_i$ it might be more natural to impose no-flux boundary conditions or controlled in-flow conditions, i.e.\ $- \bb D \partial_{\vec n} \vec c = \bb g$.
  Again, this approach leads to mixed type boundary conditions for the combined vector variable $(\vartheta, \vec c)$.
 \end{example}
 
 With these examples in mind, let us comment on the structural restrictions imposed in \cite{DeHiPr03} and \cite{DeHiPr07}:
 The $\LL_p$-maximal regularity results in \cite[Theorem 7.11]{DeHiPr03} (half-space problem) and \cite[Theorem 8.2]{DeHiPr03} (domains with compact boundary), both rely on a version of the Lopatinskii-Shapiro condition in which all the boundary operators $\Bcal_j$, $j = 1, \ldots, N$, are \emph{homogeneous} of degree $m_j \in \N_0$ up to some lower order perturbation. Therefore, in the definition of the principle symbol as used there, e.g.\ in \cite[Section 8.1]{DeHiPr03}, the principle part of the boundary operators for \emph{mixed type} boundary conditions could not 'see' the Dirichlet type contribution, as their order of differentiation is $0$, i.e.\ strictly smaller than the order $1$ for the Neumann type parts.
 However, this can \emph{not} be considered as a relatively compact, lower order perturbation of the first order part of the boundary operator $\mathcal{B}_j$. We, therefore, extend the results in \cite{DeHiPr03} and \cite{DeHiPr07} to boundary conditions of such mixed type. For this purpose, we have to closely follow the lines of \cite[Chapter 6--8]{DeHiPr03} and \cite{DeHiPr07}, and observe which adjustments are needed to transfer the results there to this more general and, thus, more flexible setup.
 In particular, we have to introduce range conditions for lower order terms which respect the structure of the boundary conditions.
 Note that the latter obstacle does not appear in the case of homogeneous principal parts, since in that case it is clear what should be a lower order perturbation of the boundary condition (namely, all linear boundary operators of strictly lesser differentiability order than the principle part).
\newline
To conclude this introductory section, let us now give a brief overview over this manuscript:
Since we will need notation and results from modern harmonic analysis, in Section \ref{sec:Notation_and_preliminaries} we begin by providing the basic notation as well as introducing important notions such as $\Rcal$-boundedness, $\Rcal$-sectoriality and the Kalton--Weis generalisation of Mikhlin's multiplier theorem.
Also, we present a useful auxiliary result on holomorphic operator families on sectors, see Proposition \ref{prop:R-boundedness_sectors}, where $\Rcal$-boundedness of the family of operators $T(\lambda)$ on one sector implies $\Rcal$-boundedness of $\lambda T'(\lambda)$ on any smaller sector.
In Section \ref{Sec:Elliptic_differential_operators_full_space}, we review the existing theory of vector-valued elliptic differential operators on the full space $\R^n$, where the solution can be constructed via the Fourier transform on the full-space $\R^n$, see Theorem \ref{thm:full-space_problem}.
Then, in Section \ref{Sec:Half-space_problem} we outline the basic ideas for solving the parabolic initial-boundary value problem on the half-space $\R_+^{n+1}$ by employing the Laplace transform with respect to time $t > 0$, and partial Fourier transform with respect to the first $n$ variables $\vec x' = (x_1, \ldots, x_n)$ of the vector $\vec x \in \R_+^{n+1}$, leading to a (vector-valued) system of ordinary differential equations, which can be solved by rewriting it as a first order system in $y := x_{n+1}$ system, provided the \emph{Lopatinskii--Shapiro condition} is met.
The solution operator constructed in this way formally provides the solution of the corresponding elliptic boundary value problem, so that (in view of the multiplier theorems) we need to establish kernel estimates and $\Rcal$-bounds for these solution operators, establishing maximal regularity for parabolic initial-boundary value problems on the half-space.
Then, in Section \ref{Sec:General_domains} by localisation techniques, we may transfer the previous results to general domains with a compact and sufficiently regular boundary.
Whereas the previous sections have been restricted to zero initial data $u(0,\cdot) = 0$, in the last section, Section \ref{Sec:Inhomogeneous_IV}, we extend the results to non-zero initial data and give the corresponding optimal regularity results.
 
\section{Notation and preliminaries}
\label{sec:Notation_and_preliminaries}

\subsection{Notation}
In this manuscript, all Banach spaces under consideration will be Banach spaces over the field $\C$ of complex numbers, and will typically be denoted by capital Roman letters.
Some of the results, however, transfer to the case of real Banach spaces $E$, possibly via complexification of the Banach space, cf.\ e.g.\ \cite{Amann_1995}.
A very important subclass of these Banach spaces consists of the \emph{Banach spaces $E$ of class $\HTcal$} which are characterised by the condition that the \emph{Hilbert transform}
 \[
  (H f)(x)
   := \frac{1}{\pi} \big( \operatorname{PV} \frac{1}{t} \ast f \big)(x)
   = \frac{1}{\pi} \lim_{R \rightarrow \infty} \lim_{\varepsilon \rightarrow 0+} \int_{(-R,-\varepsilon) \cup (\varepsilon,R)} \frac{1}{t} f(x - t) \, \dd t,
   \quad
   f \in \mathcal{S}(\R;E), \, \vec x \in \R
 \]
extends to a bounded linear operator on the $E$-valued Lebesgue-Bochner space $\LL_p(\R;E)$ for some (and then, for all) $p \in (1, \infty)$. Important examples of such Banach spaces of class $\HTcal$ are given by Hilbert spaces (or Banach spaces which are isomorphic to a Hilbert space, which includes any finite dimensional Banach space) and the standard Lebesgue spaces $\LL_q(\Omega)$, $q \in (1,\infty)$, where the set $\Omega \subseteq \R^n$ is open and non-empty, see e.g.\ \cite[Propositions 4.2.14 and 4.2.15]{HvNVW_2016}.
Also, if $E$ is a Banach space of class $\HTcal$, so is the Lebesgue--Bochner space $\LL_p(\Omega;E)$ for any open set $\Omega \subseteq \R^n$ and $p \in (1, \infty)$.
 \begin{lemma}
 \label{lem:HTcal-L_p-HTcal}
  Let $E$ be a Banach space of class $\HTcal$ and $p \in (1, \infty)$.
  Then the Lebesgue--Bochner space $\LL_p(\Omega;E)$ is a Banach space of class $\HTcal$.
 \end{lemma}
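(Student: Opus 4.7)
The plan is to exploit a Fubini-type identification between $\LL_p(\R;\LL_p(\Omega;E))$ and $\LL_p(\Omega;\LL_p(\R;E))$ and reduce the boundedness of the Hilbert transform on the former to the boundedness of the Hilbert transform on the latter at every point $\omega \in \Omega$ separately.

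First, I would fix $p \in (1,\infty)$ and work with $f \in \mathcal{S}(\R; \LL_p(\Omega;E))$, which is dense in $\LL_p(\R;\LL_p(\Omega;E))$. Using the isometric isomorphism $\LL_p(\R;\LL_p(\Omega;E)) \cong \LL_p(\R \times \Omega;E) \cong \LL_p(\Omega;\LL_p(\R;E))$ (this is the standard Fubini/Bochner identification, which holds because $p$ is the same exponent in both factors), I may regard $f$ as a function $\tilde f \colon \Omega \to \LL_p(\R;E)$ with $\tilde f(\omega)(x) = f(x)(\omega)$ for almost every $(x,\omega)$. Since $E$ is of class $\HTcal$, the Hilbert transform $H$ is a bounded linear operator on $\LL_p(\R;E)$ with some constant $C_{p,E}$, so $H\tilde f(\omega) \in \LL_p(\R;E)$ for a.e.\ $\omega \in \Omega$.

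Next, I would verify that applying $H$ slicewise in the $\omega$-variable coincides (on the dense subclass of Schwartz functions) with the Hilbert transform defined intrinsically on $\LL_p(\R;\LL_p(\Omega;E))$. For $f \in \mathcal{S}(\R;\LL_p(\Omega;E))$ the truncated principal value integrals
\[
 H_{\varepsilon,R} f(x) = \frac{1}{\pi} \int_{(-R,-\varepsilon) \cup (\varepsilon,R)} \frac{f(x-t)}{t}\, \dd t
\]
are Bochner integrals in $\LL_p(\Omega;E)$, and by a further application of Fubini the evaluation at $\omega$ yields exactly $H_{\varepsilon,R}[\tilde f(\omega)](x)$. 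Passing to the limit $\varepsilon \to 0+$, $R \to \infty$ componentwise is justified because on the Schwartz class the principal value exists in a strong sense, and so $(Hf)(x)(\omega) = H(\tilde f(\omega))(x)$ for a.e.\ $(x,\omega)$.

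The final step is the norm estimate. By the Fubini identification and the slicewise $\LL_p(\R;E)$-boundedness,
\[
 \|Hf\|_{\LL_p(\R;\LL_p(\Omega;E))}^p
 = \int_\Omega \|H\tilde f(\omega)\|_{\LL_p(\R;E)}^p\, \dd \omega
 \leq C_{p,E}^p \int_\Omega \|\tilde f(\omega)\|_{\LL_p(\R;E)}^p\, \dd \omega
 = C_{p,E}^p \|f\|_{\LL_p(\R;\LL_p(\Omega;E))}^p,
\]
so $H$ extends by density to a bounded operator on $\LL_p(\R;\LL_p(\Omega;E))$, which is the defining property of class $\HTcal$. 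The only delicate point I anticipate is the Bochner measurability bookkeeping needed to justify the slicewise interpretation of $f$ and of $Hf$; this is handled by a standard approximation with simple tensors $\varphi(x) \otimes g(\omega)$ (with $\varphi \in \mathcal{S}(\R)$ and $g \in \LL_p(\Omega;E)$), on which both sides of the desired identity are immediate, followed by density and continuity in $\LL_p(\R\times \Omega;E)$.
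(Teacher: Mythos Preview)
Your argument is correct and is essentially the standard proof of this fact. Note, however, that the paper does not actually give its own proof of this lemma: immediately after the statement it simply writes ``For a proof of this result, see [HvNVW\_2016, Proposition 4.2.15(b)].'' So there is no detailed argument in the paper to compare against.

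That said, the Fubini-type reduction you sketch is precisely the approach taken in the cited reference (and in most textbook treatments): identify $\LL_p(\R;\LL_p(\Omega;E))$ with $\LL_p(\Omega;\LL_p(\R;E))$ via the canonical isometry, apply the $E$-valued Hilbert transform slicewise in $\omega$, and integrate the resulting pointwise bound. Your handling of the measurability and density issues (reducing to simple tensors, then passing to limits) is the standard way to make this rigorous. In short, you have supplied the proof the paper chose to outsource.
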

 For a proof of this result, see \cite[Proposition 4.2.15(b)]{HvNVW_2016}.

Given two Banach spaces $X$ and $Y$, we denote by $\B(X,Y)$ the Banach space of bounded linear operators $T: X \rightarrow Y$.
Moreover, $\rho(A) = \{ \lambda \in \C: (\lambda I - A): \dom(A) \rightarrow X \text{ is bijective and } (\lambda I - A)^{-1} \in \B(X) \}$ and $\sigma(A) = \C \setminus \rho(A)$ denote the resolvent set and the spectrum of a linear operator $A: \dom(A) \subseteq X \rightarrow X$, and $\dom(A)$, $\ran(A)$ and $\ker(A)$ the domain, range and kernel, resp., of a linear operator $A: \dom(A) \subseteq X \rightarrow Y$.
\newline
For the ($E$-valued) spaces of continuous functions $\CC(\overline{\Omega};E)$, of $k$-times continuously differentiable functions $\CC^k(\overline{\Omega};E)$, Lebesgue-Bochner spaces $\LL_p(\Omega;E)$, Sobolev spaces $\WW_p^k(\Omega;E)$, Sobolev-Slobodetskii spaces $\WW_p^s(\Omega;E)$ and Bessel-potential spaces $\HH_p^s(\Omega;E)$, we use standard notation.
 Moreover, by
  \[
   \CC_\mathrm{l}(\overline{\Omega})
    := \{f \in \CC(\overline{\Omega}): \, \lim_{\abs{\vec x} \rightarrow \infty} f(\vec x) =: f(\infty) \text{ exists, if } \Omega \text{ is unbounded} \}
  \]
 we mean the functions which are continuous on $\Omega$ up to its boundary, and which additionally converge as $\abs{\vec z} \rightarrow \infty$, provided $\Omega$ is unbounded.
 \newline
 Later on, inhomogeneous Lebesgue spaces $\LL_{p,q}(J \times \Omega) := \LL_p(J; \LL_q(\Omega))$ for $p, q \in [1, \infty)$ and any proper interval $J \subseteq \R$ and Sobolev spaces or Sobolev--Slobodetskii spaces $\WW_{p,q}^{(\tau,\sigma)}(J \times \Omega) := \WW_p^\tau(J; \LL_q(\Omega)) \cap \LL_p(J; \WW_q^\sigma(\Omega))$ will appear, as well as Besov spaces $\BB_{p,q}^s(\Omega)$ and Triebel--Lizorkin spaces $\FF_{p,q}^s(\Omega)$.
 As before, we denote by $\LL_{p,q}(J \times \Omega; E)$ etc.\ their vector-valued versions.
 Moreover, sometimes we write $\WW_{p,q}^{(\tau,\sigma) \cdot s}$ for $\WW_{p,q}^{(\tau s, \sigma s)}$.
 For more information on these spaces, we refer to the rich existing literature, e.g.\ \cite{AdaFou03}, \cite{Bre11}, \cite{Tar07}, \cite{Tri10}, \cite{Tri92}, \cite{Tri06}, \cite{Tri20} and \cite{Zie89}.

\subsection{Preliminaries}

Since the results will heavily rely on modern results on harmonic analysis, especially the theory of $\Rcal$-bounded operator families and its relation to maximal regularity of abstract Cauchy problems, let us in a preliminary section recall some of the (for our purpose) most relevant results in this perspective.
A fundamental role will be played by $\Rcal$-bounded subsets in a space of bounded linear operators, cf.\  \cite[Definition 5.3.13]{HvNVW_2016}.

\begin{definition}
 Let $X$ and $Y$ be Banach spaces and $\Tcal \subseteq \B(X,Y)$ be a family of bounded linear operators.
 $\Tcal$ is called \emph{$\Rcal$-bounded} if for some (then: for all) $p \in [1, \infty)$ there is a constant $C_p > 0$ such that for every finite subfamily  $\{T_\nu\}_{\nu=1}^N \subseteq \Tcal$, vectors $\{x_\nu\}_{\nu=1}^N \subseteq X$, and independent random variables $\{\varepsilon_\nu\}_{\nu=1}^N$ with $\bb P(\varepsilon_\nu = \pm 1) = \frac{1}{2}$ on some probability space $(\Omega, \mathcal{M}, \mu)$, for arbitrary $N \in \N$, the estimate
  \begin{equation}
   \norm{\sum_{\nu=1}^N \varepsilon_\nu T_\nu x_\nu}_{\LL_p(\Omega;Y)}
    \leq C_p \norm{\sum_{\nu=1}^N \varepsilon_\nu x_\nu}_{\LL_p(\Omega;X)}
    \label{eqn:R-bound}
  \end{equation}
 is valid.
 In this case, we write
  \[
   \Rcal(\Tcal)
    = \Rcal_p(\Tcal)
    = \inf \{ C_p > 0: \, \eqref{eqn:R-bound} \text{ is valid } \}
  \]
 for its $\Rcal$-bound.
\end{definition}

\begin{remark}
\label{rem:Rcal-boundedness-properties}
 Note the following:
 \begin{enumerate}
  \item
   If $X$ is (topologically isomorphic to) a Hilbert space, $\Tcal \subseteq \B(X,Y)$ is $\Rcal$-bounded if and only if $\Tcal \subseteq \B(X,Y)$ is (uniformly) bounded, cf.\ the remarks right after \cite[Definition 5.3.13]{HvNVW_2016}.
   For general Banach spaces, this equivalence is no longer true, but in any case an $\Rcal$-bounded operator family is always (uniformly) bounded.
  \item
   If $X = Y = \LL_p(\Omega)$ for some open $\Omega \subseteq \R^n$, then via the \emph{Khintchine inequality}, see \cite[Theorem 3.2.23]{HvNVW_2016}, $\Rcal$-boundedness can be characterised by the \emph{square-function estimate}
    \[
     \norm{ \big( \sum_{\nu=1}^N \abs{T_\nu x_\nu}^2 \big)^{1/2}}_{\LL_p(\Omega)}
      \leq M_p \norm{ \big( \sum_{\nu=1}^N \abs{x_\nu}^2 \big)^{1/2}}_{\LL_p(\Omega)}
    \]
   for any finite $\{T_\nu\}_{\nu=1}^N \subseteq \Tcal$ and $\{x_\nu\}_{\nu=1}^N \subseteq \LL_p(\Omega)$, cf.\ \cite[Theorem 1.d.6]{LinTza79} for more general ($q$-concave for some $q < \infty$) Banach spaces.
  \item 
   For any fixed $p \in [1, \infty)$, the $\Rcal$-bounds are subadditive and submultiplicative \cite[Remark 5.3.14(1)]{HvNVW_2016}:
    \begin{align*}
     \Rcal(\Scal + \Tcal)
      &\leq \Rcal(\Scal) + \Scal(\Tcal)
      &&\text{for } \Scal, \Tcal \subseteq \B(X,Y),
      \\
     \Rcal(\Scal \Tcal)
      &\leq \Rcal(\Scal) \Rcal(\Tcal)
      &&\text{for } \Tcal \subseteq \B(X,Y), \, \Scal \subseteq \B(Y,Z).
    \end{align*}
  \item
   If $\Tcal$, $\Tcal_n \subseteq \B(X,Y)$ are such that
    \[
     \sup_{n \geq 1} \Rcal(\Tcal_n) < \infty
      \quad \text{and} \quad
      \lim_{n \rightarrow \infty} \sup_{T \in \Tcal} \inf_{T_n \in \Tcal_n} \norm{T - T_n}_{\B(X;Y)} = 0,
    \]
   then
    \[
     \Rcal(\Tcal)
      \leq \limsup_{n \rightarrow \infty} \Rcal(\Tcal_n)
      < \infty.
    \]
 \end{enumerate}
\end{remark}

\begin{proof}
 We only prove the last statement here.
 Let $N \in \N$ be an arbitrary natural number, $\{ x_\nu \}_{\nu = 1, \ldots, N} \subseteq X$, $\{T_\nu \}_{\nu = 1, \ldots, N} \subseteq \Tcal$ and $\{\varepsilon_\nu \}_{\nu = 1, \ldots, N }$ i.i.d.\ random variables with $\bb P(\varepsilon_\nu = \pm 1) = \frac{1}{2}$ on common probability space $(\Omega, \mathcal{M}, \mu)$.
 Then, setting $C_n := \Rcal(\Tcal_n) + \frac{1}{n}$, and choosing $T_{n,\nu} \in \Tcal_n$ such that $\norm{T_\nu - T_{n,\nu}}_{\B(X;Y)} \leq \inf_{S_n \in \Tcal_n} \norm{T_\nu - S_n} + \frac{1}{n}$, we may conclude that
  \begin{align*}
   \norm{ \sum_{\nu=1}^N \varepsilon_\nu T_\nu x_\nu }_{\LL_p(\Omega;Y)}
    &\leq \norm{ \sum_{\nu = 1}^N \varepsilon_\nu T_{n,\nu} x_\nu }_{\LL_p(\Omega;Y)}
     + \norm{ \sum_{\nu = 1}^N \varepsilon_\nu (T_\nu - T_{n,\nu}) x_\nu }_{\LL_p(\Omega;Y)}
     \\
    &\leq C_n \norm{ \sum_{\nu = 1}^N \varepsilon_\nu x_\nu }_{\LL_p(\Omega;X)}
     + \sum_{\nu=1}^N \norm{T_\nu - T_{n,\nu}}_{\B(X)} \norm{\varepsilon_\nu x_\nu}_{\LL_p(\Omega;X)}.
  \end{align*}
 By assumption, the second sum tends to zero as $n \rightarrow \infty$, whereas the first term can be estimated by \[ \limsup_{n \rightarrow \infty} C_n \cdot \norm{ \sum_{\nu = 1}^N \varepsilon_\nu x_\nu }_{\LL_p(\Omega;X)} = \limsup_{n \rightarrow \infty} \Rcal(\Tcal_n) \norm{ \sum_{\nu = 1}^N \varepsilon_\nu x_\nu }_{\LL_p(\Omega;X)}. \]
 Hence, $\Rcal(\Tcal) \leq \limsup_{n \rightarrow \infty} \Rcal(\Tcal_n)$ and, in particular, $\Tcal$ is $\Rcal$-bounded.
\end{proof}

\begin{corollary}
\label{cor:R-uniform_Neumann}
 Let $\Lambda$ be an arbitrary index set and $\{ T_\lambda: \lambda \in \Lambda \} \subseteq \B(X)$ be a family of bounded linear operators on a Banach space $X$.
 Assume that
  \[
   \Rcal (\{ T_\lambda: \lambda \in \Lambda \})
    =: \rho
    < 1.
  \]
 Then, the resolvent operator $R_\lambda := (I - T_\lambda)^{-1} \in \B(X)$ exists for all $\lambda \in \Lambda$ and
  \[
   \Rcal (\{ R_\lambda: \lambda \in \Lambda \})
    \leq \frac{1}{1-\rho}.
  \]
\end{corollary}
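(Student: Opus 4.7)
My plan is to carry the classical scalar Neumann series argument over to the $\Rcal$-bounded setting, using only the sub-additivity/sub-multiplicativity and the approximation-closure property collected in Remark \ref{rem:Rcal-boundedness-properties}.

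First, since $\Rcal$-boundedness implies uniform boundedness (Remark \ref{rem:Rcal-boundedness-properties}(1)), the hypothesis $\Rcal(\{T_\lambda: \lambda \in \Lambda\}) = \rho < 1$ already yields $\norm{T_\lambda}_{\B(X)} \leq \rho < 1$ for every $\lambda \in \Lambda$. Hence $I - T_\lambda$ is invertible for every $\lambda$, and the Neumann series $R_\lambda = \sum_{k=0}^\infty T_\lambda^k$ converges absolutely in $\B(X)$, uniformly in $\lambda$, with remainder bound $\norm{R_\lambda - \sum_{k=0}^N T_\lambda^k}_{\B(X)} \leq \frac{\rho^{N+1}}{1-\rho}$.

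Next, I would show by induction on $k$, applying sub-multiplicativity (Remark \ref{rem:Rcal-boundedness-properties}(3)) to $\Scal = \{T_\lambda^{k-1}: \lambda \in \Lambda\}$ and $\Tcal = \{T_\mu: \mu \in \Lambda\}$, that $\Rcal(\{T_\lambda^k: \lambda \in \Lambda\}) \leq \rho^k$ for every $k \in \N_0$; here one uses that $\{T_\lambda^k: \lambda \in \Lambda\}$ is contained in the product family $\Scal \Tcal$, and that $\Rcal$-bounds are monotone in the family. Sub-additivity then yields, for the partial-sum family $\Scal_N := \{\sum_{k=0}^N T_\lambda^k: \lambda \in \Lambda\}$, the uniform estimate
\[
 \Rcal(\Scal_N)
  \leq \sum_{k=0}^N \rho^k
  \leq \frac{1}{1-\rho},
 \qquad N \in \N_0.
\]

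Finally, I apply the approximation-closure statement Remark \ref{rem:Rcal-boundedness-properties}(4) with $\Tcal := \{R_\lambda: \lambda \in \Lambda\}$ and the families $\Scal_N$: for each $R_\lambda \in \Tcal$, choosing the canonical approximant $\sum_{k=0}^N T_\lambda^k \in \Scal_N$, the remainder bound above gives $\sup_{T \in \Tcal} \inf_{T_N \in \Scal_N} \norm{T - T_N}_{\B(X)} \leq \frac{\rho^{N+1}}{1-\rho} \to 0$. Combined with $\sup_{N} \Rcal(\Scal_N) \leq \frac{1}{1-\rho}$, Remark \ref{rem:Rcal-boundedness-properties}(4) yields $\Rcal(\{R_\lambda: \lambda \in \Lambda\}) \leq \limsup_{N\to\infty} \Rcal(\Scal_N) \leq \frac{1}{1-\rho}$, which is the claim. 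I do not anticipate a real obstacle: the argument is a direct transcription of the Neumann series, and the only point that requires a moment of care is the induction step, where one must apply sub-multiplicativity to families rather than to individual operators, so that the $\Rcal$-bound of $\{T_\lambda^k\}$ can genuinely be controlled by $\rho \cdot \rho^{k-1}$.
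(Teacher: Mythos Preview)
Your proposal is correct and follows essentially the same route as the paper: both arguments invoke uniform boundedness to justify the Neumann series, use sub-additivity and sub-multiplicativity of $\Rcal$-bounds to control the partial-sum families by $\sum_{k=0}^N \rho^k$, and then appeal to the approximation-closure property of Remark~\ref{rem:Rcal-boundedness-properties}(4) to pass to the limit. Your write-up is slightly more explicit about the induction step and the uniform remainder estimate, but there is no substantive difference in strategy.
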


\begin{proof}
 Since any $\Rcal$-bounded family is uniformly bounded with uniform bound less or equal the $\Rcal$-bound, $\sup_{\lambda \in \Lambda} \norm{T_\lambda}_{\B(X)} \leq \rho < 1$, so that the Neumann series $(I - T_\lambda)^{-1} = \sum_{\nu=0}^\infty T_\lambda^\nu$ converges in $\B(X)$ for all $\lambda \in \Lambda$.
 Since the $\Rcal$-bounds are subadditive, for every finite $K \in \N$,  and submultiplicative, we obtain that
  \begin{align*}
   \Rcal (\{ \sum_{\nu=0}^K T_\lambda^\nu: \, \lambda \in \Lambda \})
    &\leq \sum_{\nu=0}^K \Rcal (\{ T_\lambda^\nu: \, \lambda \in \Lambda \})
    \leq \sum_{\nu=0}^K \Rcal (\{ T_\lambda: \, \lambda \in \Lambda \})^\nu
    \\
    &\leq \sum_{\nu=0}^K \rho^\nu = \frac{1-\rho^{K+1}}{1-\rho}.
  \end{align*}
 Therefore, by Remark \ref{rem:Rcal-boundedness-properties}
  \begin{align*}
   \Rcal (\{ R_\lambda: \, \lambda \in \Lambda \})
    &= \Rcal (\{ \sum_{\nu=0}^\infty T_\lambda^\nu: \, \lambda \in \Lambda \})
    \leq \limsup_{K \rightarrow \infty} \Rcal (\{ \sum_{\nu=0}^K T_\lambda^\nu: \, \lambda \in \Lambda \})
    \\
    &\leq \limsup_{K \rightarrow \infty} \frac{1 - \rho^{K+1}}{1 - \rho}
    = \frac{1}{1 - \rho}.
  \end{align*}
\end{proof}

\begin{remark}
 In Corollary \ref{cor:R-uniform_Neumann}, it suffices to assume that
  \[
   \limsup_{\nu \rightarrow \infty} \big( \mathcal{R} \{ T_\lambda^\nu: \, \lambda \in \Lambda \} \big)^{1/\nu} < 1.
  \]
 (The left-hand side may be seen as a $\Rcal$-boundedness based version of the spectral radius.)
 In that case there are $\nu_0 \in \N$, $C_0 \geq 0$ and $\rho \in (0,1)$ such that $\big( \mathcal{R} \{ T_\lambda^\nu: \, \lambda \in \Lambda \} \big)^{1/\nu} \leq \rho$ for all $\nu \geq \nu_0$ and $\sum_{\nu = 0}^{\nu_0 - 1} \mathcal{R} \{ T_\lambda^\nu: \, \lambda \in \Lambda \} \leq C_0$, and in this case,
  \[
   \mathcal{R} \{ R_\lambda: \, \lambda \in \Lambda \}
    \leq C_0 + \frac{1}{1 - \rho}.
  \]
\end{remark}

Lutz Weis showed that, for Banach spaces of class $\HTcal$, Mikhlin's Multiplier Theorem can be generalised from the scalar case by demanding $\Rcal$-boundedness of the (now operator-valued) functions $\abs{\vec \xi}^{\abs{\vec \alpha}} \DD^{\vec \alpha} M(\vec \xi)$ on $\R^n \setminus \{0\}$ instead of merely uniform boundedness as in the classical Mikhlin Multiplier Theorem.
For a proof of this result, see \cite[Theorem 4.4]{StrWei07} (or \cite[Theorem 3.25]{DeHiPr03} for the case $M \in \CC^n(\R^n \setminus \{0\}; \B(X,Y))$).

\begin{theorem}[Mikhlin's Theorem in $n$ variables]
 Let $X, Y$ be Banach spaces of class $\HTcal$.
 Assume that a function $M: \R^n \setminus \{0\} \rightarrow \B(X,Y)$ has distributional derivates $\DD^{\vec \alpha} M$ which for each $\vec \alpha \in \{0,1\}^n$ can be represented by a function and satisfy
  \[
   \Rcal( \{ \abs{\vec \xi}^{\abs{\vec \alpha}} \DD^{\vec \alpha} M(\vec \xi): \quad \vec \xi \in \R^n \setminus \{0\} \})
    =: \kappa_{\vec \alpha}
    < \infty
  \]
 for all $\vec \alpha \in \{0,1\}^n$.
 Then the operator $T_M$ defined by $T_M f := \Fcal^{-1} M \Fcal f \in \Scal'(\R^n;Y)$ for all $\Fcal f \in \Dcal(\R^n;X)$ (here, $\Fcal$ denotes the Fourier transform on $\R^n$) extends to a bounded linear operator $\LL_p(\R^n;X) \rightarrow \LL_p(\R^n; Y)$ with operator norm $\norm{T_M}_{\B(\LL_p(\R^n;X);\LL_p(\R^n;Y))} \leq C \sum_{\vec \alpha \in \{0,1\}^n} \kappa_{\vec \alpha}$.
\end{theorem}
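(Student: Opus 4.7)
The plan is to reduce the $n$-variable statement to the one-dimensional operator-valued Mikhlin multiplier theorem of Weis, which I take as the base case $n=1$, and then iterate over the coordinate directions, exploiting the fact that $\LL_p$-Bochner spaces over Banach spaces of class $\HTcal$ are again of class $\HTcal$ (Lemma~\ref{lem:HTcal-L_p-HTcal}).

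I would proceed by induction on $n$. Fixing the isometric identification $\LL_p(\R^n;X) \cong \LL_p(\R;\LL_p(\R^{n-1};X))$, write $\vec \xi = (\vec \xi',\xi_n)$ with $\vec \xi'\in\R^{n-1}$ and $\xi_n\in\R$. For each fixed $\vec \xi'\neq 0$, the map $\xi_n\mapsto M(\vec \xi',\xi_n)$ is a $\B(X,Y)$-valued symbol on $\R$ whose $\{0,1\}$-derivatives in $\xi_n$ satisfy the one-dimensional $\Rcal$-Mikhlin hypothesis with constant bounded, uniformly in $\vec \xi'$, by $\kappa_{(0,\ldots,0,0)}+\kappa_{(0,\ldots,0,1)}$. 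By the $1$-D theorem it therefore induces a bounded Fourier multiplier $T_{\vec \xi'}\in\B(\LL_p(\R;X),\LL_p(\R;Y))$, with operator norm uniformly bounded in $\vec \xi'$.

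The key step is to verify that the operator-valued symbol $\vec \xi'\mapsto T_{\vec \xi'}$ on $\R^{n-1}$ itself satisfies the $(n-1)$-variable $\Rcal$-Mikhlin hypothesis. Since $\xi_n$-multiplication commutes with $\vec \xi'$-differentiation, the distributional derivative $\DD^{\vec\alpha'} T_{\vec \xi'}$ (for $\vec \alpha'\in\{0,1\}^{n-1}$) is the Fourier multiplier associated to $\xi_n\mapsto \DD^{(\vec\alpha',0)} M(\vec \xi',\xi_n)$. To upgrade a finite family $\{(\vec \xi'_\nu)^{\vec\alpha'}\DD^{\vec\alpha'}T_{\vec \xi'_\nu}\}_\nu$ to an $\Rcal$-bound, I would apply the $1$-D Mikhlin theorem not to each symbol individually but to the Rademacher-averaged symbol $\xi_n\mapsto \sum_\nu\varepsilon_\nu(\vec \xi'_\nu)^{\vec\alpha'}\DD^{(\vec\alpha',0)}M(\vec \xi'_\nu,\xi_n)$ acting on $\LL_p(\Omega_\varepsilon;X)$, whose $\{0,1\}$-derivatives in $\xi_n$ are controlled, via Kahane's contraction principle, by the assumed $\Rcal$-bounds $\kappa_{(\vec\alpha',0)}$ and $\kappa_{(\vec\alpha',1)}$ on $M$. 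This transfer effectively re-encodes the multi-index $\Rcal$-boundedness hypothesis on $M$ as an $\Rcal$-boundedness hypothesis on the family of $1$-D multiplier operators.

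With this transfer in hand, the induction hypothesis applies to the $(n-1)$-variable operator-valued symbol $\vec \xi'\mapsto T_{\vec \xi'}$, which takes values in $\B(\LL_p(\R;X),\LL_p(\R;Y))$ (both of class $\HTcal$ by Lemma~\ref{lem:HTcal-L_p-HTcal}). This yields a bounded Fourier multiplier on $\LL_p(\R^{n-1};\LL_p(\R;X))\cong\LL_p(\R^n;X)$ with norm dominated by $C\sum_{\vec\alpha\in\{0,1\}^n}\kappa_{\vec\alpha}$. I expect the main obstacle to be precisely the $\Rcal$-boundedness transfer step: mere uniform boundedness of the scalar symbols does not suffice to yield an $\Rcal$-bound on the induced family of Fourier multiplier operators, and the transfer requires carefully invoking the $1$-D theorem on Rademacher-averaged symbols together with Kahane's contraction principle. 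Once this step is secured, the inductive mechanism is routine.
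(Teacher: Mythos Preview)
The paper does not give its own proof of this theorem; it simply cites \cite[Theorem~4.4]{StrWei07} and \cite[Theorem~3.25]{DeHiPr03} immediately before the statement. Your inductive scheme---reducing to the one-dimensional Weis theorem and iterating via $\LL_p(\R^n;X)\cong\LL_p(\R;\LL_p(\R^{n-1};X))$ together with Lemma~\ref{lem:HTcal-L_p-HTcal}---is precisely the strategy underlying the proof in \cite[Theorem~3.25]{DeHiPr03}, so your plan is sound and matches one of the cited sources.

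You have also correctly isolated the only nontrivial step: the induction does not close unless the family $\{\abs{\vec\xi'}^{\abs{\vec\alpha'}}\DD^{\vec\alpha'}T_{\vec\xi'}\}$ is $\Rcal$-bounded in $\B(\LL_p(\R;X),\LL_p(\R;Y))$, not merely uniformly bounded. Your device of applying the one-dimensional theorem to Rademacher-averaged symbols on $\LL_p(\Omega_\varepsilon;X)$ together with Kahane's contraction principle is one correct way to obtain this; an equivalent (and slightly cleaner) phrasing is that the one-dimensional Mikhlin theorem already delivers an $\Rcal$-bound on the multiplier operator controlled by the symbol's $\Rcal$-bounds, which is exactly the strengthening needed. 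The \v{S}trkalj--Weis proof \cite{StrWei07} proceeds differently, via a direct Littlewood--Paley decomposition rather than coordinate-wise iteration, and handles the weaker regularity hypothesis (distributional derivatives) more naturally.
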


As a rule of thumb, the $\Rcal$-boundedness of the function itself typically is the most critical step in checking the assumptions of Mikhlin's theorem. In fact, we can provide the following result for operator families which are holomorphic and $\Rcal$-bounded on sectors.
We did not find this precise statement in the existing literature, although Robert Denk draw our attention to a paper of Kunstmann and Weis, who used a similar technique in the proof of \cite[Example 2.16]{KunWei14}.
\newline
For every $\vartheta \in (0, \pi]$ we define the \emph{sector} $\Sigma_\vartheta \subseteq \C$ as
  \[
   \Sigma_\vartheta
    = \{ z \in \C \setminus \{0\}: \, \abs{\arg(z)} < \vartheta \},
  \]
see Figure \ref{fig:diagram_sector}.
 \begin{figure}
	\centering
	\includegraphics[scale = 1]{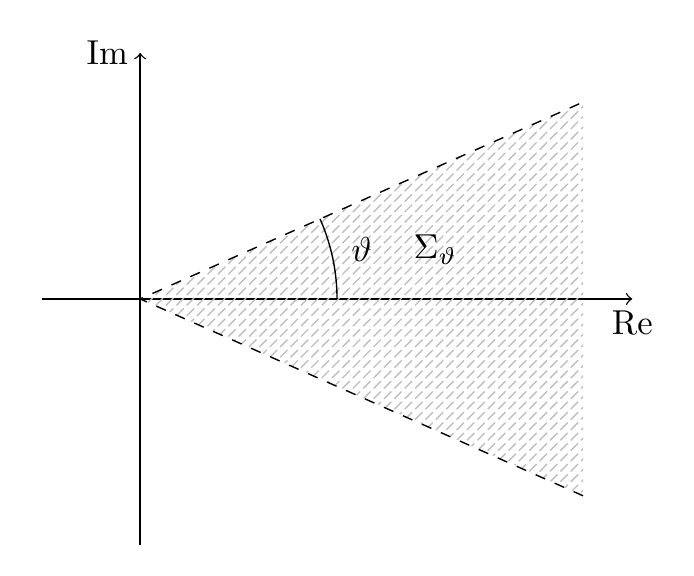}
	\caption{A sector in $\C$.}
	\label{fig:diagram_sector}
 \end{figure}

\begin{proposition}
\label{prop:R-boundedness_sectors}
 Let $X$ be a complex Banach space, $\phi \in (0, \pi)$ and let $T \in \CC(\overline{\Sigma_{\pi-\phi}};\B(X)) \cap \Hcal(\Sigma_{\pi-\phi};\B(X))$ be holomorphic on a sector and continuous up to its boundary.
 If
  \[
   \Rcal (\{ T(\lambda): \, \lambda \in \Sigma_{\pi-\phi} \})
    =: C
    < \infty,
  \]
 then for all $\phi' \in (\phi, \pi)$ it holds that
  \[
   \Rcal (\{ \lambda T'(\lambda): \, \lambda \in \Sigma_{\pi-\phi'} \})
    \leq \frac{C}{\sin(\phi' - \phi)^2}.
  \]
 Consequently, for every $k \in \N$ and $\phi' \in (\phi, \pi)$, $\Rcal (\{ \lambda^k T^{(k)}(\lambda): \, \lambda \in \Sigma_{\pi-\phi'} \}) < \infty$.
\end{proposition}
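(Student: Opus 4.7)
The plan is to realise $\lambda T'(\lambda)$ as a normalised average of values of $T$ along a small circle that stays inside $\Sigma_{\pi-\phi}$ via Cauchy's integral formula, and then push the $\Rcal$-bound assumption through this averaging by Minkowski's integral inequality combined with the (complex) contraction principle.

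The geometric observation is that for any $\lambda = \abs{\lambda} \ee^{\ii \psi} \in \Sigma_{\pi - \phi'}$, so $\abs{\psi} \leq \pi - \phi'$, the distance from $\lambda$ to the nearer boundary ray of $\Sigma_{\pi-\phi}$ equals $\abs{\lambda} \sin((\pi - \phi) - \abs{\psi}) \geq \abs{\lambda} \sin(\phi' - \phi)$. Hence, setting $r(\lambda) := \abs{\lambda} \sin(\phi' - \phi)$, the closed disc $\overline{B}(\lambda, r(\lambda))$ is contained in $\Sigma_{\pi - \phi}$, and $T$ is holomorphic on a neighbourhood thereof. Cauchy's formula for the first derivative of a Banach-space-valued holomorphic function, applied to the contour $\partial B(\lambda, r(\lambda))$, combined with multiplication by $\lambda$, yields the representation
\[
\lambda T'(\lambda) = \frac{\lambda}{2 \pi \ii} \oint_{\abs{z - \lambda} = r(\lambda)} \frac{T(z)}{(z - \lambda)^2} \, \dd z = \frac{1}{2 \pi \sin(\phi' - \phi)} \cdot \frac{\lambda}{\abs{\lambda}} \int_0^{2 \pi} T \bigl( \lambda + r(\lambda) \ee^{\ii \theta} \bigr) \ee^{- \ii \theta} \, \dd \theta.
\]

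To extract the $\Rcal$-bound, pick any finite family $\{\lambda_\nu\}_{\nu = 1}^N \subseteq \Sigma_{\pi - \phi'}$, vectors $\{x_\nu\} \subseteq X$, and Rademacher variables $\{\varepsilon_\nu\}$ on a probability space $(\Omega, \mathcal{M}, \mu)$, and compute, using the above representation and Minkowski's integral inequality,
\[
\Bigl\| \sum_{\nu = 1}^N \varepsilon_\nu \lambda_\nu T'(\lambda_\nu) x_\nu \Bigr\|_{\LL_p(\Omega; X)} \leq \frac{1}{2 \pi \sin(\phi' - \phi)} \int_0^{2\pi} \Bigl\| \sum_{\nu = 1}^N \varepsilon_\nu T\bigl(\lambda_\nu + r_\nu \ee^{\ii \theta} \bigr) \bigl( \tfrac{\lambda_\nu}{\abs{\lambda_\nu}} \ee^{- \ii \theta} x_\nu \bigr) \Bigr\|_{\LL_p(\Omega; X)} \, \dd \theta,
\]
with $r_\nu := r(\lambda_\nu)$. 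For each fixed $\theta \in [0, 2\pi]$ all the points $\lambda_\nu + r_\nu \ee^{\ii \theta}$ lie in $\Sigma_{\pi - \phi}$, so the $\Rcal$-bound hypothesis on $\{T(\mu): \mu \in \Sigma_{\pi - \phi}\}$, together with Kahane's (complex) contraction principle applied to the unimodular scalars $\tfrac{\lambda_\nu}{\abs{\lambda_\nu}} \ee^{-\ii \theta}$, bounds the inner norm by an absolute constant times $C \norm{\sum_\nu \varepsilon_\nu x_\nu}_{\LL_p(\Omega; X)}$. Integrating in $\theta$ and absorbing constants produces the claimed $\Rcal$-bound for $\{\lambda T'(\lambda): \lambda \in \Sigma_{\pi - \phi'}\}$.

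The higher-derivative statement then follows inductively, by interpolating angles $\phi < \phi_1 < \ldots < \phi_k = \phi'$: applying the first assertion to the holomorphic family $\lambda \mapsto \lambda^{j-1} T^{(j-1)}(\lambda)$ on $\Sigma_{\pi - \phi_{j-1}}$ gives $\Rcal$-boundedness of its $\lambda$-weighted derivative on $\Sigma_{\pi - \phi_j}$, and the Leibniz identity $(\lambda^{j-1} T^{(j-1)})'(\lambda) = (j-1) \lambda^{j-2} T^{(j-1)}(\lambda) + \lambda^{j-1} T^{(j)}(\lambda)$ isolates $\lambda^j T^{(j)}(\lambda)$ as the difference of two $\Rcal$-bounded families. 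The main delicacy lies precisely in the averaging step: one must be sure that $\Rcal$-boundedness is preserved when passing through a Bochner-type integral, which in this concrete setting reduces to Minkowski's inequality and rotational invariance of Rademacher sums under unimodular scalars. A naive version of my argument only produces a factor $\sin(\phi'-\phi)^{-1}$ rather than the stated $\sin(\phi'-\phi)^{-2}$; the second power presumably comes either from using a slightly smaller radius (e.g.\ $r(\lambda) = \tfrac{1}{2}\abs{\lambda}\sin(\phi'-\phi)$ together with a crude length estimate) or from absorbing the constant from the complex contraction principle into another factor of $\sin(\phi'-\phi)^{-1}$.
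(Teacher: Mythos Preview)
Your proof is correct and follows the same route as the paper's: Cauchy's integral formula on a disc of radius proportional to $|\lambda|\sin(\phi'-\phi)$, combined with the contraction principle for $\Rcal$-bounds under integrals (which you spell out explicitly via Minkowski's inequality and Kahane's contraction principle). Your suspicion about the exponent is well-founded --- the paper's bookkeeping is looser and produces $\sin(\phi'-\phi)^{-2}$, whereas your parametrised Cauchy formula indeed yields the sharper factor $\sin(\phi'-\phi)^{-1}$ (times the constant from the complex contraction principle); the only minor caveat is that with $r(\lambda)=|\lambda|\sin(\phi'-\phi)$ the circle may graze $\partial\Sigma_{\pi-\phi}$, which the paper handles by inserting a factor $\varepsilon\in(0,1)$ and letting $\varepsilon\to 1^-$, though your appeal to continuity of $T$ up to the boundary would also suffice.
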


 \begin{remark}
  Note that it suffices to demand weak holomorphy since every weakly holomorphic function $T: \Sigma_{\pi-\phi} \rightarrow \B(X)$ is already holomorphic.
 \end{remark}

 \begin{proof}[Proof of Proposition \ref{prop:R-boundedness_sectors}]
 Let us fix some $\phi' \in (\phi, \pi)$ and first consider any $\lambda \in \Sigma_{\pi-\phi'}$ and estimate its distance to the boundary $\partial \Sigma_{\pi - \phi}$.
 In fact, we can distinguish between two cases, namely $\lambda \in \overline{\Sigma_{\frac{\pi}{2}-\phi}}$ (only possible if $\phi' < \frac{\pi}{2}$) and $\lambda \in \Sigma_{\pi-\phi'} \setminus \overline{\Sigma_{\frac{\pi}{2}-\phi}}$.
 In the first case, the distance from $\lambda$ to the boundary $\partial \Sigma_{\pi-\phi}$ can easily be seen to be $\abs{\lambda}$ as $0$ is the closest point to $\lambda$ on $\partial \Sigma_{\pi-\phi}$.
 In particular, in that case $B_{\abs{\lambda}}(\lambda) \subseteq \Sigma_{\pi-\phi}$.
 On the other hand, if $\lambda \not\in \overline{\Sigma_{\frac{\pi}{2}-\phi}}$ this will not be the case, but the closest point to $\lambda$ on $\partial \Sigma_{\pi-\phi}$ will be the orthogonal projection onto the line $\{r \ee^{\pm \ii (\pi - \phi)}: \, r > 0 \}$, where the sign is positive or negative depending on whether $\Im \lambda_0$ is positive or negative, resp.
 W.l.o.g.\ let us assume that $\Im \lambda > 0$ (the case $\Im \lambda < 0$ follows similarly).
 Then,
  \begin{align*}
   \operatorname{dist}(\lambda,\partial \Sigma_{\pi-\phi})
    = \Im (\ee^{\ii \phi} \lambda)
    \geq \sin(\phi'-\phi) \abs{\lambda},
  \end{align*}   
 since $\ee^{\ii \phi} \lambda \in \Sigma_{\phi'-\phi} \cap \{z \in \C: \, \Re z < 0, \, \Im z > 0 \}$, and, hence, $B_{\sin(\phi'-\phi) \abs{\lambda}}(\lambda) \subseteq \Sigma_{\pi-\phi}$.
 Using the Cauchy integral formula and the contraction principle for $\Rcal$-bounds of parameter-dependent integrals, we infer that, fixing an arbitrary $\varepsilon \in (0,1)$,
 \begin{align*}
  &\Rcal ( \{ \lambda T'(\lambda): \, \lambda \in \Sigma_{\pi-\phi'} \} )
   \\
   &= \Rcal \big( \big\{ \frac{\lambda}{2\pi \ii} \int_{\partial B_{\varepsilon \sin(\phi'-\phi) \abs{\lambda}}(\lambda)} \frac{T(\omega)}{(\lambda - \omega)^2} \, \dd \omega: \, \lambda \in \Sigma_{\pi-\phi'} \big\} \big)
   \\
  &\leq \Rcal \big( \big\{ \frac{\abs{\lambda}^2}{\abs{\lambda - \omega}^2}: \, \lambda \in \Sigma_{\pi - \phi'}, \, \omega \in \partial B_{\varepsilon \sin(\phi' - \phi) \abs{\lambda}}(\lambda) \} \cdot \Rcal \big( \big\{ T(\omega): \, \lambda \in \Sigma_{\pi - \phi'}, \, \omega \in \partial B_{\varepsilon \sin(\phi' - \phi) \abs{\lambda}}(\lambda) \big\} \big)
   \\
  &= \frac{1}{(\varepsilon \sin(\phi' - \phi))^2} \Rcal ( \{ T(\lambda): \, \lambda \in \Sigma_{\pi - \phi} \} )
  = \frac{C}{\varepsilon^2 \sin(\phi' - \phi)^2}.
 \end{align*}  
Letting $\varepsilon \rightarrow 1-$, we may, thus conclude that
 \[
  \Rcal ( \{ \lambda T'(\lambda): \, \lambda \in \Sigma_{\pi-\phi'} \} )
   \leq \frac{C}{\sin(\phi' - \phi)^2}.
 \]
The $\Rcal$-boundedness of the sets $\{ \lambda^k T^{(k)}(\lambda): \, \lambda \in \Sigma_{\pi - \phi'} \}$ follows by induction over $k \in \N$.
 \end{proof}

This property comes in handy for elliptic boundary value problems and parabolic initial-boundary value problems, since for these usually $\Rcal$-boundedness can be guaranteed on some complex sector rather than merely on $(0, \infty)$, cf.\ the characterisation of $\LL_p$-maximal regularity in Theorem \ref{thm:characterisation_Lp-max} below.
In fact, in the theory of parabolic initial-boundary value problems, sectorial operators appear naturally as generators of analytic semigroups, see e.g.\ \cite[Theorem II.4.6]{EnNa00}.
Combining the property of sectoriality with the notion of $\Rcal$-bounded operators then leads to the (in general much smaller) class of $\Rcal$-sectorial operators, which -- as we will immediately see in Theorem \ref{thm:characterisation_Lp-max} below -- appears inevitably when looking for $\LL_p$-maximal regularity of abstract Cauchy problems.

\begin{definition}[$\Rcal$-sectorial operators]
 A sectorial operator $A$ on a Banach space $X$ (write $A \in \Scal(X)$) is called $\Rcal$-sectorial (write $A \in \Rcal \Scal(X)$), if
  \[
   \Rcal_A(0)
    := \Rcal ( \{ \lambda (\lambda + A)^{-1}: \, \lambda > 0 \} )
    < \infty.
  \]
 In this case, the \emph{$\Rcal$-angle} $\phi_A^{\Rcal}$ of $A$ is defined as
  \[
   \phi_A^{\Rcal}
    := \inf \{ \theta \in (0, \pi): \Rcal_A(\pi - \theta) < \infty \},
  \]
 where
  \[
   \Rcal_A(\theta)
    := \Rcal ( \{ \lambda(\lambda + A)^{-1}: \, \abs{\arg \lambda} \leq \theta \} ).
  \]
\end{definition}

\begin{remark}
 By expanding the resolvent operator $(\lambda - A)^{-1}$ as a Taylor series around some $s > 0$, using the Neumann series and the algebraic properties (subadditive, submultiplicative) of the $\Rcal$-bounds, one finds that $\phi_A^{\Rcal} > 0$ for every $\Rcal$-sectorial operator, i.e.\ the definition of the $\Rcal$-angle is meaningful.
\end{remark}

\begin{proof}
 Let $\eta \in (0, \frac{1}{\Rcal_A(0)})$.
 Then, for every $s > 0$ and $\lambda \in \C$ such that $\Re \lambda = s$ and $\abs{\Im \lambda} < \eta s$ (i.e., $\lambda \in \Sigma_\vartheta$ for $\vartheta = \arctan(\eta) > 0$) it holds that $\lambda + A = (s + A) + (\lambda - s)$, hence,
  \[
   (s + A)^{-1} (\lambda + A)
    = (\lambda + A) (s + A)^{-1}
    = \id + (\lambda - s) (s + A)^{-1},
  \]
 where
  \[
   \Rcal ( \{ (\lambda - s) (s + A)^{-1}: \, s > 0, \, \Re \lambda = s, \, \abs{\Im \lambda} < \eta s \} )
    \leq \eta \Rcal ( \{ s (s + A)^{-1}: s > 0 \} )
    = \eta \Rcal_A(0)
    < 1,
  \]
 so that
  \[
   \Rcal ( \{ (I + (\lambda - s) (s + A)^{-1})^{-1} = \sum_{k=0}^\infty (\lambda-s)^k (s + A)^{-k}: \, s > 0, \, \Re \lambda = s, \, \abs{\Im \lambda} < \eta s \} )
    < \frac{1}{1 - \eta \Rcal_A(0)},
  \]
 and, thus, we may deduce that
  \begin{align*}
   &\Rcal ( \{ \lambda (\lambda + A)^{-1}: \Re \lambda = s, \, \abs{\Im \lambda} < \eta s, \, s > 0 \} )
    \\
    &\leq \Rcal ( \{ \frac{\lambda}{\Re \lambda}: \, \abs{\Im \lambda} < \eta \Re \lambda, \, \Re \lambda > 0 \} ) \cdot \Rcal ( \{ s(s+A)^{-1}: \, s > 0 \} )
     \\ &\quad
     \cdot \Rcal ( \{ (I + (\lambda - s)(s+A)^{-1})^{-1}: \, \Re \lambda = s > 0, \, \abs{\Im \lambda} < \eta s \} )
    \\
    &< \sqrt{1 + \eta^2} \cdot \Rcal_A(0) \cdot \frac{1}{1 - \eta \Rcal_A(0)}
    < \infty.
  \end{align*}
\end{proof}

Similar to the perturbation theorems for contractive or analytic semigroup generators, $\Rcal$-sectorial operators allow for a relative bounded perturbation theorem, see \cite[Propositions 4.2 and 4.3]{DeHiPr03}.
 
\begin{proposition}[Relatively bounded perturbation of $\Rcal$-sectorial operators] \label{prop:Perturbation_Theorem}
 Suppose that $A$ is an $\Rcal$-sectorial operator on a Banach space $X$ and set
  \[
   a := \Rcal ( \{ A (\lambda + A)^{-1}: \, \lambda \in \Sigma_\theta \} )
    < \infty
  \]
 for some fixed $\theta \in (0, \pi - \phi_A^{\Rcal})$.
 \begin{enumerate}
  \item
   Assume that $B$ is a linear operator with $\dom(B) \supseteq \dom(A)$ and such that, for some $\alpha \in (0, \nicefrac{1}{a})$,
    \[
     \norm{B u}
      \leq \alpha \norm{A u}
      \quad
      \text{for all } u \in \dom(A).
    \]
   Then the operator $A + B$ is $\Rcal$-sectorial with $\Rcal$-angle $\phi_{A+B}^\Rcal \geq \pi - \theta$ and
    \[
     \Rcal ( \{ \lambda (\lambda + A + B)^{-1}: \, \lambda \in \Sigma_\theta \} )
      \leq \frac{1}{1-\gamma} \Rcal ( \{ \lambda (\lambda + A)^{-1}: \, \lambda \in \Sigma_\theta \} )
    \]
   where $\gamma := \norm{B A^{-1}}_{\B(X)} \Rcal ( \{ A (\lambda + A)^{-1}: \, \lambda \in \Sigma_\theta \} )$.
  \item
   Let
    \[
     C_A
      = \sup_{\lambda > 0} \norm{A (\lambda + A)^{-1}}_{\B(X)},
      \quad
     M_A
      = \sup_{\lambda > 0} \norm{\lambda (\lambda + A)^{-1}}_{\B(X)}
      < \infty
    \]
   and assume that $B$ is a linear operator with $\dom(B) \supseteq \dom(A)$ and such that, for some $\alpha, \beta > 0$,
    \[
     \norm{B u}
      \leq \alpha \norm{A u} + \beta \norm{u}
      \quad
      \text{for all } u \in \dom(A).
    \]
   Whenever $\alpha < \frac{1}{1+a} C_A$ and $\mu > \beta M_A \frac{1+a}{1 - \alpha C_A(1+a)}$,
   then $A + B$ is $\Rcal$-sectorial with $\Rcal$-angle $\phi_{A+B}^\Rcal \geq \pi - \theta$ and
    \[
     \Rcal ( \{ \lambda (\lambda + \mu + A + B)^{-1}: \, \lambda \in \Sigma_\theta \} )
      < \infty.
    \]
 \end{enumerate}
\end{proposition}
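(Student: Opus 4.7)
My plan is to derive both parts from the resolvent factorisation
\[
 \lambda + A + B = (I + B(\lambda + A)^{-1})(\lambda + A),
\]
so that $(\lambda + A + B)^{-1} = (\lambda + A)^{-1}(I + B(\lambda + A)^{-1})^{-1}$ whenever the middle factor is invertible. By the submultiplicativity of $\Rcal$-bounds (Remark \ref{rem:Rcal-boundedness-properties}), the entire task reduces to showing $\Rcal(\{B(\lambda + A)^{-1}: \lambda \in \Sigma_\theta\}) < 1$; then Corollary \ref{cor:R-uniform_Neumann} provides the $\Rcal$-uniform Neumann series bound for $(I + B(\lambda + A)^{-1})^{-1}$, and composing with the already $\Rcal$-bounded family $\{\lambda(\lambda + A)^{-1}\}$ yields the required $\Rcal$-bound on $\lambda(\lambda + A + B)^{-1}$.

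For part (i) the key observation exploits the linearity of $B$: for any finite family $\{\lambda_\nu\} \subseteq \Sigma_\theta$, $\{v_\nu\} \subseteq X$ and Rademacher variables $\{\varepsilon_\nu\}$,
\[
 \sum_\nu \varepsilon_\nu B(\lambda_\nu + A)^{-1} v_\nu
  = B w,
  \qquad
  w := \sum_\nu \varepsilon_\nu (\lambda_\nu + A)^{-1} v_\nu \in \dom(A),
\]
and the pointwise hypothesis $\norm{Bw} \le \alpha \norm{Aw}$ applied to this single element $w$ gives
\[
 \norm{\sum_\nu \varepsilon_\nu B(\lambda_\nu + A)^{-1} v_\nu}
  \le \alpha \norm{\sum_\nu \varepsilon_\nu A(\lambda_\nu + A)^{-1} v_\nu}.
\]
Taking $\LL_p$-moments over the probability space and invoking the definition of $a$ yields $\Rcal(\{B(\lambda + A)^{-1}: \lambda \in \Sigma_\theta\}) \le \norm{BA^{-1}}_{\B(X)} \cdot a = \gamma < 1$. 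Corollary \ref{cor:R-uniform_Neumann} then delivers $\Rcal(\{(I + B(\lambda + A)^{-1})^{-1}: \lambda \in \Sigma_\theta\}) \le 1/(1-\gamma)$, and submultiplicativity combined with the factorisation completes part (i).

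For part (ii) I would apply part (i) to the shifted operator $A_\mu := \mu + A$ with the same perturbation $B$. The first task is to convert the relative bound $\norm{Bu} \le \alpha \norm{Au} + \beta \norm{u}$ into one of the form $\norm{Bu} \le \alpha_\mu \norm{A_\mu u}$: using $\norm{A A_\mu^{-1}}_{\B(X)} \le C_A$ and $\norm{A_\mu^{-1}}_{\B(X)} \le M_A/\mu$ (immediate from the definitions of $C_A, M_A$ on the positive real axis) I obtain $\alpha_\mu = \alpha C_A + \beta M_A/\mu$. The second task is to estimate $a_\mu := \Rcal(\{A_\mu(\lambda + A_\mu)^{-1}: \lambda \in \Sigma_\theta\})$; writing $\tilde\lambda := \lambda + \mu \in \mu + \Sigma_\theta \subseteq \Sigma_\theta$, the identities $A_\mu(\lambda + A_\mu)^{-1} = I - \lambda(\lambda + A_\mu)^{-1}$ and $\lambda(\lambda + A_\mu)^{-1} = (\lambda/\tilde\lambda) \tilde\lambda(\tilde\lambda + A)^{-1}$, combined with the scalar contraction principle and the standard consequence $\Rcal(\{\tilde\lambda(\tilde\lambda + A)^{-1}: \tilde\lambda \in \Sigma_\theta\}) \le 1 + a$, produce a bound $a_\mu \le 1 + a$ after absorbing the scalar factors. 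The hypotheses $\alpha < 1/[(1+a)C_A]$ and $\mu > \beta M_A (1+a)/[1 - \alpha C_A(1+a)]$ are then precisely what guarantees $\alpha_\mu (1+a) < 1$, so part (i) applies to $A_\mu + B$ and yields the $\Rcal$-sectoriality of $\mu + A + B$ on $\Sigma_\theta$ with the quoted $\Rcal$-bound.

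The main obstacle I expect is the bookkeeping of scalar factors in part (ii): bounding $\sup_{\lambda \in \Sigma_\theta} \abs{\lambda/\tilde\lambda}$ and the analogous $\abs{\mu/\tilde\lambda}$ uniformly. This is trivial for $\theta \le \pi/2$, where $\Re\lambda \ge 0$ forces $\abs{\lambda + \mu} \ge \max(\abs{\lambda},\mu)$, but for $\theta \in (\pi/2, \pi - \phi_A^{\Rcal})$ a short geometric computation is required to verify that $\mu + \Sigma_\theta$ is separated from $0$ by a $\mu$-proportional amount. Once these scalar bounds are in hand, the proof assembles purely from elementary resolvent identities, Corollary \ref{cor:R-uniform_Neumann}, and the subadditive and submultiplicative properties of $\Rcal$-bounds.
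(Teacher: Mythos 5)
The paper supplies no proof of this proposition; it is cited directly from [DeHiPr03, Propositions 4.2 and 4.3]. So the comparison is against that reference, which uses essentially the strategy you outline: factorise the resolvent, bound $\Rcal(\{B(\lambda+A)^{-1}\})$ below $1$, apply a Neumann series, and compose.

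For part (i) your plan is correct, but there is a small mismatch between what you prove and what you claim. The Rademacher argument with $\norm{Bw}\le\alpha\norm{Aw}$ yields $\Rcal(\{B(\lambda+A)^{-1}\})\le\alpha a$, whereas you assert the sharper constant $\gamma=\norm{BA^{-1}}_{\B(X)}\,a$ appearing in the statement. The cleanest route to $\gamma$ does not need the Rademacher computation at all: since the statement implicitly takes $0\in\rho(A)$, one has the pointwise identity $B(\lambda+A)^{-1}=BA^{-1}\cdot A(\lambda+A)^{-1}$, a single bounded operator composed with a member of an $\Rcal$-bounded family, so submultiplicativity of $\Rcal$-bounds (Remark~\ref{rem:Rcal-boundedness-properties}) gives $\Rcal(\{B(\lambda+A)^{-1}\})\le\norm{BA^{-1}}_{\B(X)}\,a=\gamma\le\alpha a<1$ at once. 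The rest of your argument for (i) is sound.

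For part (ii) the reduction to (i) with $A_\mu:=\mu+A$ is a reasonable route, and your estimate $\alpha_\mu\le\alpha C_A+\beta M_A/\mu$ is correct. The weak point is the claim $a_\mu\le 1+a$, and the geometric issue you flag as a minor afterthought is in fact the crux. Writing $\tilde\lambda=\lambda+\mu$ and
\[
 A_\mu(\lambda+A_\mu)^{-1}
  = A(\tilde\lambda+A)^{-1}+\frac{\mu}{\tilde\lambda}\,\tilde\lambda(\tilde\lambda+A)^{-1},
\]
the first summand has $\Rcal$-bound $\le a$, and the second carries a variable complex scalar $\mu/\tilde\lambda$. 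The (complex) Kahane contraction principle handles this only at the cost of a constant factor, and, more importantly, $\sup_{\lambda\in\Sigma_\theta}|\mu/\tilde\lambda|=1/\sin(\pi-\theta)$ once $\theta>\pi/2$, because $\inf_{\lambda\in\Sigma_\theta}|\lambda+\mu|=\mu\sin(\pi-\theta)$. Since the proposition allows any $\theta\in(0,\pi-\phi_A^{\Rcal})$, this range is genuinely in play. Your route therefore gives at best $a_\mu\lesssim a+(1+a)/\sin(\pi-\theta)$, not $1+a$, and the numerical thresholds on $\alpha$ and $\mu$ in the statement are not reached this way for $\theta>\pi/2$. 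The qualitative conclusion (that $\mu+A+B$ is $\Rcal$-sectorial with $\Rcal$-angle $\le\pi-\theta$ for $\mu$ sufficiently large) does still follow from your scheme, since the effect is only to multiply the threshold for $\mu$ by a $\theta$-dependent constant. If the exact constants in the statement were the goal, one would need a more careful direct estimate of $\Rcal(\{B(\lambda+\mu+A)^{-1}:\lambda\in\Sigma_\theta\})$ and a separate treatment of the case $\theta>\pi/2$; for the purposes of this paper, where the proposition is used only qualitatively, this discrepancy is harmless.
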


As announced above, $\Rcal$-sectorial operators and $\LL_p$-maximal regularity of abstract Cauchy problems are closely related.
In fact, the following result is known for Banach spaces of class $\HTcal$, see \cite[Theorem 4.4]{DeHiPr03}.

\begin{theorem}[Characterisation of $\LL^p$-maximal regularity]
 \label{thm:characterisation_Lp-max}
 Let $X$ be a Banach space of class $\HTcal$, $p \in (1, \infty)$ and let $A$ be a sectorial operator on $X$ with spectral angle $\phi_A^\sigma = \inf \{\phi > 0: \, \sigma(A) \subseteq \Sigma_{\pi - \phi}\} < \frac{\pi}{2}$.
 In this case, the \emph{abstract Cauchy problem} with homogeneous initial data
  \begin{equation}
   \begin{cases}
   \frac{\dd}{\dd t} u(t) + A u(t)
    &= f(t)
    \quad
    \text{for } t \geq 0,
    \\
   u(0)
    &= 0
   \end{cases}
    \tag{$\mathrm{ACP}_0$}
    \label{eqn:ACP0}
  \end{equation}
 admits $\LL^p$-maximal regularity on the ray $\R_+$, i.e.\ \eqref{eqn:ACP0} has a unique solution in class $u: \R_+ \rightarrow X$ with $A u \in \LL_p(\R_+;X)$ if and only if $f \in \LL_p(\R_+;X)$, if and only if $A$ is $\Rcal$-sectorial with angle $\phi_A^{\Rcal} < \frac{\pi}{2}$.
 More precisely, the following statements are equivalent:
  \begin{enumerate}
   \item
    The abstract Cauchy problem \eqref{eqn:ACP0} has $\LL^p$-maximal regularity on $\R_+$.
   \item
    The set $\{ A (\ii \rho + A)^{-1}: \, \rho \in \R \setminus \{0\} \}$ is $\Rcal$-bounded.
   \item 
    The set $\{ A (\lambda + A)^{-1}: \, \lambda \in \Sigma_\theta \}$ is $\Rcal$-bounded for some $\theta > \frac{\pi}{2}$.
   \item
    The set $\{ \ee^{- A \lambda}: \, \lambda \in \Sigma_\vartheta \}$ is $\Rcal$-bounded for some $\vartheta > 0$.
   \item
    The sets $\{ \ee^{- At}: \, t > 0 \}$ and $\{ tA \ee^{-At}: \, t > 0 \}$ are $\Rcal$-bounded.
  \end{enumerate}
\end{theorem}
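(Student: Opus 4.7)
The plan is to follow Weis's original strategy, as presented in \cite[Theorem 4.4]{DeHiPr03}. The heart of the argument lies in the equivalence (1) $\Leftrightarrow$ (2), which rests on the operator-valued version of Mikhlin's multiplier theorem on Banach spaces of class $\HTcal$: $\Rcal$-boundedness of a symbol, together with $\Rcal$-boundedness of its derivatives, upgrades to boundedness of the associated Fourier multiplier on $\LL_p(\R;X)$. Once this is secured, the implications (2) $\Leftrightarrow$ (3) $\Leftrightarrow$ (4) $\Leftrightarrow$ (5) are purely analytic consequences of holomorphy, combined with Proposition \ref{prop:R-boundedness_sectors}, the Laplace representation of the resolvent in terms of the semigroup, and the contraction principle for $\Rcal$-bounds.

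For (1) $\Rightarrow$ (2), I would assume that \eqref{eqn:ACP0} has $\LL^p$-maximal regularity, extend $f \in \LL_p(\R_+;X)$ by zero to the whole line, and take the Fourier transform in time. Then $f \mapsto Au$ becomes the operator-valued Fourier multiplier with symbol $M(\rho) := A(\ii \rho + A)^{-1}$, and the necessity direction in the Mikhlin theory on $\HTcal$-spaces forces $\{M(\rho): \rho \neq 0\}$ to be $\Rcal$-bounded. Conversely, starting from (2), Proposition \ref{prop:R-boundedness_sectors} applied on a small sector around the imaginary axis (the resolvent is holomorphic there because of sectoriality with $\phi_A^\sigma < \pi/2$) yields $\Rcal$-boundedness of
 \[
  \rho M'(\rho)
   = -\ii A(\ii \rho + A)^{-1} + \ii \rho A (\ii \rho + A)^{-2}
 \]
on $\R \setminus \{0\}$. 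The Kalton--Weis theorem then shows $M$ is a bounded Fourier multiplier on $\LL_p(\R;X)$, whence $\LL^p$-maximal regularity on $\R_+$ follows by restriction.

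Next I would close the loop among (2), (3), (4) and (5). The direction (3) $\Rightarrow$ (2) is immediate since $\ii \R \setminus \{0\} \subseteq \Sigma_\theta$ whenever $\theta > \pi/2$. For (2) $\Rightarrow$ (3), Proposition \ref{prop:R-boundedness_sectors} and a Neumann-series expansion of $(\mu + A)^{-1}$ around points on $\ii \R$, analogous to the argument in the remark following the definition of the $\Rcal$-angle, extend the $\Rcal$-bound from $\ii \R$ to a sector of opening strictly larger than $\pi/2$. The equivalence (3) $\Leftrightarrow$ (4) is obtained via the Laplace representation
 \[
  (\lambda + A)^{-1}
   = \int_0^\infty \ee^{-\lambda t} \ee^{-tA} \, \dd t,
   \qquad \Re \lambda > 0,
 \]
and its inversion by the Dunford calculus, combined with the contraction principle for $\Rcal$-bounded families of integral operators. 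Finally, (4) $\Rightarrow$ (5) follows by restriction to $(0,\infty)$ together with Proposition \ref{prop:R-boundedness_sectors} applied to the holomorphic family $\lambda \mapsto \ee^{-\lambda A}$ via the identity $tA \ee^{-tA} = -t \frac{\dd}{\dd t} \ee^{-tA}$, while (5) $\Rightarrow$ (4) is a holomorphic extension argument using the same proposition.

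The main obstacle is the direction (2) $\Rightarrow$ (1): the passage from pointwise $\Rcal$-boundedness of the resolvent family to boundedness of the convolution-type maximal regularity operator on $\LL_p(\R_+;X)$ genuinely requires the $\HTcal$ (UMD) property of $X$ through the operator-valued Mikhlin theorem, and has no counterpart in general Banach spaces. All other implications are essentially structural, but this step is where the geometry of the base space enters, and its verification (in particular, controlling $\rho M'(\rho)$ by Proposition \ref{prop:R-boundedness_sectors}) is the crucial technical point.
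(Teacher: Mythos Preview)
The paper does not give its own proof of this theorem; it simply quotes the result and refers to \cite[Theorem 4.4]{DeHiPr03}. Your sketch follows essentially the standard Weis argument one finds there---operator-valued Mikhlin for (1)$\Leftrightarrow$(2), Neumann series and holomorphy/Laplace representation plus Proposition~\ref{prop:R-boundedness_sectors} for the remaining equivalences---so it is in line with the approach the paper defers to.
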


\begin{remark}
 It may be interesting to compare the conditions on $\LL_p$-maximal regularity with those of the (weaker) conditions on an operator $A$ to generate a bounded analytic semigroup:
 \begin{enumerate}
  \item
   First note that, for the case that $A$ is invertible, to have maximal regularity on the (unbounded) ray $\R_+$ one has to expect exponential stability of the $C_0$-semigroup $(\ee^{-tA})_{t \geq 0}$ generated by $- A$, since then from $A u \in \LL_p(\R_+;X)$ it follows $u \in \LL_p(\R_+;X)$, and for $f = 0$ and by the Datko lemma this implies exponential stability of the semigroup.
  \item
   In the context of bounded analytic semigroups, the following assertions are equivalent, see e.g.\ \cite[Theorem II.4.6]{EnNa00}:
    \begin{itemize}
     \item
      $-A$ generates a bounded (on some sector $\Sigma_\theta$, $\theta > 0$) analytic $C_0$-semigroup $(\ee^{-\lambda A})_{\lambda \in \Sigma_\theta}$.
     \item
      $-A$ generates a bounded (on $(0, \infty)$) $C_0$-semigroup $(\ee^{-tA})_{t \geq 0}$ such that $\ran(\ee^{-tA}) \subseteq \dom(A)$ for every $t > 0$ and
       \[
        M
         := \sup_{t > 0} \norm{t A \ee^{-tA}}
         < \infty.
       \]
     \item
      There is $\vartheta \in (0, \tfrac{\pi}{2})$ such that $\pm \ee^{\ii \vartheta} (-A)$ generate bounded $C_0$-semigroups.
     \item
      $-A$ generates a bounded $C_0$-semigroup and
       \[
        \sup_{\lambda \in \C_0^+} \norm{(\Im \lambda) (A + \lambda)^{-1}}
         < \infty,
       \]
      where $\C_\omega^+ := \{ z \in \C: \, \Re z > \omega \}$ for $\omega \in \R$.
    \end{itemize}
 \end{enumerate}
\end{remark}

For families of integral operators, $\Rcal$-bounds on their kernel functions may help to derive $\Rcal$-boundedness of the familiy, see \cite[Proposition 4.12]{DeHiPr03}.

\begin{proposition}
 Let $X$ and $Y$ be Banach spaces, $\Omega \subseteq \R^n$ be open and $p \in (1, \infty)$.
 Further, let $\Kcal \subseteq \B(\LL_p(\Omega;X); \LL_p(\Omega;Y))$ be a family of kernel operators, i.e.\ for each $K \in \Kcal$ there is a measurable kernel $k: \Omega \times \Omega \rightarrow \B(X;Y)$ such that
  \[
   (K f)(\vec x)
    = \int_\Omega k(\vec x,\vec x') f(\vec x') \, \dd \vec x'
    \quad
    \text{for all } f \in \LL_p(\Omega;X) \text{ and a.e.\ } \vec x \in \Omega.
  \]
 Further, assume that the family of kernels can be $\Rcal$-bounded pointwise
  \[
   \Rcal ( \{ k(\vec x,\vec x'): \, k \in \Kcal \} )
    \leq \kappa_0(\vec x,\vec x')
    \quad
    \text{for a.e.\ } \vec x,\vec x' \in \Omega
  \]
 by some scalar, measurable function $\kappa_0: \Omega \times \Omega \rightarrow \R_+$ which itself is the kernel of a scalar integral operator $K_0 \in \B(\LL_p(\Omega))$.
 Then the family $\Kcal \subseteq \B(\LL_p(\Omega;X); \LL_p(\Omega;Y))$ is $\Rcal$-bounded with $\Rcal$-bound $\Rcal(\Kcal) \leq \norm{K_0}_{\B(\LL_p(\Omega))}$.
\end{proposition}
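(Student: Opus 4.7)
My plan is to unpack the definition of $\Rcal$-boundedness and reduce it to an application of the pointwise kernel bound combined with the boundedness of the scalar integral operator $K_0$. Fix a finite subfamily $\{K_\nu\}_{\nu=1}^N \subseteq \Kcal$ with associated kernels $k_\nu$, vectors $\{f_\nu\}_{\nu=1}^N \subseteq \LL_p(\Omega;X)$, and independent Rademacher variables $\{\varepsilon_\nu\}_{\nu=1}^N$ on a probability space $(\Omega_P,\mathcal{M},\mu)$. The goal is to bound
\[
 \norm{\sum_{\nu=1}^N \varepsilon_\nu K_\nu f_\nu}_{\LL_p(\Omega_P;\LL_p(\Omega;Y))}
  = \Big( \int_\Omega \E \big\| \sum_{\nu=1}^N \varepsilon_\nu \int_\Omega k_\nu(\vec x, \vec x') f_\nu(\vec x') \, \dd \vec x' \big\|_Y^p \, \dd \vec x \Big)^{1/p}
\]
by a constant times the analogous norm of $\sum_\nu \varepsilon_\nu f_\nu$, where I freely used Fubini to interchange the $\LL_p(\Omega)$ and $\LL_p(\Omega_P)$ norms.

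The crucial step is, for fixed $\vec x \in \Omega$, to pull the $\vec x'$-integral out of the Bochner norm via Minkowski's integral inequality applied in the space $\LL_p(\Omega_P;Y)$, obtaining
\[
 \Big( \E \big\| \sum_\nu \varepsilon_\nu \int_\Omega k_\nu(\vec x, \vec x') f_\nu(\vec x') \, \dd \vec x' \big\|_Y^p \Big)^{1/p}
  \leq \int_\Omega \Big( \E \big\| \sum_\nu \varepsilon_\nu k_\nu(\vec x, \vec x') f_\nu(\vec x') \big\|_Y^p \Big)^{1/p} \dd \vec x'.
\]
For each fixed pair $(\vec x, \vec x')$, the hypothesis $\Rcal(\{k(\vec x, \vec x'): k \in \Kcal\}) \leq \kappa_0(\vec x, \vec x')$ applied to the finite subfamily $\{k_\nu(\vec x, \vec x')\}$ yields
\[
 \Big( \E \big\| \sum_\nu \varepsilon_\nu k_\nu(\vec x, \vec x') f_\nu(\vec x') \big\|_Y^p \Big)^{1/p}
  \leq \kappa_0(\vec x, \vec x') \cdot \Big( \E \big\| \sum_\nu \varepsilon_\nu f_\nu(\vec x') \big\|_X^p \Big)^{1/p}.
\]

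Introducing the scalar function $g(\vec x') := \big( \E \big\| \sum_\nu \varepsilon_\nu f_\nu(\vec x') \big\|_X^p \big)^{1/p}$, the combined inequality reads
\[
 \Big( \E \big\| \sum_\nu \varepsilon_\nu (K_\nu f_\nu)(\vec x) \big\|_Y^p \Big)^{1/p}
  \leq \int_\Omega \kappa_0(\vec x, \vec x') g(\vec x') \, \dd \vec x' = (K_0 g)(\vec x).
\]
Taking $\LL_p(\Omega)$-norms in $\vec x$, invoking the boundedness of $K_0$, and then using Fubini in the form $\norm{g}_{\LL_p(\Omega)}^p = \E \norm{\sum_\nu \varepsilon_\nu f_\nu}_{\LL_p(\Omega;X)}^p$, delivers the desired estimate with constant $\norm{K_0}_{\B(\LL_p(\Omega))}$.

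The only genuine subtlety is the application of Minkowski's integral inequality in $\LL_p(\Omega_P;Y)$ (as opposed to a scalar $\LL_p$ space), which is standard but requires the integrand to be Bochner measurable jointly in $(\omega, \vec x')$. This measurability follows from the measurability of the kernels together with the separability that is effectively imposed by considering only the finite span of the $f_\nu$'s; thus, no additional hypotheses are needed. All other steps are essentially routine manipulations of Fubini and the contraction principle for randomized sums.
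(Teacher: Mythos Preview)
Your proof is correct and follows the standard argument. Note, however, that the paper does not give its own proof of this proposition: it simply records the statement and refers to \cite[Proposition~4.12]{DeHiPr03} for a proof, so there is nothing to compare against beyond that reference. Your argument --- Fubini, Minkowski's integral inequality in $\LL_p(\Omega_P;Y)$, the pointwise $\Rcal$-bound, and then boundedness of the scalar majorant $K_0$ --- is exactly the line of reasoning used in \cite{DeHiPr03}.
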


\begin{remark}
 For comparison, note that a similar result is true if all $\Rcal$-bounds are replaced by (uniform) bounds.
\end{remark}

\begin{definition}[The classes $\Hcal^\infty(X)$, $\Rcal \Hcal^\infty(X)$ and $\BIPcal(X)$]
 A sectorial operator $A \in \Scal(X)$ is said to have \emph{bounded imaginary powers} (write $A \in \BIPcal(X)$), if $A^{\ii s}$ (defined via the Dunford calculus for sectorial operators) is in $\B(X)$ for every $s \in \R$ and there is a constant $C > 0$ such that $\norm{A^{\ii s}}_{\B(X)} \leq C$ for all $s \in [-1,1]$, cf.\ \cite[Definition 2.4]{DeHiPr03}.
 \newline
 A sectorial operator $A \in \Scal(X)$ is said to have \emph{bounded holomorphic functional calculus} (write $A \in \Hcal^\infty(X)$), if there are $\phi > 0$ and $C_\phi >0$ such that for all $h \in \Hcal_0(\Sigma_\phi) := \{ h: \Sigma_\phi \rightarrow \C: \, h \text{ holomorphic and bounded} \}$ which tend polynomially to zero as $\abs{\vec z} \rightarrow \infty$, i.e.\ there are $C, s > 0$ such that
  \[
   \abs{h(z)}
    \leq C (1 + \abs{z})^{-s},
    \quad
    z \in \Sigma_\phi  
  \]
the operator $h(A) \in \B(X)$, which in this case can be defined via the Dunford calculus, satisfies the norm estimate
  \[
   \norm{h(A)}_{\B(X)}
    \leq C_\phi \norm{h}_\infty.
  \]
 We set $\phi_A^\infty$ to be the infimum of those $\phi$ for which such a constant $C_\phi > 0$ exists, cf.\ \cite[Definition 2.9]{DeHiPr03}.
 \newline
 An operator $A \in \Hcal^\infty(X)$ is said to have an $\Rcal$-bounded functional calculus (write $A \in \Rcal \Hcal^\infty(X)$), if 
  \[
   \Rcal ( \{ h(A): \, h \in \Hcal_0(\Sigma_\phi), \, \norm{h}_\infty \leq 1 \} )
    < \infty
  \]
 for some $\phi > 0$. The infimum of those $\phi$ which are admissible for $\Rcal$-boundedness of the set on the left-hand side is denoted by $\phi_A^{\Rcal,\infty}$, cf. \cite[Definition 4.9]{DeHiPr03}.
\end{definition}

\begin{remark}[Hilbert space and Banach space case]
\label{rem:Inclusions_Operator-Spaces}
 If $X$ is a Hilbert space, then
  \[
   \Rcal \Hcal^\infty(X)
    = \Hcal^\infty(X)
    = \BIPcal(X)
    \subseteq \Rcal \Scal(X)
    = \Scal(X),
  \]
 where the inclusion of the (set of) operators of bounded imaginary powers in the (set of) $\Rcal$-sectorial operators is strict, in general.
 For general Banach spaces the equalities may become strict inclusions, too.
 In any case, for every $A \in \Rcal \Hcal^\infty(X)$
  \[
   \phi_A^{\Rcal,\infty}
    \geq \phi_A^\infty
    \geq \theta_A
    \geq \phi_A^{\Rcal}
    \geq \phi_A^\sigma,
  \]
 but in general these inequalities may be strict.
\end{remark}

A nice property of operators with $\Rcal$-bounded $\Hcal^\infty$-calculus is the following, see \cite[Proposition 4.10]{DeHiPr03}.

\begin{proposition}
\label{prop:holomorphic_image_of_bounded_sets}
 Let $X$ be a Banach space, $A \in \Rcal \Hcal^\infty(X)$ and let $\{h_\lambda\}_{\lambda \in \Lambda}$ be a family of bounded, holomorphic, scalar functions which is uniformly bounded in $\Hcal^\infty(\Sigma_\theta)$ for some $\theta > \phi_A^{\Rcal,\infty}$ and $\Lambda$ be an arbitrary index set.
 Then the family of operators $\{ h_\lambda(A) \}_{\lambda \in \Lambda}$ given by the $\Hcal^\infty$-calculus for $A$ is $\Rcal$-bounded.
\end{proposition}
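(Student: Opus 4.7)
My plan is to reduce the $\Rcal$-boundedness claim for the family $\{h_\lambda(A) : \, \lambda \in \Lambda\}$—whose elements are built from functions $h_\lambda$ that are merely bounded and holomorphic on $\Sigma_\theta$—to the $\Rcal$-boundedness supplied by the hypothesis $A \in \Rcal\Hcal^\infty(X)$, which a priori only controls functions lying in the smaller class $\Hcal_0(\Sigma_{\theta'})$ of holomorphic functions decaying polynomially at both $0$ and $\infty$. The one genuine obstacle is this mismatch: the Dunford integral defining $h_\lambda(A)$ does not converge absolutely for a generic $h_\lambda \in \Hcal^\infty$, so the defining $\Rcal$-bound of $\Rcal\Hcal^\infty(X)$ cannot be invoked directly. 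I plan to bridge this gap in the standard way, by regularizing each $h_\lambda$ by an $\Hcal_0$-multiplier and then passing to the limit.

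Pick $\theta' \in (\phi_A^{\Rcal,\infty}, \theta)$ and a scalar approximating sequence $\rho_n \in \Hcal_0(\Sigma_{\theta'})$, for instance $\rho_n(z) := \frac{n^2 z}{(1+nz)(n+z)}$. The first step is to verify, by elementary sector estimates, that $\sup_n \norm{\rho_n}_{\Hcal^\infty(\Sigma_{\theta'})} =: M' < \infty$ and that $\rho_n \to 1$ pointwise on $\Sigma_{\theta'}$, and then—crucially—to establish the strong operator convergence $\rho_n(A) \to \id$ in $X$. The latter is the usual convergence lemma of the $\Hcal^\infty$-calculus: one first treats the dense subspace $\dom(A) \cap \ran(A)$ by estimating $(1-\rho_n)(A)x$ directly from the Dunford integral, and then extends to all of $X$ using the $n$-uniform bound $\sup_n \norm{\rho_n(A)}_{\B(X)} < \infty$ provided by $A \in \Hcal^\infty(X)$. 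This convergence ingredient is the main nontrivial prerequisite.

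With the regularizer in hand, each product $h_\lambda \rho_n$ lies in $\Hcal_0(\Sigma_{\theta'})$ with $\norm{h_\lambda \rho_n}_{\Hcal^\infty(\Sigma_{\theta'})} \leq MM'$ uniformly in $\lambda \in \Lambda$ and $n \in \N$, where $M := \sup_\lambda \norm{h_\lambda}_{\Hcal^\infty(\Sigma_\theta)}$. The hypothesis $A \in \Rcal\Hcal^\infty(X)$, together with the scaling invariance of $\Rcal$-bounds, then yields
 \[
  \Rcal \big( \{ (h_\lambda \rho_n)(A): \, \lambda \in \Lambda, \, n \in \N \} \big)
   \leq M M' \cdot \Rcal \big( \{ h(A): \, h \in \Hcal_0(\Sigma_{\theta'}), \, \norm{h}_\infty \leq 1 \} \big)
   =: \kappa
   < \infty,
 \]
and multiplicativity of the calculus identifies $(h_\lambda \rho_n)(A) = h_\lambda(A)\rho_n(A)$. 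The last step is to pass this uniform bound to $n \to \infty$ inside the defining inequality \eqref{eqn:R-bound}: for any finite selection $\{\lambda_\nu, x_\nu\}_{\nu=1}^N$ and Rademacher variables $\{\varepsilon_\nu\}_{\nu=1}^N$, the bound $\kappa$ applies to the operators $T_{\nu,n} := h_{\lambda_\nu}(A)\rho_n(A)$, and as $n \to \infty$ each of the finitely many $T_{\nu,n} x_\nu$ converges to $h_{\lambda_\nu}(A) x_\nu$ in $X$ by the strong convergence $\rho_n(A) \to \id$ and the continuity of $h_{\lambda_\nu}(A)$; dominated convergence on the finite-dimensional Rademacher probability space then transfers the estimate to $\{h_\lambda(A) : \, \lambda \in \Lambda\}$ and completes the argument. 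Everything after constructing $\rho_n$ is routine scaling of $\Rcal$-bounds and a finite-sum passage to the limit, so the heart of the proof is really the strong-convergence property of the regularizer.
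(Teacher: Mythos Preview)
The paper does not actually supply a proof of this proposition; it merely cites \cite[Proposition~4.10]{DeHiPr03}. Your argument is correct and is essentially the standard proof one finds in that reference (and, e.g., in Kunstmann--Weis): regularize by an $\Hcal_0$-multiplier such as $\rho_n(z)=\tfrac{n^2 z}{(1+nz)(n+z)}$, invoke the defining $\Rcal$-bound of $\Rcal\Hcal^\infty(X)$ on the uniformly bounded family $\{h_\lambda\rho_n\}$, use multiplicativity $(h_\lambda\rho_n)(A)=h_\lambda(A)\rho_n(A)$, and pass to the limit via the strong convergence $\rho_n(A)\to\id$. The only point worth flagging is that the convergence lemma $\rho_n(A)x\to x$ for all $x\in X$ relies on $\overline{\dom(A)\cap\ran(A)}=X$, which is part of the standing sectoriality assumptions in \cite{DeHiPr03}; you correctly identify this as the substantive ingredient.
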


 \section{Elliptic differential operators on $\R^n$}
 \label{Sec:Elliptic_differential_operators_full_space}
 
Our primary goal is to consider abstract initial-boundary value problems (IBVP) of parabolic type. However, it is well-known that the Laplace transform of parabolic equation with respect to the time variable leads to elliptic boundary value problems (BVP), which in many cases are easier to handle than directly starting with the parabolic equations themselves.
Therefore, we begin our investigation with elliptic problems arising from parabolic inital-boundary value problems by formally taking the Laplace transform of these.

 \subsection{The homogeneous, constant coefficient case:}

Let $k \in \N$, $E$ be an arbitrary Banach space and $\Acal(\vec \xi)$ be a $\B(E)$-valued polynomial in $\R^n$ which is homogeneous of order $k$, i.e.\
 \[
  \Acal(\vec \xi)
   = \sum_{\abs{\vec \alpha} = k} a_{\vec \alpha} \vec \xi^{\vec \alpha}
 \]
for some operator-valued constant coefficients $a_{\vec \alpha} \in \B(E)$.
To this we associate the differential operator $\Acal(\DD)$ by formally plugging in $\DD = - \ii \nabla = (\DD_i)_{i=1,\ldots,n} = (- \ii \partial_{x_i})_{i=1,\ldots,n}$ as $\vec \xi$:
 \[
  \Acal(\DD)
   = \sum_{\abs{\vec \alpha} = k} a_{\vec \alpha} \DD^{\vec \alpha}
   = \sum_{\abs{\vec \alpha} = k} a_{\vec \alpha} (- \ii)^k \prod_{i=1}^n \frac{\partial^{\alpha_i}}{\partial x_i^{\alpha_i}}.
 \]
If, for each given function $f$ which is sufficiently regular and integrable and suitable $\lambda \in \C$, the vector-valued partial differential equation
 \begin{equation}
  \lambda u(\vec x) + \mathcal{A}(\DD) u(\vec x)
   = f(\vec x)
   \quad
   \text{for } \vec x \in \R^n
   \label{eqn:Elliptic_PDE}
 \end{equation}
has a unique solution which can be represented by an integral formula
 \[
  u(\vec x)
   = \int_{\R^n} \gamma_{\lambda}(\vec x - \vec z) f(\vec z) \, \dd \vec z
 \]
for some kernel function $\gamma_\lambda: \R^n \rightarrow \C$, then, by homogeneity of degree $k$ for the polynomial $\Acal$, the kernel function $\gamma_\lambda$ must scale as
 \[
  \gamma_\lambda(\vec x)
   = \abs{\lambda}^{\tfrac{n}{k} - 1} \tilde \gamma_{\theta}(\abs{\lambda}^{1/k} \vec x)
   \quad
   \text{for } \vec x \in \R^n, \, \theta = \arg(\lambda), \, \lambda \neq 0
 \]
and $\tilde \gamma_\theta = \gamma_{\ee^{\ii \theta}}$ is the fundamental solution of the vector-valued partial differential equation \eqref{eqn:Elliptic_PDE}, i.e.\
 \[
  \ee^{\ii \theta} \tilde \gamma_\theta + \Acal(\DD) \tilde \gamma_\theta
   = \delta_0
 \]
in the sense of distributions.
To see this, assume that $\lambda = \rho \ee^{\ii \theta}$ for some radius $\rho > 0$ and some angle $\theta \in (- \pi, \pi)$ and let $u$ be the solution to the elliptic PDE
 \[
  \ee^{\ii \theta} u + \Acal(\DD) u
   = f
 \]
for some given function $f: \R^n \rightarrow E$ and which has the integral representation
 \[
  u(\vec x)
   = \int_{\R^n} \tilde \gamma_\theta(\vec x - \vec x') f(\vec x') \, \dd \vec x',
   \quad
   \vec x \in \R^n.
 \]
Then, since $\Acal(\vec \xi)$ is homogeneous of degree $k$ in $\vec \xi$, the function $u_\lambda(\vec x) := u(\rho^{1/k} \vec x)$ solves the elliptic PDE
 \begin{align*}
  \lambda u_\lambda(\vec x) + \Acal(\DD) u_\lambda(\vec x)
   &= \lambda u(\rho^{1/k} \vec x) + \rho (\Acal(\DD) u)(\rho^{1/k} \vec x)
   \\
   &= \rho \big[ \ee^{\ii \theta} u(\rho^{1/k} \vec x) + (\Acal(\DD) u)(\rho^{1/k} \vec x) \big]
   = \rho f(\rho^{1/k} \vec x)
 \end{align*}
and has the integral representation
 \begin{align*}
  u_\lambda(\vec x)
   &= u(\rho^{1/k} \vec x)
   = \int_{\R^n} \tilde \gamma_\theta(\rho^{1/k} \vec x - \vec z) f(\vec z) \, \dd \vec z
   \\
   &= \int_{\R^n} \rho^{\frac{n}{k}} \tilde \gamma_\theta(\rho^{1/k} (\vec x - \vec y)) f(\rho^{1/k} \vec y) \, \dd \vec y
   \\
   &= \int_{\R^n} \rho^{\frac{n}{k}-1} \tilde \gamma_\theta(\rho^{\frac{1}{k}}(\vec x - \vec y)) \rho f(\rho^{\frac{1}{k}} \vec y) \, \dd \vec y.
 \end{align*}
Thus, we may conclude that $\gamma_\lambda(\vec x) = \rho^{\frac{n}{k}-1} \tilde \gamma_\theta(\rho^{1/k} \vec x)$ has the scaling property which we claimed.

\begin{definition}[Parameter-ellipticity]
 A homogeneous, $\B(E)$-valued polynomial $\Acal(\vec \xi)$ is called parameter-elliptic, if there is an angle $\phi \in [0, \pi)$ such that the spectrum $\sigma(\Acal(\vec \xi))$ of $\Acal(\vec \xi) \in \B(E)$ is contained in the sector $\Sigma_\phi$ for every $\vec \xi \in \S^{n-1} \subseteq \R^n$.
 The infimum $\phi_\Acal^\mathrm{ellipt}$ of these $\phi > 0$ is called \emph{angle of ellipticity}.
\end{definition}

\begin{remark}
 Let us make two short remarks on why later on we will choose $k = 2m$ to be even, and how parameter-ellipticity and the Lopatinskii--Shapiro condition introduced below in Assumption \ref{assmpt:LSC} will carry over from real vectors $\vec \xi \in \R^n$ to complex vectors $\vec \xi + \ii \vec \eta \in \C^n$ with sufficiently small imaginary parts of the components.
 \begin{enumerate}
  \item
   To have an angle of ellipticity $\phi_\Acal^\mathrm{ellipt} < \frac{\pi}{2}$, the degree $k$ of the polynomial $\Acal(\vec \xi)$ must necessarily be even, see, e.g., the remarks after \cite[Definition 5.1]{DeHiPr03}.
   Therefore, soon we will restrict ourselves to the most relevant case that $k = 2m$ for some $m \in \N$.
  \item
   Although parameter-ellipticity is first formulated for real unit vectors $\vec \xi \in \S^{n-1}$, by homogeneity of $\Acal$ of order $k$, the spectrum $\sigma(\Acal(\vec \xi)) \subseteq \Sigma_\phi$ is contained in some sector $\Sigma_\phi$, uniformly for all $\vec \xi \in \R^n \setminus \{\vec 0\}$.
   Moreover, and this will become relevant when considering the Lopatinskii--Shapiro condition for elliptic boundary value problems, for any $\vec \xi \in \R^n$ and $\vec \eta \in \R^n$ sufficiently small compared to $\vec \xi$, say, $\abs{\vec \eta} \leq \varepsilon \abs{\vec \xi}$ for some $\varepsilon > 0$, we may deduce that
    \[
     \sigma(\Acal(\vec \xi + \ii \vec \eta))
      = \sigma \big( \sum_{\abs{\vec \alpha} = k} a_{\vec \alpha} \vec \xi^{\vec \alpha} + \sum_{\abs{\vec \alpha} = k} a_{\vec \alpha} \sum_{\vec 0 \lneq \vec \beta \leq \vec \alpha} \vec \xi^{\vec \alpha - \vec \beta} (\ii \vec \eta)^{\vec \beta} \big)
      \subseteq \Sigma_{\phi'}
    \]
   for some, in general smaller, angle $\phi' \in (0, \phi)$ (which depends on the chosen $\varepsilon > 0$), and where we write $\vec \beta \leq \vec \alpha$ if component-wise $\beta_i \leq \alpha_i$ for $i = 1, \ldots, n$.
 \end{enumerate}
\end{remark}

 For some kernel estimates which we will derive right below, we employ the functions
  \[
   p_{k,\nu}^n: \R_+ \rightarrow \R,
    \quad
   p_{k,\nu}^n(r)
    := \int_0^\infty \frac{s^{n-2}}{(1+s)^{k-1-\nu}} \ee^{-(s+1)r} \, \dd s,
    \quad
    k, n, \nu \in \N, \, r > 0.
  \]
 These are \emph{completely monotone}, i.e.\ $(-1)^p \frac{\dd^p}{\dd r^p} p_{k,\nu}^l \geq 0$ for all $p \in \N_0$, hence, by the Bernstein--Widder theorem, see e.g.\ \cite[Theorem 17]{Wid34}, is the Laplace transform of a non-negative measure, i.e.\ $p_{k,\nu}^l(x) = \int_0^\infty \ee^{-sx} \, \dd \mu_{j,\nu}^l(s)$, and
  \[
   \int_0^\infty r^{n+\rho-1} p_{k,n}^k(r) \, \dd r
    < \infty
    \quad \text{if and only if}
    \quad
    \rho > \max \{-n, \nu-k\}.
  \]
 Moreover, the following integral relation between the functions $p_{k,\nu}^n$ is valid.

  \begin{lemma}
  \label{lem:Lem-2.6}
   For all $c,y > 0$ and $k, n, \nu \in \N_0$, the following identity is valid:
    \[
     \int_0^\infty p_{k,\nu}^n (c (y + r)) r^{n-1} \, \dd r
      = \frac{(n-1)!}{c^n} p_{k+n,\nu}^n(cy).
    \]
  \end{lemma}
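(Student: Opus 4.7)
The plan is a direct computation by Fubini's theorem: substitute the definition of $p_{k,\nu}^n$ into the left-hand side, swap the order of integration (justified by non-negativity of the integrand, since $c, y, r > 0$ and all factors in the integrand are positive), and evaluate the resulting inner integral in $r$ as a standard gamma integral.

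More concretely, I would first write
\[
 \int_0^\infty p_{k,\nu}^n(c(y+r)) r^{n-1} \, \dd r
  = \int_0^\infty \int_0^\infty \frac{s^{n-2}}{(1+s)^{k-1-\nu}} \ee^{-(s+1)c(y+r)} \, \dd s \, r^{n-1} \, \dd r,
\]
apply Tonelli's theorem to interchange the two integrals, and factor $\ee^{-(s+1)cy}$ out of the $r$-integral. The inner integral becomes
\[
 \int_0^\infty r^{n-1} \ee^{-(s+1)cr} \, \dd r = \frac{(n-1)!}{\bigl(c(s+1)\bigr)^n} = \frac{(n-1)!}{c^n (1+s)^n}
\]
by the standard gamma function identity $\int_0^\infty r^{n-1} \ee^{-\alpha r} \, \dd r = (n-1)!/\alpha^n$ for $\alpha > 0$.

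Substituting back, I would obtain
\[
 \int_0^\infty p_{k,\nu}^n(c(y+r)) r^{n-1} \, \dd r
  = \frac{(n-1)!}{c^n} \int_0^\infty \frac{s^{n-2}}{(1+s)^{k-1-\nu} (1+s)^n} \ee^{-(s+1)cy} \, \dd s
  = \frac{(n-1)!}{c^n} \int_0^\infty \frac{s^{n-2}}{(1+s)^{(k+n)-1-\nu}} \ee^{-(s+1)cy} \, \dd s,
\]
which is exactly $\frac{(n-1)!}{c^n} p_{k+n,\nu}^n(cy)$ by the definition of $p_{k+n,\nu}^n$.

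There is essentially no obstacle here: both integrands are non-negative so Tonelli applies unconditionally, and the parameter shift $k \mapsto k+n$ appears automatically from the extra factor $(1+s)^{-n}$ produced by the gamma integral. The only thing to double-check is the exponent bookkeeping (namely $(k-1-\nu) + n = (k+n) - 1 - \nu$), but this is immediate.
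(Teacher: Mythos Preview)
Your proof is correct and is precisely the computation the paper has in mind: the paper merely states that the identity ``can be validated straightforwardly via integration by parts'' with a reference to \cite[Corollary 5.3]{DeHiPr03}, and your Fubini step followed by the gamma integral $\int_0^\infty r^{n-1}\ee^{-\alpha r}\,\dd r=(n-1)!/\alpha^n$ (itself obtained by $n-1$ integrations by parts) is exactly that verification made explicit.
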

  This identity can be validated straightforwardly via integration by parts, see \cite[Corollary 5.3]{DeHiPr03}.

\begin{theorem}[Kernel estimate for parameter-elliptic operators]
 Let $k, n \in \N$ be natural numbers, $E$ be a Banach space and operator-valued coefficients $a_{\vec \alpha} \in \B(E)$ be given such that
  \[
   \Acal(\vec \xi)
    = \sum_{\abs{\vec \alpha} = k} a_{\vec \alpha} \vec \xi^{\vec \alpha}
    \quad
    \text{for } \vec \xi \in \R^n
  \]
 is parameter-elliptic with angle of ellipticity $\phi_\Acal^\mathrm{ellipt} < \pi$.
 Then, for each $\phi > \phi_\Acal^\mathrm{ellipt}$ and $\nu \in \N_0$ there exist constants $c_\phi, C_{\phi,\nu} > 0$ such that the fundamental solution $\gamma_\lambda$ of
  \[
   \lambda u + \Acal(\DD) u
    = \delta_0
  \]
 satisfies the kernel estimates
  \begin{align*}
   \abs{\DD^{\vec \beta} \gamma_\lambda(\vec x)}
    \leq C_\phi \abs{\lambda}^{(\nu+1)\tfrac{n}{k} - 1} p_{k,\nu}^n(c_\phi \abs{\lambda}^{1/k} \abs{\vec x})
    \quad
    \text{for all } \vec x \in \R^n \setminus \{0\}, \, \lambda \in \Sigma_{\pi-\phi} \text{ and } \abs{\vec \beta} = \nu.
  \end{align*}
\end{theorem}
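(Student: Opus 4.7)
The plan is to derive the pointwise bound from the Fourier inversion of $\gamma_\lambda$ combined with a contour deformation in the radial Fourier variable. The scaling identity $\gamma_\lambda(\vec x) = \abs{\lambda}^{n/k-1}\tilde{\gamma}_{\theta}(\abs{\lambda}^{1/k}\vec x)$ derived before the theorem, together with the chain rule, reduces everything to the case $\abs{\lambda}=1$: it suffices to prove an inequality of the form $\abs{\DD^{\vec \beta}\tilde{\gamma}_{\theta}(\vec y)}\leq C_\phi\,p_{k,\nu}^n(c_\phi\abs{\vec y})$ uniformly in $\theta\in[-(\pi-\phi),\pi-\phi]$ and $\vec y\in\R^n\setminus\{0\}$. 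Parameter-ellipticity combined with the homogeneity of $\Acal$ of degree $k$ immediately yields the operator-norm resolvent bound
\[
\norm{(\ee^{\ii\theta}+\Acal(\vec \xi))^{-1}}_{\B(E)}\leq \frac{C_\phi}{1+\abs{\vec \xi}^k}\qquad\text{for }\abs{\theta}\leq\pi-\phi,\ \vec \xi\in\R^n,
\]
so that, at least formally, $\DD^{\vec \beta}\tilde{\gamma}_{\theta}$ is the inverse Fourier transform of the operator-valued symbol $\vec \xi^{\vec \beta}(\ee^{\ii\theta}+\Acal(\vec \xi))^{-1}$. By the second remark preceding the theorem this symbol extends holomorphically in complex directions and still satisfies the same $O((1+\abs{\vec \xi}^k)^{-1})$ bound on a complex cone $\abs{\vec \eta}\leq \varepsilon\abs{\vec \xi}$, with $\varepsilon>0$ depending only on $\phi$.

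By rotational invariance of the Fourier transform I may assume $\vec y=\abs{\vec y}e_n$; after passing to polar coordinates $\vec \xi=r\omega$ the inversion integral splits into a bounded angular integral over $\omega\in\SS^{n-1}$ times a radial integral of the form $\int_0^\infty r^{n-1+\nu}(\ee^{\ii\theta}+r^k\Acal(\omega))^{-1}\ee^{\ii r\omega\cdot\vec y}\,\dd r$. The core of the argument is then to deform this radial contour into the complex plane: shifting its starting point from $r=0$ to $r=1$ and rotating the tail by a small angle $\sigma>0$ chosen according to the sign of $\omega\cdot\vec y$, one invokes Cauchy's theorem. This is justified by the extended parameter-ellipticity, which provides holomorphy on the enclosed complex cone, and by the resolvent decay, which kills the arc at infinity for $\nu<k-1$; the remaining range $\nu\geq k-1$ is handled by preliminary integrations by parts in the Fourier variable that lower the effective degree of the multiplier. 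Under the new parametrization $r=(1+s)\ee^{\pm\ii\sigma}$, $s\geq 0$, the oscillation $\ee^{\ii r\omega\cdot\vec y}$ becomes an exponential $\ee^{-c_\phi(1+s)\abs{\vec y}}$, the resolvent and the multiplier $r^\nu$ together supply the algebraic factor $(1+s)^{-(k-1-\nu)}$, and careful bookkeeping of the $\SS^{n-1}$-volume element in the directions transverse to $\omega_n$ contributes the remaining $s^{n-2}$, assembling precisely the integrand of $p_{k,\nu}^n(c_\phi\abs{\vec y})$.

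The main obstacle will be the contour deformation itself: one must simultaneously control the arc at infinity (not automatic for large $\nu$ and available only after the preliminary integration by parts) and verify that the deformed contour remains inside the holomorphy region of the symbol, which is exactly the content of the extended parameter-ellipticity on complex cones and which, in turn, fixes the admissible rotation angle $\sigma$ and hence the constants $c_\phi,C_\phi$. A secondary delicacy is to recover the precise factor $s^{n-2}$ rather than a crude $(1+s)^{n-2}$; this requires the geometric argument on $\SS^{n-1}$ mentioned above, and is the structural reason why the completely monotone functions $p_{k,\nu}^n$ introduced before Lemma \ref{lem:Lem-2.6} are the natural scale for these kernel estimates.
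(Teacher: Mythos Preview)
Your scaling reduction to $|\lambda|=1$ matches the paper exactly; the paper then simply cites \cite[Corollary~5.3]{DeHiPr03} for the estimate on $\tilde\gamma_\theta$ and stops. Your proposal goes further and sketches that underlying argument, and this is where a genuine gap appears. You pass to full $n$-dimensional polar coordinates $\vec\xi=r\omega$ and rotate the \emph{radial} contour, $r=(1+s)\ee^{\pm\ii\sigma}$ with the sign chosen by $\operatorname{sign}(\omega\cdot\vec y)$. On that ray the phase contributes $\exp\bigl(-(1+s)\sin\sigma\,|\omega_n|\,|\vec y|\bigr)$, not $\exp\bigl(-c_\phi(1+s)|\vec y|\bigr)$: the decay rate is proportional to $|\omega_n|$ and vanishes on the equator $\omega_n=0$. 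The angular integral over $\SS^{n-1}$ then yields only polynomial decay in $|\vec y|$ (the equatorial band contributes a term of order $1/((1+s)|\vec y|)$), nowhere near the exponential behaviour encoded in $p_{k,\nu}^n$. No geometric bookkeeping on $\SS^{n-1}$ repairs this, and the same coordinate choice is why the factor $s^{n-2}$ refuses to appear: $n$-dimensional polar volume gives $r^{n-1}$, not $r^{n-2}$.

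The cure --- which is the argument in \cite{DeHiPr03} and which the present paper reuses verbatim later in the proof of Proposition~\ref{prop:kernel_estimates_half-space} --- is to deform in the single Cartesian variable aligned with $\vec y$ rather than in the radius. After rotating so that $\vec y=|\vec y|\vec e_1$, write $\vec\xi=(a,\vec\xi'')\in\R\times\R^{n-1}$, pass to polar coordinates $\vec\xi''=r\vec\varphi$ with $\vec\varphi\in\SS^{n-2}$, and deform only the $a$-integral to a path such as $\gamma_\varepsilon(s)=s+\ii\varepsilon(1+r+|s|)$. The phase $\ee^{\ii a|\vec y|}$ now gives a uniform $\ee^{-\varepsilon|\vec y|(1+r+|s|)}$, the $(n{-}1)$-dimensional polar volume supplies exactly $r^{n-2}$, and one reads off the integrand of $p_{k,\nu}^n$ directly. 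Your observations about preliminary integration by parts for $\nu\geq k-1$ and about holomorphy on complex cones remain correct and are still needed in this corrected setup.
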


\begin{proof}
 By \cite[Corollary 5.3]{DeHiPr03},
  \[
   \abs{\DD^{\vec \beta} \tilde \gamma_\theta(\vec x)}
    \leq C_{\phi,\nu} p_{k,\nu}^n(c_\phi \abs{\vec x}),
  \]
 so from $\gamma_\lambda(\vec x) = \abs{\lambda}^{\frac{n}{k}-1} \tilde \gamma_\theta(\abs{\lambda}^{1/k} \vec x)$ we find that
  \begin{align*}
   \abs{\DD^{\vec \beta} \gamma_\lambda(\vec x)}
    &= \abs{\lambda}^{(\nu+1)\frac{n}{k}-1} \abs{(\DD^{\vec \beta} \tilde \gamma_\theta)(\abs{\lambda}^{1/k} \vec x)}
    \\
    &\leq \abs{\lambda}^{(\nu+1)\frac{n}{k}-1}  C_{\phi,\nu} p_{k,\nu}^n(c_\phi \abs{\lambda}^{1/k} \abs{\vec x}).
  \end{align*}
\end{proof}

The most important results on the full space problem with constant coefficients can be summarised as follows, cf.\ \cite[Theorems 5.4 and 5.5 and Corollary 5.6]{DeHiPr03}.

\begin{theorem}[Full-space problem]
\label{thm:full-space_problem}
  Let $n, m \in \N$  and $p \in (1, \infty)$.
  Assume that the interior symbol
   \[
    \Acal(\vec  \xi)
     = \sum_{\abs{\vec \alpha} = 2m} a_{\vec \alpha} \vec \xi^{\vec \alpha},
     \quad
     \vec \xi \in \R^{n+1}
   \]
  is parameter-elliptic with angle of ellipticity $\phi_\Acal^\mathrm{ellipt} < \pi$.
  \begin{enumerate}
   \item
    The $\LL_p(\R^{n+1})$-realisation $A_{\R^{n+1}}$ of $\mathcal{A}$, defined as the closure $A_{\R^ {n+1}} = \overline{A_0}$ of the minimal realisation of the interior symbol on $\LL_p(\R^{n+1})$
   \begin{align*}
    A_0: \quad
     \dom(A_0)
      &= \WW_p^{2m}(\R^n;E)
      \subseteq \LL_p(\R^n;E)
      \rightarrow \LL_p(\R^n;E),
      \\
     [A_0 u](\vec x)
      &= \Acal(\DD)u(\vec x),
      \quad
      \text{a.e.\ } \vec x \in \R^{n+1}
   \end{align*}
  is sectorial with \emph{spectral angle} $\phi_{A_{\R^{n+1}}}^\sigma \leq \phi_\Acal^\mathrm{ellipt}$.
   For its domain $\dom(A_{\R^{n+1}})$ the inclusions
   \[
    \WW_p^{2m}(\R^n;E)
     \subseteq \dom(A_{\R^{n+1}})
     \subseteq \WW_p^{2m-1}(\R^n;E)
   \]
  are valid.
  \item
  If $E$ is a Banach space of class $\HTcal$, then $A_{\R^{n+1}} = A_0$, i.e.\ $\dom(A_{\R^{n+1}}) = \WW_p^{2m}(\R^{n+1})$, and $A_{\R^{n+1}} \in \Hcal^\infty(\LL_p(\R^{n+1};E))$ with $\Hcal^\infty$-angle $\phi_{A_{\R^{n+1}}}^\infty \leq \phi_\Acal^ {\mathrm{ellipt}}$.
 In particular, $A_{\R^{n+1}} \in \Rcal \Scal(\LL_p(\R^n;E))$ with angle $\phi_{A_{\R^{n+1}}}^{\Rcal} \leq \phi_\Acal^\mathrm{ellipt}$ and $- A_{\R^{n+1}}$ generates a bounded analytic semigroup $(\ee^{- t A_{\R^{n+1}}})_{t \geq 0}$ on $\LL_p(\R^{n+1};E)$.
 Moreover, for every angle $\phi > \phi_\Acal^\mathrm{ellipt}$,
  \[
   \Rcal ( \{ \lambda^{1 - \tfrac{\nu}{k}} \DD^{\vec \beta} (\lambda + A_{\R^{n+1}})^{-1}: \, \lambda \in \Sigma_{\pi-\phi}, \, 0 \leq \abs{\vec \beta} = \nu \leq 2m \} )
    < \infty.
  \]
 \end{enumerate}
\end{theorem}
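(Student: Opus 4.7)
The overall plan is to realise the resolvent $(\lambda + A_{\R^{n+1}})^{-1}$ as the Fourier multiplier with operator-valued symbol $m_\lambda(\vec\xi) := (\lambda + \Acal(\vec\xi))^{-1}$. By homogeneity of $\Acal$ of order $2m$, the parameter-ellipticity assumption extends from $\S^n$ to all of $\R^{n+1}\setminus\{\vec 0\}$, so $\sigma(\Acal(\vec\xi)) \subseteq \overline{\Sigma_{\phi_\Acal^{\mathrm{ellipt}}}}$ for every $\vec\xi \neq \vec 0$, and hence $-\lambda \notin \sigma(\Acal(\vec\xi))$ for every $\lambda \in \Sigma_{\pi-\phi}$ with $\phi > \phi_\Acal^{\mathrm{ellipt}}$. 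Part (1), valid on an arbitrary Banach space $E$, is then argued through the associated fundamental solution $\gamma_\lambda = \Fcal^{-1} m_\lambda$; part (2) additionally exploits that $\LL_p(\R^{n+1};E)$ is of class $\HTcal$ by Lemma \ref{lem:HTcal-L_p-HTcal} in order to invoke the operator-valued Mikhlin multiplier theorem.

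For part (1), I would apply the preceding kernel estimate theorem with $\nu = 0$ combined with the scaling $\gamma_\lambda(\vec x) = \abs{\lambda}^{(n+1)/(2m)-1}\tilde\gamma_\theta(\abs{\lambda}^{1/(2m)}\vec x)$: polar coordinates together with the substitution $\vec y = \abs{\lambda}^{1/(2m)}\vec x$ and Lemma \ref{lem:Lem-2.6} give $\norm{\gamma_\lambda}_{\LL_1(\R^{n+1})} \leq C_\phi \abs{\lambda}^{-1}$, and Young's convolution inequality then yields the sectorial estimate $\norm{(\lambda + A_{\R^{n+1}})^{-1}}_{\B(\LL_p(\R^{n+1};E))} \leq C_\phi \abs{\lambda}^{-1}$ uniformly in $\lambda \in \Sigma_{\pi-\phi}$, so that $\phi_{A_{\R^{n+1}}}^\sigma \leq \phi_\Acal^{\mathrm{ellipt}}$. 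The inclusion $\WW_p^{2m}(\R^{n+1};E) \subseteq \dom(A_{\R^{n+1}})$ is immediate from the definition of $A_{\R^{n+1}}$ as the closure of $A_0$. For the opposite inclusion at one order less, applying the same scheme with $\nu = 2m-1$ gives $\norm{\DD^{\vec\beta}\gamma_\lambda}_{\LL_1} \leq C \abs{\lambda}^{(2m-1)/(2m) - 1}$ and hence boundedness of $\DD^{\vec\beta}(\lambda + A_{\R^{n+1}})^{-1}$ on $\LL_p(\R^{n+1};E)$ for every $\abs{\vec\beta} \leq 2m-1$, which shows $\dom(A_{\R^{n+1}}) \subseteq \WW_p^{2m-1}(\R^{n+1};E)$.

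For part (2), to upgrade the domain to $\WW_p^{2m}(\R^{n+1};E)$ I would verify the Mikhlin conditions on $\LL_p(\R^{n+1};E)$ for the symbols $\vec\xi^{\vec\beta}(\lambda+\Acal(\vec\xi))^{-1}$, $\abs{\vec\beta} = 2m$. By homogeneity of $\Acal$ it suffices to control these on $\S^n$, and parameter-ellipticity admits a holomorphic extension to complex vectors $\vec\xi + \ii\vec\eta$ of small imaginary part, as recorded in the remark following the definition of parameter-ellipticity; Proposition \ref{prop:R-boundedness_sectors} then promotes the uniform boundedness of this holomorphic extension to $\Rcal$-boundedness of all $\vec\xi$-derivatives, with the factors $\abs{\vec\xi}^{\abs{\vec\alpha}}$ absorbed by homogeneity. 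For the $\Hcal^\infty$-calculus, I would define $h(A_{\R^{n+1}})$ for $h \in \Hcal_0(\Sigma_\phi)$ with $\phi > \phi_\Acal^{\mathrm{ellipt}}$ as the Fourier multiplier with operator-valued symbol $h(\Acal(\vec\xi))$ given pointwise by the Dunford integral around $\sigma(\Acal(\vec\xi))$; differentiating under this integral and applying Propositions \ref{prop:R-boundedness_sectors} and \ref{prop:holomorphic_image_of_bounded_sets} bounds the Mikhlin constants of the symbol by a multiple of $\norm{h}_\infty$, yielding $A_{\R^{n+1}} \in \Hcal^\infty(\LL_p(\R^{n+1};E))$ with $\phi_{A_{\R^{n+1}}}^\infty \leq \phi_\Acal^{\mathrm{ellipt}}$. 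The $\Rcal$-sectoriality and generation of a bounded analytic semigroup then follow from Remark \ref{rem:Inclusions_Operator-Spaces} and Theorem \ref{thm:characterisation_Lp-max}. The concluding $\Rcal$-bound on the family $\{\lambda^{1-\nu/(2m)}\DD^{\vec\beta}(\lambda + A_{\R^{n+1}})^{-1}\}$ is obtained by running the same Mikhlin argument parametrically in $\lambda$: the rescaled symbol family $\{\lambda^{1-\nu/(2m)}\vec\xi^{\vec\beta}(\lambda + \Acal(\vec\xi))^{-1}\}$ has uniformly $\Rcal$-bounded Mikhlin constants, which transfers to $\Rcal$-boundedness on the operator side.

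The main obstacle is precisely this transition from the \emph{pointwise} sectoriality provided by parameter-ellipticity to the \emph{$\Rcal$-boundedness} of the full parametric family of symbol derivatives $\{\abs{\vec\xi}^{\abs{\vec\alpha}}\DD_\xi^{\vec\alpha}\sigma_\lambda(\vec\xi): \vec\xi \neq \vec 0,\, \lambda \in \Sigma_{\pi-\phi},\, \vec\alpha \in \{0,1\}^{n+1}\}$ required by the operator-valued Mikhlin theorem, carried out uniformly in $\lambda$. This is exactly where the interplay of homogeneity of $\Acal$, the sectorial extension to complex $\vec\xi$, and the holomorphy-to-$\Rcal$-boundedness transfer of Proposition \ref{prop:R-boundedness_sectors} becomes indispensable; everything else reduces to bookkeeping with Young's inequality and Dunford calculus.
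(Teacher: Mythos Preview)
Your overall plan matches what the paper does: it does not prove this theorem itself but refers to \cite[Theorems 5.4, 5.5 and Corollary 5.6]{DeHiPr03}, sketching exactly the route you describe --- kernel estimates plus Young's inequality for part~(1), and the operator-valued Mikhlin theorem on $\LL_p(\R^{n+1};E)$ together with the Dunford integral for the $\Hcal^\infty$-calculus for part~(2). So at the level of strategy you are aligned with the source.

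There is, however, a genuine gap in your justification of the $\Rcal$-bounds on the symbol side. You write that Proposition~\ref{prop:R-boundedness_sectors} ``promotes the uniform boundedness of this holomorphic extension to $\Rcal$-boundedness of all $\vec\xi$-derivatives''. It does not. That proposition takes $\Rcal$-boundedness of $\{T(\lambda):\lambda\in\Sigma_{\pi-\phi}\}$ as \emph{input} and returns $\Rcal$-boundedness of $\{\lambda T'(\lambda)\}$ on a smaller sector; it never converts uniform boundedness into $\Rcal$-boundedness. Moreover, it is formulated for holomorphic functions of a single variable on a sector in $\C$, whereas here you need $\Rcal$-bounds on $\vec\xi$-derivatives of $\vec\xi\mapsto(\lambda+\Acal(\vec\xi))^{-1}$ with $\vec\xi$ ranging over (a complex neighbourhood of) $\R^{n+1}\setminus\{\vec 0\}$. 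So the citation does not fit either the hypothesis or the setting.

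The missing ingredient --- and this is what the paper's sketch gestures at with its reference to Proposition~\ref{prop:holomorphic_image_of_bounded_sets} and, more accurately, what \cite{DeHiPr03} actually uses --- is the fact that the range of a $\B(E)$-valued holomorphic function over a \emph{compact} subset of its domain is automatically $\Rcal$-bounded. This follows from the (iterated) Cauchy integral formula together with the stability of $\Rcal$-bounds under absolutely convex hulls (the contraction principle); see \cite[Proposition~3.10 and Corollary~3.12]{DeHiPr03}. One applies this on a complex neighbourhood of $\S^n$, using compactness of the sphere, and then extends to all of $\R^{n+1}\setminus\{\vec 0\}$ by homogeneity of $\Acal$. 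The same mechanism, applied to the $(\lambda,\vec\xi)$-holomorphic family $\lambda^{1-\nu/2m}\vec\xi^{\vec\beta}(\lambda+\Acal(\vec\xi))^{-1}$ (after the natural rescaling so that the relevant parameter set becomes compact), yields the uniform-in-$\lambda$ $\Rcal$-boundedness of the Mikhlin data needed for the final assertion. Once you replace your appeal to Proposition~\ref{prop:R-boundedness_sectors} by this compactness/Cauchy-formula argument, the rest of your outline goes through.
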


 For the proof of this theorem, we refer to \cite[Theorem 5.5 and Corollary 5.6]{DeHiPr03}.
 It crucially uses that the $n$-dimensional sphere $\S^n \subseteq \R^{n+1}$ is compact and $\Acal(\vec \xi)$ is holomorphic and homogeneous of degree $k = 2m$, so that by Proposition \ref{prop:holomorphic_image_of_bounded_sets} the family $\{ \vec \xi^{\vec \alpha} \Acal(\vec \xi)^{-1}: \, \vec \xi \in \R^{n+1} \setminus \{0\} \}$ is $\Rcal$-bounded for each fixed $\vec \alpha \in \N_0^{n+1}$ with $\abs{\vec \alpha} = k$, and similarly $\{ \vec \xi^{\vec \beta} \DD^{\vec \beta} [\vec \xi^{\vec \alpha} \Acal(\vec \xi)^{-1}]: \vec \xi \in \R^{n+1} \setminus \{0\} \}$ is $\Rcal$-bounded as well.
 The bounded $\Hcal^\infty$-calculus is then established via Cauchy's theorem and again Proposition \ref{prop:holomorphic_image_of_bounded_sets}.

\subsection{Spatially dependent coefficients and lower order perturbations.}

Next, we consider the variable coefficient case with lower order coefficients, i.e.\
 \[
  \Acal(\vec x,\vec \xi)
   = \sum_{\abs{\vec \alpha} \leq k} a_{\vec \alpha}(\vec x) \DD^{\vec \alpha}
 \]
for some spatial-dependent operator-valued coefficients $a_{\vec \alpha}(\vec x) \in \B(E)$, $k \in \N$, and its minimal $\LL_p(\R^n;X)$-realisation
 \[
  [A_0 u](\vec x)
   = \Acal(\vec x,\DD)u(\vec x)
   \quad
   \text{for } u \in \dom(A_0) = \WW_p^k(\R^n;E) \text{ and a.e.\ } \vec x \in \R^n.
 \]

Using perturbation arguments, the following result can be derived, provided the Banach space $E$ is of class $\HTcal$, see \cite[Theorem 5.7]{DeHiPr03}.
 
\begin{theorem}
 Let $E$ be a Banach space of class $\HTcal$, $n, k \in \N$, $p \in (1, \infty)$ and $\phi_0 \in (0, \pi)$.
 Assume that $\mathcal{A}(\vec x,\DD) = \sum_{\abs{\vec \alpha} \leq k} a_{\vec \alpha}(\vec x,\DD)$ is a differential operator of order $k$ with variable coefficients which are subject to the following assumptions:
  \begin{enumerate}
   \item
    $a_{\vec \alpha} \in \CC_\mathrm{l}(\R^n; \B(E))$ for the top order coefficients where $\vec \alpha \in \N_0^n$ has length $\abs{\vec \alpha} = k$.
   \item
    The principal part $\Acal_\#(\vec x,\vec \xi) = \sum_{\abs{\vec \alpha} = k} a_{\vec \alpha}(\vec x) \vec \xi^{\vec \alpha}$ is parameter elliptic with angle of ellipticity less or equal $\phi_0$, for every $\vec x \in \R^n \cup \{\infty\}$, where we formally write $a_{\vec \alpha}(\infty) := \lim_{\abs{\vec x} \rightarrow \infty} a_{\vec \alpha}(\vec x)$.
   \item
    $a_{\vec \alpha} \in [\LL_\infty + \LL_{r_\nu}](\R^n; \B(E))$ for each lower order multi-index $\vec \alpha \in \N_0^n$ of length $\abs{\vec \alpha} = \nu < k$ for some $r_\nu \geq p$ such that $k - \nu > \nicefrac{n}{r_\nu}$.
  \end{enumerate}
 Then, for each angle $\phi > \phi_0$ there is a constant $\mu_\phi \geq 0$ such that the shifted operator $\mu_\phi + A_{\R^n}$ is $\Rcal$-sectorial with $\Rcal$-angle $\phi^\Rcal_{\mu_\phi + A_{\R^n}} \leq \phi$.
 In particular, if $\phi_0 < \nicefrac{\pi}{2}$, then the abstract Cauchy problem
  \[
   \frac{\, \dd}{\, \dd t} u(t) + A_{\R^n} u(t)
    = f(t),
    \quad
    t \in (0,T),
    \quad
   u(0)
    = 0
  \]
 has maximal regularity in the class $\LL_q((0,T);\LL_p(\R^n;E))$ for each $q \in (1, \infty)$, i.e.\ has a unqiue solution in the class $u \in \WW_{q,p}^{(1,2)}((0,T) \times \R^n; E)$ if and only if $f \in \LL_{q,p}((0,T) \times \R^n;E)$.
\end{theorem}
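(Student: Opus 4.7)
The plan is to establish $\Rcal$-sectoriality in two stages: first for the principal part $\Acal_\#(\vec x, \DD) = \sum_{\abs{\vec \alpha} = k} a_{\vec \alpha}(\vec x) \DD^{\vec \alpha}$ alone, via a freezing-of-coefficients argument, and then to absorb the contributions of order $\abs{\vec \alpha} < k$ as a relatively bounded perturbation by invoking Proposition \ref{prop:Perturbation_Theorem}. Throughout, $E \in \HTcal$ and Lemma \ref{lem:HTcal-L_p-HTcal} ensure that $\LL_p(\R^n;E)$ is again of class $\HTcal$, so that the constant coefficient theory (Theorem \ref{thm:full-space_problem}) applies on every building block.

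\textbf{Step 1: uniform constant-coefficient resolvents.} For each $\vec x_0 \in \R^n \cup \{\infty\}$ the frozen principal part $\Acal_\#(\vec x_0, \DD)$ is a homogeneous operator of degree $k$ with constant coefficients whose symbol is parameter-elliptic with angle at most $\phi_0$. Theorem \ref{thm:full-space_problem} gives its $\LL_p(\R^n;E)$-realisation as an $\Rcal$-sectorial operator with $\Rcal$-angle $\leq \phi_0$ and domain $\WW_p^k(\R^n;E)$. The assumption $a_{\vec \alpha} \in \CC_\mathrm{l}(\R^n;\B(E))$ makes $\R^n \cup \{\infty\}$ a compact parameter space on which the coefficients vary continuously, so a compactness and Neumann-series argument (using the contraction result in Corollary \ref{cor:R-uniform_Neumann}) yields $\Rcal$-bounds for
\[
\{\, \lambda(\lambda + \Acal_\#(\vec x_0, \DD))^{-1} : \lambda \in \Sigma_{\pi-\phi},\ \vec x_0 \in \R^n \cup \{\infty\} \,\}
\]
that are uniform in $\vec x_0$.

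\textbf{Step 2: localisation for the variable principal part.} Given $\eta > 0$, uniform continuity of the top-order coefficients on $\R^n \cup \{\infty\}$ supplies a radius $r = r(\eta)$ and a locally finite covering $\{B_r(\vec x_j)\}$ with subordinate partition of unity $\{\varphi_j\}$ and enlargements $\psi_j \equiv 1$ on $\supp \varphi_j$ such that
\[
\sup_j \sup_{\vec x \in B_r(\vec x_j)} \sum_{\abs{\vec \alpha} = k} \norm{a_{\vec \alpha}(\vec x) - a_{\vec \alpha}(\vec x_j)}_{\B(E)} \leq \eta.
\]
Set $R(\lambda) := \sum_j \psi_j \, (\lambda + \Acal_\#(\vec x_j, \DD))^{-1} \, \varphi_j$ as a local parametrix. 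Applying $\lambda + \Acal_\#(\vec x, \DD)$ produces $I + S(\lambda)$, where $S(\lambda)$ consists of a freezing term, bounded in $\Rcal$-norm by $\eta$ times the family from Step 1 (together with Proposition \ref{prop:R-boundedness_sectors} applied to $\DD^{\vec \alpha}(\lambda + \Acal_\#(\vec x_j,\DD))^{-1}$), plus commutator terms $[\Acal_\#(\vec x,\DD), \psi_j]$ whose order is strictly less than $k$ and which therefore carry an extra decay factor $\abs{\lambda}^{-1/k}$. Choosing $\eta$ small and then $\abs{\lambda}$ large enough, $\Rcal(\{S(\lambda)\}) < 1$, and Corollary \ref{cor:R-uniform_Neumann} inverts $I + S(\lambda)$ with a uniform $\Rcal$-bound. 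This produces $\mu_0 > 0$ and yields $\Rcal$-sectoriality of $\mu_0 + \Acal_\#(\vec x, \DD)_{\R^n}$ with $\Rcal$-angle $\leq \phi$.

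\textbf{Step 3: absorbing the lower-order terms.} Under the assumption $a_{\vec \alpha} \in \LL_\infty + \LL_{r_\nu}$ for $\abs{\vec \alpha} = \nu < k$ with $k - \nu > n/r_\nu$, the Sobolev embedding $\WW_p^{k-\nu}(\R^n;E) \hookrightarrow \LL_{s}(\R^n;E)$ with $1/s = 1/p - 1/r_\nu$, combined with Gagliardo--Nirenberg interpolation between $\WW_p^k$ and $\LL_p$, gives for every $\varepsilon > 0$
\[
\sum_{\abs{\vec \alpha} < k} \norm{a_{\vec \alpha} \DD^{\vec \alpha} u}_{\LL_p(\R^n;E)} \leq \varepsilon \norm{\Acal_\#(\cdot,\DD) u}_{\LL_p(\R^n;E)} + C_\varepsilon \norm{u}_{\LL_p(\R^n;E)}
\]
for all $u \in \WW_p^k(\R^n;E)$. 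Because $\dom(\Acal_\#(\cdot,\DD)_{\R^n}) = \WW_p^k(\R^n;E)$ and the principal part is already $\Rcal$-sectorial (after the Step 2 shift), the relatively-bounded perturbation assertion, part (ii) of Proposition \ref{prop:Perturbation_Theorem}, applies with $\varepsilon$ chosen small enough; enlarging the shift to a suitable $\mu_\phi \geq \mu_0$ gives $\Rcal$-sectoriality of $\mu_\phi + A_{\R^n}$ with $\Rcal$-angle $\leq \phi$. The maximal regularity consequence when $\phi_0 < \pi/2$ is then immediate from Theorem \ref{thm:characterisation_Lp-max}.

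\textbf{Main obstacle.} The delicate point is Step 2: one must propagate $\Rcal$-boundedness (not merely uniform boundedness) through the partition-of-unity construction. This requires that the commutators $[\Acal_\#(\vec x,\DD),\psi_j]$, which involve derivatives of the $\psi_j$, are controlled uniformly in $j$, forcing careful selection of a partition whose derivatives are bounded independently of the chart, including a chart at infinity where the $\CC_\mathrm{l}$-hypothesis is indispensable. The subadditivity and submultiplicativity of $\Rcal$-bounds, together with Proposition \ref{prop:R-boundedness_sectors} to upgrade derivative estimates from one sector to a smaller one, are the tools that make this otherwise $\LL_\infty$-style Kato-type argument go through in the randomised setting required on a general $\HTcal$-space.
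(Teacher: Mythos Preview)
The paper does not give its own proof of this theorem: it simply states the result and refers to \cite[Theorem 5.7]{DeHiPr03}, remarking that it follows ``using perturbation arguments''. Your three-step outline (uniform constant-coefficient $\Rcal$-bounds via compactness of $\R^n \cup \{\infty\}$, localisation by a small-oscillation partition of unity plus a Neumann series, and finally absorption of lower-order terms via Proposition~\ref{prop:Perturbation_Theorem}) is precisely the strategy carried out in \cite[Section 5.3 and Theorem 5.7]{DeHiPr03}, so you are reproducing the cited argument rather than deviating from the paper.

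One small correction: in Step~2 you invoke Proposition~\ref{prop:R-boundedness_sectors} for the terms $\DD^{\vec \alpha}(\lambda + \Acal_\#(\vec x_j,\DD))^{-1}$, but that proposition concerns $\lambda$-derivatives of holomorphic operator families, not spatial derivatives. The $\Rcal$-bound you actually need for $\lambda^{1-\abs{\vec \alpha}/k}\DD^{\vec \alpha}(\lambda + \Acal_\#(\vec x_j,\DD))^{-1}$ is already provided by Theorem~\ref{thm:full-space_problem}(2). Also, make explicit that thanks to the $\CC_\mathrm{l}$-hypothesis the covering can be taken \emph{finite} (one chart at infinity plus finitely many on a compact ball), which avoids having to sum infinitely many $\Rcal$-bounds; your phrase ``locally finite covering'' leaves this ambiguous.
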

 
 \section{Elliptic vector-valued PDE: The half-space problem}
 \label{Sec:Half-space_problem}

In the preceding section, we have mainly recalled results from \cite{DeHiPr03} on the full-space problem, and no adjustments have been necessary.
Next, we consider the half-space problem as a prototypical domain and an intermediate step towards general domains.
In this context, we need to adjust the notation and revise the proofs in \cite{DeHiPr03} to cover the more general class of systems with mixed type boundary conditions.

 \subsection{Partial Fourier transforms}
 Starting with the prototypical case of a half-space $\R_+^{n+1} := \R^n \times (0,\infty)$ instead of a general domain $\Omega \subseteq \R^n$, we are interested in well-posedness and maximal regularity properties of linear systems of parabolic equations of the form
  \begin{equation}
   \begin{cases}
   \partial_t u(t,\vec x) + \Acal(\DD) u(t,\vec x)
    = f(t,\vec x)
    &\text{for } (t, \vec x) \in (0, \infty) \times \R_+^{n+1},
    \nonumber
    \\
   \mathcal{B}_j(\DD) u(t,\vec x)
    = g_j(t,\vec x)
    &\text{for } (t, \vec x) \in (0, \infty) \times \partial \R_+^{n+1},
   \end{cases}
    \tag{PIBVP}
    \label{eqn:PIBVP}
  \end{equation}
 accompanied with some suitable initial conditions at $t = 0$.
 Revisiting the strategy in \cite{DeHiPr03}, for the moment, let us assume that the differential operators $\Acal(\DD)$ in the interior and $\Bcal_j(\DD)$ on the boundary are homogeneous of degrees $2m$ and $m_j < 2m$, respectively, i.e.\
  \[
   \Acal(\DD) = \sum_{\abs{\vec \alpha} = 2m} a_{\vec \alpha} \DD^{\vec \alpha},
   \quad
   \Bcal_j(\DD) = \sum_{\abs{\vec \beta} = m_j} b_{j,\vec \beta} \DD^{\vec \beta},
  \]
 where $m \in \N$ is a fixed natural number and $m_j \in \{0, 1, \ldots, 2m-1\}$ and corresponds to the differentiability order of the operator $\Bcal_j(\DD)$.
  The coefficients $a_{\vec \alpha}, b_{j,\vec \beta} \in \B(E)$, where $E$ may be some arbitrary Banach space, are assumed to be spatially and temporally constant. We employ the multi-index notation $\DD^{\vec \alpha} = \prod_{i=1}^{n+1} \DD_{x_i}^{\alpha_i}$ with $\DD_{x_i} = - \ii \frac{\partial}{\partial x_i}$ and write $\abs{\vec \alpha} = \sum_{i=1}^{n+1} \alpha_i$ for the length of a vector of natural numbers $\vec \alpha = (\alpha_1, \ldots, \alpha_{n+1})^\mathsf{T} \in \N_0^{n+1}$.
 Then, the interior symbol $\Acal(\vec \xi)$ and each of the boundary symbols $\Bcal_j(\vec \xi)$
  \[
   \Acal(\vec \xi)
    = \sum_{\abs{\vec \alpha} = 2m} a_{\vec \alpha} \vec \xi^{\vec \alpha},
    \quad
   \Bcal_j(\vec \xi)
    = \sum_{\abs{\vec \beta} = m_j} b_{j,\vec \beta} \vec \xi^{\vec \beta}
    \quad (j = 1, \ldots, m)
  \]
 are homogeneous in $\vec \xi \in \R^{n+1}$ of orders $2m$ for $\Acal$ and $m_j$ for $\Bcal_j$, $j = 1, \ldots, m$, respectively.
 (Throughout, we use the short-hand notation $\vec \xi^{\vec \alpha} := \prod_{i=1}^{n+1} \xi_i^{\alpha_i}$ etc.)
 To establish an existence and uniqueness theory for solutions of the parabolic equation \ref{eqn:PIBVP}, in \cite[Subsection 6.1]{DeHiPr03}, one may, first only formally, apply the Laplace transform w.r.\ to the time variable $t \in (0, \infty)$, i.e.\ we consider
  \[
   (\Lcal u)(\lambda, \vec x)
    = \int_0^\infty \ee^{- \lambda t} u(t,\vec x) \, \dd t
    \quad
    \text{for some set of parameters }
    \lambda \in U \subseteq \C \text{ and } \vec x \in \R_+^{n+1}
  \]
 so that the parabolic system transforms into a family of elliptic boundary value problems on the half-space $\R_+^{n+1}$ which are given by
  \begin{align}
   \lambda (\Lcal u)(\lambda,\vec x) + \Acal(\DD) (\Lcal u)(\lambda,\vec x)
    &= (\Lcal f)(\lambda,\vec x) + u(0,\vec x)
    &&\text{for } (\lambda, \vec x) \in U \times \R_+^{n+1},
    \label{eqn:Elliptic_PDE_Laplace-Space}
    \\
   \Bcal_j(\DD) (\Lcal u)(\lambda,\vec x)
    &= (\Lcal g_j)(\lambda,\vec x)
    &&\text{for } (\lambda, \vec x) \in U \times \partial \R_+^{n+1}, \, j = 1, \ldots, m.
    \label{eqn:Elliptic_PDE_Laplace-Space_Boundary}
  \end{align}   
 In \cite{DeHiPr03}, this provides the first step towards more general interior and exterior symbols.
 More general results are then derived using operator-theoretic results from modern harmonic analysis and employing perturbation methods.
 In this manuscript, however, we want to cover situations similar to those in \cite{AugBot21a}, where demanding homogeneity of the (adjusted notion of the) principle part of the boundary symbols cannot be guaranteed. Therefore, in comparison with \cite{DeHiPr03}, we relax the conditions on the \emph{principle parts} of the boundary operators in the following way:
  \begin{assumption}
   The interior symbol $\Acal(\vec \xi) = \sum_{\abs{\vec \alpha} = 2m} a_{\vec \alpha} \vec \xi^{\vec \alpha}$ is homogeneous of degree $2m$ for some $m \in \N$ and parameter-elliptic with angle of ellipticity $\phi^\mathrm{ellipt}_{\Acal} \in [0, \pi)$, i.e.\ the spectrum is uniformly contained in some sector,
  \begin{equation}
   \sigma(\Acal(\vec \xi)) \subseteq \Sigma_{\pi - \phi}
    \quad \text{for all }
    \vec \xi \in \R^{n+1} \text{ with } \abs{\vec \xi} = 1,
  \end{equation}
  for some $\phi \in [0, \pi)$ and where we put $\phi^\mathrm{ellipt}_{\Acal} \in [0, \pi)$ to be the infimum of all $\phi$ which are admissible for this inclusion.
  \newline
   Moreover, we assume that for $j = 1, \ldots, m$ there exist continuous, linear projections $\Pcal_{j,k} \in \B(E)$, $k = 0, 1, \ldots, m_j < 2m$ such that $\Pcal_{j,k} \Pcal_{j,k'} = 0$ for $k \neq k'$.
   The boundary symbols $\Bcal_j(\vec \xi)$, $j = 1, \ldots, m$ are defined by
    \[
     \Bcal_j(\vec \xi)
      = \sum_{k=0}^{m_j} \Bcal_{j,k}(\vec \xi)
      \quad \text{with} \quad
     \Bcal_{j,k}(\vec \xi)
      =\sum_{\abs{\vec \beta} = k} b_{j,k,\vec \beta} \vec \xi^{\vec \beta}  \Pcal_{j,k}
      \quad
      \text{for } j = 1, \ldots, m \text{ and } k = 0, 1, \ldots, m_j.
    \] 
 We assume that $a_{\vec \alpha} \in \B(E)$ and $b_{j,k,\vec \beta} \in \B(E)$ denote constant coefficients and such that $b_{j,k,\vec \beta}(\ran (\Pcal_{j,k})) \subseteq \ran(\Pcal_{j,k})$ for all $\vec \beta \in \N_0^{n+1}$ with $\abs{\vec \beta} = k$.
  \end{assumption}
 With a slight abuse of notation, for the remainder of this section we focus on the elliptic boundary value problem and simply write $u$, $f$ and $g_j$ for what should actually be the Laplace-transformed versions $\Lcal u$, $\Lcal f - u(0,\cdot)$ and $\Lcal g_j$, respectively.
 We fix an integrability exponent from the reflexive range $p \in (1, \infty)$ and an angle $\phi > \phi^\mathrm{ellipt}_{\Acal}$, and consider the boundary value problem 
  \[
   \begin{cases}
    \lambda u + \Acal(\DD) u = f
    &\text{in } \R_+^{n+1}
    \\
    \Bcal_j(\DD) u = g_j
    &\text{on } \partial \R_+^{n+1},
    \quad
    j = 1, \ldots, m
   \end{cases}
  \] 
 for a given parameter $\lambda \in \Sigma_{\pi-\phi}$, and given data $f \in \LL_p(\R_+^{n+1};E)$ and $g_j = \sum_{k=0}^{m_j} g_{j,k}$ such that $g_{j,k} \in \WW_p^{2m-k-1/p}(\R^n;\ran(\Pcal_{j,k}))$ for $j = 1, \ldots, m$ and $k = 0, 1, \ldots, m_j < 2m$.
 The function $f$ can be extended trivially by zero to $\Ecal_0 f \in \LL_p(\R^{n+1};E)$ on the full space $\R^{n+1}$; below $\Ecal_0$ will always denote this trivial extension operator.
 Moreover, by assumption we may write $g_j = \sum_{k=0}^{m_j} g_{j,k} = \sum_{k=0}^{m_j} \Pcal_{j,k} g_j$ (which is a consistent choice since all operators $\Pcal_{j,k}$ are projections such that $\Pcal_{j,k} \Pcal_{j,k'} = 0$ for $k \neq k'$).
 To solve the elliptic boundary value problem, we employ the following strategy (cf.\ \cite{DeHiPr03}):
 First, we solve the inhomogeneous problem on the full space
  \[
   \lambda w^\mathrm{fs} + \Acal(\DD) w^\mathrm{fs}
    = \Ecal_0 f
    \quad
    \text{on } \R^{n+1}.
  \]
  This is somehow the easiest part, since we may employ the full space theory developed in \cite[Section 5]{DeHiPr03}, to write $\vec \xi = (\vec \xi', y)$ and
   \[
    w^\mathrm{fs}(\vec \xi', y, \lambda)
     = (P_{\R_+^{n+1}} (\lambda + A_{\R^{n+1}})^{-1} \Ecal_0 f)(\vec \xi',y).
   \]
  Here, we write $P_{\R_+^{n+1}}$ for the restriction from the full space $\R^{n+1}$ to the half-space $\R_+^{n+1}$, and $A_{\R^{n+1}}$ for the $\LL_p(\R^{n+1})$-realisation of the differential operator $\Acal(\DD)$, i.e.\ $A_{\R^{n+1}} u = \Acal(\DD)u$ on $\dom(A_{\R^{n+1}}) = \{ u \in \LL_p(\R^{n+1}): \, \Acal(\DD)u \in \LL_p(\R^{n+1})\}$.
  We refer to \cite{DeHiPr03} for details, but in view of the inhomogeneous boundary problem on the half-space considered next, let us sketch the basic idea first:
  Using the Fourier transformation on $\R^{n+1}$, initially defined for functions $u \in \LL^1(\R^{n+1};E)$ by
   \[
    (\Fcal_{\R^{n+1}} u)(\vec \xi)
     = \int_{\R^{n+1}} u(\vec x) \ee^{- \ii \vec x \cdot \vec \xi} \, \dd \vec x,
   \]
  the (elliptic) full space problem $(\lambda + \Acal(\DD)) w^\mathrm{fs} = \Ecal_0 f$ in Fourier space becomes the parameter $\vec \xi \in \R^{n+1}$ dependent algebraic system of linear equations on $E$
   \[
    (\lambda + \Acal(\vec \xi)) (\Fcal_{\R^{n+1}} w^\mathrm{fs})(\lambda, \vec \xi) = (\Fcal_{\R^{n+1}} \Ecal_0 f)(\lambda, \vec \xi)
     \quad
     \text{for }
     \lambda \in \Sigma_{\pi-\phi} \text{ and } \vec \xi \in \R^{n+1}.
   \]
 (Note that the solution to this problem is not subject to any boundary conditions prescribed on $\partial \R_+^{n+1}$, so that in a further step, we have to subtract a corrective solution to the problem $(\lambda + \Acal(\DD)) w^\mathrm{corr} = 0$ in $\R_+^{n+1}$ and $\Bcal_j(\DD) w^\mathrm{corr} = - \Bcal_j(\DD) w^\mathrm{fs}$ on $\partial \R_+^{n+1}$ from the constructed solution.)
 Subsequently we consider the half-space problem for the difference $w := u - P_{\R_+^{n+1}} w^\mathrm{fs}$ between the solution $u$ to the original problem which we seek for, and the full-space problem with right-hand side $\Ecal_0 f$.
 This new problem is homogeneous in $\R_+^{n+1}$, but inhomogeneous at the boundary $\partial \R_+^{n+1}$:
 For $u$ to be a solution to \eqref{eqn:Elliptic_PDE_Laplace-Space}--\eqref{eqn:Elliptic_PDE_Laplace-Space_Boundary}, this function $w$ has to solve the elliptic boundary value problem
  \begin{equation}
   \begin{cases}
    \lambda w + \Acal(\DD) w
     = 0
     &\text{in } \R_+^{n+1},
     \\
    \Bcal_j(\DD) w
     = g_j - \Bcal_j(\DD) w^\mathrm{fs}
     &\text{on } \partial \R_+^{n+1}, \, j = 1, \ldots, m.
   \end{cases}
   \label{eqn:EBVP}
   \tag{EBVP}
  \end{equation}
 Due to the special structure of the half-space $\R_+^{n+1} = \R^n \times (0, \infty)$, this inhomogeneous boundary value problem can be approached by performing the \emph{partial Fourier transform} in $\vec x'$, i.e.\ in the first $n$ variables of the vector $\vec x \in \R_+^{n+1}$:
 Starting with functions $u = u(\vec x',y) \in \LL_1(\R_+^{n+1})$ we introduce the partial Fourier transform in $\vec x'$ as
  \[
   \Fcal u(\vec \xi',y)
    = \int_{\R^n} \ee^{- \ii \vec x' \cdot \vec \xi'} u(\vec x',y) \, \dd \vec x'.
  \]
 Formally applying the partial Fourier transformation to the boundary value problem \eqref{eqn:EBVP}, leads to the elliptic boundary value problem in the Laplace--Fourier space
  \begin{align}
   \lambda \Fcal u (\vec \xi',y) + \sum_{l=0}^{2m} \tilde a_l(\vec \xi') \DD_y^{2m-l} \Fcal u(\vec \xi',y)
    &= \Fcal f (\vec \xi',y)
    &&\text{in } \R_+^{n+1},
    \label{eqn:Elliptic_BVP_Laplace-Fourier-Space}
    \\
   \Fcal (\Bcal_j u)(\vec \xi',0)
    = \sum_{k=0}^{m_j} \sum_{l=0}^k \tilde b_{j,k,l}(\vec \xi') \DD_y^{k-l} \Pcal_{j,k} \Fcal u(\vec \xi',0)
    &= \Fcal g_j(\vec \xi',0)
    &&\text{on } \partial \R_+^{n+1}, \, j = 1, \ldots, m, 
    \label{eqn:Elliptic_BVP_Laplace-Fourier-Space_Boundary}
  \end{align}
 where we merge coefficients and factors of the same $(n+1)$-st index $\alpha_{n+1}$ and $\beta_{n+1}$, resp., and write
  \begin{align*}
   \tilde a_l(\vec \xi')
    &= \sum_{\abs{\vec \alpha'} = l} a_{(\vec \alpha',2m-l)} (\vec \xi')^{\vec \alpha'},
    &&l = 0, 1, \ldots, 2m,
    \\    
   \tilde b_{j,k,l}(\vec \xi')
    &= \sum_{\abs{\vec \beta'} = l} b_{j,k,(\vec \beta',k-l)} (\vec \xi')^{\vec \beta'},
    &&j = 1, \ldots, m, \, k = 1, \ldots, m_j, \, l = 0, 1, \ldots, k.
  \end{align*}
 Note that by construction the terms $\tilde a_l(\vec \xi')$ and $\tilde b_{j,k,l} (\vec \xi')$ are homogeneous of order $l$ in $\vec \xi' \in \R^n$.
 The vector-valued ordinary differential equation \eqref{eqn:Elliptic_BVP_Laplace-Fourier-Space} is equivalent to a first order system (in $y$) on the space $E^{2m}$, as can be seen by introducing the $E^{2m}$--valued functions
  \[
   \Fcal \vec v(\lambda, \vec \xi', y)
    := (\Fcal u(\lambda, \vec \xi', y), \frac{1}{\rho} \DD_y \Fcal u(\lambda, \vec \xi', y), \ldots, \frac{1}{\rho^{2m-1}} \DD_y^{2m-1} \Fcal u(\lambda, \vec \xi', y))^\mathsf{T},
  \]
 where $\rho > 0$ is some additional parameter.
 To exploit the homogeneity of $\Acal(\vec \xi)$, $\Bcal_{j,k}(\vec \xi)$ in $\vec \xi$ later on, we closely follow the lines of \cite{DeHiPr03} and introduce
  \[
   \sigma := \frac{\lambda}{\rho^{2m}} \in \Sigma_{\pi-\phi}
    \quad \text{and} \quad
   \vec b := \frac{\vec \xi'}{\rho} \in \R^n
  \] 
 as new variables in the Laplace--Fourier space.
 From now on, whenever $\sigma$ and $\vec b$ will appear, they are related to the variables $\lambda \in \Sigma_{\pi - \phi}$ and $\vec \xi' \in \R^n$ and the parameter $\rho > 0$ by these defining relations:
 In particular, we have
  \[
   (\abs{\lambda}^{1/m} + \abs{\vec \xi'}^2)^{1/2}
    = \rho (\abs{\sigma}^{1/m} + \abs{\vec b}^2)^{1/2}
     \quad \text{and} \quad
   (\abs{\lambda} + \abs{\vec \xi'}^{2m})^{1/2m}
    = \rho (\abs{\sigma} + \abs{\vec b}^{2m})^{1/2}.
  \]
 \begin{remark}
 Later on we will choose the parameter $\rho = \rho(\lambda, \vec \xi')$ depending on the variables $\lambda \in \Sigma_{\pi-\phi}$ and $\vec \xi' \in \R^n$.
 The particular choice of $\rho = \rho(\lambda, \vec \xi')$ will be $\rho = (\lambda + \abs{\vec \xi'}^{2m})^{1/2m}$, in line with the choice in \cite{DeHiPr07}, but in contrast to the earlier choice $\rho = (\abs{\lambda}^{1/m} + \abs{\vec \xi'}^2)^{1/2}$ in \cite{DeHiPr03}.
 The reason behind this are the respective interpretations of $\rho(\lambda, \vec \xi')$ as a multiplication operator in Fourier space: For the choices just mentioned, $\rho(\lambda, \vec \xi')$ will be the Fourier symbol of the pseudo-differential operators $((-\Delta)^m + \lambda)^{1/2m}$ and $((-\Delta) + \abs{\lambda}^{1/m})^{1/2}$, respectively.
 As it turned out, the former choice is more practical, see \cite{DeHiPr07}.
 \end{remark}
 Let us continue with the formulation of the elliptic BVP \eqref{eqn:EBVP} as a first order system and introduce a matrix-valued function $\bb A_0: \R^n \times \Sigma_{\pi - \phi} \ni (\vec \xi',\lambda) \mapsto \bb A_0(\vec \xi',\lambda) \in \B(E^{2m})$ by setting
  \[
   \bb A_0(\vec \xi',\lambda)
    = \left[ \begin{array}{ccccc} 0 & \id_E & 0 & \cdots & 0 \\ 0 & 0 & \id_E && 0 \\ \vdots & \vdots &\, \ddots &\, \ddots & \\ 0 & \cdots && 0 & \id_E \\ c_{2m}(\vec \xi',\lambda) & c_{2m-1}(\vec \xi') & \cdots && c_1(\vec \xi') \end{array} \right],
  \]
 where we choose the (operator-valued) entries of the operator matrix $\bb A_0(\vec \xi', \lambda)$ as
  \[
   c_j(\vec \xi') = - a_0^{-1} \tilde a_j(\vec \xi')
    \quad
    \text{for } j = 1, \ldots, 2m-1 \text{ and } c_{2m}(\vec \xi',\lambda) = - a_0^{-1} (\tilde a_{2m}(\vec \xi') + \lambda \id_E),
  \]
 and $\id_E$ denotes the identity map on $E$.
 By parameter-ellipticity of the interior symbol $\Acal(\vec \xi)$, the ($\vec \xi'$-independent) coefficient $a_0(\vec \xi') = a_0 \in \B(E)$ is invertible because
  \[
   a_0(\vec \xi')
    = a_{(\vec 0, 2m)}
    = \Acal(\vec 0, 2m)
    \quad
    \text{for all } \vec \xi' \in \R^n,
  \]
 where $(\vec 0, 2m) = (0, \ldots, 0, 2m) \in \R^{n+1}$.
 Since each of the symbols $\tilde a_l(\vec \xi')$ is homogeneous in $\vec \xi' \in \R^n$ of order $l$, we may, reformulate the elliptic BVP \eqref{eqn:Elliptic_BVP_Laplace-Fourier-Space} (with the homogeneous choice $f=0$) equivalently as a first order in $y \in (0,\infty)$ system of ordinary differential equations, viz.\
  \[
   \frac{\partial}{\partial y} \Fcal \vec v^\mathrm{hs}(\lambda, \vec \xi',y)
    = \ii \rho \bb A_0(\vec b, \sigma) \Fcal \vec v^\mathrm{hs}(\lambda, \vec \xi', y),
    \quad
    \lambda \in \Sigma_{\pi-\phi}, \, \vec \xi' \in \R^n, \, y > 0.
  \]
 In contrast to the situation in \cite{DeHiPr03}, this time, the boundary symbols $\Bcal_j(\vec \xi) = \sum_{k=0}^{m_j} \Bcal_{j,k}(\vec \xi)$ are \emph{not} homogeneous in $\vec \xi \in \R^{n+1}$ of order $m_j$, in general, but only each of the symbols $\Bcal_{j,k}(\vec \xi) := \Bcal_j(\vec \xi) \Pcal_{j,k} = \Pcal_{j,k} \Bcal_j(\vec \xi) \Pcal_{j,k} $  is homogeneous in $\vec \xi \in \R^{n+1}$ of order $k$, for each $j = 1, \ldots, m$ and $k = 0, 1, \ldots, m_j$.
 Hence, each (operator-valued) coefficient $\tilde b_{j,k,l}(\vec \xi') \in \B(E)$ is homogeneous in $\vec \xi' \in \R^n$ of degree $l$, and we may write
  \begin{align*}
   \Fcal g_j(\rho \vec b, 0)
    &= \Fcal \Bcal_j u(\rho \vec b,0)
    = \sum_{k=0}^{m_j} \sum_{l=0}^k \tilde b_{j,k,l}(\rho \vec b) \DD_y^{k-l} \Pcal_{j,k} \Fcal u(\rho \vec b,0)
    \\
    &= \sum_{k=0}^{m_j} \sum_{l=0}^k \rho^l \tilde b_{j,k,l}(\vec b) \DD_y^{k-l} \Pcal_{j,k} \Fcal u(\rho \vec b,0)
    \\
    &= \sum_{k=0}^{m_j} \sum_{l=0}^k \rho^l \Pcal_{j,k} \tilde b_{j,k,l}(\vec b) \DD_y^{k-l} \Pcal_{j,k} \Fcal u(\rho \vec b,0)
    \quad
    \text{for each } \vec b \in \SS^{n-1} \text{ and } \rho > 0.
  \end{align*}
Therefore, the boundary condition \eqref{eqn:Elliptic_BVP_Laplace-Fourier-Space_Boundary} is equivalently expressed as
  \begin{equation*}
   \vec \Bcal_{j,k}^0(\vec b) \Fcal \vec v(\rho \vec b, 0)
    =\frac{\Pcal_{j,k} \Fcal g_j(\rho \vec b,0)}{\rho^k}
    \quad
    \text{for }
    j = 1, \ldots, m \text{ and } k = 0, 1, \ldots, m_j,
  \end{equation*}
 where we define the row vector $\vec \Bcal_{j,k}^0(\vec b)$ as follows:
  \[
   \vec \Bcal_{j,k}^0(\vec b)
    = (\tilde b_{j,k,k}(\vec b) \Pcal_{j,k}, \ldots, \tilde b_{j,k,0} \Pcal_{j,k}, 0, \ldots, 0)
    \quad
    \text{for }
    j = 1, \ldots, m \text{ and } k = 0, 1, \ldots, m_j.
  \]
 (At this point, note that the inclusion $\tilde b_{j,k,l}(\ran (\Pcal_{j,k})) \subseteq \ran(\Pcal_{j,k})$ for the range of the projections $\Pcal_{j,k}$ is valid, since we assumed that $b_{j,k,\vec{\beta}}(\ran (\Pcal_{j,k})) \subseteq \ran(\Pcal_{j,k})$ for each $\vec \beta \in \N_0^{n+1}$ of length $\abs{\vec \beta} = k$.)
 Then \cite[Proposition 6.1]{DeHiPr03} applies and, therefore, the spectrum of $\bb A_0(\vec b, \sigma) \in \B(E^{2m})$ is strictly separated by the real axis, i.e.\ $\sigma(\bb A_0(\vec b, \sigma)) \cap \R = \emptyset$ and, hence, $\ii \sigma(\bb A_0(\vec b,\sigma)) = \sigma(\ii \bb A_0(\vec b, \sigma)) = S_+(\vec b, \sigma) \cup S_-(\vec b, \sigma)$ splits into two parts $S_\pm(\vec b,\sigma)$ which are \emph{uniformly} (for $(\vec b, \sigma) \in \R^n \times \Sigma_{\pi-\phi}$ with $\abs{\vec b}, \abs{\sigma} \leq 1$) separated by a strip around the imaginary axis.
 This means that, there are constants $c_1, c_2 > 0$ such that
  \begin{align}
   \inf \{ \Re \mu: \, \mu \in S_+(\vec b, \sigma) \}
    &\geq c_1,
    &&\abs{\vec b} \leq 1, \, \abs{\sigma} \leq 1,
    \label{eqn:Spectral_Gap_S+}
    \\
   \sup \{ \Re \mu: \, \mu \in S_-(\vec b, \sigma) \}
    &\leq - c_2,
    &&\abs{\vec b} \leq 1, \, \abs{\sigma} \leq 1.
    \label{eqn:Spectral_Gap_S-}
  \end{align}
  \begin{figure}
	\centering
	\includegraphics[scale = 0.5]{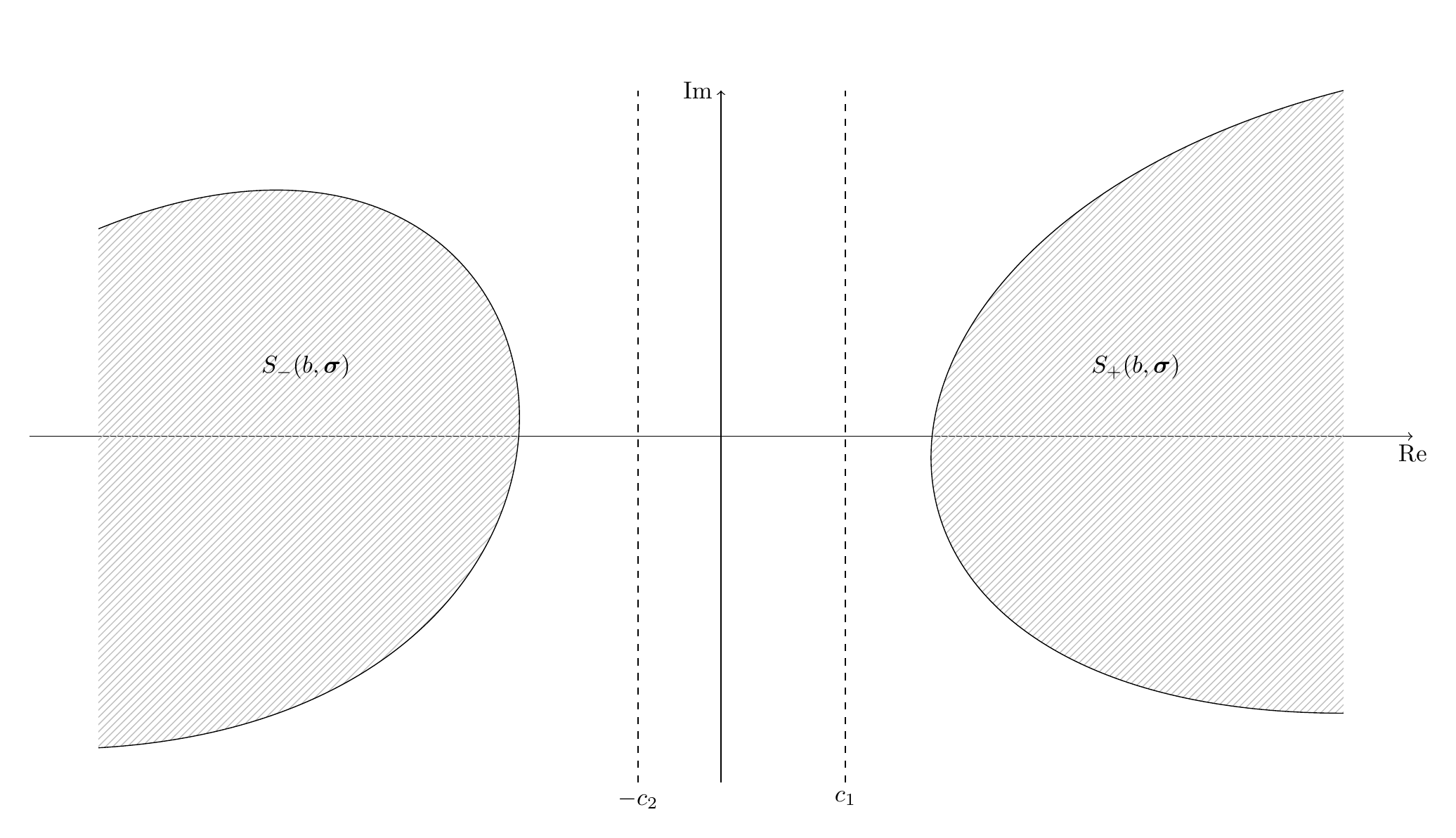}
	\caption{Spectral gap between parts $S_\pm(\vec b, \sigma)$ of the spectrum $\ii \sigma(\bb A_0(\vec b, \Sigma))$.}
	\label{fig:diagram_spectral_gap}
 \end{figure}
 In particular, for each $(\vec b, \sigma)$ the spectral projections $P_\pm(\vec b,\sigma) \in \B(E^{2m})$ onto $S_\pm(\vec b,\sigma)$ exist.
 
 \subsection{The Lopatinskii-Shapiro condition on the half-space}
 
 The \emph{Lopatinskii--Shapiro condition} provides a solvability and uniqueness property for the Fourier space representation of the elliptic boundary value problem \eqref{eqn:Elliptic_PDE_Laplace-Space}--\eqref{eqn:Elliptic_PDE_Laplace-Space_Boundary}.
 It can be stated as follows:
 
 \begin{assumption}[Lopatinskii--Shapiro condition, half-space version]
 \label{assmpt:LSC}
  For each $\lambda \in \overline{\Sigma}_{\pi - \phi}$ and $\vec \xi' \in \R^n$ such that $(\lambda, \vec \xi') \neq (0, \vec 0)$, the initial value problem
   \begin{equation*}
    \begin{cases}
     \lambda v(y) + \Acal(\vec \xi',\DD_y) v(y)
      = 0
      &\text{for } y > 0,
      \\
     \Bcal_j(\vec \xi',\DD_y) v(0)
      = g_j
      &\text{for } j = 1, \ldots, m
    \end{cases}
   \end{equation*}
  admits a unique solution in the class $v \in \CC_0(\R_+; E) = \{ v \in \CC(\R_+;E): \, v(y) \rightarrow 0 \text{ as } y \rightarrow \infty \} $, for each given $\vec g = (g_1, \ldots, g_m)^\mathsf{T} \in E^m$.
 \end{assumption}
 
 \begin{remark}
  Below, we introduce a map $\tilde D \subseteq \C^{n+1} \rightarrow \B(E^m;E^{2m})$ which (roughly) maps, for each set of parameters $(\vec \xi', \lambda) \in \tilde D$, any given data $g_j \in E$ to the time trace at zero $\vec v(0) \in E^{2m}$ of the unique solution to the elliptic initial value problem from the Lopatinskii--Shapiro condition.
  For technical reasons, we want $\tilde D$ to be an open \emph{complex} neighbourhood of $(\R^n \times \Sigma_{\pi-\phi}) \setminus \{\vec 0\} \subseteq \C^{n+1}$.
  We then can prove complex differentiability of this map, provided we know that the Lopatinskii--Shapiro condition is actually also valid for values $(\lambda, \vec \xi')$ from this \emph{complex} neighbourhood $\tilde D$.
  This technical detail had not been addressed in the original work \cite{DeHiPr03}, however, see \cite[Lemma 5.6]{Kam15} in a master thesis supervised by Robert Denk. By this, the Lopatinskii--Shapiro condition is automatically valid in a complex neighbourhood $\tilde D$ of $(\R^n \times \overline{\Sigma_{\pi-\phi}}) \setminus \{\vec 0\} \subseteq \C^{n+1}$, provided it is valid on $(\R^n \times \overline{\Sigma_{\pi-\phi}}) \setminus \{\vec 0\}$.
 \end{remark}

To proceed, we modify the proofs of \cite[Propositions 6.2--6.4]{DeHiPr03} and transfer them to the situation considered here.
 We, thereby, obtain the following preliminary result on existence and uniqueness of solutions and a solution formula for the Fourier space representation of the elliptic boundary value problems in half-space which satisfy the Lopatinskii--Shapiro condition.
 \begin{proposition}
 \label{prop:EBVP_half_space_solution}
  Let $\Acal(\DD)$ be a homogeneous, parameter-elliptic operator of order $2m$ with angle of ellipticity $\phi_\Acal^\mathrm{ellipt} \in [0, \pi)$. Let $\phi > \phi_\Acal^\mathrm{ellipt}$ and $\lambda \in \overline{\Sigma}_{\pi-\phi}$ be given and assume that the Lopatinskii-Shapiro condition is valid.
  Then there is a unique solution $\Fcal u$ of the system \eqref{eqn:Elliptic_BVP_Laplace-Fourier-Space}--\eqref{eqn:Elliptic_BVP_Laplace-Fourier-Space_Boundary}, which is given by the sum
   \[
    \Fcal u
     = \Fcal w^\mathrm{fs} + \Fcal w^\mathrm{hs} + \Fcal w^\mathrm{corr}
     = \Fcal w^\mathrm{fs} + (\Fcal \vec v^\mathrm{hs})_1 + (\Fcal \vec v^\mathrm{corr})_1.
   \]
  Here, $\Fcal w^\mathrm{fs}$ denotes the solution to the full-space problem with right-hand side $\Fcal \Ecal_0 f$, $\Fcal w^\mathrm{hs}$ is the solution of \eqref{eqn:Elliptic_BVP_Laplace-Fourier-Space}--\eqref{eqn:Elliptic_BVP_Laplace-Fourier-Space_Boundary} for homogeneous right-hand side $f = 0$ in $\R_+^{n+1}$ and boundary conditions $\Bcal_j w^\mathrm{hs} = g_j$ on $\partial \R_+^{n+1}$, $j = 1, \ldots, m$, whereas $\Fcal w^\mathrm{corr}$ is a correction term with boundary conditions $ \Bcal_j \Fcal w^\mathrm{corr} = - \Bcal_j \Fcal w^\mathrm{fs}$ matching those from the solution to the full-space problem on $\partial \R_+^{n+1}$ (and $f = 0$ in $\R_+^{n+1}$).
  Moreover, $(\cdot)_1$ denotes the first component of any vector in $E^{2m}$.
  The function $\Fcal \vec v^\mathrm{hs}$ can be computed using the uniformly continuous semigroup generated by $\ii \rho \bb A_0(\vec b, \sigma)$,
   \begin{equation}
    \Fcal \vec v^\mathrm{hs}(\vec \xi',y,\lambda)
     = \ee^{\ii \rho \bb A_0(\vec b,\sigma)y} \Fcal \vec v^\mathrm{hs}(\vec \xi',0,\lambda)
     \quad \text{for }
     \lambda = \rho^{2m} \sigma \in \Sigma_{\pi-\phi}, \, \vec \xi' = \rho \vec b \in \R^n, \, y > 0,
     \label{eqn:First_Order_System_in_y_scaled}
  \end{equation}
 where the initial value $\Fcal \vec v^\mathrm{hs}(\vec \xi',0,\lambda)$ has to be chosen according to
  \[
   \Fcal \vec v^\mathrm{hs}(\vec \xi',0,\lambda)
    = \bb M(\vec b,\sigma) \Fcal \vec g_\rho^\mathrm{hs}(\vec \xi',0).
  \]
  The (operator-valued) function $\bb M: \tilde D \mapsto \B(E^m;E^{2m})$ is jointly holomorphic in $(\vec b, \sigma) \in \tilde D$ in a complex neighbourhood $\tilde D$ of $(\R^n \times \overline{\Sigma_{\pi - \phi}}) \setminus \{\vec 0\} \subseteq \C^{n+1}$, and $\bb M(\vec b,\sigma)$ maps any given data
  \[
   \Fcal \vec g_\rho^\mathrm{hs}(\vec \xi',0)
    = \left( \sum_{k=0}^{m_1} \tfrac{\Pcal_{1,k} \Fcal g_1(\vec \xi',0)}{\rho^k}, \ldots, \sum_{k=0}^{m_m} \tfrac{\Pcal_{m,k} \Fcal g_m(\vec \xi',0)}{\rho^k} \right),
    \quad
   \vec \xi' = \rho \vec b \in \R^n, \, \rho > 0.
  \]
 to the corresponding unique solution of the algebraic problem
  \[
   \begin{cases}
     P_+(\vec b, \sigma) \Fcal \vec v(\vec \xi', 0, \lambda)
      = 0,
      &\vec \xi' \in \R^n,
      \\
     B_0^{j,k}(\vec b) \Fcal \vec v(\vec \xi', 0, \lambda)
      = \frac{\Pcal_{j,k} \Fcal g_j(\vec \xi', 0)}{\rho^k},
      &j = 1, \ldots, 2m, \, k = 0, 1, \ldots, m_j.
   \end{cases}
  \]
   On the other hand, the correction term $\Fcal \vec v^\mathrm{corr}$ is given by
  \[
   \Fcal \vec v^\mathrm{corr}(\vec \xi',y,\lambda)
    = - \ee^{\ii \rho \bb A_0(\vec b,y)} \bb M(\vec b,\sigma) (\Fcal \vec g_\rho^\mathrm{corr})(\vec \xi',0)
   \quad \text{for }
   \lambda = \rho^{2m} \sigma \in \Sigma_{\pi-\phi}, \, \vec \xi' = \rho \vec b \in \R^n, \, y > 0,
  \]
   where
  \[
    \Fcal \vec g_\rho^\mathrm{corr}(\vec \xi',0)
     = \left( \sum_{k=0}^{m_1} \tfrac{\Fcal g^\mathrm{corr}_{1,k}(\vec \xi',0)}{\rho^k}, \ldots, \sum_{k=0}^{m_m} \tfrac{\Fcal g^\mathrm{corr}_{m,k}(\vec \xi',0)}{\rho^k} \right),
    \quad
    \vec \xi' = \rho \vec b \in \R^n, \, \rho > 0.
  \]
   Here, the functions
   \[
    \Fcal g^\mathrm{corr}_{j,k}(\vec \xi',0)
     = \int_0^\infty h^\mathrm{corr}_{j,k}(\vec \xi',s) (\Fcal f)(\vec \xi',s) \, \dd s,
     \quad
     j = 1, \ldots, m, \, k = 0, 1, \ldots, m_j, \, \vec \xi' \in \R^n
   \]
   are defined using the integral operator kernels
  \[
    h^\mathrm{corr}_{j,k}(\vec \xi',s)
     = \int_{\R} \Bcal_j(\vec \xi',\eta) \Pcal_{j,k}
     (\lambda + \Acal(\vec \xi',\eta))^{-1} \ee^{- \ii \eta s} \, \dd \eta
    \quad \text{for } \vec \xi' \in \R^n, \, s > 0.
  \]
 \end{proposition}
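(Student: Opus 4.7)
The plan is to follow the decomposition strategy sketched right before the statement and to adapt the corresponding arguments of \cite[Propositions 6.2--6.4]{DeHiPr03} to the present mixed differentiability orders setup. First I would dispose of the full-space contribution by setting $w^{\mathrm{fs}} := P_{\R_+^{n+1}}(\lambda + A_{\R^{n+1}})^{-1}\Ecal_0 f$, which is covered by Theorem \ref{thm:full-space_problem}; this leaves a homogeneous equation $\lambda w + \Acal(\DD)w = 0$ in $\R_+^{n+1}$ with the modified boundary data $g_j - \Bcal_j(\DD)w^{\mathrm{fs}}|_{y=0}$, to be split further into $w^{\mathrm{hs}}$ (original boundary data $g_j$) and $w^{\mathrm{corr}}$ (data $-\Bcal_j(\DD)w^{\mathrm{fs}}|_{y=0}$).

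Next, I would apply the partial Fourier transform in $\vec x'$ and recast the resulting ODE in $y$ as the first-order system $\partial_y \Fcal \vec v = \ii\rho \bb A_0(\vec b,\sigma)\Fcal \vec v$, so the $\CC_0(\R_+;E^{2m})$-solutions are exactly $\ee^{\ii\rho \bb A_0(\vec b,\sigma) y} \Fcal \vec v(0)$ subject to $P_+(\vec b,\sigma)\Fcal \vec v(0) = 0$. The spectral-gap estimates \eqref{eqn:Spectral_Gap_S+}--\eqref{eqn:Spectral_Gap_S-}, combined with the Dunford calculus and a scaling in $\rho$, produce the spectral projections $P_\pm(\vec b,\sigma)$ and show they are jointly holomorphic on a complex neighbourhood $\tilde D$ of $(\R^n\times\overline{\Sigma_{\pi-\phi}})\setminus\{\vec 0\}$.

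The operator $\bb M(\vec b,\sigma)$ is then defined as the solution operator of the linear system
\[
 P_+(\vec b,\sigma)\Fcal \vec v(0) = 0,
 \qquad
 \vec{\Bcal}_{j,k}^0(\vec b)\Fcal \vec v(0) = \frac{\Pcal_{j,k}\Fcal g_j(\vec\xi',0)}{\rho^k}
 \quad (j=1,\ldots,m,\ k=0,\ldots,m_j).
\]
Existence and uniqueness for every right-hand side is precisely the Fourier-side reformulation of Assumption \ref{assmpt:LSC}; by the remark following that assumption (relying on \cite[Lemma 5.6]{Kam15}) this invertibility propagates to the complex neighbourhood $\tilde D$, and $\bb M$ is jointly holomorphic there as the inverse of a jointly holomorphic bijection. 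For the correction term I would plug the explicit representation of $\Fcal w^{\mathrm{fs}}$, obtained from the resolvent $(\lambda+\Acal(\vec\xi',\eta))^{-1}$ via a further partial Fourier transform in $y$, into $\Bcal_j(\DD)\Pcal_{j,k}$; evaluating at $y=0$ turns $y$-derivatives into powers of $\eta$ under the integral and produces exactly the kernel $h^{\mathrm{corr}}_{j,k}$ displayed in the statement. Applying $\bb M(\vec b,\sigma)$ to the resulting corrected boundary data and propagating via $\ee^{\ii\rho\bb A_0(\vec b,\sigma)y}$ yields the claimed formula for $\Fcal \vec v^{\mathrm{corr}}$.

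The technically delicate point is the third step: one must check that the Lopatinskii--Shapiro system with mixed differentiability orders $m_j$ and the $\rho^{-k}$-scaling of the data does assemble into a genuinely invertible operator on $E^{2m}$ whose inverse extends holomorphically off the real parameter set. This requires combining the spectral-gap estimates for $\bb A_0$, the resulting holomorphy of $P_\pm(\vec b,\sigma)$, and the bookkeeping forced by the projections $\Pcal_{j,k}$: every step which in \cite{DeHiPr03} exploited homogeneity of degree $m_j$ must here be split into contributions of degree $k\le m_j$ by inserting $\Pcal_{j,k}$, and one has to verify that the invariance $b_{j,k,\vec\beta}(\ran \Pcal_{j,k})\subseteq \ran\Pcal_{j,k}$ is exactly what makes these pieces close up consistently so that the row vectors $\vec{\Bcal}_{j,k}^0(\vec b)$ together with $P_+(\vec b,\sigma)$ yield a well-posed $2m\times 2m$ block system, scaling homogeneously in $(\vec b,\sigma)$ and thus admitting a uniform holomorphic inverse.
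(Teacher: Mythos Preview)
Your proposal is correct and follows essentially the same approach as the paper: the three-fold decomposition $w^{\mathrm{fs}}+w^{\mathrm{hs}}+w^{\mathrm{corr}}$, the first-order reformulation with the stable-subspace constraint $P_+(\vec b,\sigma)\Fcal\vec v(0)=0$, the construction of $\bb M$ from the Lopatinskii--Shapiro condition and its holomorphic extension via \cite[Lemma~5.6]{Kam15}, and the derivation of $h^{\mathrm{corr}}_{j,k}$ from the full Fourier representation of $w^{\mathrm{fs}}$. The only point where the paper is slightly more explicit is in justifying boundedness of $\bb M(\vec b,\sigma)$ via the closed graph theorem before establishing holomorphy, whereas you invoke ``inverse of a holomorphic bijection'' directly; in the infinite-dimensional setting this step does require the closed graph argument, but your outline is otherwise complete.
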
  
 \begin{proof}
  To establish Proposition \ref{prop:EBVP_half_space_solution}, we proceed as follows and analogously to \cite{DeHiPr03}:
  The linear problem is decomposed into three sub-problems, each of which for itself is easier to handle than the original problem:
   \begin{align*}
    \lambda \Fcal w^\mathrm{fs} + \Acal(\DD) \Fcal w^\mathrm{fs}
     &= \Ecal_0 \Fcal f
     &&\text{in } \R^{n+1}
     \\
     \intertext{(obviously, without any boundary conditions),}
    \lambda \Fcal w^\mathrm{hs} + \Acal(\DD) \Fcal w^\mathrm{hs}
     &= 0
     &&\text{in } \R_+^{n+1},
     \\
    \Bcal_j(\DD) \Fcal w^\mathrm{hs}
     &= g_j,
     &&\text{on } \partial \R_+^{n+1}, \, j = 1, \ldots, m,
     \\
     \intertext{and}
    \lambda \Fcal w^\mathrm{corr} + \Acal(\DD) \Fcal w^\mathrm{corr}
     &= 0
     &&\text{in } \R_+^{n+1},
     \\
    \Bcal_j(\DD) \Fcal w^\mathrm{corr}
     &= - \Bcal_j(\DD) \Fcal w^\mathrm{fs}
     &&\text{on } \partial \R_+^{n+1}, \, j = 1, \ldots, m.
   \end{align*}
  The second problem, i.e.\ the case where $f = 0$, but inhomogeneous boundary conditions are imposed, may be handled similar to \cite[Proposition 5.3]{DeHiPr03} which provides us with a unique solution $(\Fcal \vec v^\mathrm{hs})_1$ to the problem \eqref{eqn:Elliptic_PDE_Laplace-Space}--\eqref{eqn:Elliptic_PDE_Laplace-Space_Boundary} for $f = 0$, and which has the form \eqref{eqn:First_Order_System_in_y_scaled}, where $\bb M(\vec b,\sigma)$ is jointly holomorphic in the variables $(\vec b,\sigma) \in \tilde D \supset (\R^n \times \overline{\Sigma_{\pi-\phi}}) \setminus \{\vec 0\}$. Since the projections $\Pcal_{j,k}$ project the problem to subspaces on which the corresponding system for the boundary operators is homogeneous in $\vec \xi'$ of degree $k$, uniqueness and existence of solutions, as well as the representation formula for the solution can be extracted from the equivalent reformulation as a first order (in $y \in (0,\infty)$) system. It, therefore, remains to ensure holomorphy of the map $\bb M$. This can be done almost literally as in the proof of \cite[Proposition 6.2]{DeHiPr03}:
  First, we employ the closed graph theorem to conclude that for each $\vec z \in D$ the operator $\bb M(\vec z)$ is a linear and closed map $E^m \rightarrow E^{2m}$, and it is uniformly bounded, first on the set $D = \overline{\BB_1(0)} \times \overline{\Sigma}_{\pi-\phi} \subseteq \R^n \times \C$, and, thereafter, in any complex neighbourhood $\tilde D$ of $D$, which is sufficiently close to $D$. With use of the spectral projections $P_-(\vec b, \sigma)$ and the Lopatinskii-Shapiro condition we may then prove continuity of the map $\vec z \mapsto \vec v(\vec z) = \bb M(\vec z) \vec g$, for any fixed $\vec g \in E^m$, on complex lines.
  This uses, in particular, that the Lopatinskii--Shapiro condition is valid on a complex neighbourhood $\tilde D$ of $(\R^n \times \overline{\Sigma_{\pi-\phi}}) \setminus \{\vec 0 \}$.
  Moreover, it uses that the spectral projections $P_-$ and the operators $\bb \Bcal_{j,k}^0$ are continuous.
  Complex differentiability follows.

 To derive an integral representation of the solution $\Fcal w^\mathrm{corr}$ to the correction problem, we use that, analogous to the half-space problem with given boundary data, $\Fcal w^\mathrm{corr}$ is the first component of the $E^{2m}$-valued function $\Fcal \vec v^\mathrm{corr}$, which can be formally written down as
  \[
   (\Fcal \vec v^\mathrm{corr})(\lambda, \vec \xi', y)
    = \ee^{\ii \rho \bb A_0(\vec b, \sigma) y} \bb M(\vec b, \sigma) \Fcal \vec g_\rho^\mathrm{corr}(\lambda, \vec \xi').
  \]
 The $\rho$-weighted version of the boundary data is
  \[
   \Fcal \vec g_\rho^\mathrm{corr}(\lambda, \vec \xi')
    = \big( \sum_{k=0}^{m_1} \frac{\Fcal g_{1,k}^\mathrm{corr}(\lambda, \vec \xi')}{\rho^k}, \ldots, \sum_{k=0}^{m_m} \frac{\Fcal g_{m,k}^\mathrm{corr}(\lambda, \vec \xi')}{\rho^k} \big),
  \]
 where the boundary data $\Fcal g_{j,k}^\mathrm{corr}$ result from taking the trace of $\Bcal_{j,k}(\vec \xi',\DD_y) \Fcal w^\mathrm{fs}(\vec \xi', y)$ at $y = 0$.
 We note that
  \[
   (\Fcal_{\vec x',y} w^\mathrm{fs})(\vec \xi', \eta)
    = (\lambda + \Acal(\vec \xi', \eta))^{-1} \Fcal_{\vec x',y} (\Ecal_0 f)(\vec \xi',y)
    = (\lambda + \Acal(\vec \xi, \eta))^{-1} (\Lcal_y \Fcal_{\vec x'} f)(\vec \xi', \ii y)
  \]
 since by definition $(\Ecal_0 f)(\vec x', y) = 0$ for $y < 0$.
 (By $\Fcal_{\vec x'}$ and $\Fcal_{\vec x', y}$ we mean the Fourier transform w.r.t.\ $\vec x'$ and $\vec x = (\vec x', y)$, respectively.)
 Thus,
  \begin{align*}
   \Bcal_{j,k}(\vec \xi', \eta) \Fcal_{\vec x',y} w^\mathrm{fs}(\vec \xi',\eta)
    &= \Fcal_{\vec x',y} \big( \Bcal_{j,k}(\bb D) w^\mathrm{fs} \big)(\vec \xi',\eta)
    \\
    &= \Bcal_{j,k}(\vec \xi',\eta) (\lambda + \Acal(\vec \xi',\eta))^{-1} \Fcal_{\vec x',y} (\Ecal_0 f)(\vec \xi', \eta)
    \\
    &= \int_{\R} \Bcal_{j,k}(\vec \xi',\eta) (\lambda + \Acal(\vec \xi',\eta))^{-1} \Fcal_{\vec x'} (\Ecal_0 f) (\vec \xi', y) \ee^{-\ii \eta y} \, \dd y.
  \end{align*}
 $\Bcal_{j,k}(\vec \xi', \DD_y) \Fcal_{\vec x'} w^\mathrm{fs}(\vec \xi',0)$ can, thus, be obtained by taking the inverse Laplace transform
  \begin{align*}
   \Bcal_{j,k}(\vec \xi', \DD_y) \Fcal_{\vec \xi'} w^\mathrm{fs}(\vec \xi',0)
    &= \int_0^\infty \int_{\R} \Bcal_{j,k}(\vec \xi',\eta) (\lambda + \Acal(\vec \xi',\eta))^{-1} \ee^{-\ii \eta y} (\Fcal_{\vec x'} \Ecal_0 f)(\vec \xi', y) \, \dd y \, \dd \eta.
  \end{align*} 
  For the last of the three sub-problems, the crucial point is to derive an integral representation of the boundary values $\Fcal g_j^\mathrm{corr}(\vec \xi',0) = (\Bcal_j(\DD) \Fcal P_{\R_+^{n+1}} (\lambda + A_{\R^{n+1}})^{-1} \Ecal_0 f)(\vec \xi',0)$ as $\Fcal g_j^\mathrm{corr}(\vec \xi',0) = \sum_{k=0}^{m_j} \Fcal g^\mathrm{corr}_{j,k}(\vec \xi,0)$ with
   \begin{align*}
    \Fcal g^\mathrm{corr}_{j,k}(\vec \xi',0)
     &= \int_0^\infty h^\mathrm{corr}_{j,k}(\vec \xi',s) (\Fcal f)(\vec \xi',s) \, \dd s
     \\
    h^\mathrm{corr}_{j,k}(\vec \xi',s)
     &= \int_{\R} \Bcal_j(\vec \xi',\eta) \Pcal_{j,k} (\lambda + \Acal(\vec \xi',\eta))^{-1} \ee^{-\ii \eta s} \, \dd \eta,
     \quad
     1 \leq j \leq m, \, 0 \leq k \leq m_j < 2m
   \end{align*}
 and to ensure that the kernels $h^\mathrm{corr}_{j,k}$ have good decay properties.
 \end{proof}
 
 \subsection{Kernel estimates}
 To actually solve the elliptic boundary value problem \eqref{eqn:EBVP} we may now use the kernel representation of the solution corresponding to the boundary value problem in Fourier space, and perform the inverse Laplace--Fourier transform.
 To check that the solution operators defined by this procedure are at least continuous on $\LL_p(\R_+^{n+1}; E)$, we may employ theory of integral operators.
 Roughly speaking, the following is true:
 If a linear operator on $\LL_p(\R_+^{n+1};E)$ has a kernel representation,  estimates on the kernel imply estimates on its operator norm.
 For a precise formulation of this statement, see e.g.\ \cite[Lemma 6.7]{DeHiPr03}.
 Its proof relies on Young's and H\"older's inequality.
 
 \begin{lemma}
  Let $p, p' \in (1,\infty)$ such that $\frac{1}{p} + \frac{1}{p'} = 1$ and $E$ be a Banach space and $k \in \LL_{p,p',1}(\R^n \times (0,\infty) \times (0,\infty))$.
  Then,
   \[
    (K f)(\vec x)
     := \int_0^\infty \int_{\R^n} k(\vec x' - \bar {\vec x}', y, \bar y) f(\bar {\vec x}', \bar y) \, \dd \bar {\vec x}' \, \dd \bar y
     \quad
     \text{for } f \in \LL_p(\R^{n+1};E), \, \text{a.e.\ } \vec x = (\vec x', y) \in \R_+^{n+1}
   \]
  defines a bounded linear operator $K \in \B(\LL_p(\R_+^{n+1};E))$ with operator norm which is less or equal the norm $\norm{k}_{p,p',1}$ of the kernel $k$.
 \end{lemma}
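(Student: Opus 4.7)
The plan is to derive the $\LL_p \to \LL_p$ boundedness of $K$ from three classical ingredients: Young's convolution inequality exploits the translation-invariance of the kernel in the tangential variable $\vec x'$, Minkowski's integral inequality allows one to exchange an outer $\LL_p$-norm with an integral, and H\"older's inequality then handles the remaining integrations in the normal variables $(y, \bar y)$. At the outset I would reduce to the scalar, non-negative case: by the pointwise bound $\norm{k(\vec z, y, \bar y) f(\bar{\vec x}', \bar y)}_E \leq \norm{k(\vec z, y, \bar y)}_{\B(E)} \norm{f(\bar{\vec x}', \bar y)}_E$, the $E$-valued Bochner integral defining $Kf$ is well-defined as soon as the corresponding scalar estimate is established. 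It therefore suffices to work with the scalar functions $\kappa(\vec z, y, \bar y) := \norm{k(\vec z, y, \bar y)}_{\B(E)}$ and $F(\bar{\vec x}', \bar y) := \norm{f(\bar{\vec x}', \bar y)}_E$.

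For each fixed pair $(y, \bar y) \in (0, \infty)^2$, the inner integral in $\bar{\vec x}'$ is a convolution in the tangential variable, so Young's convolution inequality yields
\[
  \norm{\int_{\R^n} \kappa(\vec x' - \bar{\vec x}', y, \bar y) F(\bar{\vec x}', \bar y) \, \dd \bar{\vec x}'}_{\LL_p(\R^n, \, \dd \vec x')}
   \leq \norm{\kappa(\cdot, y, \bar y)}_{\LL_1(\R^n)} \norm{F(\cdot, \bar y)}_{\LL_p(\R^n)}.
\]
Applying Minkowski's integral inequality to bring the outer $\LL_p$-norm in $\vec x'$ past the $\bar y$-integration in the definition of $Kf$, and combining with the Young bound, gives $\norm{Kf(\cdot, y)}_{\LL_p(\R^n;E)} \leq \int_0^\infty K_\kappa(y, \bar y) G(\bar y) \, \dd \bar y$, where $K_\kappa(y, \bar y) := \norm{\kappa(\cdot, y, \bar y)}_{\LL_1(\R^n)}$ and $G(\bar y) := \norm{F(\cdot, \bar y)}_{\LL_p(\R^n)} = \norm{f(\cdot, \bar y)}_{\LL_p(\R^n; E)}$.

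To conclude, H\"older's inequality in $\bar y$ with the conjugate pair $(p, p')$ bounds the right-hand side by $\norm{K_\kappa(y, \cdot)}_{\LL_{p'}(0,\infty)} \norm{G}_{\LL_p(0,\infty)}$, and taking the $\LL_p$-norm in $y$ produces
\[
  \norm{Kf}_{\LL_p(\R_+^{n+1};E)}
   \leq \norm{K_\kappa}_{\LL_p((0,\infty); \, \LL_{p'}(0,\infty))} \cdot \norm{f}_{\LL_p(\R_+^{n+1};E)}.
\]
The iterated norm of $K_\kappa$ on the right is precisely $\norm{k}_{p,p',1}$ once the indexing convention of the mixed-norm space $\LL_{p,p',1}(\R^n \times (0,\infty) \times (0,\infty))$ is identified with the order in which the norms are stacked (namely, $\LL_1$ in $\vec z$ innermost, then $\LL_{p'}$ in $\bar y$, then $\LL_p$ in $y$). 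The only real task beyond the three classical inequalities is the bookkeeping needed to match this stacking order with the notation in the statement; I do not foresee any essential analytic obstacle.
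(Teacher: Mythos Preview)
Your proposal is correct and follows precisely the approach the paper indicates: the paper does not give a detailed proof but states that it ``relies on Young's and H\"older's inequality'' and refers to \cite[Lemma 6.7]{DeHiPr03}, which is exactly the combination you employ (with Minkowski's integral inequality as the natural device for swapping norms and integrals). Your caveat about matching the stacking order of the mixed norm $\LL_{p,p',1}$ to the convention in the statement is well placed and is indeed only bookkeeping.
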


 For the kernel functions considered here, it proves helpful to define (analogously to \cite[Section 6.3]{DeHiPr03}) weighted versions of the functions $\Fcal \vec g_\rho^\mathrm{hs}$ and the semigroup associated to $\ii \rho \bb A_0(\vec b, \sigma)$ according to
  \begin{align*}
   \Fcal \vec g_\rho^{2m}(\vec \xi',0)
    &= \rho^{2m} \Fcal \vec g_\rho^\mathrm{hs}(\vec \xi',0)
    \in E^m
    &&\text{for } \vec \xi' \in \R^n, \, \rho > 0,
    \\
   \Fcal \bb V_\rho^{2m}(\vec \xi',y,\lambda)
    &= \frac{1}{\rho^{2m}} \ee^{\ii \rho \bb A_0(\vec b,\sigma) y} \bb M(\vec b, \sigma)
    \in \B(E^m,E^{2m})
    &&\text{for } \vec \xi' \in \R^n, \, y > 0 \text{ and } \rho > 0.
  \end{align*} 
 By formula \eqref{eqn:First_Order_System_in_y_scaled}, the identity
  \[
   \Fcal \vec v^\mathrm{hs}(\vec \xi',y,\lambda)
    = \Fcal \bb V_\rho^{2m}(\vec \xi',y,\lambda) \Fcal \vec g_\rho^{2m}(\vec \xi',0)
    \in E^{2m}
  \]
 is valid.
 Further, let us denote by
  \begin{equation}
   \bb K^\lambda(\vec x',y)
    = \Fcal^{-1} (\Fcal \bb V_\rho^{2m})(\vec x',y)
    = \frac{1}{(2\pi)^n} \int_{\R^n} \ee^{\ii \vec x' \cdot \vec \xi'} \Fcal \bb V_\rho^{2m}(\vec \xi',y,\lambda) \, \dd \vec \xi'
    \in \B(E^{2m}),
    \quad
    \vec x' \in \R^n, \, y > 0
    \label{eqn:Def_K_lambda}
  \end{equation}
 the inverse (partial) Fourier transform (w.r.t.\ $\vec \xi' \in \R^n$) of the $\Bcal(E)$-valued function $\Fcal \bb V_\rho^{2m}$.
 Then, the solution to the elliptic boundary value problem
  \begin{equation*}
   \begin{cases}
    (\lambda + \Acal(\DD)) w^\mathrm{hs}
     = 0
     &\text{in } \R_+^{n+1},
     \\
    \Bcal_j(\DD) w^\mathrm{hs}
     = g_j
     &\text{on } \partial \R_+^{n+1}, \, j = 1, \ldots, m
   \end{cases}
  \end{equation*}
 is (formally) given by
  \[
   w^\mathrm{hs}(\lambda, \vec x', y)
    = \int_{\R^n} [\bb K^\lambda(\vec x-\vec x',y)]_1 \Fcal^{-1} \vec g_\rho^{2m}(\vec x') \, \dd \vec x'
    \quad
    \text{for } \lambda \in \Sigma_{\pi-\phi}, \, \vec x' \in \R^n, \, y > 0
  \]
 where we denote by $[\bb K^\lambda(\vec x',y')]_1$ the first row of $\bb K^\lambda(\vec x',y')$.
  Here, $\Fcal \vec g_\rho^{2m}(\vec \xi', y) \in E^{2m}$ with components
  \[
   (\Fcal \vec g_\rho^{2m}(\vec \xi', y))_j
    = \sum_{k=0}^{m_j} \rho^{2m-k} \Fcal g_{j,k}(\vec \xi',y)
    = \sum_{k=0}^{m_j} ( \abs{\xi'}^{2m} + \abs{\lambda})^{(2m-k)/2m} \Fcal g_{j,k}(\vec \xi',y).
  \]
 Since $(\abs{\xi'}^{2m} + \abs{\lambda})^{(2m-k)/2m}$ is the Fourier symbol of the pseudo-differential operator $((- \Delta_{\R^n})^m + \abs{\lambda})^{(2m-k)/2m}$ for the realisation of the Laplacian $\Delta_{\R^n}$ in $\LL_p(\R^n;E)$, we infer that
  \[
   \Fcal^{-1} (\Fcal \vec g_\rho^{2m})_j(\vec x', y)
    = \sum_{k=0}^{m_j} ( (- \Delta_{\R^n})^m + \abs{\lambda})^{(2m-k)/2m} g_{j,k}(\vec x',y).
  \]
  Denoting by $K^\lambda_j := (\bb K^\lambda)_{1,j}$ the components $(1,j)$ of the inverse Fourier transform $\bb K^\lambda$ of the operator-valued function $\Fcal \bb V_\rho^{2m}$ as defined by \eqref{eqn:Def_K_lambda} and $K^\lambda_{j,k} := K^\lambda_j \Pcal_{j,k}$, $j = 1, \ldots, m$ and $k = 0, 1, \ldots, m_j$, we observe the following:
  \[
   \Fcal \vec v^\mathrm{hs}
    = \Fcal \bb V_\rho^{2m} \Fcal \vec g_\rho^{2m},
    \quad \text{then} \quad
   \vec v^\mathrm{hs}
    = \bb K^\lambda \ast \Fcal^{-1} (\Fcal \vec g_\rho^{2m}).
  \]
 With the help of Lemma \ref{lem:Lem-2.6}, we may now derive kernel estimates.

 \begin{proposition}
 \label{prop:kernel_estimates_half-space}
  Let $\Acal(\DD)$ be a parameter-elliptic operator of order $2m$ with angle of ellipticity $\phi_\Acal^\mathrm{ellipt} \in [0,\pi)$.
  Let $\phi > \phi_\Acal^\mathrm{ellipt}$ and assume that the Lopatinskii-Shapiro condition is valid.
  Then, for every multi-index $\vec \alpha \in \N_0^{n+1}$, there are constants $M,c > 0$ such that
   \begin{align*}
    \abs{\DD^{\vec \alpha} \bb K^\lambda(\vec x',y)}
     &\leq M \cdot \abs{\lambda}^{\frac{n-2m+\abs{\vec \alpha}}{2m}} p_{2m,\abs{\vec \alpha}-1}^{n+1} \left( c \abs{\lambda}^{1/(2m)} \left( \abs{\vec x'} + \abs{y} \right) \right),
     \quad
     \vec x' \in \R^n, \, y > 0, \, \lambda \in \Sigma_{\pi-\phi}.
   \end{align*}
 \end{proposition}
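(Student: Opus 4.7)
The plan is to mirror the kernel-estimate proof of \cite[Proposition 6.4]{DeHiPr03}, adjusting for the mixed-order boundary symbols via the extra weights $\rho^{2m-k}$ hidden in $\bb V_\rho^{2m}$ and the projections $\Pcal_{j,k}$. The starting point is the representation
\[
 \bb K^\lambda(\vec x', y)
  = \frac{1}{(2\pi)^n} \int_{\R^n} \ee^{\ii \vec x' \cdot \vec \xi'} \frac{1}{\rho^{2m}} \ee^{\ii \rho \bb A_0(\vec b, \sigma) y} \bb M(\vec b, \sigma) \, \dd \vec \xi',
\]
together with the holomorphy of $(\vec b, \sigma) \mapsto \bb M(\vec b, \sigma)$ on the complex neighbourhood $\tilde D$ of $(\R^n \times \overline{\Sigma_{\pi-\phi}}) \setminus \{\vec 0\}$ provided by Proposition~\ref{prop:EBVP_half_space_solution}, and the uniform spectral gap \eqref{eqn:Spectral_Gap_S+}--\eqref{eqn:Spectral_Gap_S-} for $\ii \bb A_0(\vec b, \sigma)$.

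First I would perform a scaling reduction. Substituting $\vec \xi' = \abs{\lambda}^{1/(2m)} \vec \zeta$ and using the identities $\rho = \abs{\lambda}^{1/(2m)} (1+\abs{\vec \zeta}^{2m})^{1/(2m)}$, $\vec b = \vec \zeta / (1+\abs{\vec \zeta}^{2m})^{1/(2m)}$, $\sigma = \sigma_0/(1+\abs{\vec \zeta}^{2m})$ with $\sigma_0 = \lambda/\abs{\lambda}$, one obtains
\[
 \bb K^\lambda(\vec x', y)
  = \abs{\lambda}^{n/(2m) - 1} \tilde{\bb K}^{\sigma_0}\bigl(\abs{\lambda}^{1/(2m)} \vec x', \abs{\lambda}^{1/(2m)} y\bigr),
\]
where $\tilde{\bb K}^{\sigma_0}$ depends only on the unit vector $\sigma_0$ and covers a compact set in the angle variable. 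Taking $\DD^{\vec \alpha}$ with $\abs{\vec \alpha} = \nu$ and rescaling then produces exactly the prefactor $\abs{\lambda}^{(n-2m+\nu)/(2m)}$ and reduces the claim to showing, for the rescaled variables $(\vec x', y)$, a bound of the form $C \cdot p_{2m,\nu-1}^{n+1}(c(\abs{\vec x'}+y))$ uniformly for $\sigma_0 \in \SS^1 \cap \overline{\Sigma_{\pi-\phi}}$.

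The key analytic step is the contour deformation. Since $\bb M(\vec b, \sigma)$ and the resolvent–like expressions entering $\bb A_0$ are jointly holomorphic in a complex neighbourhood of the real variable $\vec \zeta$, for any direction $\vec \omega \in \R^n$ with $\abs{\vec \omega}$ sufficiently small one may shift the contour of integration $\R^n \ni \vec \zeta \leadsto \R^n + \ii \vec \omega$, picking up no residue. Choosing $\vec \omega$ proportional to $-\vec x'/\abs{\vec x'}$ yields an exponential factor $\ee^{-\abs{\vec \omega}\abs{\vec x'}}$ that gives the spatial decay; the spectral gap \eqref{eqn:Spectral_Gap_S-}, after noting that the constraint $P_+(\vec b,\sigma) \Fcal \vec v(\vec \xi',0,\lambda) = 0$ eliminates the unstable modes so that only $\ee^{\ii \rho \bb A_0} P_-$ survives, yields exponential decay $\ee^{-c\rho y}$, uniform on the deformed contour. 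Combining both, writing the resulting integrand as an integral over $(0,\infty)$ against a radial variable on $\S^{n-1}$ and recognising the combination $\abs{\vec x'}+y$ in the exponent, one obtains integrals of the shape $\int_0^\infty s^{n-1}(1+s)^{-(2m-1-\nu)}\ee^{-c(1+s)(\abs{\vec x'}+y)} \, \dd s$, which is, by definition, a multiple of $p_{2m,\nu-1}^{n+1}(c(\abs{\vec x'}+y))$, with Lemma~\ref{lem:Lem-2.6} supplying the reduction from an $n$-dimensional radial integration to the one-dimensional $p$-function.

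The main obstacle, compared to the homogeneous case of \cite{DeHiPr03}, will be verifying the uniform boundedness and contour-deformation estimates in the presence of the mixed differentiability orders: the weights $\rho^{2m-k}$ built into $\bb V_\rho^{2m}$ mix with the projections $\Pcal_{j,k}$ in $\bb M(\vec b,\sigma)$, and one has to track that $\bb M(\vec b,\sigma)$ remains uniformly bounded (with good holomorphic extension) in a neighbourhood of $\partial \tilde D$ even when some $\rho^{-k}$ factors might, a priori, be singular; this is where the range conditions $b_{j,k,\vec \beta}(\ran \Pcal_{j,k}) \subseteq \ran \Pcal_{j,k}$ and the projections $P_\pm(\vec b,\sigma)$ do the heavy lifting, as already prepared in Proposition~\ref{prop:EBVP_half_space_solution}. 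Once these uniform bounds are in place, the remaining calculus is exactly the same as in the homogeneous case and the proof closes.
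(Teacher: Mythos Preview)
Your overall architecture (representation via $\bb V_\rho^{2m}$, holomorphy of $\bb M$, spectral gap for the semigroup, contour deformation) matches the paper, but the contour deformation you propose is too crude and does not close. A constant imaginary shift $\R^n \leadsto \R^n + \ii \vec\omega$ yields the factor $\ee^{-\abs{\vec\omega}\,\abs{\vec x'}}$, which is \emph{constant} in the integration variable $\vec\zeta$, whereas the semigroup contributes $\ee^{-c\rho y}$ with $\rho = \rho(\vec\zeta)$. These two do \emph{not} combine to $\ee^{-c\rho(\abs{\vec x'}+y)}$; you end up with a bound of the form $\ee^{-\varepsilon\abs{\vec x'}}\, p_{2m,\nu-1}^{n+1}(cy)$, which blows up as $y\to 0$ (for $\nu$ large enough that $p_{2m,\nu-1}^{n+1}$ is singular at the origin) even when $\abs{\vec x'}>0$, so it cannot dominate the claimed bound $p_{2m,\nu-1}^{n+1}\bigl(c(\abs{\vec x'}+y)\bigr)$. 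The paper fixes this by first rotating so that $\vec x'$ points in the first coordinate direction, writing $\bb Q\vec\xi' = (a,r\vec\varphi)$, and then deforming only the one-dimensional $a$-integral along the path $\gamma_\varepsilon(s) = s + \ii\varepsilon\bigl(\abs{\lambda}+(r^2+s^2)^m\bigr)^{1/2m}$, i.e.\ with imaginary part proportional to $\rho$. This is exactly what produces $\ee^{-\varepsilon\rho\abs{\vec x'}}$ matching $\ee^{-c\rho y}$, and after integrating out $r$ and $s$ one lands on $p_{2m,\nu-1}^{n+1}\bigl(c\abs{\lambda}^{1/2m}(\abs{\vec x'}+y)\bigr)$ directly, with no appeal to Lemma~\ref{lem:Lem-2.6} needed at this stage.

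A secondary point: your ``main obstacle'' is a misreading of the definitions. The weights $\rho^{2m-k}$ sit in $\Fcal\vec g_\rho^{2m}$, not in $\Fcal\bb V_\rho^{2m} = \rho^{-2m}\ee^{\ii\rho\bb A_0(\vec b,\sigma)y}\bb M(\vec b,\sigma)$; hence the kernel $\bb K^\lambda = \Fcal^{-1}(\Fcal\bb V_\rho^{2m})$ carries no $k$-dependent weights at all. The mixed-order structure enters only through the construction of $\bb M(\vec b,\sigma)$ in Proposition~\ref{prop:EBVP_half_space_solution}, and once holomorphy and uniform boundedness of $\bb M$ on the relevant compact set are established there, the kernel estimate proceeds exactly as in \cite[Proposition~6.5]{DeHiPr03} with no further adjustment for the projections $\Pcal_{j,k}$.
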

 
 \begin{proof}
  We proceed as in the proof of \cite[Propositions 6.5 \& 6.6]{DeHiPr03}.
  The proof heavily relies on theory of complex functions and, in particular, Cauchy's Theorem.
  We use that the identity
   \[
    \Fcal \vec v^\mathrm{hs}(\vec \xi',y,\lambda)
     = \Fcal \bb V_\rho^{2m}(\vec \xi',y,\lambda) \Fcal \vec g_\rho^{2m}(\vec \xi',0),
     \quad
     \vec \xi' \in \R^n, \, y > 0, \, \lambda \in \Sigma_{\pi-\phi}
   \]
  is valid independent of the particular choice of $\rho > 0$, so that later on, we may choose variables $(a,r)$ and then choose $\rho$ depending on $(a,r)$ as $\rho = (\abs{\lambda} + \abs{\vec \xi'}^{2m})^{1/2m} = (\abs{\lambda} + (a^2 + r^2)^m)^{\nicefrac{1}{2m}}$ for $\lambda \in \Sigma_{\pi-\phi}$ and $\vec \xi' \in \R^n$ such that $\abs{\lambda} + \abs{\vec \xi'} > 0$.
 Fix $\vec x' \in \R^n$ and let $\bb Q \in \R^{n \times n}$ be the (uniquely determined) rotation in $\R^n$ which maps the vector $\vec x'$ to a positive multiple of the first standard basis vector $\bb Q \vec x' = (\abs{\vec x'},0, \ldots, 0)^\mathsf{T} =: \abs{\vec x'} \vec e_1$. Using spherical coordinates in $\R^n$, we then write $\bb Q \vec \xi' = (a,r \vec \varphi)^\mathsf{T}$ where $a \in \R$, $r > 0$ and $\vec \varphi \in \S^{n-2} = \{\vec z' \in \R^{n-1}: \, \abs{\vec z'} = 1\}$ (the $n-2$-dimensional sphere), hence, $a^2 + r^2 = \abs{\vec \xi'}^2$. 
 Since, by convention, $\sigma = \tfrac{\lambda}{\rho^{2m}}$ and $\vec b = \tfrac{\vec \xi'}{\rho}$, the transformation rule will help us to estimate the kernels and their derivatives:
 Using orthogonality of the rotation matrix $\bb Q$ and the transformation rule, the integral kernel operator $\bb K^\lambda = \Fcal^{-1} (\bb V_\rho^{2m})$ can be written as
  \begin{align*}
   &\bb K^\lambda(\vec x',\lambda)
    \\
    &= \frac{1}{(2\pi)^n} \int_{\R^n} \ee^{\ii \vec x' \cdot \vec \xi'} \Fcal \bb V_\rho^{2m}(\vec \xi',y,\lambda) \, \dd \vec \xi'
    \\
    &= \frac{1}{(2\pi)^n} \int_{\R^n} \ee^{\ii \bb Q \vec x' \cdot \bb Q \vec \xi'} \ee^{\ii \rho \bb A_0(\vec b,\sigma) y} \bb M(\vec b,\sigma) \frac{1}{\rho^{2m}} \, \dd \vec \xi'
    \\
    &= \frac{1}{(2\pi)^n} \int_{\S^{n-2}} \int_0^\infty r^{n-2} \int_{-\infty}^\infty \ee^{\ii \abs{\vec x'} a} \ee^{\ii \rho \bb A_0(\bb Q^\mathsf{T} (\tfrac{a}{\rho}, \tfrac{r}{\rho} \vec \varphi), \tfrac{\lambda}{\rho^{2m}}) y} \bb M(\bb Q^\mathsf{T}(\tfrac{a}{\rho}, \tfrac{r}{\rho} \vec \varphi), \tfrac{\lambda}{\rho^{2m}}) \frac{1}{\rho^{2m}} \, \dd a \, \dd r \, \dd \sigma(\vec \varphi)
    \\
    &= \frac{1}{(2\pi)^n} \int_{\S^{n-2}} \int_0^\infty r^{n-2} \int_{-\infty}^\infty \ee^{\ii \abs{\vec x'} a} \frac{1}{(\abs{\lambda} + (a^2 + r^2)^m)^2} \bb F_1(\lambda,a,r,\vec \varphi, y) \bb F_2(\lambda,a,r,\vec \varphi) \, \dd a \, \dd r \, \dd \sigma(\vec \varphi)
  \end{align*}  
  for operator-valued functions $\bb F_1$ and $\bb F_2$ which are defined by
  \begin{align*}
    \bb F_1(\lambda,a,r,\vec \varphi, y)
     &= \ee^{\ii (\abs{\lambda} + (a^2 + r^2)^m)^{1/2m} \bb A_0((\abs{\lambda} + (a^2 + r^2)^m)^{-\nicefrac{1}{2m}} \bb Q^\mathsf{T}(a,r \vec \varphi), \lambda (\abs{\lambda} + (a^2 + r^2)^m)^{-1}) y} \in \B(E^{2m}),
     \\
    \bb F_2(\lambda,a,r,\vec \varphi)
     &= \bb M((\abs{\lambda} + (a^2 + r^2)^m)^{\nicefrac{1}{2m}} \bb Q^\mathsf{T}(a, r \vec \varphi), \lambda (\abs{\lambda} + (a^2 + r^2)^m)^{-\nicefrac{1}{2m}})
     \in \B(E^m; E^{2m})
   \end{align*}
  for $\lambda \in \overline{\Sigma_{\pi - \phi}}$, $a \in \R$, $r \geq 0$, $\varphi \in \S^{n-1}$ and $y > 0$.
  The inner integrand can be interpreted as a function depending holomorphically on $a \in \C$ as the maps $\bb M(\vec b,\sigma)$ depend holomorphically on $(\vec b, \sigma)$, and $\bb M(\vec b, \sigma)$ maps into the stable part (corresponding to $S_-(\vec b, \sigma)$) of $\big( \ee^{\ii \rho \bb A_0(\vec b, \sigma) y} \big)_{y \geq 0}$.
  Thus, we may and will replace the inner real integral $\int_{-\infty}^\infty \ldots \, \dd a$ by a complex contour integral $\int_{\Gamma_\varepsilon} \ldots \, \dd \omega$ for a complex path $\Gamma_\varepsilon$ which is parametrised by
   \[
    \gamma_\varepsilon(s)
     = s + \ii \varepsilon \left( \abs{\lambda} + (r^2 + s^2)^m \right)^{1/2m},
     \quad
     s \in (-\infty,\infty)
   \]
  for some (sufficiently small) parameter $\varepsilon > 0$.
  \begin{figure}
	\centering
	\includegraphics[scale = 0.75]{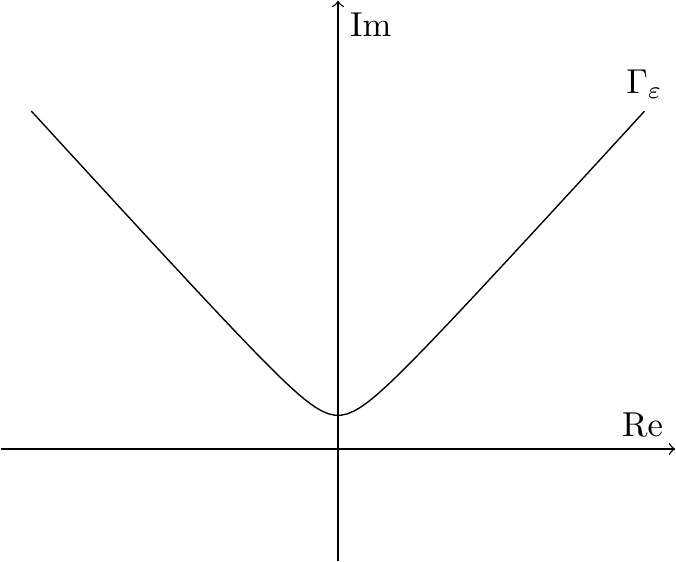}
	\caption{The path $\Gamma_\varepsilon$.}
	\label{fig:diagram_path}
 \end{figure}
  Hence,
  \begin{align*}
   \bb K^\lambda(\vec x',\lambda)
    &= \frac{1}{(2\pi)^n} \int_{\S^{n-2}} \int_0^\infty r^{n-2} \int_{\Gamma_\varepsilon} \ee^{\ii \abs{\vec x'} a} \frac{1}{(\abs{\lambda} + (a^2 + r^2)^m)^2}
     \\
    &\qquad \cdot \bb F_1(\lambda,a,r,\vec \varphi, y) \bb F_2(\lambda,a,r,\vec \varphi) \, \dd a \, \dd r \, \dd \sigma(\vec \varphi).
  \end{align*}
 We proceed just as in the proof of \cite[Proposition 6.5]{DeHiPr03}.
 First, we find that
  \[
   \abs{\ee^{\ii \abs{\vec x'} \gamma_\varepsilon(s)}}
    = \ee^{- \abs{\vec x'} \Im (\gamma_\varepsilon(s))}
    = \ee^{-\varepsilon \abs{\vec x'} (\abs{\lambda} + (r^2 + s^2)^m)^{1/2m}}.
  \]
 Moreover, we may estimate the function $\bb F_1$ as follows:
  \[
   \norm{\bb F_1(\lambda, \gamma_\varepsilon(s), r, \vec \varphi, y)}_{\B(E^n)}
    \leq M \ee^{- c_1 y}.
  \]
 
 In fact, from $\sup \{\Re \mu: \mu \in S_-(\vec b, \sigma)\} < - c_1 < 0$ it follows that for every $(\vec b, \sigma) \in \S^{n-1} \times \Sigma_{\pi - \phi}$ with $\abs{\sigma} \leq 1$, there is a constant $M_{\vec b, \sigma} \geq 1$ such that
  \[
   \norm{\ee^{\ii \bb A_0(\vec b, \sigma) y} P_-(\vec b,\sigma)}_{\B(E^n)}
    \leq M_{\vec b, \sigma} \ee^{- c_1 y}
    \quad
    \text{for all } y \geq 0.
  \]
 Here, $\bb A_0(\vec b, \sigma) \in \B(E^{2m})$ is a continuous linear operator and depends continuously on the parameters $(\vec b, \sigma) \in \S^{n-1} \times \Sigma_{\pi - \phi}$.
 Therefore, each (uniformly continuous) semigroup $\big( \ee^{\ii \bb A_0(\vec b, \sigma)y} \big)_{y \geq 0} \subseteq \B(E^n)$ satisfies the spectral-determined growth bound condition and, hence, by \eqref{eqn:Spectral_Gap_S+} its restriction to the image under the spectral projection $P_-(\vec b, \sigma)$ on the stable part has a growth bound which is strictly less than $- c_1$.
 Moreover, as $\bb A_0(\vec b, \sigma)$ depends holomorphically on the parameters $(\vec b, \sigma)$, so do the (uniformly continuous) semigroups $\big( \ee^{\ii A_0(\vec b, \sigma) y} \big)_{y \geq 0}$.
 Thus, the constant $M_{\vec b, \sigma}$ can be chosen uniformly as $M \geq 1$ for all $(\vec b, \sigma) \in \S^{n-1} \times \Sigma_{\pi - \phi}$ with $\abs{\sigma} \leq 1$.
 This establishes the estimate on $\bb F_1$.
 \newline
 For the function $\bb F_2$, observe that
  \[
   \norm{\bb F_2(\lambda, a, r, \varphi)}_{\B(E^n)}
    = \norm{ \bb M( (\abs{\lambda} + (a^2 + r^2)^m)^{-1/2m} \bb Q^\mathsf{T}(a,r \vec \varphi), \lambda ( \abs{\lambda} + (a^2 + r^2)^m)^{-1})}_{\B(E^n)},
  \]
 where $(\abs{\lambda} + (a^2 + r^2)^m)^{-1/2m} \bb Q^\mathsf{T}(a,r \vec \varphi) \in \R^n$ has norm less or equal $1$ and $\lambda ( \abs{\lambda} + (a^2 + r^2)^m)^{-1} \in \Sigma_{\pi-\phi}$ has modulus less or equal $1$.
 Since the map $(\vec b, \sigma) \mapsto \bb M(\vec b, \sigma) \in \B(E^n)$ is holomorphic,
  \[
   \sup_{(\lambda,a,r,\vec \varphi)} \norm{\bb F_2(\lambda, a, r, \vec \varphi)}_{\B(E^n)}
    < \infty,
  \]
 where we take the supremum over all admissible values for $(\lambda,a,r,\vec \varphi)$.
 For the calculations following next, let us remark that the functions
  \[
   \rho(\lambda, s, r)
    = \big( \lambda + (s^2 + r^2)^m \big)^{1/2m}
    \quad \text{and} \quad
   \tilde \rho(\lambda, s, r)
    = \big( \lambda^{1/m} + s^2 + r^2 \big)^{1/2}
  \]
 on $\R_+ \times \R \times \R$
 (the latter having been the choice employed in \cite{DeHiPr03}) are comparable in the sense that there is a constant $c > 0$ such that
  \[
   \frac{1}{c} \tilde \rho(\lambda, r, s)
    \leq \rho(\lambda, r, s)
    \leq c \tilde \rho(\lambda, r, s).
  \]
 This can be seen, e.g., by employing the binomial formula to estimate
  \[
   \tilde \rho(\lambda, s, r)^{2m}
    = \sum_{k=0}^{m} \binom{m}{k} \abs{\lambda}^{k/m} (s^2 + r^2)^{m-k}
    \leq C (\abs{\lambda} + (s^2 + r^2)^m)
    = \rho(\lambda, s, r)^{2m}
  \]
 and the trivial estimate
  \[
   \rho(\lambda, s, r)^2
    = \big( \abs{\lambda} + (s^2 + r^2)^m \big)^{1/m}
    \leq \big( 2 \abs{\lambda} \big)^{1/m} + \big( 2 (s^2 + r^2)^m \big)^{1/m}
    = 2^{1/m} \tilde \rho(\lambda, s, r)^2.
  \]
 We may, therefore, readily estimate $\DD^{\vec \alpha} \bb K^\lambda(\vec x', y)$ for every multi-index $\vec \alpha = (\vec \alpha', \alpha_{n+1}) \in \N_0^{n+1}$:
  \begin{align*}
   &\norm{\DD^{\vec \alpha} \bb K^\lambda(\vec x', y)}_{\B(E^{2m})}
    \\
    &= \norm{ \frac{1}{(2 \pi)^n} \int_{\R^n} (\vec \xi')^{\vec \alpha'} \ee^{\ii \vec x' \cdot \vec \xi'} \DD_y^{\alpha_{n+1}} (\Fcal \bb V_\rho^{2m}(\vec \xi', y, \lambda)) \, \dd \vec \xi'}_{\B(E^n)}
    \\
    &= \norm{ \frac{1}{(2\pi)^n} \int_{\S^{n-2}} \int_0^\infty r^{n-2} \int_{\Gamma_\varepsilon} \ee^{\ii \abs{\vec x'} a} (\rho \vec b)^{\vec \alpha'} \frac{1}{\abs{\lambda} + (s^2 + r^2)^m} (\rho \bb A_0(\vec b, \sigma))^{\alpha_{n+1}} \bb F_1 \bb F_2 \, \dd a \, \dd r \, \dd \S^{n-2}(\vec \varphi)}
    \\
    &\leq \frac{C}{(2\pi)^n} \int_{\S^{n-2}} \, \dd \S^{n-2}(\vec \varphi) \int_0^\infty r^{n-2} \int_{\Gamma_\varepsilon} \frac{1}{(\abs{\lambda} + (s^2 + r^2)^m)^{\frac{2m - \abs{\vec \alpha}}{2m}}} \ee^{- (\abs{\lambda} + (s^2 + r^2)^m)^{1/2m} \cdot (\varepsilon \abs{\vec x'} + c_1 y)} |\dd a| \, \dd r
    \\
    &\leq C_{\varepsilon_0} \int_0^\infty r^{n-2} \int_0^\infty \ee^{- ( \abs{\lambda} + (s^2 + r^2)^m)^{1/2m} \cdot (\varepsilon \abs{\vec x'} + c_1 y)} \cdot \frac{1}{(\abs{\lambda} + (s^2 + r^2)^m)^{\frac{2m - \abs{\vec \alpha}}{2m}}} \, \dd s \, \dd r
    \\
    &\leq C_{\varepsilon_0} \int_0^\infty r^{n-2} \int_r^\infty \ee^{- c ( \abs{\lambda}^{1/m} + (\tau - r)^2 + r^2)^{1/2} \cdot (\varepsilon \abs{\vec x'} + c_1 y)} \cdot \frac{1}{(\abs{\lambda}^{1/2m} + \tau)^{2m - \abs{\vec \alpha}}} \, \dd \tau \, \dd r
    \\
    &\leq C_{\varepsilon_0} \int_0^\infty r^{n-2} \int_r^\infty \ee^{- c \varepsilon \cdot (\abs{\lambda}^{1/2m} + \tau) \cdot \abs{\vec x'}} \ee^{- c c_1 (\abs{\lambda}^{1/2m} + \tau) y} \cdot  \frac{1}{(\abs{\lambda}^{1/2m} + \tau)^{2m - \abs{\vec \alpha}}} \, \dd \tau \, \dd r
    \\
    &\leq C_{\varepsilon_0} \int_0^\infty r^{n-2} \int_r^\infty \ee^{- c (\varepsilon \abs{\vec x'} + c_1 y) \cdot (\abs{\lambda}^{1/2m} + \tau)}  \frac{1}{(\abs{\lambda}^{1/2m} + \tau)^{2m - \abs{\vec \alpha}}} \, \dd \tau \, \dd r
    \\
    &= C_{\varepsilon_0} \abs{\lambda}^{\frac{1}{2m} - \frac{2m - \abs{\vec \alpha}}{2m} + \frac{n-1}{2m}} \int_0^\infty s^{n-2} \int_s^\infty \ee^{- c ( \varepsilon \abs{\vec x'} + c_1 y) \abs{\lambda}^{1/2m} (1 + \tilde \tau)} \frac{1}{(1 + \tilde \tau)^{2m - \abs{\vec \alpha}}} \, \dd \tilde \tau \, \dd s
    \\
    &= C_{\varepsilon_0} \abs{\lambda}^{\frac{n}{2m} - 1 + \frac{\abs{\alpha}}{2m}} \frac{1}{n-1} \int_0^\infty \frac{\tilde \tau^{n-1}}{(1 + \tilde \tau)^{2m - \abs{\vec \alpha}}} \ee^{- c \abs{\lambda}^{1/2m} (\varepsilon \abs{\vec x'} + c_1 y)(1 + \tilde \tau)} \, \dd \tilde \tau
    \\
    &= \frac{C_{\varepsilon_0}}{n-1} \abs{\lambda}^{\frac{n}{m} - 1 + \frac{\abs{\alpha}}{2m}} p_{2m,\abs{\vec \alpha}-1}^{n+1}(c \abs{\lambda}^{1/2m} (\varepsilon \abs{\vec x'} + c_1 y))
    \\
    &\leq C_{\varepsilon_0}' \abs{\lambda}^{\frac{n-2m+\abs{\vec \alpha}}{2m}} p_{2m,\abs{\vec \alpha}-1}^{n+1}(c_{\varepsilon_0} \abs{\lambda}^{1/2m} (\abs{\vec x'} + y))
  \end{align*}
  for some constants $C_{\varepsilon_0}$, $C_{\varepsilon_0}'$ and $c_{\varepsilon_0} > 0$, for sufficiently small $\varepsilon_0 > 0$, and all $\varepsilon \in (0, \varepsilon_0]$.
 Thus, we have established the kernel estimate
  \begin{align*}
   \abs{\DD^{\vec \alpha} \bb K^\lambda(\vec x',y)}
    &\leq M \abs{\lambda}^{\frac{n-2m+\abs{\vec \alpha}}{2m}} p_{2m,\abs{\vec \alpha}-1}^{n+1}(c \abs{\lambda}^{\nicefrac{1}{2}m}(\abs{\vec x'} + \abs{y}))
    \quad
    \text{for }
    \vec x' \in \R^n, \, y > 0, \lambda \in \Sigma_{\pi-\phi}.
  \end{align*}
 \end{proof}

\begin{proposition}
 Let $\Acal(\DD)$ be a parameter-elliptic operator of order $2m$ with angle of ellipticity $\phi_\Acal^\mathrm{ellipt} \in [0, \pi)$.
 Let $\phi > \phi_\Acal^\mathrm{ellipt}$ and assume that the Lopatinskii--Shapiro condition is satisfied.
 Then, for every multi-index $\vec \alpha \in \N_0^{n+1}$, there are constants $M, c > 0$ such that
   \[
    \abs{\DD^{\vec \alpha} \bb K^{\mathrm{corr},\lambda}_{j,k}(\vec x',y,\tilde y)}
     \leq M \abs{\lambda}^{\frac{n+1-2m+\abs{\vec \alpha}}{2m}} p_{2m,\abs{\vec \alpha}}^{n+1}\left(c \abs{\lambda}^{1/(2m)} (\abs{\vec x'} + y + \tilde y) \right)
 \]
for all $\vec x' \in \R^n, \, y, \tilde y > 0$ and $\lambda \in \Sigma_{\pi-\phi}$, where
 \[
  \bb K^{\mathrm{corr},\lambda}_{j,k}(\vec x', y, \tilde y) = \frac{1}{(2\pi)^n} \int_{\R^n} \ee^{\ii \vec x' \cdot \vec \xi'} \ee^{\ii \rho \bb A_0(\vec b,\sigma)y} \bb M(\vec b,\sigma) \vec h^\mathrm{corr}_{j,k}(\vec \xi',\tilde y) \, \dd \vec \xi'.
 \]
\end{proposition}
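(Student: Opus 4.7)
The argument parallels closely the proof of the preceding kernel estimate for $\bb K^\lambda$, with one extra wrinkle due to the inner $\eta$-integration in the definition of $\vec h^\mathrm{corr}_{j,k}$. The strategy is to combine a rotation, spherical coordinates in $\R^n$, and a \emph{double} contour deformation in the complex plane to extract exponential decay simultaneously in $\abs{\vec x'}$ (from the outer $\vec\xi'$-integral), in $y$ (from the semigroup $\ee^{\ii\rho\bb A_0(\vec b,\sigma)y}$ acting on the range of $\bb M(\vec b,\sigma)$), and in $\tilde y$ (from the inner $\eta$-integral); the extra dimension contributed by the $\eta$-integration accounts precisely for the shifts $n \to n+1$ in the power of $\abs\lambda$ and $\abs{\vec\alpha}-1 \to \abs{\vec\alpha}$ in the second index of $p_{2m,\bullet}^{n+1}$.

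First I would differentiate under the integral sign, producing polynomial factors $(\vec\xi')^{\vec\alpha'}$ and $\eta$-powers (for derivatives in $\tilde y$) together with factors $(\rho\bb A_0(\vec b,\sigma))^\ast$ (for derivatives in $y$). Then I would rotate $\vec\xi' = \bb Q^\mathsf{T}(a, r\vec\varphi)$ so that $\vec x'\cdot\vec\xi' = \abs{\vec x'}a$, deform the $a$-contour into the path $\Gamma_\varepsilon$ parametrised by $\gamma_\varepsilon(s) = s + \ii\varepsilon(\abs\lambda + (r^2+s^2)^m)^{1/2m}$, exactly as in the previous proof, and in parallel deform the $\eta$-contour into an analogous path $\tilde\Gamma_\delta$ parametrised by $\tilde\gamma_\delta(\tau) = \tau + \ii\delta(\abs\lambda + (a^2+r^2+\tau^2)^m)^{1/2m}$. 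Both deformations are permissible: for $\lambda \in \overline{\Sigma_{\pi-\phi}}$, parameter-ellipticity of $\Acal$ ensures that $\lambda + \Acal(\vec\xi',\eta)$ is invertible in a complex tube around $\R^{n+1}$ (cf.\ the remark following the definition of parameter-ellipticity), while $\bb M(\vec b,\sigma)$ is jointly holomorphic on $\tilde D$ and the semigroup factor remains bounded on the stable subspace via the spectral gap~\eqref{eqn:Spectral_Gap_S-}.

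Next I would apply the standard estimates: parameter-ellipticity gives $\norm{(\lambda + \Acal(\vec\xi',\eta))^{-1}}_{\B(E)} \leq C/(\abs\lambda + \abs{(\vec\xi',\eta)}^{2m})$, homogeneity of $\Bcal_{j,k}$ yields $\norm{\Bcal_j(\vec\xi',\eta)\Pcal_{j,k}}_{\B(E)} \leq C(\abs{\vec\xi'} + \abs\eta)^k$, the bound $\norm{\ee^{\ii\rho\bb A_0(\vec b,\sigma)y}\bb M(\vec b,\sigma)}_{\B(E^m;E^{2m})} \leq M\ee^{-c_1 y\rho}$ with $\rho = (\abs\lambda + \abs{\vec\xi'}^{2m})^{1/2m}$ was already established in the preceding proof, and the two contour shifts produce the extra factors $\ee^{-\varepsilon\abs{\vec x'}\rho}$ and $\ee^{-\delta\tilde y\tilde\rho}$, with $\tilde\rho = (\abs\lambda + \abs{(\vec\xi',\eta)}^{2m})^{1/2m}$. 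Combining the polynomial factors with the resolvent and the boundary symbol, the homogeneity degrees compensate exactly, leaving an algebraic weight of order $(\abs\lambda^{1/2m} + (\text{radial variables}))^{\abs{\vec\alpha} - 2m}$, one power heavier than before because of the extra $\eta$-integration range. A change of variables $\tilde\tau = \tau/\abs\lambda^{1/2m}$, spherical integration in $\vec\varphi$, and finally invocation of Lemma~\ref{lem:Lem-2.6} to collapse one of the radial integrals into the profile $p_{2m,\abs{\vec\alpha}}^{n+1}$ deliver the target estimate $M\abs\lambda^{\frac{n+1-2m+\abs{\vec\alpha}}{2m}}\, p_{2m,\abs{\vec\alpha}}^{n+1}(c\abs\lambda^{1/2m}(\abs{\vec x'}+y+\tilde y))$.

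The main obstacle is making the simultaneous contour deformations in $a$ and $\eta$ rigorous: one must choose $\varepsilon, \delta > 0$ small enough, uniformly in $\lambda, r, s, \tau$, so that the two deformed paths lie jointly inside the holomorphy domain of $(\lambda + \Acal(\vec\xi',\eta))^{-1}\bb M(\vec b,\sigma)\ee^{\ii\rho\bb A_0(\vec b,\sigma)y}$, and justify via Cauchy's theorem that the vertical closing sides of the contours vanish in the limit at infinity; once this is settled, the dimensional bookkeeping is a direct adaptation of the previous proof, with the additional $+1$ in the exponent of $\abs\lambda$ and the unit shift in the second index of $p_{2m,\bullet}^{n+1}$ both accounted for by the extra integration.
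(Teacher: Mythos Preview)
Your overall plan is right in spirit, but the paper handles the inner $\eta$-integral differently, and your proposed $\eta$-deformation has a sign issue.

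For the inner integral the paper does \emph{not} use an unbounded shifted contour. After scaling via homogeneity,
\[
h_{j,k}^\mathrm{corr}(\vec\xi',\tilde y)=\rho^{k+1-2m}\int_{\R}\Bcal_{j,k}(\vec b,a)\,(\sigma+\Acal(\vec b,a))^{-1}\ee^{-\ii\tilde y\rho a}\,\dd a,
\]
it closes the contour in the lower half plane and replaces the real line by a fixed \emph{closed} curve $\Gamma^-$ encircling the poles at $-\ii S_+(\vec b,\sigma)$, chosen uniformly for $\abs{\vec b},\abs{\sigma}\leq 1$. On $\Gamma^-$ one has $\Im a\leq -c$, so $\abs{\ee^{-\ii\tilde y\rho a}}\leq\ee^{-c\tilde y\rho}$, while all remaining factors are uniformly bounded on the compact curve. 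This yields the direct pointwise bound
\[
\abs{h_{j,k}^\mathrm{corr}(\vec\xi',\tilde y)}\leq C\,\rho^{k+1-2m}\,\ee^{-c\tilde y\rho},
\]
after which the outer $\vec\xi'$-integral is treated exactly as in the proof of Proposition~\ref{prop:kernel_estimates_half-space} (rotation, the path $\Gamma_\varepsilon$, then the $p$-function computation). The extra factor $\rho\,\ee^{-c\tilde y\rho}$ relative to that proof is precisely what produces the shifts $n\to n+1$ in the power of $\abs{\lambda}$ and $\abs{\vec\alpha}-1\to\abs{\vec\alpha}$ in the second index of $p_{2m,\bullet}^{n+1}$.

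Your route via an unbounded shift $\tilde\gamma_\delta(\tau)=\tau+\ii\delta\tilde\rho$ has two problems. First, the direction is wrong: with $\tilde y>0$ and $\eta=\tau+\ii\delta\tilde\rho$ one has $\abs{\ee^{-\ii\eta\tilde y}}=\ee^{+\delta\tilde\rho\tilde y}$, i.e.\ growth, not decay; you must go into the \emph{lower} half plane. Second, once you do, the resolvent $(\lambda+\Acal(\vec\xi',\eta))^{-1}$ has poles in the lower half plane at $-\ii S_+$ sitting at depth $\sim c_1\rho$, so an unbounded shift can only be small, and you then have to track the interaction of the two unbounded contours carefully. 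The paper's closed-contour trick sidesteps both issues: it absorbs the entire $\tilde y$-dependence into a single exponential factor and reduces the problem cleanly to the already-established estimate for $\bb K^\lambda$. Apart from the treatment of the inner integral, the rest of your dimensional bookkeeping is correct.
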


\begin{proof}
Consider the inverse Fourier transform of the function \[ \Fcal w^\mathrm{corr}(\vec \xi', y, \lambda) = - \big[ \ee^{\ii \rho \bb A_0(\vec b,\sigma)y} \bb M(\vec b,\sigma) (\Fcal \vec g_\rho^\mathrm{corr})(\vec \xi',0) \big]_1 \] corresponding to the correction term for $\Bcal_j(\DD) \Fcal w^\mathrm{fs}|_{\partial \R_+^{n+1}}$, hence,
  \begin{align*}
   \bb K^{\mathrm{corr},\lambda}_{j,k}(\vec x', y, \tilde y)
    &= - \frac{1}{(2\pi)^n} \int_{\R^n} \ee^{\ii \vec x' \cdot \vec \xi} \ee^{\ii \rho \bb A_0(\vec b,\sigma) y} \bb M(\vec b,\sigma) \bb h_{j,k}^\mathrm{corr}(\vec \xi',\tilde y) \, \dd \vec \xi'
  \end{align*}
 where $\vec h_j^\mathrm{corr} = (0, \ldots, 0, h_j^\mathrm{corr}, 0, \ldots, 0)^\mathsf{T} = (0, \ldots, 0, \sum_{k=0}^{m_j} h_{j,k}^\mathrm{corr}, 0, \ldots, 0)^\mathsf{T} = \sum_{k=0}^{m_j} \vec h_{j,k}^\mathrm{corr}$.
 We recall that the family $\big( \ee^{\ii \rho \bb A_0(\vec b, \sigma) y} \bb M(\vec b, \sigma) \vec h_{j,k}(\vec \xi', 0) \big)_{y \geq 0}$ is the norm-continuous semigroup corresponding to the (parameter-dependent) initial value problem
  \begin{align*}
   \frac{\partial}{\partial y} \Fcal \vec v^\mathrm{corr}_{j,k}(\vec \xi', y, \lambda)
    &= \ii \rho \bb A_0(\vec b, \sigma) \Fcal \vec v^\mathrm{corr}_{j,k},
    \quad
    y > 0,
    \\
   B_{\tilde j}^0(\vec b) \Fcal \vec v^\mathrm{corr}_{j,k}(\vec \xi', 0, \lambda)
    &= \delta_{j,\tilde j} \sum_{\tilde k=0}^{m_{\tilde j}} \delta_{k,\tilde k} \Pcal_{\tilde j, \tilde k} \frac{\Fcal g_{\tilde j}(\vec \xi')}{\rho^{\tilde k}},
    \quad
    \tilde j = 1, \ldots, m.
  \end{align*}
 Since the function $\Fcal \vec v^\mathrm{corr}$ is the solution to the boundary value problem
  \begin{align*}
   \DD_y \Fcal \vec v^\mathrm{corr}(\vec \xi', y, \lambda)
    &= \rho \bb A_0(\vec b, \sigma) \Fcal \vec v^\mathrm{corr}(\vec \xi', y, \lambda),
    &&\vec \xi' \in \R^n, \, y > 0,
    \\
   B_j^0(\vec b) \Fcal \vec v^\mathrm{corr}(\vec \xi',0)
    &= \sum_{k=0}^{m_j} \Pcal_{j,k} \frac{\Fcal g_j(\vec \xi',0)}{\rho^k},
    &&j = 1, \ldots, m, \, \vec \xi' \in \R^n,
  \end{align*}
 we deduce that
  \begin{align*}
   \Fcal w^\mathrm{corr}(\vec \xi', y, \lambda)
    &= \big[ \ee^{\ii \rho \bb A_0(\vec b, \sigma) y} (\Fcal \vec v^\mathrm{corr})(\vec \xi', 0) \big]_1
    = \big[ \ee^{\ii \rho \bb A_0(\vec b, \sigma) y} M(\vec b, \sigma) \Fcal \vec g_\rho^\mathrm{corr}(\vec \xi', 0) \big]_1
    \\
    &= \big[ \ee^{\ii \rho \bb A_0(\vec b, \sigma) y} \bb M(\vec b, \sigma) \big( \sum_{k=0}^{m_j} \Pcal_{j,k} \frac{\Fcal g_j^\mathrm{corr}(\vec \xi',0)}{\rho^k} \big)_{j} \big]_1.
  \end{align*}
 By Cauchy's theorem and homogeneity considerations
  \begin{align*}
   h_{j,k}^\mathrm{corr} (\vec \xi', \tilde y)
    &= \rho^{k+1-2m} \int_{\R} \Bcal_{j,k}(\vec b,a) (\sigma + \Acal(\vec b,a))^{-1} \ee^{-\ii \tilde y \rho a} \, \dd a
    \\
    &= \rho^{k+1-2m} \int_{\Gamma^-} \Bcal_{j,k}(\vec b,a) (\sigma + \Acal(\vec b,a))^{-1} \ee^{-\ii \tilde y \rho a} \, \dd a,
  \end{align*}
 where $\Gamma^-$ is a closed curve in the open lower complex plane which encircles $- \ii P_+(\vec b,\sigma)$, uniformly for all $(\vec b, \sigma)$.
  \begin{figure}
	\centering
	\includegraphics[scale = 1]{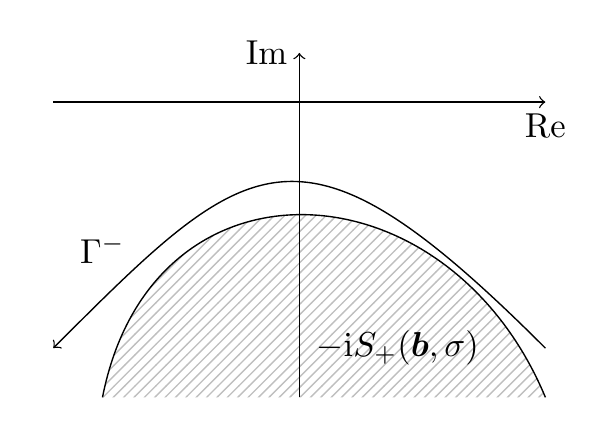}
	\caption{A path $\Gamma^-$ surrounding $- \ii P_+(\vec b, a)$.}
	\label{fig:diagram_path-2}
 \end{figure}
 We then obtain, after using a rotation as above and shifting the path of integration as in \cite{DeHiPr03}, that
  \begin{equation}
   \abs{h_{j,k}^\mathrm{corr} (\vec \xi',\tilde y)}
    \leq C \left(\abs{\lambda} + r^{2m} + s^{2m} \right)^{\nicefrac{k+1-2m}{2m}} \ee^{- c \tilde y (\abs{\lambda} + r^{2m} + s^{2m})^{\nicefrac{1}{2m}}},
    \quad
    \vec \xi' \in \R^n, \, y > 0 
  \end{equation}
 for sufficiently small values of $\abs{\operatorname{arg} (r^2 + a^2)}$.
 For any multi-index $\vec \alpha \in \N_0^{n+1}$, we, thus, obtain the estimate
  \begin{align*}
   \abs{\DD^{\vec \alpha} K^\mathrm{corr,\lambda}_{j,k}(\vec x',y, \tilde y)}
    &\leq \abs{\DD^{\vec \alpha} K^\mathrm{corr,\lambda}_{j,k}(\vec x',y, \tilde y)}
    \\
    &\leq M \abs{\lambda}^{\frac{n-2m+1+\abs{\vec \alpha}}{2m}} \int_0^\infty \frac{s^{n-1}}{(1+s)^{2m-1-\abs{\vec \alpha}}} \ee^{-c (s+1) \abs{\lambda}^{1/(2m)} (\abs{\vec x'} + y + \tilde y)} \, \dd s,
  \end{align*}
 cf.\ the calculations on \cite[p.\ 80]{DeHiPr03}.
\end{proof}

 \subsection{Solution operators}
 Using the kernel estimates from the last subsection, we can now derive $\LL_p$-estimates for $w^\mathrm{fs}$, $w^\mathrm{hs}$ and $w^\mathrm{corr}$.
 For this purpose, let us introduce the $\LL_p$-realisation for the boundary value problem \eqref{eqn:Elliptic_PDE_Laplace-Space}--\eqref{eqn:Elliptic_PDE_Laplace-Space_Boundary} as the closure $A \& B = \overline{(A \& B)_\mathrm{min}}$ of the \emph{minimal operator} $(A \& B)_\mathrm{min}$ which is defined by
  \begin{align*}
   (A \& B)_\mathrm{min}:
    \dom((A \& B)_\mathrm{min})
    &= \WW_p^{2m}(\R_+^{n+1};E)
    \subseteq \LL_p(\R_+^{n+1}; E)
    \\
    &\rightarrow \LL_p(\R_+^{n+1}; E) \times \prod_{j=1}^m \left(\sum_{k=0}^{m_j} \Pcal_{j,k} \WW_p^{2m-k}(\R_+^{n+1};E) \right),
    \\
   (A \& B)_\mathrm{min} u
    &= \left( \begin{array}{c} \Acal(\DD) \\ \Bcal_1(\DD) \\ \vdots \\ \Bcal_m (\DD) \end{array} \right) u.
  \end{align*}
 \begin{remark}
  Note that the differential operators $\Bcal_j(\DD)$ corresponding to the boundary symbols act on the function $u: \R_+^{n+1} \rightarrow E$ and are evaluated on the half-space $\R_+^{n+1}$, and not only on its boundary trace $\partial \R_+^{n+1}$.
  \newline
  Also, it may not be immediately clear that the operator $(A \& B)_\mathrm{min}$ is closable, thus let us comment on that.
  We may introduce the higher-order reflexion $\Ecal u(\vec x',y) := \sum_{i=0}^{2m} \alpha_i u(\vec x', - \frac{y}{i+1})$ for $\vec x' \in \R^n$ and $y < 0$, which for suitable chosen scalar coefficients $\alpha_i \in \R$ defines a continuous extension of functions $u \in \WW_p^{2m}(\R_+^{n+1};E)$ to functions $\Ecal u \in \WW_p^{2m}(\R^{n+1};E)$.
  Then, given any sequence $(u_n)_{n \geq 1} \subseteq \dom((A \& B)_\mathrm{min})$ such that $u_n \rightarrow 0$ and $\Acal(\DD) u_n \rightarrow v$ in $\LL_p(\R_+^{n+1};E)$ for some function $v \in \LL_p(\R_+^{n+1};E)$, from the definition of the extension operator it follows that  $\Ecal u_n \rightarrow 0$ and $\Acal(\DD) u_n \rightarrow \tilde v$ in $\LL_p(\R^{n+1};E)$ for the extension $\tilde v = \Ecal v \in \LL_p(\R^{n+1};E)$ of $v$.
  We claim that $\tilde v = 0$, which will imply that $v = v|_{\R_+^{n+1}} = 0$ and, hence, $(A \& B)_{\mathrm{min}}$ is closable.
  To this end, we use that $\Acal(\vec \xi)$ is parameter-elliptic, so that for every $\vec \alpha \in \N_0^{n+1}$ of length $\abs{\vec \alpha} \leq 2m$, the function $\vec \xi^{\vec \alpha} (\Acal(\vec \xi) + 1)^{-1}$ is an $\Rcal$-bounded Fourier multiplier.
  From $(\Acal(\DD) + 1) \Ecal u_n = \Acal(\DD) \Ecal u_n + \Ecal u_n = \Ecal (\Acal(\DD) u_n + u_n) \rightarrow \tilde v$ it, thus, follows that also $\DD^{\vec \alpha} \Ecal u_n$ converges in $\LL_p(\R^{n+1};E)$ for each multi-index $\vec \alpha \in \N_0^{n+1}$ of length $\abs{\vec \alpha} \leq 2m$.
  Since $\WW_p^{2m}(\R^{n+1};E)$ is a Banach space and $\Ecal u_n \rightarrow 0$, however, this can only be the case if $\DD^{\vec \alpha} (\Ecal u_n) \rightarrow \DD^{\vec \alpha} (\Ecal 0) = 0$, which in turn implies that $\tilde v = 0$.
  This shows that the operator $(A \& B)_\mathrm{min}$ is indeed closable.
 \end{remark}
 For each parameter $\lambda \in \Sigma_{\pi-\phi}$ and functions $g_j \in \sum_{k=0}^{m_j} \Pcal_{j,k} \WW_p^{2m-k}(\R_+^{n+1};E)$ (for the moment defined on $\R_+^{n+1}$, \emph{not} only on $\partial \R_+^{n+1}$) we consider the system of differential equations
  \begin{equation}
   \lambda \bb J v + (A \& B) v
    = \left( \begin{array}{c} 0 \\ \vec g \end{array} \right),
    \label{eqn:Abstract_BVP}
  \end{equation}
 where we write $\vec g = (g_1, \ldots, g_m)^\mathsf{T}$ and $\bb J v = (v, 0, \ldots, 0)^\mathsf{T} \in E^{1 + m}$ for $v \in E$, and get the following result in which we consider two cases: On the one hand, the situation as above or in \cite[Proposition 6.8]{DeHiPr03}, where $\vec g$ is given from the start, and on the other hand, the more subtle case where $\vec g$ is only given on $\partial \R_+^{n+1}$, but we use a particular extension to $\R_+^{n+1}$ which allows us to assume that $\partial_y  g_{j,k} = - L_\lambda^{1/2m} g_{j,k}$ in $\R_+^{n+1}$, thus relating the partial derivative in $y$-direction with a fractional differential operator acting on the $\vec x'$-components of the function $\vec g$.
  \begin{remark}
  \label{rem:generator}
   We will use that the pseudo-differential operator $- L_\lambda^{1/2m} = - (\abs{\lambda} + (-\Delta)^m)^{1/2m}$, i.e.\ the linear operator associated to the Fourier symbol $- (\abs{\lambda} + \abs{\vec \xi'}^{2m})^{1/2m}$, for fixed $\lambda \in \Sigma_{\pi-\phi}$ and $\vec \xi' \in \R^n$, generates a bounded $\CC_0$-semigroup on the Bochner--Lebesgue space $\LL_p(\R^n;E)$ for any given $p \in [1,\infty)$.
   In fact, for every $\mu > 0$ and $\lambda \in \Sigma_{\pi-\phi}$, the resolvent problem
    \[
     (\mu + L_\lambda^{1/2m}) u
      = f
    \]
   can be solved in Fourier space as
    \[
     (\mu + (\abs{\lambda} + \abs{\vec \xi'}^{2m})^{1/2m}) \hat u
      = \hat f
      \quad \Leftrightarrow \quad
     \hat u
      = \frac{1}{\mu + (\abs{\lambda} + \abs{\vec \xi'}^{2m})^{1/2m}} \hat f
      =: m_\mu(\vec \xi') \hat f,
    \]
   where $m_\mu(\vec \xi')$ is scalar, and for boundedness of the resolvent, it, thus, suffices to prove boundedness of $m_\mu(\vec \xi')$ and $(\vec \xi')^{\vec \alpha} D_{\vec \xi'}^{\vec \alpha} m_\mu(\vec \xi')$ for all $\vec \alpha \in \N_0^n$ of length $\abs{\vec \alpha} \leq \lfloor \frac{n}{2} \rfloor$, which can easily be checked, and in fact gives $\norm{(\mu (\mu + L_\lambda^{1/2m})^{-1})^k} \leq C$ for all $\mu > 0$. Hence, by the Hille--Yosida Theorem $- L_\lambda^{1/2m}$ generates a bounded $C_0$-semigroup on $\LL_p(\R^n;E)$, for every $\lambda \in \Sigma_{\pi-\phi}$.
   For details, we refer to Proposition \ref{prop:aux} in the appendix.
  \end{remark}
  
   We denote by $L_\lambda$ the operator $\abs{\lambda} + (- \Delta_{\R^n})^m$ on $\LL^p(\R_+^{n+1};E)$, where $\Delta_{\R^n}$ is the closure of the minimal realisation of the ($E$-valued) Laplace operator on $\R^n$, and extend the functions $g_{j,k}$ on the boundary $\partial \R_+^{n+1} \cong \R^n$ with use of the $C_0$-semigroup $(\ee^{- L_\lambda^{1/2m} y})_{y \geq 0}$ on $\LL_p(\R^n;E)$, which is generated by $- L_\lambda^{1/2m}$ to functions
    \begin{equation}
     (\Ecal_\lambda g_{j,k})(\vec x', y)
      = \ee^{- L_\lambda^{1/2m} y} g_{j,k}(\vec x', y)
      \quad
      \text{for } \vec x' \in \R^n, \, y \geq 0.
      \label{eqn:Ecal_lambda}
    \end{equation}
   This defines functions $\Ecal_\lambda g_j = \sum_{k=0}^{m_j} \Ecal_\lambda g_{j,k} \in \sum_{k=0}^{m_j} \Pcal_{j,k} \WW_p^{2m-k}(\R_+^{n+1};E)$.
   This can be seen as follows:
   By assumption $\Pcal_{j,k'} (\ran (\Pcal_{j,k})) = \{0\}$ for $k \neq k'$, and then by continuity of the projection operators also $\Pcal_{j,k'} (\overline{ \ran (\Pcal_{j,k})}) = \{0\}$.
   Moreover, both the Fourier-transform and its inverse leave the spaces $\overline{ \ran (\Pcal_{j,k}) }$ invariant, e.g.\ $\ran (\Fcal v) \subseteq \overline{\ran (\Pcal_{j,k})}$ whenever $\ran (v) \subseteq \overline{\ran (\Pcal_{j,k})}$ and the same is true for the scalar Fourier-symbol $(\abs{\lambda} + \abs{\vec \xi}^{2m})^{1/2m}$ of the operator $L_\lambda^{1/2m}$.
   Therefore, all spaces $\overline{ \ran (\Pcal_{j,k}) }$ are $L_\lambda^{1/2m}$-invariant, i.e.\ $L_\lambda^{1/2m} ( \dom(L_\lambda^{1/2m}) \cap \overline{ \ran (\Pcal_{j,k}) } ) \subseteq \overline{ \ran (\Pcal_{j,k}) }$, and, then, these spaces are $(\ee^{- y L_\lambda^{1/2m}})_{y \geq 0}$-invariant as well.

  \begin{proposition}
  \label{prop:Estimates_Solution_Operator}
  Let $p \in (1, \infty)$ and $\lambda \in \Sigma_{\pi-\phi}$ be given.
   \begin{enumerate}
    \item\label{pt:a}
    Let $g_j = \sum_{k=0}^{m_j} g_{j,k} \in \sum_{k=0}^{m_j} \WW_p^{2m-k}(\R^n;\ran(\Pcal_{j,k}))$ for $j = 1, \ldots, m$ be given.
   \newline
   Then there is a unique solution $v^\mathrm{hs} \in \WW_p^{2m-1}(\R_+^{n+1};E) \cap \dom(A \& B)$ of the initial-boundary value problem \eqref{eqn:Abstract_BVP} for $\Ecal_\lambda g_{j,k}$ as in \eqref{eqn:Ecal_lambda} taking the role of $g_{j,k}: \R_+^{n+1} \rightarrow E$.
   It is given by $v^\mathrm{hs} = \sum_{j=1}^m \sum_{k=0}^{m_j} v^\mathrm{hs}_{j,k}$ for functions
    \[
     v^\mathrm{hs}_{j,k}(\vec x',y)
      = T^\lambda_{j,k} h^\lambda_{j,k}(\vec x',y)
    \]
   where
    \begin{align*}
     (T^\lambda_{j,k} h^\lambda_{j,k})(\vec x',y)
      &= \int_0^\infty \int_{\R^n} T_{j,k}(\lambda, \vec x' - \vec {\tilde x}, y + \tilde y) h^\lambda_{j,k}(\vec {\tilde x}', \tilde y) \, \dd \vec {\tilde x}' \, \dd \tilde y
       \\
      &= \int_0^\infty \int_{\R^n} T_{j,k}(\lambda, \vec x' - \vec {\tilde x}, y + \tilde y) L_\lambda^{\frac{2m-k}{2m}} \Ecal_\lambda g_{j,k}(\vec {\tilde x}', \tilde y) \, \dd \vec {\tilde x}' \, \dd \tilde y
   \end{align*}
   and
    \begin{align*}
     h^\lambda_{j,k}(\vec x', y)
      &= L_\lambda^{\frac{2m-k}{2m}} \ee^{- L_\lambda^{1/2m} y} \Ecal_\lambda g_{j,k}(\vec x',y)
      \\
      &= \ee^{- L_\lambda^{1/2m} y} L_\lambda^{\frac{2m-k}{2m}} \Ecal_\lambda g_{j,k}(\vec x',y)
      \quad
      \text{for } \vec x' \in \R^n \text{ and } y > 0.
    \end{align*}
   Here, the operator-valued kernel function $T^\lambda_{j,k}(\vec x', y) = T(\lambda, \vec x',y)$ is defined by
    \[
     T^\lambda_{j,k}(\vec x', y)
      = T_{j,k}(\lambda, \vec x', y)
      = \big( \frac{\partial}{\partial y} K_{j,k}^\lambda - K_{j,k}^\lambda L_\lambda^{1/2m} \big)(\vec x', y)
    \]
   Moreover, there are constants $C > 0$ (independent of the parameter $\lambda \in \Sigma_{\pi-\phi}$ and the boundary data $g_j$) such that for every $\lambda \in \Sigma_{\pi-\phi}$, $\vec \alpha \in \N_0^{n+1}$ and $j = 1, \ldots, m$ the following estimates are valid:
    \begin{align*}
     \norm{\abs{\lambda}^{1-\frac{\abs{\vec \alpha}}{2m}} \DD^{\vec \alpha} T^\lambda_{j,k} h^\lambda_{j,k}}_{\LL_p(\R_+^{n+1};E)}
      &\leq C \norm{((- \Delta_{\R^n})^m + \abs{\lambda})^{\frac{2m-k}{2m}} g_{j,k}}_{\LL_p(\R^n;E)},
      &&0 \leq \abs{\vec \alpha} \leq 2m,
      \\
     \norm{\abs{\lambda}^{\frac{k-2m}{2m}} \DD^{\vec \alpha} T^\lambda_{j,k} h^\lambda_{j,k}}_{\LL_p(\R_+^{n+1};E)}
      &\leq C \norm{g_{j,k}}_{\LL_p(\R^n;E)},
      &&0 \leq \abs{\vec \alpha} \leq k \leq m_j,
      \\
     \norm{ \abs{\lambda}^{2-\frac{\abs{\vec \alpha}}{2m}} \DD^{\vec \alpha} \partial_\lambda T^\lambda_{j,k} h^\lambda_{j,k}}_{\LL_p(\R_+^{n+1};E)}
      &\leq C \norm{g_{j,k}}_{\LL_p(\R^n;E)},
      &&0 \leq \abs{\vec \alpha} \leq k \leq m_j.
    \end{align*}
     \item\label{pt:b}
      Given functions $g_{j,k} \in \WW_p^{2m-k}(\R_+^{n+1};\ran(\Pcal_{j,k}))$, we set $h^\lambda_{j,k}(\vec x', y) = L_\lambda^{\frac{2m-k}{2m}} g_{j,k}$.
      Then, the following estimates are valid:
       \begin{align*}
        \norm{\abs{\lambda}^{1-\frac{\abs{\vec \alpha}}{2m}} \DD^{\vec \alpha} S^{\lambda,I}_{j,k} g_{j,k}}_{\LL_p(\R_+^{n+1};E)}
         &\leq C \norm{((-\Delta_{\R^n})^m + \abs{\lambda})^{\frac{2m-k}{2m}} g_{j,k}}_{\LL_p(\R_+^{n+1};E)},
         \\
        \norm{\abs{\lambda}^{1-\frac{\abs{\vec \alpha}}{2m}} \DD^{\vec \alpha} S^{\lambda,II}_{j,k} g_{j,k}}_{\LL_p(\R_+^{n+1};E)}
         &\leq C \norm{((-\Delta_{\R^n})^m + \abs{\lambda})^{\frac{2m-k-1}{2m}} \partial_y g_{j,k}}_{\LL_p(\R_+^{n+1};E)},
         \\
        \norm{\DD^{\vec \alpha} S^{\lambda,I}_{j,k} g_{j,k}}_{\LL_p(\R_+^{n+1};E)}
         &\leq C \abs{\lambda}^{\frac{\abs{\vec \alpha}-k}{2m}} \norm{g_{j,k}}_{\LL_p(\R_+^{n+1};E)},
         \\
        \norm{\DD^{\vec \alpha} S^{\lambda,II}_{j,k} g_{j,k}}_{\LL_p(\R_+^{n+1};E)}
         &\leq C \abs{\lambda}^{\frac{\abs{\vec \alpha} - k - 1}{2m}} \norm{\frac{\partial}{\partial_y} g_{j,k}}_{\LL_p(\R_+^{n+1};E)}.
       \end{align*}        
    \end{enumerate}
  \end{proposition}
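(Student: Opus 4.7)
The plan is to reduce the $\LL_p$-estimates to the pointwise kernel bounds from Proposition \ref{prop:kernel_estimates_half-space} via Young's inequality and the integral relation of Lemma \ref{lem:Lem-2.6}. First, I would verify the representation for $v^\mathrm{hs}_{j,k}$ in part~(\ref{pt:a}). Extending $g_{j,k}$ to $\R_+^{n+1}$ via $\Ecal_\lambda$ based on the semigroup $(\ee^{-L_\lambda^{1/2m} y})_{y \geq 0}$ from Remark \ref{rem:generator}, one has $\partial_y \Ecal_\lambda g_{j,k} = - L_\lambda^{1/2m} \Ecal_\lambda g_{j,k}$, so that the combination $\partial_y K^\lambda_{j,k} - K^\lambda_{j,k} L_\lambda^{1/2m}$ arises naturally when differentiating the Poisson-type formula for $v^\mathrm{hs}_{j,k}$ constructed in Proposition \ref{prop:EBVP_half_space_solution}. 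The factor $L_\lambda^{(2m-k)/2m}$ appears upon rewriting the $j$-th component $\sum_k \rho^{2m-k} \Fcal g_{j,k}$ of $\Fcal \vec g_\rho^{2m}$ as $\sum_k L_\lambda^{(2m-k)/2m} \Fcal g_{j,k}$ in Fourier space.

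For the estimates in part~(\ref{pt:a}), the kernel bound of Proposition \ref{prop:kernel_estimates_half-space} together with Remark \ref{rem:generator} yields
\[
\abs{\DD^{\vec \alpha} T^\lambda_{j,k}(\vec x', y)} \leq M \abs{\lambda}^{\frac{n - 2m + \abs{\vec \alpha} + 1}{2m}} p_{2m, \abs{\vec \alpha}}^{n+1}\bigl(c \abs{\lambda}^{1/2m}(\abs{\vec x'} + y)\bigr).
\]
Combining Young's inequality in the $\vec x'$-variable with Lemma \ref{lem:Lem-2.6} to integrate in $y$, and exploiting the scaling of $p^{n+1}_{2m,\nu}(c \abs{\lambda}^{1/2m} \cdot)$ to extract factors of $\abs{\lambda}^{-(n+1)/2m}$, I would deduce the stated $\LL_p$-bounds for $\DD^{\vec \alpha} T^\lambda_{j,k} h^\lambda_{j,k}$. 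The first estimate follows by absorbing the factor $L_\lambda^{(2m-k)/2m}$ into the norm of $g_{j,k}$, while the second is obtained by keeping this factor as an $\abs{\lambda}^{(2m-k)/2m}$-weight and using Remark \ref{rem:generator} to commute $L_\lambda^{(2m-k)/2m}$ with the semigroup $(\ee^{-L_\lambda^{1/2m} y})_{y \geq 0}$. The estimate for $\partial_\lambda T^\lambda_{j,k}$ is then a consequence of Proposition \ref{prop:R-boundedness_sectors}, applied to the holomorphic family $\lambda \mapsto T^\lambda_{j,k}$ on the sector $\Sigma_{\pi - \phi}$, which produces the required extra factor $\abs{\lambda}^{-1}$ on any smaller subsector.

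For part~(\ref{pt:b}), I would decompose a general $g_{j,k} \in \WW_p^{2m-k}(\R_+^{n+1}; \ran(\Pcal_{j,k}))$ via
\[
g_{j,k}(\vec x', y) = \Ecal_\lambda (g_{j,k}|_{y=0})(\vec x', y) + \int_0^y \ee^{-L_\lambda^{1/2m}(y - s)} \bigl(\partial_s + L_\lambda^{1/2m}\bigr) g_{j,k}(\vec x', s) \, \dd s,
\]
which isolates the boundary-trace contribution from the contribution due to $\partial_y g_{j,k}$. Substituting into the representation of part~(\ref{pt:a}), these two terms define $S^{\lambda, I}_{j,k}$ and $S^{\lambda, II}_{j,k}$, respectively. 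The $\LL_p$-bounds for $S^{\lambda, I}_{j,k}$ follow from part~(\ref{pt:a}) together with the trace theorem $\WW_p^{2m-k}(\R_+^{n+1}; E) \hookrightarrow \WW_p^{2m-k-1/p}(\R^n; E)$, whereas those for $S^{\lambda, II}_{j,k}$ follow by the same kernel analysis, with the fractional exponent shifted by $1/2m$ to account for the additional integration in $y$ and with $\partial_y g_{j,k}$ replacing $g_{j,k}$ on the right-hand side.

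The main obstacle is the joint control of the pseudodifferential factor $L_\lambda^{(2m-k)/2m}$, which acts only in $\vec x'$, together with the $y$-convolution kernel $T^\lambda_{j,k}$: one must verify that these two operations commute and that the resulting composition is uniformly bounded in $\lambda \in \Sigma_{\pi - \phi}$ with the correct $\abs{\lambda}$-scaling. This is where the invariance of $\overline{\ran(\Pcal_{j,k})}$ under $L_\lambda^{1/2m}$ and its semigroup, established in the discussion preceding the proposition, becomes essential, together with the Mikhlin-type analysis for the scalar Fourier symbol $(\abs{\lambda} + \abs{\vec \xi'}^{2m})^{1/2m}$ from Remark \ref{rem:generator}.
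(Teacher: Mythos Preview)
Your treatment of part~(\ref{pt:a}) is essentially the paper's: derive the representation by writing $v^{\mathrm{hs}}$ as a convolution with $\bb K^\lambda$, insert the extension $\Ecal_\lambda g_{j,k}$, integrate by parts in $\tilde y$ (using $\partial_{\tilde y}\Ecal_\lambda g_{j,k}=-L_\lambda^{1/2m}\Ecal_\lambda g_{j,k}$) to produce the kernel $T^\lambda_{j,k}=\partial_y K^\lambda_{j,k}-K^\lambda_{j,k}L_\lambda^{1/2m}$, then feed the pointwise bounds of Proposition~\ref{prop:kernel_estimates_half-space} through Lemma~\ref{lem:Lem-2.6} and \cite[Lemma~6.7]{DeHiPr03}. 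The $\partial_\lambda$-estimate via Proposition~\ref{prop:R-boundedness_sectors} also agrees with the paper.

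Your approach to part~(\ref{pt:b}) diverges from the paper and does not quite work as written. The operators $S^{\lambda,\mathrm{I}}_{j,k}$ and $S^{\lambda,\mathrm{II}}_{j,k}$ are not free to be defined via a Duhamel split of $g_{j,k}$; they are the two terms of the \emph{same} integration-by-parts identity as in part~(\ref{pt:a}), now applied with $g_{j,k}$ already given on $\R_+^{n+1}$ (this is the content of \cite[Proposition~6.8]{DeHiPr03}). Concretely, one writes
\[
v^{\mathrm{hs}}_{j,k}(\vec x',y)
= -\int_0^\infty \partial_{\tilde y}\Bigl[\,(K^\lambda_{j,k}(\cdot,y+\tilde y)\ast L_\lambda^{\frac{2m-k}{2m}} g_{j,k}(\cdot,\tilde y))(\vec x')\,\Bigr]\,\dd\tilde y
\]
and expands by the product rule: the term with $\partial_y K^\lambda_{j,k}$ is $S^{\lambda,\mathrm{I}}_{j,k}g_{j,k}$, and the term with $K^\lambda_{j,k}\,L_\lambda^{\frac{2m-k}{2m}}\partial_{\tilde y} g_{j,k}=K^\lambda_{j,k}L_\lambda^{1/2m}\cdot L_\lambda^{\frac{2m-k-1}{2m}}\partial_{\tilde y} g_{j,k}$ is $S^{\lambda,\mathrm{II}}_{j,k}g_{j,k}$. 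The right-hand sides of the four estimates in part~(\ref{pt:b}) are then read off directly from the kernel bounds, exactly as in part~(\ref{pt:a}). These specific kernels reappear in Proposition~\ref{prop:Rcal-Bounds}, so the identification matters.

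Your Duhamel decomposition instead makes $S^{\lambda,\mathrm{I}}_{j,k}g_{j,k}$ depend on the trace $g_{j,k}|_{y=0}$. This is problematic for the third estimate, $\norm{\DD^{\vec\alpha}S^{\lambda,\mathrm{I}}_{j,k}g_{j,k}}\le C\abs{\lambda}^{(\abs{\vec\alpha}-k)/2m}\norm{g_{j,k}}_{\LL_p(\R_+^{n+1})}$: the trace theorem cannot recover a bound in terms of the bare $\LL_p(\R_+^{n+1})$-norm of $g_{j,k}$, and the $\lambda$-independent trace constant would spoil the stated scaling. Likewise, your second Duhamel term carries $(\partial_s+L_\lambda^{1/2m})g_{j,k}$ rather than $\partial_y g_{j,k}$ alone, so the tangential contribution $L_\lambda^{1/2m}g_{j,k}$ does not fit the stated bound for $S^{\lambda,\mathrm{II}}_{j,k}$. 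The product-rule split avoids both issues in one stroke.
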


 \begin{remark}
  A result analogous to part \eqref{pt:b} can be found in \cite[Proposition 6.8]{DeHiPr03}.
  On the other hand, in part \eqref{pt:a} (and in \cite{DeHiPr07}) one starts with functions defined on $\R^n$ and uses the particular extension $\Ecal_\lambda g_j$ to a function on $\R_+^{n+1}$.
 A convenient property of this particular extension is the identity
   \[
    \frac{\partial}{\partial y} h^\lambda_{j,k}
     = \frac{\partial}{\partial y} \ee^{- L_\lambda^{1/2m} y} L_\lambda^{\frac{2m-k}{2m}} g_{j,k}
     = - L_\lambda^{1/2m} h^\lambda_{j,k}.
   \]
 \end{remark}

  \begin{proof}[Proof of Proposition \ref{prop:Estimates_Solution_Operator}]
   We will focus on part \eqref{pt:a}. Part \eqref{pt:b} follows with slight modifications, cf.\ \cite[Proposition 6.8]{DeHiPr03}.
   Having the integral formulas for the solution of the IBVP at hand, it remains to prove the asserted $\LL_p$-estimates.
   In fact, from the previous subsection (i.e.\ for $f = 0$, hence, $\Fcal w^\mathrm{fs} = \Fcal w^\mathrm{corr} = 0$), we already know that
    \[
     w^\mathrm{hs}
      = [\vec v^\mathrm{hs}]_1
      \quad \text{for }
     \vec v^\mathrm{hs}
      = \sum_{j=1}^m \sum_{k=0}^{m_j} \vec v^\mathrm{hs}_{j,k}
    \]
   with
    \begin{align*}
     \vec v^{\mathrm{hs}}(\vec x',y)
      &= \Fcal^{-1} \big( \Fcal \bb V_\rho^{2m} \cdot \Fcal \vec g_\rho^{2m} \big)(\vec x', y)
      \\
      &= (2\pi)^n \big( \Fcal^{-1} \bb V_\rho^{2m} \ast_{\vec x'} \Fcal^{-1} (\Fcal \vec g_\rho^{2m}) \big)(\vec x',y)
      \\
      &= \int_{\R^n} \bb K^\lambda(\vec x' - \vec {\tilde x}', y) \Fcal^{-1} \big( \sum_{k=0}^{m_j} \rho^{2m-k} \Fcal g_{j,k}(\vec {\tilde x}') \big)_{j=1}^m \, \dd \vec {\tilde x'}
      \\
      &= \int_{\R^n} \big[ \bb K^\lambda(\vec x' - \vec {\tilde x}', y + \tilde y) \Fcal^{-1} (\sum_{k=0}^{m_j} \rho^{2m-k} \ee^{- \rho \tilde y} \Fcal g_{j,k}(\vec {\tilde x}') \big]_{y=0}
      \\
      &= - \int_{\R^n} \int_0^\infty \frac{\partial}{\partial \tilde y} \big( \bb K^\lambda(\vec x' - \vec {\tilde x}', y + \tilde y) \Fcal^{-1} \big( \sum_{k=0}^{m_j} \rho^{2m-k} \ee^{-\rho \tilde y} \Fcal g_{j,k}(\vec {\tilde x'}) \big)_{j=1}^m \, \dd \tilde y \, \dd \vec {\tilde x}'
      \\
      &= - \int_{\R^n} \int_0^\infty \big( \frac{\partial}{\partial \tilde y} \bb K^\lambda(\vec x' - \vec {\tilde x}', y + \tilde y) - \bb K^\lambda(\vec x' - \vec {\tilde x}', y + \tilde y) ((-\Delta_{\R^n})^m + \abs{\lambda})^{1/2m} \big)
       \\ &\qquad \cdot
       \big( \sum_{k=0}^{m_j} ((-\Delta_{\R^n})^m + \abs{\lambda})^{(2m-k)/2m} \ee^{- L_\lambda^{1/2m} \tilde y} g_{j,k}(\vec {\tilde x}') \big)_{j=1}^m
      \\
      &= - \big( \sum_{k=0}^{m_j} \int_{\R^n} \int_0^\infty \big( \frac{\partial}{\partial y} K^\lambda_{j,k}(\vec x - \vec {\tilde x}, y + \tilde y) - K^\lambda_{j,k}(\vec x - \vec x', y + \tilde y) L_\lambda^{1/2m} \big)
       \\ &\qquad
       \cdot((- \Delta_{\R^n})^m + \abs{\lambda})^{\frac{2m-k}{2m}} \ee^{- L_\lambda^{1/2m} y} g_{j,k}(\vec {\tilde x}) \dd \vec {\tilde x} \big)_{j=1,\ldots,m}
      \\
      &=: \big( \sum_{k=0}^{m_j} S^{\lambda,\mathrm{I}}_{j,k} g_{j,k} + S^{\lambda,\mathrm{II}}_{j,k} g_{j,k} \big)_{j=1,\ldots,m}
      =: \big( \sum_{k=0}^{m_j} S^{\lambda}_{j,k} g_{j,k} \big)_{j=1,\ldots,m} \big)_{j=1,\ldots,m}
    \end{align*}
   where we write $\big( \ee^{- L_\lambda^{1/2m} y} \big)_{y \geq 0}$ for the strongly continuous semigroup generated by the pseudo-differential operator $- L_\lambda^{1/2m}$, cf.\ Remark \ref{rem:generator}.
   Moreover, we exploited the growth bounds for $\bb K^\lambda(\vec x - \vec {\tilde x}, y + \tilde y)$ as $\tilde y \rightarrow \infty$ and the special form of the extension of $g_{j,k}$ from $\R^n$ to $\R_+^{n+1}$ by $\Ecal_\lambda$.
   Analogously to the proof of \cite[Proposition 6.8]{DeHiPr03}, but for the additional parameter $k = 0, 1, \ldots, m_j$, we thus may define
    \begin{align*}
     v^\mathrm{hs}_{j,k}(\vec x',y)
      &= - \int_0^\infty \int_{\R^n}  T_{j,k} (\lambda, \vec x' - \vec {\tilde x}', y + \tilde y) h^\lambda_{j,k}(\vec {\tilde x}', \tilde y) \, \dd \vec {\tilde x}' \, \dd \tilde y
      \\
      &:=  - \int_0^\infty \int_{\R^n}  \big[ \frac{\partial}{\partial \tilde y} K^\lambda_{j,k} - K^\lambda_{j,k} L_\lambda^{1/2m} \big] (\vec x' - \vec {\tilde x}', y + \tilde y) h^\lambda_{j,k}(\vec {\tilde x}', \tilde y) \, \dd \vec {\tilde x}' \, \dd \tilde y.
    \end{align*}
   As in the proof of \cite[Proposition 6.8]{DeHiPr03} and applying Proposition \ref{prop:kernel_estimates_half-space}, we then find that
    \begin{align*}
     &\norm{\DD^{\vec \alpha} \big( \frac{\partial}{\partial \tilde y} K^\lambda_{j,k}(\cdot,\tilde y) + K^\lambda_{j,k}(\cdot, \tilde y) L_\lambda^{1/2m} \big)}_{\LL_1(\R^n;\B(E))}
      \\
      &\leq C \abs{\lambda}^{\frac{n-2m+\abs{\vec \alpha}}{2m}} \int_{\R^n} p_{2m,\abs{\vec \alpha}}^{n+1}(c \abs{\lambda}^{\nicefrac{1}{2m}} \tilde y) \, \dd \vec x', \lambda
      \\
      &\leq C \abs{\lambda}^{\frac{n-2m+\abs{\vec \alpha}}{2m}} \int_0^\infty p_{2m,\abs{\vec \alpha}}^{n+1}(c \abs{\lambda}^{\nicefrac{1}{2m}} (r + \tilde y)) r^{n-1} \, \dd r
      \\
      &= n! \, C \abs{\lambda}^{\frac{\abs{\vec \alpha} - 2m}{2m}} p_{2m+n,\abs{\vec \alpha}}^{n+1}(c \abs{\lambda}^{\nicefrac{1}{2}} \tilde y),
      \quad
      \tilde y > 0.
    \end{align*}
  Then, by \cite[Lemma 6.7]{DeHiPr03},
   \[
    \norm{\lambda^{1-\frac{\abs{\vec \alpha}}{2m}} \DD^{\vec \alpha} T^\lambda_{j,k} g_{j,k}}_{\LL_p(\R^n;E)}
    \leq C \norm{((- \Delta_{\R^n})^m + \abs{\lambda})^{\frac{2m-k}{2m}} g_{j,k}}_{\LL_p(\R^n;E)},
   \]
  so that the first estimate has been established.
  By slight modification of the arguments, the second estimate can be proved as well.
  For the third estimate, we may use that the integral operators $T^\lambda_{j,k}$ depend holomorphically on the parameter $\lambda \in \Sigma_{\pi-\phi}$, so that the third estimate is a direct consequence of the first one when applying the same technique which has been used for the proof of Proposition \ref{prop:R-boundedness_sectors}. 
  \newline
  The representation and regularity of the solution to \eqref{eqn:Abstract_BVP} follow by approximation just as at the end of the proof of \cite[Proposition 6.8]{DeHiPr03}:
  Given $g_{j,k} \in \WW_p^{2m-k}(\R_+^{n+1}; \ran (\Pcal_{j,k}))$, let $g_{j,k}^\nu \in \WW_p^{2m-k+1}(\R_+^{n+1}; \ran(\Pcal_{j,k}))$ with $g_{j,k}^\nu \rightarrow g_{j,k} \in \WW_p^{2m-k}(\R_+^{n+1}; \ran(\Pcal_{j,k}))$ as $\nu \rightarrow \infty$.
  By the estimates above, $v^\nu_{j,k} := T^\lambda_{j,k} g_{j,k}^\nu \in \WW_p^{2m}(\R_+;\ran(\Pcal_{j,k}))$ with $(\lambda J + A)v^{\nu} = \left( \begin{smallmatrix} 0 \\ \vec g^\nu \end{smallmatrix} \right)$, i.e.\ $v^\nu \in \dom(A \& B)$, and further $v^\nu_{j,k} \rightarrow v_{j,k}$ in $\WW_p^{2m-1}(\R_+^{n+1}; E)$.
  As $A \& B$ is a closed operator, $v \in \dom(A \& B) \cap \WW_p^{2m-1}(\R_+^{n+1}; E)$ with $(\lambda \bb J + A \& B)  v = \left( \begin{smallmatrix} 0 \\ \vec g \end{smallmatrix} \right)$.
  \end{proof}
  
 Similarly, the results for the boundary value problem
  \begin{equation}
   \lambda J v + (A \& B) v
    = \left( \begin{array}{c} f \\ \vec 0 \end{array} \right)
   \label{eqn:Abstract_BVP_g=0}
  \end{equation}
 transfer from \cite{DeHiPr03} to the situation considered here.
 
 \begin{proposition}
 \label{prop:Max_Regularity_Zero-bdry-data}
  Let $p \in (1, \infty)$ and differential operators $\Acal(\DD)$ and $\Bcal_j(\DD)$, $j = 1, \ldots, m$, be given as above.
  Let $\lambda \in \Sigma_{\pi-\phi}$, and $f \in \LL_p(\R_+^{n+1};E)$.
  Then there is a unique solution $w \in \WW_p^{2m-1}(\R_+^{n+1};E) \cap \dom(A \& B)$ of the boundary value problem \eqref{eqn:Abstract_BVP_g=0}, and it is given by $w = w^\mathrm{fs} + \sum_{j=1}^m \sum_{k=0}^{m_j} v^\mathrm{corr}_{j,k}$, for functions $v^\mathrm{corr}_{j,k}$ as in Proposition \ref{prop:EBVP_half_space_solution} and $w^\mathrm{fs} = P(\lambda + A_{\R^{n+1}})^{-1} \Ecal_0 f$.
  Moreover, there is a constant $C > 0$ (independent of the data $f$) such that
   \[
    \norm{\lambda^{1-\frac{\abs{\vec \alpha}}{2m}} \DD^{\vec \alpha} w}_{\LL_p(\R_+^{n+1};E)}
     \leq C \norm{f}_{\LL_p(\R_+^{n+1};E)}
     \quad
     \text{ for all }
     \vec \alpha \in \N_0^{n+1} \text{ such that }
     \abs{\vec \alpha} \leq 2m-1.
   \]
 \end{proposition}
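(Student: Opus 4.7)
The plan is to construct the solution by the standard decomposition $w = w^{\mathrm{fs}} + w^{\mathrm{corr}}$, where $w^{\mathrm{fs}}$ handles the interior source term and $w^{\mathrm{corr}}$ absorbs the boundary defect it produces, adapting the strategy of \cite[Proposition 6.9]{DeHiPr03} to the mixed-order boundary operators considered here. First I would define $w^{\mathrm{fs}} := P_{\R_+^{n+1}} (\lambda + A_{\R^{n+1}})^{-1} \Ecal_0 f$ and invoke Theorem \ref{thm:full-space_problem} to obtain $w^{\mathrm{fs}} \in \WW_p^{2m}(\R^{n+1};E)$ together with the $\Rcal$-sectoriality estimate
\[
\norm{\abs{\lambda}^{1 - \abs{\vec\alpha}/(2m)} \DD^{\vec\alpha} w^{\mathrm{fs}}}_{\LL_p(\R_+^{n+1};E)} \leq C \norm{f}_{\LL_p(\R_+^{n+1};E)}, \qquad \abs{\vec\alpha} \leq 2m,
\]
uniformly in $\lambda \in \Sigma_{\pi-\phi}$.

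Next, the boundary defect $-\Bcal_j(\DD) w^{\mathrm{fs}}|_{\partial \R_+^{n+1}}$ splits, via the projections $\Pcal_{j,k}$, into pieces of differentiability order $k = 0, 1, \ldots, m_j$, and I would directly use Proposition \ref{prop:EBVP_half_space_solution} to define the correction in terms of $f$ by
\[
v^{\mathrm{corr}}_{j,k}(\vec x',y) = -\int_0^\infty \int_{\R^n} \bb K^{\mathrm{corr},\lambda}_{j,k}(\vec x' - \vec{\tilde x}', y, \tilde y) f(\vec{\tilde x}', \tilde y) \, \dd \vec{\tilde x}' \, \dd \tilde y,
\]
so that $w^{\mathrm{corr}} := \sum_{j=1}^m \sum_{k=0}^{m_j} v^{\mathrm{corr}}_{j,k}$ solves the homogeneous interior equation with matching boundary trace $\Bcal_j(\DD) w^{\mathrm{corr}} = -\Bcal_j(\DD) w^{\mathrm{fs}}$ on $\partial \R_+^{n+1}$; consequently $w = w^{\mathrm{fs}} + w^{\mathrm{corr}}$ solves \eqref{eqn:Abstract_BVP_g=0} by construction.

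The main analytic step is the $\LL_p$-bound on the correction: feeding the kernel estimate
\[
\abs{\DD^{\vec\alpha} \bb K^{\mathrm{corr},\lambda}_{j,k}(\vec x',y,\tilde y)} \leq M \abs{\lambda}^{(n+1-2m+\abs{\vec\alpha})/(2m)} p^{n+1}_{2m,\abs{\vec\alpha}}\bigl(c \abs{\lambda}^{1/(2m)}(\abs{\vec x'}+y+\tilde y)\bigr)
\]
from the preceding proposition into the convolution lemma \cite[Lemma 6.7]{DeHiPr03} and integrating in $\vec{\tilde x}'$ and $\tilde y$ via Lemma \ref{lem:Lem-2.6} yields
\[
\norm{\abs{\lambda}^{1-\abs{\vec\alpha}/(2m)} \DD^{\vec\alpha} v^{\mathrm{corr}}_{j,k}}_{\LL_p(\R_+^{n+1};E)} \leq C \norm{f}_{\LL_p(\R_+^{n+1};E)}, \qquad \abs{\vec\alpha} \leq 2m-1,
\]
and summing over $j,k$ and adding the full-space estimate gives the claim. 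For uniqueness, any solution of the homogeneous system $(\lambda \bb J + A \& B) w = 0$ in $\WW_p^{2m-1}$ can, after partial Fourier transform in $\vec x'$, be pointwise reduced to the ODE initial value problem of Assumption \ref{assmpt:LSC} with zero data; the uniqueness part of the Lopatinskii--Shapiro condition then forces $\Fcal w(\vec\xi',\cdot) \equiv 0$ for a.e.\ $\vec\xi'$, hence $w = 0$.

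The principal obstacle is keeping the mixed-order scaling consistent across the summation in $k$: the $k$-dependent weights $\rho^{2m-k}$ hidden in $\Fcal \vec g_\rho^{\mathrm{corr}}$ must be precisely compensated by the corresponding factors in $\bb K^{\mathrm{corr},\lambda}_{j,k}$ so that the final bound depends only on $\norm{f}_{\LL_p}$ and is uniform in $k$. This is exactly where the standing hypothesis $b_{j,k,\vec\beta}(\ran \Pcal_{j,k}) \subseteq \ran \Pcal_{j,k}$ together with the orthogonality $\Pcal_{j,k} \Pcal_{j,k'} = 0$ for $k \neq k'$ is crucial: it decouples the problem block-by-block and thereby extends the argument of \cite{DeHiPr03} beyond the homogeneous principal-part setting.
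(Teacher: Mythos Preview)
Your proposal is correct and follows essentially the same route as the paper, which simply refers to \cite[Proposition~6.9]{DeHiPr03} and notes that the argument transfers \emph{expressis verbis}; you have accurately spelled out that argument with the additional $k$-summation required by the mixed-order boundary operators. One small caveat: the claim $w^{\mathrm{fs}}\in\WW_p^{2m}(\R^{n+1};E)$ via Theorem~\ref{thm:full-space_problem} needs $E$ of class $\HTcal$, which is not assumed here, but since the target estimate is only for $\abs{\vec\alpha}\leq 2m-1$ the weaker inclusion $w^{\mathrm{fs}}\in\WW_p^{2m-1}$ from part~(1) of that theorem suffices.
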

 \begin{proof}
  The proof can be transferred almost \emph{expressis verbis} from the proof of \cite[Proposition 6.9]{DeHiPr03}.
 \end{proof} 
 
 Summarising these results, we obtain the following theorem, which can be seen as an analogue to \cite[Theorem 6.10]{DeHiPr03}.
 
 \begin{theorem}
 \label{thm:Max_Regularity_BVP}
  \begin{enumerate}
   \item
  Let $p \in (1, \infty)$ and let $E$ be an arbitrary Banach space.
  Let $\Acal(\DD)$ be a parameter-elliptic operator of order $2m$ with angle of ellipticity $\phi_\Acal^\mathrm{ellipt} \in [0,\pi)$.
  Let $\phi > \phi_\Acal^\mathrm{ellipt}$.
  For each $j = 1, \ldots, m$ let $\Bcal_j(\DD) = \sum_{k=0}^{m_j} \sum_{\abs{\vec \beta} = k} b_{j,k,\vec \beta} \DD^{\vec \beta} \Pcal_{j,k}$ be a boundary differential operator of order $m_j < 2m$.
  Assume that the Lopatinskii--Shapiro condition ${\bf (LS)}$ is valid.
  Let data $f \in \LL_p(\R_+^{n+1};E)$ and $g_j \in \sum_{k=0}^{m_j} \WW_p^{2m-k-\frac{1}{p}}(\R^n;\ran(\Pcal_{j,k}))$, $j = 1, \ldots, m$, be given.
  Let $\lambda \in \Sigma_{\pi-\phi}$ and let $A \& B$ be defined as above.
  Then the boundary value problem
   \[
    \lambda J u + (A \& B) u
     = \left( \begin{array}{c} f \\ \Ecal_\lambda \vec g \end{array} \right)
   \]  
  has a unique solution $u \in \WW_p^{2m-1}(\R_+^{n+1};E) \cap \dom(A \& B)$ which, for $R^\lambda_0 := P_{\R_+^{n+1}} (\lambda + A_{\R^{n+1}})^{-1} \Ecal_0$, is given by
   \[
    u
     = R^\lambda_0 f + \sum_{j=1}^m \sum_{k=0}^{m_j} \left( R^\lambda_{j,k} f + S^\lambda_{j,k} \Ecal_\lambda g_{j,k} \right)
   \]
  and there is a constant $C > 0$ which is independent of the data $(f,\vec g)$ and the parameter $\lambda$ such that for every multi-index $\vec \alpha \in \N_0^{n+1}$ of length $\abs{\vec \alpha} \leq 2m-1$ and every $\lambda \in \Sigma_{\pi-\phi}$ we have
   \begin{align}
    \norm{\lambda^{1-\frac{\abs{\vec \alpha}}{2m}} \DD^{\vec \alpha} u}_{\LL_p(\R_+^{n+1};E)}
     &\leq C \left( \norm{f}_{\LL_p(\R_+^{n+1};E)} + \sum_{j=1}^m \sum_{k=0}^{m_j} \norm{((- \Delta)^m + \abs{\lambda})^{\frac{2m-k}{2m}} \Ecal_\lambda g_{j,k}}_{\LL_p(\R_+^{n+1};E)} \right).
     \nonumber
   \end{align}
    \item
     If we start with given functions $g_{j,k} \in \WW_p^{2m-k}(\R_+^{n+1};E)$ and replace $\Ecal_\lambda \vec g$ in the statement above by $\vec g$, we obtain the same result, except for the modified estimates
   \begin{align}
    \norm{\lambda^{1-\frac{\abs{\vec \alpha}}{2m}} \DD^{\vec \alpha} u}_{\LL_p(\R_+^{n+1};E)}
     &\leq C \left( \norm{f}_{\LL_p(\R_+^{n+1};E)} + \sum_{j=1}^m \sum_{k=0}^{m_j} \norm{((- \Delta)^m + \abs{\lambda})^{\frac{2m-k}{2m}} g_{j,k}}_{\LL_p(\R_+^{n+1};E)} 
     \right. \label{eqn:estimate_solution} \\ &\qquad \left.
     + \sum_{j=1}^m \sum_{k=0}^{m_j} \norm{((-\Delta)^m + \abs{\lambda})^{\frac{2m-k-1}{2m}} \frac{\partial}{\partial y} g_{j,k}}_{\LL_p(\R_+^{n+1};E)} 
    \right).
     \nonumber
   \end{align}
  \end{enumerate}
 \end{theorem}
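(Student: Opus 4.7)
The plan is to exploit linearity to decompose the problem into the two subproblems already solved in the preceding propositions, and to address uniqueness separately.

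I would write $u = u^{(1)} + u^{(2)}$, where $u^{(1)}$ solves the zero-boundary problem \eqref{eqn:Abstract_BVP_g=0} with inhomogeneity $f$ while $u^{(2)}$ solves \eqref{eqn:Abstract_BVP} with the prescribed boundary data. For part (1), Proposition \ref{prop:Max_Regularity_Zero-bdry-data} delivers $u^{(1)} = R^\lambda_0 f + \sum_{j,k} v^{\mathrm{corr}}_{j,k}$, and the integral representation from Proposition \ref{prop:EBVP_half_space_solution} permits rewriting $v^{\mathrm{corr}}_{j,k}$ as $R^\lambda_{j,k} f$; the first part of Proposition \ref{prop:Estimates_Solution_Operator} delivers $u^{(2)} = \sum_{j,k} S^\lambda_{j,k} \Ecal_\lambda g_{j,k}$ together with the weighted estimate of exactly the right form, since $h^\lambda_{j,k} = L_\lambda^{\frac{2m-k}{2m}} \Ecal_\lambda g_{j,k}$. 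Summing both pieces and using the triangle inequality in $\LL_p$ yields both the representation $u = R^\lambda_0 f + \sum_{j,k}\big(R^\lambda_{j,k} f + S^\lambda_{j,k} \Ecal_\lambda g_{j,k}\big)$ and the asserted bound.

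Part (2) is handled identically, except that the second part of Proposition \ref{prop:Estimates_Solution_Operator} is invoked; its natural decomposition $S^\lambda_{j,k} = S^{\lambda,\mathrm{I}}_{j,k} + S^{\lambda,\mathrm{II}}_{j,k}$ accounts for the two summands on the right-hand side of \eqref{eqn:estimate_solution}. The extra $\partial_y$-term cannot be absorbed here, because the convenient identity $\partial_y h^\lambda_{j,k} = - L_\lambda^{1/2m} h^\lambda_{j,k}$, which relied on the specific extension $\Ecal_\lambda$, is no longer at our disposal when $g_{j,k}$ is already given on the half-space.

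The main obstacle is uniqueness, as neither Proposition \ref{prop:Max_Regularity_Zero-bdry-data} nor Proposition \ref{prop:Estimates_Solution_Operator} directly rules out further solutions in $\WW_p^{2m-1}(\R_+^{n+1};E) \cap \dom(A \& B)$. The plan is to consider the homogeneous problem ($f = 0$, $\vec g = 0$) and apply the partial Fourier transform in $\vec x'$, which -- thanks to the $\WW_p^{2m-1}$-regularity of $u$ -- reduces matters for a.e.\ $\vec \xi' \in \R^n$ to a $y$-ordinary differential initial value problem of precisely the form appearing in Assumption \ref{assmpt:LSC}. The required decay of $\Fcal u(\vec \xi', \cdot)$ as $y \to \infty$ can be extracted from $u \in \WW_p^{2m-1}(\R_+^{n+1};E)$ together with a density argument in $\vec \xi'$, so that the Lopatinskii--Shapiro condition forces $\Fcal u(\vec \xi', \cdot) \equiv 0$ for a.e.\ $\vec \xi'$, hence $u = 0$. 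The delicate step is justifying this pointwise-in-$\vec \xi'$ reduction with enough regularity to invoke Assumption \ref{assmpt:LSC}; everything else is an orderly assembly of pieces already prepared.
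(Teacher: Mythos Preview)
Your decomposition $u = u^{(1)} + u^{(2)}$ and the assembly of the estimates from Propositions~\ref{prop:Estimates_Solution_Operator} and~\ref{prop:Max_Regularity_Zero-bdry-data} is exactly what the paper does: the theorem is stated as a summary of those two propositions with no further argument, so your existence and estimate portions match.

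Your treatment of uniqueness, however, is unnecessarily elaborate. You write that neither proposition ``directly rules out further solutions,'' but Proposition~\ref{prop:Max_Regularity_Zero-bdry-data} explicitly asserts that the problem \eqref{eqn:Abstract_BVP_g=0} has a \emph{unique} solution in $\WW_p^{2m-1}(\R_+^{n+1};E) \cap \dom(A \& B)$. Hence if $u_1, u_2$ both solve the full problem, their difference solves \eqref{eqn:Abstract_BVP_g=0} with $f = 0$, and Proposition~\ref{prop:Max_Regularity_Zero-bdry-data} forces $u_1 - u_2 = 0$. The pointwise-in-$\vec\xi'$ Fourier argument you sketch is therefore not needed, which is fortunate since justifying it for general $p \in (1,\infty)$ (where the partial Fourier transform of an $\LL_p$-function need not be a function) is genuinely delicate and the paper never carries it out at this stage.
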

 
 \subsection{$\Hcal^\infty$-calculus}
 
 In this subsection, we transfer the results on the $\Hcal^\infty$-calculus as in \cite[Section 7.1]{DeHiPr03} to the situation considered here.
 We start by combining Theorem \ref{thm:Max_Regularity_BVP} and  \cite[Lemma 7.1]{DeHiPr03}, and immediately obtain the following theorem.
 
 \begin{theorem}
 \label{thm:Max_Regularity_BVP_HT}
 If, in the situation of Theorem \ref{thm:Max_Regularity_BVP}, the Banach space $E$ is of class $\HTcal$, then $u$ lies in $\WW_p^{2m}(\R_+^{n+1};E)$ and \eqref{eqn:estimate_solution} holds true for all $\vec \alpha \in \N_0^{n+1}$ with $\abs{\vec \alpha} = 2m$ as well.
 Moreover, in this case
  \begin{align*}
   \norm{\DD^{\vec \alpha} S^\lambda_{j,k} \Ecal_\lambda g_j}_{\LL_p(\R_+^{n+1};E)}
    &\leq C \abs{\lambda}^{\frac{\abs{\vec \alpha} - k}{2m}} \norm{\Ecal_\lambda g_{j,k}}_{\LL_p(\R_+^{n+1};E)},
    &&0 \leq \abs{\vec \alpha} \leq k \leq m_j,
    \\
    \intertext{if $g_{j,k}$ is only given on $\R^n \cong \partial \R_+^{n+1}$, else}
   \norm{\DD^{\vec \alpha} S^{\lambda,\mathrm{I}}_{j,k} \Ecal_\lambda g_{j,k}}_{\LL_p(\R_+^{n+1};E)}
    &\leq C \abs{\lambda}^{\frac{\abs{\vec \alpha} - k}{2m}} \norm{g_{j,k}}_{\LL_p(\R_+^{n+1};E)},
    &&0 \leq \abs{\vec \alpha} \leq k \leq m_j,
    \\
   \norm{\DD^{\vec \alpha} S^{\lambda,\mathrm{II}}_{j,k} g_{j,k}}_{\LL_p(\R_+^{n+1};E)}
    &\leq C \abs{\lambda}^{\frac{\abs{\vec \alpha} - k - 1}{2m}} \norm{\tfrac{\partial}{\partial y} g_{j,k}}_{\LL_p(\R_+^{n+1};E)},
    &&0 \leq \abs{\vec \alpha} \leq k + 1 \leq m_j + 1
 \end{align*}
 and the $\LL_p$-realisation $A_B$ of the boundary value problem \eqref{eqn:Abstract_BVP_g=0} is
  \begin{align}
   A_B u
    &= \Acal(\DD) u,
    \label{eqn:Def_Lp-realisation}
    \\
   \dom(A_B)
    &= \{ u \in \WW_p^{2m}(\R_+^{n+1};E): \, \Bcal_j(\DD) u = 0 \quad \text{for all } j = 1, \ldots, m \}.
  \end{align}
 \end{theorem}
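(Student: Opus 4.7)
My plan is to strengthen Theorem \ref{thm:Max_Regularity_BVP} in three essentially independent parts, exploiting that $E \in \HTcal$ implies $\LL_p(\R_+^{n+1};E) \in \HTcal$ by Lemma \ref{lem:HTcal-L_p-HTcal}, which unlocks the operator-valued Mikhlin theorem with $\Rcal$-bounded symbols. First, for the top-order estimate with $|\vec\alpha| = 2m$, I would treat the three summands in $u = R^\lambda_0 f + \sum_{j,k}(R^\lambda_{j,k}f + S^\lambda_{j,k}\Ecal_\lambda g_{j,k})$ separately. The full-space contribution $R^\lambda_0 f = P_{\R_+^{n+1}}(\lambda+A_{\R^{n+1}})^{-1}\Ecal_0 f$ already lies in $\WW_p^{2m}(\R_+^{n+1};E)$ with the asserted $\lambda$-scaling, directly by Theorem \ref{thm:full-space_problem}(2). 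For the half-space and correction terms, I would return to the Fourier representation $\Fcal v = \Fcal\bb V_\rho^{2m}\cdot \Fcal\vec g_\rho^{2m}$: the symbol is operator-valued, jointly holomorphic in $(\vec b,\sigma)$ on a complex neighbourhood of $\overline{\Sigma_{\pi-\phi}}\times(\R^n\setminus\{0\})$, and exponentially decaying in $y$ thanks to the uniform spectral gap \eqref{eqn:Spectral_Gap_S+}--\eqref{eqn:Spectral_Gap_S-}. Multiplying by $\vec\xi^{\vec\alpha}$ at full order $|\vec\alpha|=2m$ preserves holomorphy and boundedness on the sector, so Proposition \ref{prop:R-boundedness_sectors} upgrades boundedness to $\Rcal$-boundedness of the rescaled derivatives, and Mikhlin's theorem (in the form stated for $\HTcal$-spaces) finally yields the missing $\LL_p$-bound. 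This is in essence the content of \cite[Lemma 7.1]{DeHiPr03}, which permits lifting a $\WW_p^{2m-1}$-solution with $\Acal(\DD)u \in \LL_p$ to a $\WW_p^{2m}$-solution precisely under the $\HTcal$-assumption on $E$.

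The refined estimates on $S^\lambda_{j,k}$ and the split $S^{\lambda,\mathrm{I}}_{j,k}$, $S^{\lambda,\mathrm{II}}_{j,k}$ are of a rather different, more elementary nature, and I would derive them directly from the kernel estimates of Proposition \ref{prop:kernel_estimates_half-space}: for $|\vec\alpha| \leq k \leq m_j$ the bound $|\DD^{\vec\alpha}\bb K^\lambda(\vec x',y)| \lesssim |\lambda|^{(n-2m+|\vec\alpha|)/2m}\, p_{2m,|\vec\alpha|-1}^{n+1}(c|\lambda|^{1/(2m)}(|\vec x'|+y))$ is absolutely integrable over $\R_+^{n+1}$ with $\LL_1$-norm of order $|\lambda|^{(|\vec\alpha|-k)/(2m)}$ by Lemma \ref{lem:Lem-2.6}, so the Young/integral-operator estimate yields the first family of bounds. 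For $S^{\lambda,\mathrm{II}}_{j,k}$, an integration by parts in the $\tilde y$-variable transfers one derivative from the kernel onto $\tfrac{\partial}{\partial y}g_{j,k}$, shifting the scaling power by $1/(2m)$ and giving the second family.

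Finally, for the identification of the $\LL_p$-realisation $A_B$ of \eqref{eqn:Abstract_BVP_g=0}: once we know $u \in \WW_p^{2m}(\R_+^{n+1};E)$, the boundary traces $\Bcal_j(\DD)u|_{\partial\R_+^{n+1}}$ are well defined via the standard Sobolev trace theorem. The inclusion $\{u \in \WW_p^{2m}: \Bcal_j(\DD)u = 0\text{ on }\partial\R_+^{n+1}\}\subseteq\dom(A_B)$ is immediate, since any such $u$ solves \eqref{eqn:Abstract_BVP_g=0} with $f = \Acal(\DD)u \in \LL_p(\R_+^{n+1};E)$ and $\vec g = \vec 0$, hence belongs to $\dom(A\&B)\cap\WW_p^{2m}$. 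Conversely, any $u\in\dom(A_B)$ satisfies $\lambda\bb J u+(A\&B)u = (f,\vec 0)^{\mathsf{T}}$ for $f := (\lambda + \Acal(\DD))u \in \LL_p$, and by uniqueness of the solution in the class of the first part it coincides with the explicit representation, which lies in $\WW_p^{2m}$ with vanishing boundary symbols.

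The main obstacle is the first step: the delicate passage from $\WW_p^{2m-1}$ to $\WW_p^{2m}$-regularity, since this is the only place where $\HTcal$-type harmonic analysis must actually be invoked. Concretely, the uniform $\Rcal$-boundedness of the operator-valued symbol $\vec\xi^{\vec\alpha}\Fcal\bb V_\rho^{2m}(\vec\xi',y,\lambda)$ at the critical order $|\vec\alpha|=2m$ has to be extracted from Proposition \ref{prop:R-boundedness_sectors} after rewriting the symbol in variables $(\vec b,\sigma)$, uniformly over $\lambda \in \Sigma_{\pi-\phi}$, $\vec\xi' \in \R^n$, and $y > 0$; everything else then follows by routine modifications of the proofs in \cite[Section 7]{DeHiPr03} combined with the projections $\Pcal_{j,k}$ to take care of the mixed-order boundary terms.
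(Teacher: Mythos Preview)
Your approach is essentially the same as the paper's: the paper states the theorem as an immediate consequence of combining Theorem \ref{thm:Max_Regularity_BVP} with \cite[Lemma~7.1]{DeHiPr03}, which is exactly the lifting step you identify as the heart of the matter, and the refined $S^\lambda_{j,k}$-estimates and the domain identification then follow from the kernel bounds and uniqueness just as you outline. One small correction: Proposition~\ref{prop:R-boundedness_sectors} does \emph{not} upgrade uniform boundedness to $\Rcal$-boundedness---it takes $\Rcal$-boundedness of $T(\lambda)$ as input and returns $\Rcal$-boundedness of $\lambda T'(\lambda)$---so that particular citation is misplaced; but since you correctly anchor the argument in \cite[Lemma~7.1]{DeHiPr03} anyway, this does not affect the validity of your plan.
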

 
 Together with the kernel estimates of Proposition \ref{prop:kernel_estimates_half-space}, it follows that $A_B$ admits a \emph{bounded $\Hcal^\infty$-calculus}, see the following analogue to \cite[Theorem 7.4]{DeHiPr03}:
 
 \begin{theorem}
  \label{thm:Bounded_Hinfty_Calculus}
  Let $E$ be a Banach space of class $\HTcal$ and let $A_B: \dom(A_B) \rightarrow \LL_p(\R_+^{n+1};E)$ be defined as in Theorem \ref{thm:Max_Regularity_BVP_HT}. Then $A_B \in \Hcal^\infty(\LL_p(\R_+^{n+1};E))$ with $\Hcal^\infty$-angle $\phi_{A_B}^\infty \leq \phi_\Acal^\mathrm{ellipt}$.
 \end{theorem}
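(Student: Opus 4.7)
The plan is to mimic the strategy of \cite[Theorem 7.4]{DeHiPr03} using the explicit representation of the resolvent $(\lambda + A_B)^{-1}$ obtained in Theorem \ref{thm:Max_Regularity_BVP} and the kernel bounds of Proposition \ref{prop:kernel_estimates_half-space} (and its analogue for the correction kernels). Fix an angle $\phi > \phi_\Acal^\mathrm{ellipt}$, choose $\phi' \in (\phi_\Acal^\mathrm{ellipt}, \phi)$ and a standard Dunford contour $\Gamma \subseteq \Sigma_{\pi-\phi'}$ surrounding $-\sigma(A_B)$. For $h \in \Hcal_0(\Sigma_\phi)$ with polynomial decay at $0$ and $\infty$ I define, as usual,
\[
 h(A_B) f
  = \frac{1}{2\pi \ii} \int_\Gamma h(-\lambda) (\lambda + A_B)^{-1} f \, \dd \lambda,
  \quad f \in \LL_p(\R_+^{n+1};E),
\]
and the goal is the bound $\norm{h(A_B)}_{\B(\LL_p(\R_+^{n+1};E))} \leq C \norm{h}_\infty$ with $C$ independent of $h$.

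By Theorem \ref{thm:Max_Regularity_BVP} the resolvent splits as
\[
 (\lambda + A_B)^{-1} f
  = R_0^\lambda f + \sum_{j=1}^m \sum_{k=0}^{m_j} R_{j,k}^\lambda f,
\]
with $R_0^\lambda = P_{\R_+^{n+1}} (\lambda + A_{\R^{n+1}})^{-1} \Ecal_0$ the restriction of the full-space resolvent and $R_{j,k}^\lambda$ the correction terms given by the kernels $\bb K^{\mathrm{corr},\lambda}_{j,k}$. The full-space contribution is treated first and is essentially for free: By Theorem \ref{thm:full-space_problem} the full-space realisation $A_{\R^{n+1}}$ lies in $\Hcal^\infty(\LL_p(\R^{n+1};E))$ with angle $\leq \phi_\Acal^\mathrm{ellipt}$, and since the trivial extension $\Ecal_0: \LL_p(\R_+^{n+1};E) \to \LL_p(\R^{n+1};E)$ and the restriction $P_{\R_+^{n+1}}$ are bounded, the operator $\frac{1}{2\pi \ii}\int_\Gamma h(-\lambda) R_0^\lambda \, \dd \lambda = P_{\R_+^{n+1}} h(A_{\R^{n+1}}) \Ecal_0$ is bounded on $\LL_p(\R_+^{n+1};E)$ with norm $\lesssim \norm{h}_\infty$.

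The substance of the argument lies in the correction pieces. For each $j, k$, the kernel estimate
\[
 \abs{\bb K^{\mathrm{corr},\lambda}_{j,k}(\vec x', y, \tilde y)}
  \leq M \, \abs{\lambda}^{\frac{n+1-2m}{2m}} p_{2m,0}^{n+1}\bigl( c \abs{\lambda}^{1/(2m)} (\abs{\vec x'} + y + \tilde y) \bigr)
\]
(and its analogues for derivatives) shows that $R_{j,k}^\lambda$ is represented by an $\LL_1$-integrable kernel with $\lambda$-decay controlled by the function $p_{2m,0}^{n+1}$. Substituting this representation into the Dunford integral, parametrising $\Gamma$ by $\lambda = r \ee^{\pm \ii (\pi - \phi')}$ and interchanging the order of integration, yields an operator with an explicit kernel
\[
 N_{j,k}(\vec x' - \tilde{\vec x}', y, \tilde y)
  = \frac{1}{2\pi \ii} \int_\Gamma h(-\lambda) \bb K^{\mathrm{corr},\lambda}_{j,k}(\vec x' - \tilde{\vec x}', y, \tilde y) \, \dd \lambda.
\]
Using $\abs{h(-\lambda)} \leq \norm{h}_\infty$ and substituting $r = \abs{\lambda}$, the complete monotonicity and the integrability of $p_{2m,0}^{n+1}$ (cf. the discussion preceding Lemma \ref{lem:Lem-2.6}) produce an $\LL_{p,p',1}$-bound on $N_{j,k}$ of order $\norm{h}_\infty$, so that the operator $\int_\Gamma h(-\lambda) R_{j,k}^\lambda \, \dd \lambda$ is bounded on $\LL_p(\R_+^{n+1};E)$ by Lemma \ref{lem:Lem-2.6} and the kernel-norm lemma recalled before Proposition \ref{prop:kernel_estimates_half-space}, with a constant proportional to $\norm{h}_\infty$. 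Summing over $j, k$ and adding the full-space piece gives $\norm{h(A_B)}_{\B(\LL_p(\R_+^{n+1};E))} \leq C \norm{h}_\infty$, so $A_B \in \Hcal^\infty$ with $\phi_{A_B}^\infty \leq \phi'$, and letting $\phi' \downarrow \phi_\Acal^\mathrm{ellipt}$ yields the claimed angle.

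The main obstacle is verifying that the $\lambda$-dependence coming from the mixed-order boundary operators $\Bcal_j = \sum_k \Bcal_{j,k}$ and the pseudo-differential factors $L_\lambda^{(2m-k)/2m}$ hidden in the construction of the correction kernels does not spoil the integrability of $N_{j,k}$ along $\Gamma$: the kernel estimates available in Proposition \ref{prop:kernel_estimates_half-space} give exactly the integrable order $\abs{\lambda}^{(n+1-2m)/2m}$ in front of a $p_{2m,0}^{n+1}$-factor, which combined with the decay of $p_{2m,0}^{n+1}$ in its argument $c \abs{\lambda}^{1/(2m)}(\abs{\vec x'} + y + \tilde y)$ produces convergent $r$-integrals on each arm of $\Gamma$. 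This is exactly the place where the assumption that the $\Bcal_{j,k}$ are $\Pcal_{j,k}$-compatible (so that the solution operator has the desired block structure) is essential; away from this structural assumption the mixed differentiability orders would produce unbalanced powers of $\abs{\lambda}$ and the kernel estimate would fail to give a bounded $\Hcal^\infty$-calculus.
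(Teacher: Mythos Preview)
Your approach is essentially the same as the paper's: split the resolvent into the full-space piece $R_0^\lambda$ (handled by the existing $\Hcal^\infty$-result Theorem \ref{thm:full-space_problem}) and the correction pieces $R_{j,k}^\lambda$, then integrate the kernel estimate for $\bb K^{\mathrm{corr},\lambda}_{j,k}$ over the Dunford contour. The paper carries out exactly this computation and arrives at the pointwise bound
\[
 \Bigl| \int_\Gamma h(\lambda)\, \bb K^{\mathrm{corr},\lambda}_{j,k}(\vec x', y, \tilde y)\, \dd\lambda \Bigr|
  \leq C \norm{h}_\infty \frac{1}{(\abs{\vec x'} + y + \tilde y)^{n+1}}.
\]

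One point to sharpen: for the final step you invoke the $\LL_{p,p',1}$ kernel lemma, but the Poisson-type kernel $(\abs{\vec x'}+y+\tilde y)^{-(n+1)}$ does \emph{not} lie in $\LL_{p,p',1}(\R^n\times\R_+\times\R_+)$ (the iterated norm diverges). The paper instead appeals to \cite[Remark 7.2\,c)]{DeHiPr03}, which is tailored to precisely such Poisson kernels of the form $k_0(\abs{\vec x'-\tilde{\vec x}'}, y+\tilde y)$ and yields $\LL_p$-boundedness by a different mechanism. Also, your reference to Lemma \ref{lem:Lem-2.6} is not quite on point; the relevant fact is rather the moment integrability $\int_0^\infty \tau^n p_{2m,0}^{n+1}(c\tau)\,\dd\tau < \infty$ discussed before that lemma. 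With these two citations corrected, your sketch matches the paper's proof.
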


 The proof can be executed similarly to the proof of \cite[Theorem 7.4]{DeHiPr03}, using \cite[Lemma 7.1]{DeHiPr03} and an adjusted version of \cite[Remark 7.2c]{DeHiPr03}.
 
 \begin{proof}
  Let us fix any $\phi \in (\phi_\Acal^\mathrm{ellipt},\pi)$ and recall that we may write the resolvents of the operator $A_B$ as
   \[
    (\lambda + A_B)^{-1}
     = R^\lambda_0 + \sum_{j=1}^m \sum_{k=0}^{m_j} R^\lambda_{j,k}
     \quad
     \text{for all }
     \lambda \in \Sigma_{\pi-\phi}.
   \]
  The terms correspond to the resolvent for the full-space problem plus corrective terms due to the homogeneous boundary conditions imposed at $\partial \R_+^{n+1}$.
  \newline
  Let $h \in \Hcal^\infty(\Sigma_\phi)$ be an arbitrary polynomially decaying function, i.e.\ there is $C, s > 0$ such that
   \[
    \abs{h(z)}
     \leq C (1 + \abs{z})^{-s},
   \]
and denote by $\Gamma$ the contour given by $(0,\infty] \ee^{\ii \theta} \cup (0, \infty) \ee^{-\ii \theta}$ for some $\theta \in (\phi_\Acal^\mathrm{ellipt}, \phi)$.
  We may then use the Dunford integral calculus to define
   \[
    h(A_B)
     := \frac{1}{2\pi \ii} \int_\Gamma h(\lambda) (\lambda + A_B)^{-1} \dd \lambda.
   \]
  We want to show that $A_B \in \Hcal^\infty(\LL_p(\R_+^{n+1};E))$ with $\Hcal^\infty$-angle $\phi_{A_B}^\infty \leq \phi_\Acal^\mathrm{ellipt}$, thus it remains to prove that for all such $\phi > \phi_\Acal^\mathrm{ellipt}$ and $h \in \Hcal_0(\Sigma_\phi)$ which polynomially decay to zero at infinity, we have an estimate
   \[
    \norm{h(A_B)}_{\B(\LL_p(\R_+^{n+1};E))}
     \leq C \norm{h}_\infty
   \]
  for some constant $C > 0$ which is independent of $h \in \Hcal_0(\Sigma_\phi)$.
  For the full-space problem related term $R^\lambda_0$ such an estimate has already been established in \cite[Theorem 5.5]{DeHiPr03}, relying on the condition that $E$ is a Banach space of class $\HTcal$.
  The remaining terms $R^\lambda_{j,k}$ are the integral operators associated with the integral kernels $K^{\mathrm{corr},\lambda}_{j,k}$, hence, $\int_\Gamma h(\lambda) R^\lambda_{j,k} \, \dd \lambda$ is the integral operator associated to the kernel $\int_\Gamma h(\lambda) K^{\mathrm{corr},\lambda}_{j,k}(\cdot,\cdot,\cdot) \, \dd \lambda$.
  Since the estimate
   \begin{align*}
    \abs{ \int_\Gamma h(\lambda) K^{\mathrm{corr}, \lambda}_{j,k}(\vec x', y, \tilde y) \, \dd \lambda }
     &\leq C \norm{h}_\infty \int_0^\infty r^{\frac{n+1}{2m}} p_{2m,0}^{n+1} (c r^{\frac{1}{2m}} (\abs{\vec x'} + y + \tilde y) \, \dd r
     \\
     &\leq C \norm{h}_\infty \frac{1}{(\abs{\vec x'} + y + \tilde y)^{n+1}} \int_0^\infty \tau^n p_{2m,0}^{n+1}(c \tau) \, \dd \tau
     \\
     &\leq C \norm{h}_\infty \frac{1}{(\abs{\vec x'} + y + \tilde y)^{n+1}}
   \end{align*}
  is valid thanks to $\int_0^\infty \tau^n p_{2m,0}^{n+1}(\tau) \dd \tau < \infty$, we may employ Proposition \ref{prop:holomorphic_image_of_bounded_sets} and \cite[Remark 7.2 c)]{DeHiPr03} to conclude the assertion.
 \end{proof}

 \begin{corollary}
  \label{cor:Rcal-Bounds}
  Let $E$ be a Banach space of class $\HTcal$.
  Then
   \[
    \Rcal ( \{\lambda^{1-\frac{\abs{\vec \alpha}}{2m}} \DD^{\vec \alpha} (\lambda + A_B)^{-1}: \, \lambda \in \Sigma_{\pi-\phi}, \, \vec \alpha \in \N_0^{n+1} \text{ such that } 0 \leq \abs{\vec \alpha} \leq 2m \} )
     < \infty.
   \]
 \end{corollary}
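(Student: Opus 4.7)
The plan is to combine the resolvent decomposition from Theorem \ref{thm:Max_Regularity_BVP} with the $\Rcal$-bound for the full-space operator already recorded in Theorem \ref{thm:full-space_problem}, together with the kernel estimates of Proposition \ref{prop:kernel_estimates_half-space} (and its corrective analogue) lifted to $\Rcal$-bounds via the dominating-scalar-kernel criterion of \cite[Proposition 4.12]{DeHiPr03}.

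First I would decompose
\[
 (\lambda + A_B)^{-1}
  = R^\lambda_0 + \sum_{j=1}^m \sum_{k=0}^{m_j} R^\lambda_{j,k},
\]
with $R^\lambda_0 = P_{\R_+^{n+1}}(\lambda + A_{\R^{n+1}})^{-1}\Ecal_0$ and each $R^\lambda_{j,k}$ the corrective integral operator with operator-valued kernel $K^{\mathrm{corr},\lambda}_{j,k}$. For the full-space contribution, the sought $\Rcal$-bound for $\{\lambda^{1-\abs{\vec\alpha}/(2m)}\DD^{\vec\alpha} R^\lambda_0\}$ is an immediate consequence of Theorem \ref{thm:full-space_problem}(2), since composition with the bounded (hence $\Rcal$-bounded as singletons) operators $\Ecal_0$ and $P_{\R_+^{n+1}}$ preserves $\Rcal$-bounds by Remark \ref{rem:Rcal-boundedness-properties}.

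For each corrective term, the kernel estimate
\[
 \norm{\DD^{\vec\alpha} K^{\mathrm{corr},\lambda}_{j,k}(\vec x',y,\tilde y)}_{\B(E)}
  \leq M \abs{\lambda}^{\frac{n+1-2m+\abs{\vec\alpha}}{2m}} p^{n+1}_{2m,\abs{\vec\alpha}}\bigl(c\abs{\lambda}^{1/(2m)}(\abs{\vec x'}+y+\tilde y)\bigr)
\]
shows that the scaled family $\abs{\lambda}^{1-\abs{\vec\alpha}/(2m)} \DD^{\vec\alpha} K^{\mathrm{corr},\lambda}_{j,k}$ is pointwise norm-dominated by the scalar kernel $M\abs{\lambda}^{(n+1)/(2m)} p^{n+1}_{2m,\abs{\vec\alpha}}\bigl(c\abs{\lambda}^{1/(2m)}(\abs{\vec x'}+y+\tilde y)\bigr)$. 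Changing variables $(\vec z',\eta,\tilde\eta) = \abs{\lambda}^{1/(2m)}(\vec x',y,\tilde y)$ and invoking integrability of $p^{n+1}_{2m,\abs{\vec\alpha}}$ shows that this scalar kernel defines a scalar integral operator on $\LL_p(\R_+^{n+1})$ whose norm is bounded uniformly in $\lambda \in \Sigma_{\pi-\phi}$. Applying \cite[Proposition 4.12]{DeHiPr03} would then upgrade the integral-operator family to an $\Rcal$-bounded one, \emph{provided} the operator-valued kernels are \emph{pointwise} $\Rcal$-bounded by the scalar dominating kernel.

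This pointwise $\Rcal$-boundedness is the main obstacle, since the kernel estimate itself only delivers pointwise operator-norm bounds. To bridge the gap, I would fix $(\vec x',y,\tilde y)$ and re-interpret the holomorphic map $\lambda \mapsto \abs{\lambda}^{1-\abs{\vec\alpha}/(2m)}\DD^{\vec\alpha} K^{\mathrm{corr},\lambda}_{j,k}(\vec x',y,\tilde y)$, by means of the contour-integral representation of Proposition \ref{prop:EBVP_half_space_solution}, as the image under the bounded $\Hcal^\infty$-calculus of $A_{\R^{n+1}}$ (available through Theorem \ref{thm:full-space_problem}(2)) of a family of scalar $\Hcal^\infty(\Sigma_\theta)$-functions that is uniformly bounded as $\lambda$ varies in $\Sigma_{\pi-\phi}$. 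Proposition \ref{prop:holomorphic_image_of_bounded_sets} then delivers the required pointwise $\Rcal$-bound with constant controlled by the scalar dominating kernel. Finally, collecting the contributions via subadditivity of $\Rcal$-bounds (Remark \ref{rem:Rcal-boundedness-properties}) over the finitely many indices $(j,k)$ and multi-indices $\vec\alpha$ with $\abs{\vec\alpha}\leq 2m$ concludes the argument.
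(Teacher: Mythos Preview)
Your decompose-and-bound-kernels strategy is viable in principle and in fact mirrors how the paper establishes the closely related Proposition~\ref{prop:Rcal-Bounds} for the inhomogeneous-boundary solution operators. The paper's own proof of the present corollary, however, takes a quite different and far shorter operator-theoretic route: it invokes Theorem~\ref{thm:Bounded_Hinfty_Calculus} to obtain $A_B \in \Hcal^\infty \subseteq \BIPcal$, identifies $\dom(A_B^{\abs{\vec\alpha}/(2m)})$ with $\HH_p^{\abs{\vec\alpha}}(\R_+^{n+1};E)$ so that each $\DD^{\vec\alpha} A_B^{-\abs{\vec\alpha}/(2m)}$ is a single bounded operator, and then factors
\[
 \lambda^{1-\abs{\vec\alpha}/(2m)} \DD^{\vec\alpha}(\lambda+A_B)^{-1}
  = \bigl[\DD^{\vec\alpha} A_B^{-\abs{\vec\alpha}/(2m)}\bigr]\cdot
    \bigl[\lambda^{1-\abs{\vec\alpha}/(2m)} A_B^{\abs{\vec\alpha}/(2m)}(\lambda+A_B)^{-1}\bigr],
\]
with $\Rcal$-boundedness of the second factor coming from the functional calculus of $A_B$. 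No explicit kernel analysis is needed, and the resolvent decomposition into $R^\lambda_0$ and $R^\lambda_{j,k}$ is never invoked.

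There is, moreover, a genuine gap in your argument at the step where you obtain pointwise $\Rcal$-bounds on the $\B(E)$-valued kernels via the $\Hcal^\infty$-calculus of $A_{\R^{n+1}}$ together with Proposition~\ref{prop:holomorphic_image_of_bounded_sets}. This is a category mismatch: the functional calculus of $A_{\R^{n+1}}$ produces operators on $\LL_p(\R^{n+1};E)$, whereas the kernel values $K^{\mathrm{corr},\lambda}_{j,k}(\vec x',y,\tilde y)$ lie in $\B(E)$; for fixed $(\vec x',y,\tilde y)$ they are not of the form $h_\lambda(A_{\R^{n+1}})$ for scalar $h_\lambda$, so Proposition~\ref{prop:holomorphic_image_of_bounded_sets} cannot be applied to them in this way. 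The correct mechanism for upgrading the pointwise norm bounds to pointwise kernel $\Rcal$-bounds is the one carried out in the proof of Proposition~\ref{prop:Rcal-Bounds}: express the kernel as an integral $\int_M G\cdot H\,\dd\mu$ with $\mu$ a finite measure, $H$ scalar, and $G$ a $\B(E^{2m})$-valued function whose range is $\Rcal$-bounded because it is built, through holomorphic dependence on a compact parameter set and Kahane's contraction principle, from the finitely many coefficient operators $a_{\vec\alpha}$, $b_{j,k,\vec\beta}$; then \cite[Proposition~3.8]{DeHiPr03} delivers the $\Rcal$-bound on the integral.
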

 
 \begin{proof}
  By Theorem \ref{thm:Bounded_Hinfty_Calculus} and Remark \ref{rem:Inclusions_Operator-Spaces}, $A_B \in \Hcal^\infty(E) \subseteq \BIPcal(E)$, hence, for every $\sigma \in (0,1)$
   \[
    \dom(A_B^\sigma)
     = \big( \LL_p(\R_+^{n+1};E), \WW_p^{2m}(\R_+^{n+1};E) \big)_{\sigma,p}
     = \HH_p^{2m\sigma}(\R_+^{n+1};E).
   \]
  Thus, for each multi-index $\vec \alpha \in \N_0^{n+1}$ of length $\abs{\vec \alpha} \leq 2m$, the linear operator $\DD^{\vec \alpha} A_B^{-\frac{k}{2m}}$ with $k = \abs{\vec \alpha}$ is a bounded operator on $E$.
  As $\{ \lambda (\lambda + A_B)^{-1}: \, \lambda \in \Sigma_{\pi-\phi} \}$ is $\Rcal$-bounded, all the sets
   \[
    \{ \lambda^{- \frac{\abs{\vec \alpha}}{2m}} \DD^{\vec \alpha} \lambda (\lambda + A_B)^{-1}: \, \lambda \in \Sigma_{\pi-\phi} \}
    \quad
    \text{for each } \abs{\vec \alpha} \leq 2m
   \]
  are $\Rcal$-bounded.
  Since, here, the set of admissible multi-indices $\{ \vec \alpha \in \N_0^{n+1}: \, \abs{\vec \alpha} \leq 2m \}$ is finite, the family of operators
   \[
    \{ \lambda^{- \frac{\abs{\vec \alpha}}{2m}} \DD^{\vec \alpha} \lambda (\lambda + A_B)^{-1}: \, \lambda \in \Sigma_{\pi-\phi}, \, \abs{\vec \alpha} \leq 2m \}
   \]
  is $\Rcal$-bounded as well.
 \end{proof}

 \subsection{$\Rcal$-bounds for solution operators}
 
 By Corollary \ref{cor:Rcal-Bounds}, the solution map for the boundary value problem \eqref{eqn:Abstract_BVP_g=0} with homogeneous boundary data $\vec g = \vec 0$ admits $\Rcal$-bounds. Next, we consider the boundary value problem \eqref{eqn:Abstract_BVP_g=0} for $f = 0$ and inhomogeneous boundary data $\vec g \neq 0$ and also conclude $\Rcal$-boundedness for the corresponding solution maps. By Proposition \ref{prop:Estimates_Solution_Operator} the solution $v$ to $(\lambda \bb J + A \& B) v = (0, \Ecal_\lambda \vec g)^\mathsf{T}$ may be expressed as
  \[
   v = \sum_{j=1}^m \sum_{k=0}^{m_j} S^\lambda_{j,k} \Ecal_\lambda g_{j,k}.
  \]
 Our goal is to prove that the boundary value problem
  \[
   \begin{cases}
    (\lambda + \Acal(\DD)) u = f
     &\text{in } \R_+^{n+1},
     \\
    \Bcal_j(\DD) u = g_{j,k}
     &\text{on } \partial \R_+^{n+1}, \, j = 1, \ldots, m
   \end{cases}
  \] 
 has a (unique) solution in the class $u \in \WW_p^{2m}(\R_+^{n+1};E)$ if and only if the given functions lie in the classes $f \in \LL_p(\R_+^{n+1};E)$ and $g_{j,k} \in \WW_p^{2m-k-1/p}(\partial \R_+^{n+1};E)$ for $j = 1, \ldots, m$ and $k = 0, 1, \ldots, m_j$.
 Using the extension operator $\Ecal_\lambda$, the latter is equivalent to $\Ecal_\lambda g_{j,k} \in \WW_p^{2m-k}(\R_+^{n+1};E)$ for all $j = 1, \ldots, m$ and $k = 0, 1, \ldots, m_j$, and since for $\lambda \in \Sigma_{\pi-\phi}$, i.e.\ $\abs{\lambda} > 0$, $((-\Delta)^m + \abs{\lambda})^{\frac{2m-k}{2m}}$ acts as a top-linear isomorphy between $\WW_p^k(\R_+^{n+1};E)$ and $\LL_p(\R_+^{n+1};E)$, we may reformulate the problem as follows:
 Show that that
  \[
   \begin{cases}
    ( \lambda + \Acal(\DD) ) u  = f
     &\text{in } \R_+^{n+1},
     \\
    ((-\Delta_{\R^n})^m + \abs{\lambda})^{\frac{2m-k}{2m}} \Bcal_{j,k}(\DD) u|_{y = 0} = \tilde g_{j,k}
     &\text{on } \partial \R_+^{n+1}, \, 1 \leq j \leq m, \, 0 \leq k \leq m_j
   \end{cases}
  \]
 where $\tilde g_{j,k} := ((- \Delta_{\R^n})^m + \abs{\lambda})^{\frac{2m-k}{2m}} \Ecal_\lambda g_{j,k}|_{y = 0}$, has a (unique) solution $u \in \WW_p^{2m}(\R_+;E)$ if and only if $f \in \LL_p(\R_+^{n+1};E)$ and $((- \Delta_{\R^n})^m + \abs{\lambda})^{\frac{2m-k}{2m}} \Ecal_\lambda g_{j,k} \in \LL_p(\R_+;E)$ for all $j = 1, \ldots, m$ and $k = 0, 1, \ldots, m_j$.
 Therefore, for $\lambda \in \Sigma_{\pi - \phi}$, $j = 1, \ldots, m$ and $k = 0, \ldots, m_j$, and using the operators $S_{j,k}^{\lambda,\mathrm{I}}$ and $S_{j,k}^{\lambda,\mathrm{I}}$ from Proposition \ref{prop:Max_Regularity_Zero-bdry-data} we introduce operators
  \begin{align}
   U^\lambda_{j,k}
    &= S^{\lambda,\mathrm{I}}_{j,k} ((-\Delta)^m + \abs{\lambda})^{\frac{k-2m}{2m}} \Ecal_\lambda,
    \label{eqn:U_lambda}
    \\
   V^\lambda_{j,k}
    &= S^{\lambda,\mathrm{II}}_{j,k}((-\Delta)^m + \abs{\lambda})^{\frac{k-2m}{2m}} \Ecal_\lambda.
    \label{eqn:V_lambda}
  \end{align}
 The families consisting of these operators allow for the following $\Rcal$-bounds, cf.\ \cite[Proposition 7.6]{DeHiPr03}.
 \begin{proposition}
 \label{prop:Rcal-Bounds}
 For each $j = 1, \ldots, m$ and $k = 0, 1, \ldots, m_j$, the sets
  \begin{align*}
   \{\lambda^{1-\frac{\abs{\vec \alpha}}{2m}} \DD^{\vec \alpha} U^\lambda_{j,k}: \, \lambda \in \Sigma_{\pi-\phi}, \, \abs{\vec \alpha} \leq 2m \}
    &\subseteq \B(\LL_p(\R_+^{n+1};E)),
    \\
   \{\lambda^{1-\frac{\abs{\vec \alpha}}{2m}} \DD^{\vec \alpha} V^\lambda_{j,k}: \, \lambda \in \Sigma_{\pi-\phi}, \, \abs{\vec \alpha} \leq 2m \}
    &\subseteq \B(\LL_p(\R_+^{n+1};E)),
    \\
   \{ \lambda^{2-\frac{\abs{\vec \alpha}}{2m}} \DD^{\vec \alpha} \partial_\lambda U^\lambda_{j,k}: \, \lambda \in \Sigma_{\pi-\phi}, \, \abs{\vec \alpha} \leq 2m \}
    &\subseteq \B(\LL_p(\R_+^{n+1};E)),
    \\
   \{ \lambda^{2-\frac{\abs{\vec \alpha}}{2m}} \DD^{\vec \alpha} \partial_\lambda V^\lambda_{j,k}: \, \lambda \in \Sigma_{\pi-\phi}, \, \abs{\vec \alpha} \leq 2m \}
    &\subseteq \B(\LL_p(\R_+^{n+1};E))
  \end{align*}
 are $\Rcal$-bounded.
 \end{proposition}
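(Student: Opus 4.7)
The overall strategy is to establish the two $\mathcal{R}$-bounds for the families $\{\lambda^{1-|\vec{\alpha}|/2m}\mathrm{D}^{\vec{\alpha}} U^\lambda_{j,k}\}_{\lambda}$ and $\{\lambda^{1-|\vec{\alpha}|/2m}\mathrm{D}^{\vec{\alpha}} V^\lambda_{j,k}\}_{\lambda}$ first, and then to harvest the corresponding bounds for their $\partial_\lambda$-derivatives as a free consequence of Proposition \ref{prop:R-boundedness_sectors}. The three key tools are (i) the integral-kernel representations of $S^{\lambda,\mathrm{I}}_{j,k}$ and $S^{\lambda,\mathrm{II}}_{j,k}$ obtained inside the proof of Proposition \ref{prop:Estimates_Solution_Operator}, (ii) the pointwise operator-valued kernel estimates of Proposition \ref{prop:kernel_estimates_half-space}, and (iii) the kernel-to-$\mathcal{R}$-bound criterion recorded after Proposition \ref{prop:holomorphic_image_of_bounded_sets} (pointwise $\mathcal{R}$-domination of the operator kernels by a scalar kernel of a bounded scalar integral operator implies $\mathcal{R}$-boundedness of the operator family).

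\textbf{Step 1: kernel representations and scalar domination.} Unfolding the definition in \eqref{eqn:U_lambda}--\eqref{eqn:V_lambda} and inserting the integral kernel of $S^{\lambda,\mathrm{I}}_{j,k}$ from the proof of Proposition \ref{prop:Estimates_Solution_Operator}, the weighted operator $\lambda^{1-|\vec{\alpha}|/2m}\mathrm{D}^{\vec{\alpha}} U^\lambda_{j,k}$ is the integral operator whose $\mathcal{B}(E)$-valued kernel is
\[
\lambda^{1-|\vec{\alpha}|/2m}\,\mathrm{D}^{\vec{\alpha}}\bigl(\partial_y \mathbf{K}^\lambda - \mathbf{K}^\lambda L_\lambda^{1/2m}\bigr)(\vec{x}'-\vec{\tilde{x}}', y+\tilde{y})\cdot\bigl((-\Delta)^m+|\lambda|\bigr)^{(k-2m)/2m} \mathcal{E}_\lambda.
\]
The estimate of Proposition \ref{prop:kernel_estimates_half-space} and the scaling identity $1-|\vec{\alpha}|/2m + (n-2m+|\vec{\alpha}|)/2m = n/2m$ give a pointwise norm bound of the form $M|\lambda|^{n/2m} p_{2m,|\vec{\alpha}|-1}^{n+1}\!\bigl(c|\lambda|^{1/2m}(|\vec{x}'-\vec{\tilde{x}}'|+y+\tilde{y})\bigr)$; the corresponding bound for the $V^\lambda_{j,k}$-kernel involves one additional factor of $|\lambda|^{1/2m}$ and a shift of the $\nu$-index of $p_{2m,\nu}^{n+1}$. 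By Lemma \ref{lem:Lem-2.6} and Young's inequality — exactly as in Proposition \ref{prop:Estimates_Solution_Operator} — the induced scalar integral operator on $\mathrm{L}_p(\mathbb{R}_+^{n+1})$ has operator norm bounded independently of $\lambda \in \Sigma_{\pi-\phi}$.

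\textbf{Step 2: pointwise $\mathcal{R}$-domination.} To upgrade this to $\mathcal{R}$-boundedness of the operator family indexed by $\lambda$, I will show that the operator-valued kernels themselves satisfy the pointwise $\mathcal{R}$-bound required by the kernel criterion, with the same scalar kernel as majorant. The $\lambda$-dependent operator-valued ingredients are exactly three: the Fourier multiplier $\bigl((-\Delta)^m+|\lambda|\bigr)^{(k-2m)/2m}$, the semigroup $\mathrm{e}^{-L_\lambda^{1/2m}y}$ used in $\mathcal{E}_\lambda$, and the symbolic factor generating $\mathbf{K}^\lambda$ through Cauchy's formula. Each of these is a bounded holomorphic function of the full-space operator $(-\Delta_{\mathbb{R}^n})^m$, which by Theorem \ref{thm:full-space_problem} lies in $\mathcal{H}^\infty(\mathrm{L}_p(\mathbb{R}^n;E))$; since $E$ is of class $\mathcal{HT}$, Remark \ref{rem:Inclusions_Operator-Spaces} plus Theorem \ref{thm:Bounded_Hinfty_Calculus} yield membership in $\mathcal{R}\mathcal{H}^\infty$, and Proposition \ref{prop:holomorphic_image_of_bounded_sets} then delivers pointwise $\mathcal{R}$-boundedness of the three families, uniformly in the kernel arguments. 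Combining this with the scalar majorant from Step 1 and invoking the kernel criterion from Section \ref{sec:Notation_and_preliminaries} produces the first two $\mathcal{R}$-bounds of the proposition.

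\textbf{Step 3: $\partial_\lambda$-families via holomorphy.} For fixed $\vec{\alpha}$ and $(j,k)$ the map $\lambda \mapsto \lambda^{1-|\vec{\alpha}|/2m}\mathrm{D}^{\vec{\alpha}} U^\lambda_{j,k}$ is holomorphic on $\Sigma_{\pi-\phi}$, as all three ingredients above, and the symbol $\mathbf{M}(\vec{b},\sigma)$ appearing via $\mathbf{K}^\lambda$, depend holomorphically on $\lambda$ (cf.\ Proposition \ref{prop:EBVP_half_space_solution}). Choosing $\phi < \phi' < \pi-\phi^{\mathrm{ellipt}}_{\mathcal{A}}$, the $\mathcal{R}$-boundedness on $\Sigma_{\pi-\phi}$ already obtained transfers via Proposition \ref{prop:R-boundedness_sectors} into an $\mathcal{R}$-bound for $\{\lambda\,\partial_\lambda(\lambda^{1-|\vec{\alpha}|/2m}\mathrm{D}^{\vec{\alpha}} U^\lambda_{j,k}):\lambda\in\Sigma_{\pi-\phi'}\}$; the Leibniz rule then converts this into the claimed $\mathcal{R}$-bound on $\{\lambda^{2-|\vec{\alpha}|/2m}\mathrm{D}^{\vec{\alpha}} \partial_\lambda U^\lambda_{j,k}\}$ (adjusting $\phi'$ as needed). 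The same argument applies verbatim to $V^\lambda_{j,k}$. The main obstacle throughout is Step 2: the norm kernel bounds are straightforward, but the pointwise $\mathcal{R}$-majorisation of the operator-valued symbol requires that the $\mathcal{HT}$-hypothesis and the bounded $\mathcal{H}^\infty$-calculus of the full-space realisation enter genuinely, through Proposition \ref{prop:holomorphic_image_of_bounded_sets}, rather than through mere uniform bounds.
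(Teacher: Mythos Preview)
Your overall architecture is sound, and Step~3 (harvesting the $\partial_\lambda$-bounds from Proposition~\ref{prop:R-boundedness_sectors}) matches the paper exactly. Step~2, however, contains a genuine gap.

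The claim that ``the symbolic factor generating $\mathbf{K}^\lambda$'' is a bounded holomorphic function of $(-\Delta_{\mathbb{R}^n})^m$ is false: the Fourier symbol $\rho^{-2m}\,\mathrm{e}^{\mathrm{i}\rho\mathbf{A}_0(\vec{b},\sigma)y}\,\mathbf{M}(\vec{b},\sigma)$ depends on the \emph{direction} $\vec{b}=\vec{\xi}'/\rho$, not merely on $|\vec{\xi}'|$, so it cannot be written as $h\bigl((-\Delta)^m\bigr)$ for any scalar $h$, and Proposition~\ref{prop:holomorphic_image_of_bounded_sets} is simply inapplicable to it. There is a second, independent defect: even for the genuinely radial ingredients $((-\Delta)^m+|\lambda|)^{(k-2m)/2m}$ and $\mathrm{e}^{-L_\lambda^{1/2m}y}$, Proposition~\ref{prop:holomorphic_image_of_bounded_sets} delivers $\mathcal{R}$-boundedness of a family in $\mathcal{B}\bigl(\mathrm{L}_p(\mathbb{R}^n;E)\bigr)$, whereas the kernel criterion you invoke requires \emph{pointwise} $\mathcal{R}$-boundedness of the kernel values as a family in $\mathcal{B}(E)$. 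You supply no mechanism for passing from the former to the latter. (As an aside, Remark~\ref{rem:Inclusions_Operator-Spaces} only gives $\mathcal{H}^\infty=\mathcal{R}\mathcal{H}^\infty$ on Hilbert spaces, so your route to $\mathcal{R}\mathcal{H}^\infty$ for $(-\Delta)^m$ on $\mathrm{L}_p(\mathbb{R}^n;E)$ is also not justified as written.)

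The paper's route to the pointwise $\mathcal{R}$-bound is different and bypasses the functional calculus of $(-\Delta)^m$ entirely. It re-enters the explicit contour-integral representation of the kernel from the proof of Proposition~\ref{prop:kernel_estimates_half-space}: after the rotation $\mathbf{Q}$ and deformation to $\Gamma_\varepsilon$, the integrand is factored as $G\cdot H$, with $H$ scalar and uniformly bounded and $G$ a $\mathcal{B}(E^{2m})$-valued function built from $\mathbf{A}_0(\vec{b},\sigma)$, $\mathbf{M}(\vec{b},\sigma)$ and the stable semigroup, all of which are holomorphic on the compact parameter set $\{|\vec{b}|\leq 1,\ |\sigma|\leq 1\}$. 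The integration is against a measure on $\mathbb{S}^{n-2}\times\mathbb{R}_+^2$ whose total mass is bounded uniformly in $(\vec{x}',y)$. The pointwise $\mathcal{R}$-bound in $\mathcal{B}(E)$, with scalar majorant $C(|\vec{x}'|+y+\tilde y)^{-(n+1)}$, then follows directly from \cite[Proposition~3.8]{DeHiPr03} (the integral contraction principle for $\mathcal{R}$-bounds), in the manner of \cite[Proposition~7.7]{DeHiPr03}. This is the missing ingredient your Step~2 needs.
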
 
 \begin{proof}
  By Proposition \ref{prop:Estimates_Solution_Operator}, the operators $\DD^{\vec \alpha} U^\lambda_{j,k}$ and $\DD^{\vec \alpha} V^\lambda_{j,k}$ are integral operators on $\LL_p(\R_+^{n+1};E)$ with kernels $\DD^{\vec \alpha} K^\lambda_{U^{j,k}}$ and $\DD^{\vec \alpha} K^\lambda_{V_{j,k}}$, resp., where
   \begin{align*}
    \Fcal K^\lambda_{U^{j,k}}(\vec \xi',y)
     &= \Fcal \big( \frac{\partial}{\partial y} K^{\lambda,\mathrm{I}}_{j,k} \big)(\vec \xi',y) (\abs{\vec \xi'}^{2m} + \abs{\lambda})^{\frac{2m-k}{2m}} \Pcal_{j,k} (\abs{\vec \xi'}^{2m} + \abs{\lambda})^{\frac{k-2m}{2m}} \Ecal_\lambda,
     \\
    \Fcal K^\lambda_{V^{j,k}}(\vec \xi',y)
     &= \Fcal K^{\lambda,\mathrm{II}}_{j,k}(\vec \xi',y) \frac{\partial}{\partial y} (\abs{\vec \xi'}^{2m} + \abs{\lambda})^{\frac{2m-k-1}{2m}} \Pcal_{j,k} (\abs{\vec \xi'}^{2m} + \abs{\lambda})^{\frac{k+1-2m}{2m}} \Ecal_\lambda,
   \end{align*}
  for $j = 1, \ldots, m$ and $k = 0, 1, \ldots, m_j$.
  By \cite[Proposition 4.1 and Lemma 7.1]{DeHiPr03}, it suffices to prove that there is $C > 0$ such that for all $\vec x' \in \R^n$  and $y, \tilde y > 0$
   \begin{align*}
    \Rcal ( \{ \lambda^{1- \frac{\abs{\vec \alpha}}{2m}} \DD^{\vec \alpha} \frac{\partial}{\partial \tilde y} K^\lambda_{U^{j,k}}(\vec x', y + \tilde y): \, \lambda \in \Sigma_{\pi-\phi}, \, \abs{\vec \alpha} \leq 2m \} )
     &\leq \frac{C}{(\abs{\vec x'} + y + \tilde y)^{n+1}},
     \\
    \Rcal ( \{ \lambda^{1- \frac{\abs{\vec \alpha}}{2m}} \DD^{\vec \alpha} \frac{\partial}{\partial \tilde y} K^\lambda_{V^{j,k}}(\vec x', y + \tilde y): \, \lambda \in \Sigma_{\pi-\phi}, \, \abs{\vec \alpha} \leq 2m \} )
     &\leq \frac{C}{(\abs{\vec x'} + y + \tilde y)^{n+1}}.
   \end{align*}
  This can be proved as follows, analogously to \cite[Proposition 7.7]{DeHiPr03}.
  For each $\vec x' \in \R^n$, $y > 0$, $\lambda \in \Sigma_{\pi-\phi}$ and $\vec \alpha, \vec \beta \in \N_0^n$, $\gamma \in \N_0$ such that $\abs{\vec \alpha} \leq 2m$ and $\abs{\vec \beta} + \gamma = \abs{\vec \alpha}$, we have the identity
   \begin{align*}
    &\DD^{\vec \alpha} \frac{\partial}{\partial y} K^\lambda_{U^{j,k}}(\vec x',y)
     \\
     &= \frac{1}{(2\pi)^n} \int_{\R^n} \ee^{\ii \vec x' \cdot \vec \xi} \frac{\ee^{\ii \rho \bb A_0(\vec b,\sigma) y}}{\rho^{2m}}(\vec \xi')^{\vec \beta} (A_0(b,\sigma) \rho)^\gamma \ii \bb A_0(\vec b,\sigma) M(b,\sigma) \rho^{2m-k} J_{\vec \xi',\lambda}^{2m-k} \, \dd \vec \xi',
     \nonumber
   \end{align*}
  where $J_{\vec \xi',\lambda}^{2m-k} = (\abs{\vec \xi'}^{2m} + \abs{\lambda})^{\frac{k-2m}{2m}}$.
  As in the proof of Proposition \ref{prop:kernel_estimates_half-space}, we choose a rotation $\bb Q \in
\R^{n \times n}$ such that $\bb Q \vec x' = (\abs{\vec x'}, 0, \ldots, 0)^\mathsf{T}$ and write $\bb Q \vec \xi' = (a, r \vec \varphi)$ for some $a \in \R$, $r > 0$ and $\vec \varphi \in \S^{n-2}$. Then
 \begin{align*}
  \DD^{\vec \alpha} \frac{\partial}{\partial y} K^\lambda_{U^{j,k}}(\vec x',y)
   &= \frac{1}{(2\pi)^n} \int_{\S^{n-2}} \int_0^\infty r^{n-2} \int_{-\infty}^\infty \ee^{\ii \abs{\vec x'} a} \frac{\ee^{\ii \rho \bb A_0(\vec b, \sigma) y}}{\rho^{2m-1}} (\vec \xi')^{\vec \beta}
    \\ &\quad \cdot
    (\bb A_0(\vec b,\sigma) \rho)^\gamma \ii \bb A_0(\vec b,\sigma) \bb M(\vec b,\sigma) \rho^{2m-k} J_{\vec \xi',\lambda}^{2m-k} \, \dd a \, \dd r \, \dd \S^{n-2}(\vec \varphi)
   \\
   &= \frac{1}{(2\pi)^n} \int_{\S^{n-2}} \int_0^\infty r^{n-2} \int_{\Gamma^-} \ee^{\ii \abs{\vec x'} a} \frac{\ee^{\ii \rho \bb A_0(\vec b, \sigma) y}}{\rho^{2m-1}} (\vec \xi')^{\vec \beta}
    \\ &\quad \cdot
    (\bb A_0(\vec b,\sigma) \rho)^\gamma \ii \bb A_0(\vec b,\sigma) \bb M(\vec b,\sigma) \rho^{2m-k} J_{\vec \xi',\lambda}^{2m-k} \, \dd a \, \dd r \, \dd \S^{n-2}(\vec \varphi),
 \end{align*}
 where as in the proof of Proposition \ref{prop:kernel_estimates_half-space} $\gamma^-(s) = s + \ii \varepsilon (r + \abs{s} + \abs{\lambda}^{\nicefrac{1}{2}m})$, $s \in \R$, is a parametrisation of the curve $\Gamma^-$, and Cauchy's integral theorem as well as holomorphy of $\bb A_0$ and $\bb M$ have been employed.
 Setting $c = \nicefrac{c_1}{2}$ for $c_1 > 0$ from \eqref{eqn:Spectral_Gap_S+}, as in Proposition \ref{prop:kernel_estimates_half-space} we obtain the identity
  \begin{align*}
   &\abs{\lambda}^{1-\frac{\abs{\vec \alpha}}{2m}} \DD^{\vec \alpha} \frac{\partial}{\partial y} K^\lambda_{U^{j,k}}(\vec x',y)(\abs{\vec x'} + y)^{n+1}
    \\
    &= \frac{1}{(2\pi)^n} \int_{\S^{n-2}} \int_0^\infty \int_0^\infty F(r,\tau,\vec \varphi, \lambda, \vec x', y) r^{n-2} (\abs{\vec x'} + y)^n \ee^{-\frac{1}{2} (\varepsilon \abs{\vec x'} + cy)(r + \tau)} \, \dd r \, \dd \tau \, \dd \S^{n-2}(\vec \varphi),
  \end{align*}
 where
  \[
   F(r,\tau,\vec \varphi, \lambda, \vec x',y)
    = G(r,\tau, \vec \varphi, \lambda, y) H(\vec x', y, r, \tau, \lambda)
  \]
 with
  \begin{align*}
   G(r,\tau,\vec \varphi, \lambda, y)
    &= \ee^{\ii \rho \bb A_0(\vec b, \sigma) y} \ee^{cy(r + \tau + \abs{\lambda}^{\nicefrac{1}{2}m})} \bb A_0(\vec b,\sigma)^\gamma \bb A_0(\vec b, \sigma) \bb M(\vec b,\sigma),
    \\
   H(\vec x', y, r, \tau, \lambda)
    &= (\abs{\vec x'} + y) \ee^{-(\varepsilon \abs{\vec x'} + c y) \abs{\lambda}^{\nicefrac{1}{2}m}}  \ee^{- \frac{1}{2} (\varepsilon \abs{\vec x'} + cy)(r + \tau)} {\vec \xi'}^{\vec \beta} \rho^\gamma \frac{\lambda^{1-\frac{\abs{\vec \alpha}}{2m}}}{\rho^{2m-1}} J_{\vec \xi',\lambda}^{2m-k}
  \end{align*}
 and where $\rho, \vec b, \sigma$ depend on $r, \tau, \lambda$ and $\abs{\vec \alpha} = \abs{\vec \beta} + \gamma$.
  Setting $M = \S^{n-2} \times \R_+ \times \R_+$, this integral equals $\int_M F(r,\tau,\vec \varphi, \lambda, \vec x', y) \, \dd \mu(r,\tau, \vec \varphi)$, where \[\ \dd \mu(r,\tau,\vec \varphi) = r^{n-2} (\abs{\vec x'} + y)^n \ee^{-\frac{1}{2}(\varepsilon \abs{\vec x'} + cy)(r + \tau)} \, \dd r \, \dd \tau \, \dd \sigma(\vec \varphi).\] Here, $\mu$ is a finite measure on $M$ and such that $\int_M \, \dd \mu(r,\tau, \vec \varphi) \leq C$ for some $C > 0$ independent of $\vec x' \in \R^n$ and $y > 0$.
  By \cite[Proposition 3.8]{DeHiPr03} it follows that for $\abs{\vec \beta} + \gamma = \abs{\vec \alpha}$
   \begin{align*}
    &(\abs{\vec x'} + y)^{n+1} \Rcal ( \{ \lambda^{1-\frac{\abs{\vec \alpha}}{2m}} \DD^{\vec \alpha} \frac{\partial}{\partial y} K^\lambda_{U^{j,k}}(\vec x',y): \, \lambda \in \Sigma_{\pi-\phi}, \, \abs{\vec \alpha} \leq 2m \} )
     \\
     &\leq \Rcal ( \{ (\abs{\vec x'} + y)^{n+1} \lambda^{1-\frac{\abs{\vec \alpha}}{2m}} \DD^{\vec \alpha} \frac{\partial}{\partial y} K^\lambda_{U^{j,k}}(\vec x',y): \, \lambda \in \Sigma_{\pi-\phi}, \, \abs{\vec \alpha} \leq 2m, \, \vec x' \in \R^n, \, y > 0 \} )
     \\
     &= \Rcal \big( \big\{ \int_M F(r,\tau,\vec \varphi, \lambda, \vec x',y) \, \dd \mu(r,\tau, \vec \varphi): \, \lambda \geq 0, \, \vec x' \in \R^n, \, \vec \varphi \in \S^{n-2} \big\} \big).
   \end{align*}
 The assertion for $V^\lambda_{j,k}$ follows similarly and those for the $\lambda$-derivatives follow $\Rcal$-boundedness for the operators $U^\lambda_{j,k}$, $V^\lambda_{j,k}$ and the abstract result Proposition \ref{prop:R-boundedness_sectors}.
 \end{proof}
 
 \subsection{Spatially dependent interior and boundary symbols}
 
 As situations with spatial dependent parameters are of utmost importance, in particular for the localisation procedure by which results on the half-space are transferred to general, but sufficiently regular domains, we consider spatially dependent perturbations of the principle part next, i.e.\ the interior symbol $\Acal$ is allowed to be of the form
  \[
   \Acal(\vec x, D)
    = \sum_{\abs{\vec \alpha} = 2m} a_{\vec \alpha}(\vec x) \DD^{\vec \alpha}
  \]
 with spatially dependent, but bounded and uniformly continuous coefficients $a_{\vec \alpha}: \R_+^{n+1} \rightarrow \B(E)$, and similarly for the boundary symbols
  \[
   \Bcal_j(\vec x,\DD)
    = \sum_{k=0}^{m_j} \sum_{\abs{\vec \beta} = k} b_{j,\vec \beta}(\vec x) \DD^{\vec \beta} \Pcal_{j,k},
    \quad
    j = 1, \ldots, m
  \]
 where we demand regularity of the coefficients $b_{j,\vec \beta} \in \mathrm{BUC}^{2m-k}(\R_+^{n+1};E)$ for each multi-index $\vec \beta \in \N_0^{n+1}$ of length $\abs{\vec \beta} = k$.
 Throughout this subsection, we assume that some $\phi > \phi_\Acal^\mathrm{ellipt}$ is given, the Banach space $E$ is of class $\HTcal$ and the Lopatinskii--Shapiro condition is satisfied at every boundary point $\vec x \in \partial \R_+^{n+1}$.
 For now, let us fix $\vec x_0 \in \R_+^{n+1}$ and, as an intermediate step, assume that, for some given small $\varepsilon > 0$, the coefficients $a_{\vec \alpha}$ and $b_{j,\vec \beta}$ have uniformly small oscillation, in the sense that
  \begin{align*}
   \sup_{\vec x \in \R_+^{n+1}} \sum_{\abs{\vec \alpha} = 2m} \norm{a_{\vec \alpha}(\vec x) - a_{\vec \alpha}(\vec x_0)}_{\B(E)}
    &< \varepsilon,
    \\
   \sup_{\vec x \in \R_+^{n+1}} \sum_{\abs{\vec \beta} = k} \norm{b_{j,k,\vec \beta}(\vec x) - b_{j,k,\vec \beta}(\vec x_0)}_{\B(E)}
    &< \varepsilon
    \quad \text{for }
    j = 1, \ldots, m, \, k = 0, \ldots, m_j.
  \end{align*}
 We may then interpret $\Acal(\vec x, D)$ and $\Bcal_j(\vec x, D)$ as small perturbations from the constant coefficient case:
  \begin{align*}
   \Acal(\vec x,\DD)
    &= \Acal(\vec x_0,\DD) + \Acal^\mathrm{sm}(\vec x,\DD)
    &&\text{for } \vec x \in \R_+^{n+1}
    \\
   \Bcal_j(\vec x,\DD)
    &= \Bcal_j(\vec x_0,\DD) + \Bcal_j^\mathrm{sm}(\vec x,\DD)
    &&\text{for } \vec x \in \partial \R_+^{n+1}.
  \end{align*}
 For each $\lambda \in \Sigma_{\pi-\phi}$ with $\phi > \phi_\Acal^\mathrm{ellipt}$ and a given right-hand side $f \in \LL_p(\R_+^{n+1};E)$, the variable coefficient problem
  \begin{equation}
   \begin{cases}
    (\lambda + \Acal(\vec x,\DD)) u = f
    &\text{in } \R_+^{n+1},
    \\
    \Bcal_j(\vec x,\DD) u = 0
    &\text{on } \partial \R_+^{n+1}, \, j = 1, \ldots, m
   \end{cases}
   \label{eqn:variable_coefficient_problem}
  \end{equation}
  has a unique solution $u \in \WW_p^{2m}(\R_+^{n+1};E)$ if and only if it is the unique solution $u \in \WW_p^{2m}(\R_+^{n+1};E)$ to the problem
  \begin{equation}
   \begin{cases}
    (\lambda + \Acal(\vec x_0,\DD)) u = f - \Acal^\mathrm{sm}(\vec x, D) u
    &\text{in } \R_+^{n+1},
    \\
    \Bcal_j(\vec x_0) u(\vec x',0) = - \Bcal_j^\mathrm{sm}(\vec x',\DD) u(\vec x,0)
    &\text{on } \partial \R_+^{n+1}, \text{ for } j = 1, \ldots, m.
   \end{cases}
   \label{eqn:perturbed_EBVP}
  \end{equation}
 A procedure as in \cite[Section 7.3]{DeHiPr03}, based on the contraction mapping principle, then delivers that, for sufficiently small $\varepsilon > 0$ and $\lambda \in \Sigma_{\pi-\phi}$ with sufficiently large modulus, the system of equations \eqref{eqn:variable_coefficient_problem} (or, equivalently, equation \eqref{eqn:perturbed_EBVP}) admits a unique solution $u \in \WW_p^{2m}(\R_+^{n+1};E)$ of the form
  \[
   u
    = [\id + (\lambda + A_{B^0}^0)^{-1} \Acal^\mathrm{sm} + \sum_{j=1}^m \sum_{k=0}^{m_j} S^\lambda_{j,k} \Bcal_j^\mathrm{sm}]^{-1} (\lambda + A_{B^0}^0)^{-1} f.
  \]
 Here, we set $A_{B^0}^0 := \Acal(\vec x_0,\DD)|_{\ker (\Bcal(\vec x_0,\DD))}$ and then define
  \begin{align*}
   A_B u
    &= \Acal(\vec x,\DD) u,
    \\
   \dom(A_B)
    &= \{ u \in \WW_p^{2m}(\R_+^{n+1};E): \, \Bcal_j(\vec x_0, D) u(\vec x',0) = 0, \, \vec x' \in \R^n, \, j = 1, \ldots, m  \}.
  \end{align*}
 Thus, for $\lambda \in \Sigma_{\pi-\phi}$ with sufficiently large modulus the operator $\lambda + A_B$ is invertible and its inverse is implicitly given by the relation
  \[
   (\lambda + A_B)^{-1}
    = (\lambda + A_{B^0}^0)^{-1} - (\lambda + A_{B^0}^0)^{-1} \Acal^\mathrm{sm}(\cdot,\DD)(\lambda + A_B)^{-1} - \sum_{j=1}^m \sum_{k=0}^{m_j} S^\lambda_{j,k} \Pcal_{j,k} \Bcal_j^\mathrm{sm} (\lambda + A_B)^{-1},
  \]
 hence, solving for $(\lambda + A_B)^{-1}$ we find that
  \[
   (\lambda + A_B)^{-1}
    = [ \id + (\lambda + A_{B^0}^0)^{-1} \Acal^\mathrm{sm}(\cdot,\DD) + \sum_{j=1}^m \sum_{k=0}^{m_j} S^\lambda_{j,k} \Pcal_{j,k} \Bcal^\mathrm{sm}_{j,k}(\cdot,\DD)]^{-1} (\lambda + A_{B^0}^0)^{-1}.
  \]
 
 With this formula at hand, we may establish $\Rcal$-bounds on the set of operators $\lambda^{1 - \frac{\abs{\vec \alpha}}{2m}} \DD^{\vec \alpha} (\lambda + A_B)^{-1}$ as follows.
 
 \begin{lemma}
  There is $\lambda_0 > 0$ such that the set
   \[
    \{ \lambda^{1 - \frac{\abs{\vec \alpha}}{2m}} \DD^{\vec \alpha} (\lambda + A_B)^{-1}: \, \lambda \in \Sigma_{\pi-\phi}, \, \abs{\lambda} \geq \lambda_0, \, 0 \leq \abs{\vec \alpha} \leq 2m \} \subseteq \B(\LL_p(\R_+^{n+1};E))
   \]
  is $\Rcal$-bounded.
 \end{lemma}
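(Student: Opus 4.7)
The plan is to upgrade the contraction-mapping argument outlined in the text — which yields, for $\varepsilon$ small and $\abs{\lambda}$ large, the resolvent identity $(\lambda + A_B)^{-1} = (\id + T_\lambda)^{-1}(\lambda + A_{B^0}^0)^{-1}$ — to an $\Rcal$-valued statement, using the $\Rcal$-Neumann principle (Corollary \ref{cor:R-uniform_Neumann}) together with the building-block $\Rcal$-bounds from Corollary \ref{cor:Rcal-Bounds} (for $(\lambda + A_{B^0}^0)^{-1}$) and Proposition \ref{prop:Rcal-Bounds} (for the boundary solution operators $S^\lambda_{j,k}$). To accommodate the scaled-derivative structure of the claim, I introduce the $\lambda$-adapted map
\[
 G_\lambda: \WW_p^{2m}(\R_+^{n+1}; E) \to \LL_p(\R_+^{n+1}; E)^N, \quad G_\lambda u := \bigl(\abs{\lambda}^{1 - \abs{\vec\alpha}/(2m)} \DD^{\vec\alpha} u\bigr)_{\abs{\vec\alpha} \leq 2m},
\]
where $N$ is the number of multi-indices $\vec\alpha \in \N_0^{n+1}$ with $\abs{\vec\alpha} \leq 2m$. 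The lemma is equivalent to $\Rcal$-boundedness of $\{G_\lambda (\lambda + A_B)^{-1}: \abs{\lambda} \geq \lambda_0\} \subseteq \B(\LL_p, \LL_p^N)$.

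Writing $u = (\lambda + A_B)^{-1} f$ and $w = G_\lambda u$, and using the inversion $\DD^{\vec\alpha} u = \abs{\lambda}^{\abs{\vec\alpha}/(2m) - 1} w_{\vec\alpha}$, the interior perturbation becomes $\Acal^{\mathrm{sm}}(\cdot,\DD) u = \sum_{\abs{\vec\alpha}=2m} [a_{\vec\alpha} - a_{\vec\alpha}(\vec x_0)] w_{\vec\alpha}$, and for each $j,k$ the boundary perturbation becomes $\Bcal^{\mathrm{sm}}_{j,k}(\cdot,\DD) u = \abs{\lambda}^{(k-2m)/(2m)} \sum_{\abs{\vec\beta}=k} [b_{j,k,\vec\beta} - b_{j,k,\vec\beta}(\vec x_0)] \Pcal_{j,k} w_{\vec\beta}$. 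Applying $G_\lambda$ to the identity $(\id + T_\lambda) u = (\lambda + A_{B^0}^0)^{-1} f$ then yields
\[
 (\id + M_\lambda) w = \Phi_\lambda f, \qquad \Phi_\lambda := G_\lambda (\lambda + A_{B^0}^0)^{-1},
\]
where $M_\lambda \in \B(\LL_p^N)$ is a finite sum consisting of $\Phi_\lambda$ composed with multiplication by the interior coefficient-differences, plus $\abs{\lambda}^{(k-2m)/(2m)}$-weighted versions of $G_\lambda S^\lambda_{j,k} \Pcal_{j,k}$ composed with multiplication by the boundary coefficient-differences. The crucial observation is that the positive power $\abs{\lambda}^{1-k/(2m)}$ growth built into the $\Rcal$-bounds for $G_\lambda S^\lambda_{j,k}$ (Proposition \ref{prop:Rcal-Bounds}) exactly compensates the negative power arising from expressing $\DD^{\vec\beta} u$ in terms of $w_{\vec\beta}$.

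The crux is the bound $\Rcal(\{M_\lambda : \abs{\lambda} \geq \lambda_0\}) \leq \rho < 1$. The multiplication operators by $a_{\vec\alpha} - a_{\vec\alpha}(\vec x_0)$ and $b_{j,k,\vec\beta} - b_{j,k,\vec\beta}(\vec x_0)$ are $\lambda$-independent and have operator norm — hence singleton $\Rcal$-bound — at most $\varepsilon$; $\{\Phi_\lambda\}$ is $\Rcal$-bounded by Corollary \ref{cor:Rcal-Bounds}; and the weighted family $\{\abs{\lambda}^{(k-2m)/(2m)} G_\lambda S^\lambda_{j,k}\}$ is $\Rcal$-bounded by Proposition \ref{prop:Rcal-Bounds}. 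Submultiplicativity of $\Rcal$-bounds gives $\Rcal(\{M_\lambda\}) \leq C \varepsilon$, so with $\varepsilon < 1/(2C)$ and $\lambda_0$ large enough to justify the perturbation identity itself one obtains $\rho < 1$. Corollary \ref{cor:R-uniform_Neumann} then produces an $\Rcal$-bounded family $\{(\id + M_\lambda)^{-1}\} \subseteq \B(\LL_p^N)$, and submultiplicativity applied to $w = (\id + M_\lambda)^{-1} \Phi_\lambda f$ completes the proof.

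The principal technical obstacle is bridging the gap between the form of $S^\lambda_{j,k}$ appearing in the resolvent formula — which receives $\Bcal^{\mathrm{sm}}_{j,k}(\cdot,\DD)u$ as a function on all of $\R_+^{n+1}$ (the Proposition \ref{prop:Estimates_Solution_Operator}\eqref{pt:b} setup) — and Proposition \ref{prop:Rcal-Bounds}, whose $\Rcal$-bounds are phrased for boundary data on $\R^n$ extended via $\Ecal_\lambda$ (the Proposition \ref{prop:Estimates_Solution_Operator}\eqref{pt:a} setup). This is handled either by reducing the boundary perturbation to its trace at $\partial \R_+^{n+1}$ and invoking Proposition \ref{prop:Rcal-Bounds} directly, or by rerunning the rotation-and-contour kernel-level argument in its proof for the $\R_+^{n+1}$-input kernel representation furnished by Proposition \ref{prop:Estimates_Solution_Operator}\eqref{pt:b}.
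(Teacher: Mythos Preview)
Your approach is the right one and matches the paper's (which simply refers to \cite[Section~7.3]{DeHiPr03}): rewrite the resolvent via a Neumann series around the frozen-coefficient operator $A_{B^0}^0$, feed in the $\Rcal$-bounds from Corollary~\ref{cor:Rcal-Bounds} and Proposition~\ref{prop:Rcal-Bounds}, and close with Corollary~\ref{cor:R-uniform_Neumann}. The $G_\lambda$-packaging is a clean way to organise the argument.

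There is, however, a genuine gap in your construction of $M_\lambda$ on $\LL_p^N$. You write the boundary perturbation as $\Bcal^{\mathrm{sm}}_{j,k}(\cdot,\DD)u = \abs{\lambda}^{(k-2m)/2m}\sum_{\abs{\vec\beta}=k}[\,b_{j,k,\vec\beta}-b_{j,k,\vec\beta}(\vec x_0)\,]\Pcal_{j,k}w_{\vec\beta}$ and then assert that $\{\abs{\lambda}^{(k-2m)/2m}G_\lambda S^\lambda_{j,k}\}$ is $\Rcal$-bounded by Proposition~\ref{prop:Rcal-Bounds}. But Proposition~\ref{prop:Rcal-Bounds} does \emph{not} give this: it gives $\Rcal$-bounds for $G_\lambda\,U^\lambda_{j,k}$ and $G_\lambda\,V^\lambda_{j,k}$, where the smoothing factor is the \emph{pseudo-differential} operator $L_\lambda^{(k-2m)/2m}=((-\Delta)^m+\abs{\lambda})^{(k-2m)/2m}$, not the scalar $\abs{\lambda}^{(k-2m)/2m}$. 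The scalar weight does not supply the $2m-k$ derivatives that $S^\lambda_{j,k}$ needs; consequently your $M_\lambda$ as written is not a bounded operator on $\LL_p^N$ (an arbitrary $w_{\vec\beta}\in\LL_p$ is not admissible input for $S^\lambda_{j,k}$), and your ``exact compensation of powers'' does not occur.

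The fix is to replace $\abs{\lambda}^{(k-2m)/2m}$ by $L_\lambda^{(k-2m)/2m}$ throughout, i.e.\ to base the boundary block of $M_\lambda$ on the quantity $L_\lambda^{(2m-k)/2m}\Bcal^{\mathrm{sm}}_{j,k}(\cdot,\DD)u$ (together with the analogous $\partial_y$-term for $S^{\lambda,\mathrm{II}}_{j,k}$), exactly as in the estimates of Proposition~\ref{prop:Estimates_Solution_Operator}\eqref{pt:b}. Expressing this in terms of $w$ now uses all components $w_{\vec\gamma}$ with $k\le\abs{\vec\gamma}\le 2m$, not just $\abs{\vec\gamma}=k$. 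Moreover, commuting $L_\lambda^{(2m-k)/2m}$ (or, equivalently, $\DD^{\vec\delta}$ with $\abs{\vec\delta}\le 2m-k$) past the variable coefficients $b_{j,k,\vec\beta}(\cdot)-b_{j,k,\vec\beta}(\vec x_0)\in\BUC^{2m-k}$ produces, besides the top-order term with the small factor $\varepsilon$, lower-order commutator terms whose coefficients are bounded but not small; these are instead damped by a negative power of $\abs{\lambda}$ and become $<\tfrac12$ only for $\abs{\lambda}\ge\lambda_0$ large. This is precisely why both $\varepsilon$ small \emph{and} $\lambda_0$ large are needed in the statement, and it is the mechanism behind the argument in \cite[Section~7.3]{DeHiPr03}. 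Once you make this correction, the rest of your outline (submultiplicativity, Corollary~\ref{cor:R-uniform_Neumann}) goes through as you wrote it. The ``technical obstacle'' you flag at the end is part of the same issue and is resolved in the same stroke.
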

 \begin{proof}
  The proof is essentially the same as in \cite[Section 7.3]{DeHiPr03}. Instead of the operators $\Bcal_j(\DD)$, we consider each of the operators $\Bcal_{j,k}(\DD) = \Bcal_j(\DD) \Pcal_{j,k}$ ($k = 0, 1, \ldots, m_j$) separately and use that $\Bcal_j(\DD) = \sum_{k=0}^{m_j} \Bcal_{j,k}(\DD)$ is the sum of these operators.
  We leave out the details here.
 \end{proof}
 
 \subsection{Variable and lower order coefficients in a half-space}

 To generalise the results of the previous subsection to more general spatially dependent coefficients, i.e.\ to get rid of the small oscillation constraint, and elliptic operators with lower order coefficients, we use the same localisation procedure as in \cite[Section 5.3]{DeHiPr03}. In comparison with \cite[Section 7.4]{DeHiPr03} we use a more subtle notion of the principal part, which allows us to handle the mixed type boundary conditions considered here. Throughout this subsection, we assume that there are projections $\Pcal_{j,k}$ (independent of the spatial variable $\vec x$), for $j = 1, \ldots, m$ and $k = 0, 1, \ldots, m_j < 2m$, such that $\Pcal_{j,k} \Pcal_{j,k'} = 0$ for each $k \neq k'$ and we allow that the interior symbol $\Acal$ and the boundary symbols $\Bcal_j$ have the spatial, but not temporal dependent form
  \[
   \Acal(\vec x,\DD)
    = \sum_{\abs{\vec \alpha} \leq 2m} a_{\vec \alpha}(\vec x) \DD^{\vec \alpha},
    \quad
   \Bcal_j(\vec x,\DD)
    = \sum_{k=0}^{m_j} \sum_{\abs{\vec \beta} \leq k} b_{j,k,\vec \beta}(\vec x) \DD^{\vec \beta} \Pcal_{j,k},
    \quad
    j = 1, \ldots, m.
  \]
 The principle parts $\Acal_\#$ of the interior symbol and $\Bcal_{j,\#}$ of the boundary symbols are defined as
  \[
   \Acal_\#(\vec x,\DD)
    = \sum_{\abs{\vec \alpha} = 2m} a_{\vec \alpha}(\vec x) \DD^{\vec \alpha},
    \quad
   \Bcal_{j,\#}(\vec x,\DD)
    = \sum_{k=0}^{m_j} \sum_{\abs{\vec \beta} = k} b_{j,k,\vec \beta}(\vec x) \DD^{\vec \beta} \Pcal_{j,k},
    \quad
    j = 1, \ldots, m.
  \]
 Moreover, we demand the following regularity of the coefficients $a_{\vec \alpha}$ and $b_{j,k,\vec \beta}$.
 
 \begin{assumption}
  \label{assmpt_Variable_and_lower_order_coefficients}
  Assume that the following conditions are met by the interior and boundary symbols and their coefficients.
  \begin{enumerate}
   \item
    Smoothness assumptions:
     \begin{enumerate}
      \item
       $a_{\vec \alpha} \in \CC_\mathrm{l}(\overline{\R_+^{n+1}};\B(E))$ for all $\vec \alpha \in \N_0^{n+1}$ of length $\abs{\vec \alpha} = 2m$;
      \item
       $a_{\vec \alpha} \in [\LL_\infty + \LL_{r_k}](\R_+^{n+1};\B(E))$ for all $\vec \alpha \in \N_0^{n+1}$ of length $\abs{\vec \alpha} = k < 2m$ where $r_k \geq p$ such that $2m - k > \tfrac{n}{r_k}$;
      \item
       $b_{j,k,\vec \beta} \in \CC^{2m-k}(\partial \R_+^{n+1}; \B(E))$ and $b_{j,k,\vec \beta}(\ran(\Pcal_{j,k})) \subseteq \ran(\Pcal_{j,k}) + \ldots + \ran(\Pcal_{j,m_j})$ for each $j = 1, \ldots, m$, $0 \leq k \leq m_j$ and $\vec \beta \in \N_0^{n+1}$ of length $\abs{\vec \beta} = k$.
     \end{enumerate}
   \item
    There is $\phi_\Acal^\mathrm{ellipt} \in [0,\pi)$ such that
     \begin{enumerate}
      \item
       \textit{Ellipticity of the interior symbol:} 
       The principal symbol $\Acal_\#(\vec x,\vec \xi)$ is parameter-elliptic with angle of ellipticity $\phi \leq \phi_\Acal^\mathrm{ellipt}$ for each $\vec x \in \overline{\R_+^{n+1}}$.
      \item
       Lopatinskii--Shapiro condition: For each $\vec x_0 \in \overline{\R_+^{n+1}}$, $\lambda \in \overline{\Sigma_{\pi-\phi_\mathcal{A}^\mathrm{ellipt}}}$ and $\vec \xi' \in \R^n$ with $(\lambda, \vec \xi') \neq (0,\vec 0)$, the initial value problem for the principle symbols
        \begin{align*}
         (\lambda + \Acal_\#(\vec x_0, \vec \xi', \DD_{n+1}))v(y)
          &= 0,
          &&y > 0,
          \\
         \Bcal_{j,\#}(\vec x_0, \vec \xi', \DD_{n+1}) v(0)
          &= h_j,
          &&j = 1, \ldots, m
        \end{align*}
       has a unique solution $u \in \CC_0(\R_+;E)$ for each $(h_1, \ldots, h_m)^\mathsf{T} \in E^m$.    
     \end{enumerate}
  \end{enumerate}
 \end{assumption}
 
 A procedure similar to the approach in the preceeding subsection then gives the following result.
 
 \begin{theorem}
  \label{thm:Lq-Lp-optimal_regularity_zero_iv}
  Let $E$ be a Banach space of class $\HTcal$, $n,m \in \N$, $p \in (1, \infty)$ and assume that for some $\phi_\Acal^\mathrm{ellipt} \in [0,\pi)$ the boundary value problem $(\Acal, (\Bcal_j)_{j=1}^m)$ satisfies Assumption \ref{assmpt_Variable_and_lower_order_coefficients}.
  Let $A_B$ be the $\LL_p(\R_+^{n+1};E)$-realisation of the homogeneous boundary value problem with domain
   \[
    \dom(A_B)
     = \{u \in \WW_p^{2m}(\R_+^{n+1};E): \, \Bcal_j(\vec x,\DD) u = 0 \, \text{on } \partial \R_+^{n+1}, \, j = 1, \ldots, m \}.
   \]
  Then, for each $\phi > \phi_\Acal^\mathrm{ellipt}$, there is $\mu_\phi \geq 0$ such that $\mu_\phi + A_B$ is $\Rcal$-sectorial with $\phi_{\mu_\phi + A_B}^{\mathcal{R}} \leq \phi$.
  In particular, if $\phi_\Acal^\mathrm{ellipt} < \tfrac{\pi}{2}$, then the parabolic initial-boundary value problem
   \begin{align*}
    \partial_t u + A_B u + \mu_\phi u
     &= f,
     &&t > 0,
     \\
    u(0)
     &= 0
   \end{align*}
  has the property of maximal regularity in the class $\LL_{q,p}(\R_+ \times \R_+^{n+1};E))$ for each $q \in (1, \infty)$, i.e.\ for every given $f \in \LL_{q,p}(\R_+ \times \R_+^{n+1};E)$ there is a unique solution $u \in \WW_{q,p}^{(1,2)}(\R_+ \times \R_+^{n+1};E)$ and
   \[
    \norm{u}_{\WW_{q,p}^{(1,2)}}
     \leq C \norm{f}_{\LL_{q,p}}
     \quad
     \text{for all }  f \in \LL_{q,p}(\R_+ \times \R_+^{n+1};E).
   \]
 \end{theorem}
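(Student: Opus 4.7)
The strategy is a standard localisation and perturbation argument, following the blueprint of \cite[Section 7.4]{DeHiPr03} but adapted to the mixed-order boundary operators considered here. First I would split $A_B = A_{B,\#} + A_\mathrm{low}$, where $A_{B,\#}$ is the $\LL_p$-realisation of the principal parts $(\Acal_\#, (\Bcal_{j,\#})_{j=1}^m)$ on the same domain, and $A_\mathrm{low}$ collects the lower-order differential operators both in the interior and on the boundary. The lower-order interior coefficients live in $\LL_\infty + \LL_{r_k}$ with $2m-k > n/r_k$, which by standard Sobolev embedding ensures that they induce relatively bounded perturbations of $A_{B,\#}$ with arbitrarily small relative bound (after subtracting a sufficiently large constant); the lower-order boundary terms can be handled analogously by exploiting that $\Bcal_j(\vec x,\DD) - \Bcal_{j,\#}(\vec x,\DD)$ lowers the differentiability order by at least one, which is compensated by fractional powers of $\mu_\phi + A_{B,\#}$. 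Once $\Rcal$-sectoriality of $A_{B,\#} + \mu$ is established for $\mu$ large, Proposition \ref{prop:Perturbation_Theorem} yields $\Rcal$-sectoriality of $A_B + \mu_\phi$ with the desired $\Rcal$-angle $\leq \phi$.

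The main work is therefore to show $\Rcal$-sectoriality of $\mu + A_{B,\#}$ for large $\mu$. I would fix $\phi > \phi_\Acal^\mathrm{ellipt}$, choose $\varepsilon > 0$ so small that the small-oscillation result from the preceding subsection applies uniformly, and (using uniform continuity on $\overline{\R_+^{n+1}}$, together with the limit $a_{\vec \alpha}(\infty)$ for the top-order coefficients) select a finite open cover $\{U_\ell\}_{\ell=0}^N$ of $\overline{\R_+^{n+1}}$ together with a subordinate partition of unity $\{\varphi_\ell\}_{\ell=0}^N$ and cut-offs $\{\psi_\ell\}$ with $\psi_\ell \equiv 1$ on $\supp \varphi_\ell$, such that on each $U_\ell$ the oscillation of every top-order coefficient is less than $\varepsilon$. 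Interior patches are handled by Theorem \ref{thm:full-space_problem} (extending the frozen-coefficient operator to the full space $\R^{n+1}$), and boundary patches by the preceding subsection (via a rigid rotation so that the local piece of $\partial \R_+^{n+1}$ is flat). In each case one obtains a local resolvent $R_\ell(\lambda) \in \B(\LL_p)$ such that $\{\lambda^{1-|\vec \alpha|/2m} \DD^{\vec \alpha} R_\ell(\lambda) : \lambda \in \Sigma_{\pi-\phi},\, |\lambda| \geq \lambda_0,\, |\vec \alpha| \leq 2m\}$ is $\Rcal$-bounded, uniformly in $\ell$.

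The candidate parametrix is then $R(\lambda) f := \sum_\ell \varphi_\ell R_\ell(\lambda)(\psi_\ell f)$. Applying $\lambda + A_{B,\#}$ to this expression produces the identity plus a commutator remainder of the form $K(\lambda) = \sum_\ell [\Acal_\#(\vec x,\DD), \varphi_\ell] R_\ell(\lambda) \psi_\ell$ on the interior side, together with analogous boundary commutators involving $[\Bcal_{j,\#}(\vec x,\DD), \varphi_\ell]$. Each commutator is of strictly lower differential order than the corresponding principal part, so (using the $\Rcal$-bounds from Corollary \ref{cor:Rcal-Bounds} and Proposition \ref{prop:Rcal-Bounds}, the extension operator $\Ecal_\lambda$ from \eqref{eqn:Ecal_lambda}, and the operators $U^\lambda_{j,k}$, $V^\lambda_{j,k}$ from \eqref{eqn:U_lambda}--\eqref{eqn:V_lambda}) the $\Rcal$-norm of $K(\lambda)$ acquires a factor $|\lambda|^{-1/(2m)}$. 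Choosing $\mu_\phi$ sufficiently large, Corollary \ref{cor:R-uniform_Neumann} gives $\Rcal$-boundedness of $(I + K(\lambda))^{-1}$, whence $\mu + A_{B,\#}$ is $\Rcal$-sectorial of angle $\leq \phi$ for $\mu \geq \mu_\phi$.

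The main obstacle, compared to \cite{DeHiPr03}, is the handling of the boundary commutators: cutting off with $\varphi_\ell$ interacts non-trivially with the projection decomposition $\Bcal_j = \sum_k \Bcal_{j,k} \Pcal_{j,k}$ and with the extension $\Ecal_\lambda$, because the fractional operator $L_\lambda^{(2m-k)/2m}$ is non-local in $\vec x'$. I expect to deal with this by commuting $L_\lambda^{(2m-k)/2m}$ past $\varphi_\ell$ up to a remainder of lower order in $L_\lambda^{1/2m}$, which is absorbed by another power of $|\lambda|^{-1/(2m)}$; the invariance condition $b_{j,k,\vec\beta}(\ran \Pcal_{j,k}) \subseteq \sum_{k'\geq k} \ran \Pcal_{j,k'}$ in Assumption \ref{assmpt_Variable_and_lower_order_coefficients} is exactly what keeps the lower-order boundary contributions compatible with the projection structure used in Proposition \ref{prop:Rcal-Bounds}. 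Once $\Rcal$-sectoriality is secured with $\Rcal$-angle below $\pi/2$, the maximal regularity statement for the abstract Cauchy problem on $\LL_q(\R_+;\LL_p(\R_+^{n+1};E))$ is an immediate consequence of Theorem \ref{thm:characterisation_Lp-max}, since $\LL_p(\R_+^{n+1};E)$ is of class $\HTcal$ by Lemma \ref{lem:HTcal-L_p-HTcal}.
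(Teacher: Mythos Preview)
The paper does not actually give a proof of this theorem; it explicitly says ``Since the technique of proof is very similar to parts which appear in the proof for the general domain case, at this point we skip the details'' and defers to the proof of Theorem~\ref{thm:Lp-Lq_optimal_regularity_zero_iv_general_domain}. Your localisation/parametrix/Neumann-series scheme in paragraphs~2--4 is exactly that argument, specialised to the half-space (where no flattening diffeomorphism is needed), so on the level of strategy you are in agreement with the paper.

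There is, however, a genuine gap in your first paragraph. You propose to write $A_B = A_{B,\#} + A_{\mathrm{low}}$ and to absorb the lower-order \emph{boundary} contributions via Proposition~\ref{prop:Perturbation_Theorem}. This does not work: the domains
\[
 \dom(A_{B,\#}) = \{u \in \WW_p^{2m}: \Bcal_{j,\#}(\vec x,\DD)u|_{\partial\R_+^{n+1}} = 0\}
 \quad\text{and}\quad
 \dom(A_B) = \{u \in \WW_p^{2m}: \Bcal_j(\vec x,\DD)u|_{\partial\R_+^{n+1}} = 0\}
\]
are genuinely different, and Proposition~\ref{prop:Perturbation_Theorem} requires the perturbation to be defined on $\dom(A)$. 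A lower-order change in the boundary operator is not a relatively bounded perturbation of $A_{B,\#}$ in that sense. The correct fix---and this is what the paper does in the domain-case proof---is to feed the lower-order boundary contributions into the same slot as the commutator terms: in the paper's notation the operators $\Dcal^\nu_{j,k}$ collect \emph{both} $[\Bcal_{j,k},\varphi_\nu]$ and the difference $\Bcal_{j,k} - \Bcal^\nu_{j,k}$ (which contains all lower-order boundary pieces), and these are then controlled via the solution operators $S^{\lambda}_{j,k}$ using the estimates of Proposition~\ref{prop:Estimates_Solution_Operator}/Theorem~\ref{thm:Max_Regularity_BVP_HT}. The range condition $b_{j,k,\vec\beta}(\ran\Pcal_{j,k})\subseteq \ran\Pcal_{j,k}+\cdots+\ran\Pcal_{j,m_j}$ is precisely what guarantees that these lower-order pieces land in the correct slots of the $S^\lambda_{j,k}$-machinery. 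The lower-order \emph{interior} terms could in principle be treated by Proposition~\ref{prop:Perturbation_Theorem} as you suggest, but the paper also absorbs them into the localisation remainder $\Ccal^\nu$; either route is fine for those.

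A smaller remark on your paragraph~4: the commutator issue between the cut-offs and $L_\lambda^{(2m-k)/2m}$ is largely avoided in the paper by using part~\eqref{pt:b} of Proposition~\ref{prop:Estimates_Solution_Operator} (boundary data already given on $\R_+^{n+1}$, split $S^\lambda_{j,k}=S^{\lambda,\mathrm I}_{j,k}+S^{\lambda,\mathrm{II}}_{j,k}$) rather than part~\eqref{pt:a} with the extension $\Ecal_\lambda$; this is why both versions are recorded there.
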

 
 Since the technique of proof is very similar to parts which appear in the proof for the general domain case, at this point we skip the details.
 
 \section{Elliptic vector-valued PDE: The case of $\CC^{2m}$-domains}
 \subsection{Localisation techniques for domains}
 \label{Sec:General_domains}
 
 So far, we have considered the half-space $\Omega = \R_+^{n+1}$ as a model case.
 Via bending and localisation techniques, we may now transfer the result to more general domains which are sufficiently regular.
 Let $E$ be a Banach space of class $\HTcal$ and $m, n, m_1, \ldots, m_m \in \N$ be natural numbers such that $m_j < 2m$ for $j = 1, \ldots, m$.
 For each $j = 1, \ldots, m$ and $k = 0, \ldots, m_j$, let $\Pcal_{j,k}$ be a projection such that $\Pcal_{j,k} \Pcal_{j,k'} = 0$ whenever $k \neq k'$.
 Let $\Omega \subseteq \R^{n+1}$ be a domain with compact $\CC^{2m}$-boundary, i.e.\ we assume that $\partial \Omega \subseteq \R^{n+1}$ is compact, and for each boundary point $\vec x_0 \in \partial \Omega$ there exist local coordinates of class $\CC^{2m}$ which are obtained from the original one by rotating and shifting, and such that the positive $x_{n+1}$-axis corresponds to the direction of the inner normal vector $- \vec n(\vec x_0)$ to $\Omega$ at $\vec x_0$.
 These local coordinates may be chosen such that they depend continuously on boundary point $\vec x_0 \in \partial \Omega$.
 We now consider the differential operators with spatial dependent coefficients of the form
  \[
   \Acal(\vec x,\DD)
    = \sum_{\abs{\vec \alpha} \leq 2m} a_{\vec \alpha}(\vec x) \DD^{\vec \alpha},
    \quad
   \Bcal_j(\vec x,\DD)
    = \sum_{k=0}^{m_j} \sum_{\abs{\vec \beta} \leq k} b_{j,k,\vec \beta}(\vec x) \DD^{\vec \beta} \Pcal_{j,k},
    \quad
    j = 1, \ldots, m,
  \]
 where the $\B(E)$-valued coefficients satisfy the following regularity and ellipticity conditions.
 
 \begin{assumption}
  \label{assmpt_general_domains}
  Let the following conditions be satisfied:
  \begin{enumerate}
   \item
    Smoothness conditions:
     \begin{enumerate}
      \item
       $a_{\vec \alpha} \in \CC_\mathrm{l}(\overline{\Omega};\B(E))$ for each multi-index $\vec \alpha \in \N_0^{n+1}$ of length $\abs{\vec \alpha} = 2m$;
      \item
       $a_{\vec \alpha} \in [\LL_\infty + \LL_{r_k}](\Omega; \B(E))$ for some $r_k \geq p$ such that $2m - k > \tfrac{n}{r_k}$, for each multi-index $\vec \alpha \in \N_0^{n+1}$ of length $\abs{\vec \alpha} = k < 2m$;
      \item
       $b_{j,k,\vec \beta} \in \CC^{2m-k}(\partial \Omega;\B(E))$ and $b_{j,k,\vec \beta}(\ran(\Pcal_{j,k})) \subseteq \ran(\Pcal_{j,k}) + \ldots + \ran(\Pcal_{j,m_j})$ for each $j = 1, \ldots, m$, $0 \leq k \leq m_j$ and each multi-index $\vec \beta \in \N_0^{n+1}$ of length $\abs{\vec \beta} = k$.
     \end{enumerate}
   \item
    There is an angle $\phi_{\Acal,\Bcal} \in [0,\pi)$ such that the following assertions hold true:
     \begin{enumerate}
      \item
       \textit{ellipticity of the interior symbol:}
       The principal symbol
        \[
         \Acal_\#(\vec x, \vec \xi)
          = \sum_{\abs{\vec \alpha} = 2m} a_{\vec \alpha}(\vec x) \vec \xi^{\vec \alpha}
        \]
       is parameter elliptic with angle of ellipticity less than $\phi_{\Acal,\Bcal}$, for each $\vec x \in \overline{\Omega} \cup \{\infty\}$.
      \item
       Lopatinskii--Shapiro condition:
       At each boundary point $\vec x \in \partial \Omega$, and for all $\lambda \in \overline{\Sigma}_{\pi - \phi_{\Acal,\Bcal}}$ and $\vec \xi \in \R^{n+1}$ such that $\vec \xi \cdot \vec n(\vec x_0) = 0$, but $(\lambda, \vec \xi) \neq (0,\vec 0)$, the initial value problem
        \begin{align*}
         \Acal_\#(\vec x, \vec \xi + \ii \vec n \tfrac{\partial}{\partial y}) v(y)
          &= 0
          &&\text{for } y > 0,
          \\
         \Bcal_j(\vec x, \vec \xi + \ii \vec n \tfrac{\partial}{\partial y}) v(0)
          &= h_j
          &&\text{for } j = 1, \ldots, m,
        \end{align*}
       has a unique solution $v \in \CC_0(\R_+;E)$, for each $(h_1, \ldots, h_m)^\mathsf{T} \in E^m$.
     \end{enumerate}
  \end{enumerate}
 \end{assumption}
 
 Under these assumptions, we may now prove maximal regularity of the parabolic problem associated to the $\LL_p(\Omega;E)$-realisation of the boundary value problem.
  
  \begin{theorem}
   \label{thm:Lp-Lq_optimal_regularity_zero_iv_general_domain}
   Let $E$ be a Banach space of class $\HTcal$, $n, m \in \N$ and $p \in (1, \infty)$.
   Let $\Omega \subseteq \R^{n+1}$ be a domain with compact $\CC^{2m}$-boundary. Suppose that for some angle $\phi_{\Acal,\Bcal} \in [0,\pi)$ the boundary value problem $(\Acal(\vec x, D), (\Bcal_j(\vec x, D))_{j=1}^m)$ satisfies Assumption \ref{assmpt_general_domains}.
   Let $A_B$ denote the $\LL_p(\Omega;E)$-realisation of the boundary value problem with domain
    \[
     \dom(A_B)
      = \{u \in \WW_p^{2m}(\Omega;E): \, \Bcal_j(\vec x,\DD)u = 0 \, \text{on } \partial \Omega, \, j = 1, \ldots, m \}.
    \]
   Then, for each $\phi > \phi_{\Acal,\Bcal}$, there is a constant $\mu_\phi \geq 0$ such that $\mu_\phi + A_B$ is $\Rcal$-sectorial with $\phi^\mathcal{R}_{\mu_\phi + A_B} \leq \phi$.
   In particular, if $\phi_{\Acal,\Bcal} < \tfrac{\pi}{2}$, the parabolic initial-boundary value problem
    \begin{align*}
     \partial_t u + A_B u + \mu_\phi u
      &= f,
      &&t > 0,
      \\
     u(0)
      &= 0
    \end{align*}
   has the property of maximal regularity in the class $\LL_q(\R_+;\LL_p(\Omega;E))$ for each $q \in (1, \infty)$.
  \end{theorem}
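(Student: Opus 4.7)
The plan is to reduce the problem on $\Omega$ to the already established full-space and half-space cases by a standard localization and perturbation argument, taking care that the mixed-order structure of the boundary conditions is preserved in each chart. Since $\partial \Omega$ is compact and of class $\CC^{2m}$, I can cover $\overline{\Omega}$ by finitely many open sets $U_0, U_1, \ldots, U_N \subseteq \R^{n+1}$ with $\overline{U_0} \subseteq \Omega$ (an interior chart) and, for $k = 1, \ldots, N$, each $U_k$ centered at a boundary point $\vec x_k \in \partial \Omega$ and equipped with a $\CC^{2m}$-diffeomorphism $\Phi_k\colon U_k \to V_k$ straightening the boundary, so that $\Phi_k(U_k \cap \Omega) = V_k \cap \R_+^{n+1}$ and $\Phi_k(U_k \cap \partial \Omega) = V_k \cap \partial \R_+^{n+1}$. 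Pick a subordinate partition of unity $\{\varphi_k\}_{k=0}^N$ together with cutoffs $\psi_k \in \CC_c^\infty(U_k)$ satisfying $\psi_k \equiv 1$ on $\supp \varphi_k$.

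Pulling $\mathcal{A}(\vec x, \DD)$ and $\mathcal{B}_j(\vec x, \DD)$ back via $\Phi_k$ gives new differential operators $\mathcal{A}^{(k)}(\vec y, \DD)$ and $\mathcal{B}_j^{(k)}(\vec y, \DD)$ on $V_k \cap \R_+^{n+1}$. Since the Jacobians $D\Phi_k$ are invertible $\CC^{2m-1}$-maps, the chain rule shows that the top-order coefficients transform algebraically (and preserve $\CC_\mathrm{l}$ and the smoothness demanded of the boundary coefficients), while lower-order terms stay lower-order in the sense of Assumption \ref{assmpt_Variable_and_lower_order_coefficients}; crucially, since the projections $\Pcal_{j,k}$ act on the target Banach space $E$ (they are spatially constant), the product structure $\Bcal_{j,k}(\vec y, \DD) = \sum_{\abs{\vec \beta} \leq k} b^{(k)}_{j,k,\vec \beta}(\vec y) \DD^{\vec \beta} \Pcal_{j,k}$ is preserved. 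Moreover, parameter-ellipticity and the Lopatinskii--Shapiro condition are pointwise conditions on the principal symbols evaluated on vectors tangent to the boundary, hence they are invariant under $C^{2m}$-diffeomorphisms that map normals to normals. After extending each $\mathcal{A}^{(k)}, \mathcal{B}_j^{(k)}$ to the whole half-space (respectively the whole full space for the interior chart) in a way that preserves the hypotheses (e.g. by freezing coefficients at $\infty$, or more carefully, multiplying by $\psi_k$ plus a constant-coefficient tail), Theorem \ref{thm:Lq-Lp-optimal_regularity_zero_iv} (or Theorem \ref{thm:full-space_problem} for $k = 0$) yields local resolvents $R_\lambda^{(k)} = (\lambda + A_B^{(k)})^{-1}$ on $\Sigma_{\pi - \phi}$ with shift $\mu_\phi$, together with $\Rcal$-bounds for $\{\lambda^{1 - \abs{\vec \alpha}/2m} \DD^{\vec \alpha} R_\lambda^{(k)}\}$, $\abs{\vec \alpha} \leq 2m$.

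To glue the local pieces, define the parametrix
\[
 R_\lambda f
  := \sum_{k=0}^N \psi_k \, \Phi_k^* R_\lambda^{(k)} (\Phi_k^{-1})^* (\varphi_k f)
\]
(with $\Phi_0 = \mathrm{id}$), and compute $(\lambda + A_B) R_\lambda = I + T_\lambda$, where $T_\lambda$ collects two sorts of error: commutator terms $[\mathcal{A}(\vec x, \DD), \psi_k]$ applied to $\Phi_k^* R_\lambda^{(k)} \cdots$, and boundary commutators $[\mathcal{B}_j, \psi_k]$. Both types of error are differential operators of order at most $2m - 1$ in the interior and $m_j - 1$ (within the $\Pcal_{j,k}$-block) on the boundary, i.e.\ strictly lower than the relevant top orders. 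Using Proposition \ref{prop:Rcal-Bounds} together with the $\Rcal$-bounds on the local resolvents, each such term carries an extra $\abs{\lambda}^{-1/2m}$ factor uniformly in $\lambda \in \Sigma_{\pi - \phi}$; choosing $\mu_\phi$ sufficiently large, I obtain $\Rcal(\{T_\lambda : \lambda \in \mu_\phi + \Sigma_{\pi - \phi}\}) < 1$. Corollary \ref{cor:R-uniform_Neumann} then gives the $\Rcal$-bounded family $\{(I + T_\lambda)^{-1}\}$ and hence $(\lambda + A_B)^{-1} = R_\lambda (I + T_\lambda)^{-1}$ together with $\Rcal$-sectoriality of $\mu_\phi + A_B$ with angle $\phi^{\Rcal}_{\mu_\phi + A_B} \leq \phi$, by multiplicativity of $\Rcal$-bounds. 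The maximal regularity statement for $\phi^\mathrm{ellipt}_{\Acal,\Bcal} < \pi/2$ is then immediate from Theorem \ref{thm:characterisation_Lp-max}.

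The main obstacle I expect is bookkeeping in the perturbation step: the commutator $[\psi_k, \mathcal{B}_j]$ involves derivatives of $\psi_k$ only in tangential directions of $\partial \Omega$, so it must be absorbed via the mixed-order estimates of Proposition \ref{prop:Rcal-Bounds} for each block $\Pcal_{j,k}$ separately, exploiting that the weight $((-\Delta)^m + \abs{\lambda})^{(2m-k)/2m}$ produces the right scaling so that terms of tangential order $k - 1$ on $\ran(\Pcal_{j,k})$ indeed gain an $\abs{\lambda}^{-1/2m}$ factor. A secondary technical point is the extension of the localized coefficients from $V_k \cap \R_+^{n+1}$ to all of $\R_+^{n+1}$ in a way consistent with Assumption \ref{assmpt_Variable_and_lower_order_coefficients}; I would achieve this by multiplying the localized top-order coefficients by a smooth cutoff $\chi_k$ equal to $1$ near $0$ and gluing them to the constant-coefficient operators with symbols $\mathcal{A}_\#(\vec x_k, \cdot)$, $\mathcal{B}_{j,\#}(\vec x_k, \cdot)$ away from $\supp \chi_k$, which preserves parameter-ellipticity and the Lopatinskii--Shapiro condition globally by continuity.
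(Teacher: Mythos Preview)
Your plan is essentially the same localisation--perturbation--Neumann-series argument the paper carries out, and you have correctly identified the key ingredients: invariance of {\bf (E)} and {\bf (LS)} under admissible $\CC^{2m}$-charts, preservation of the $\Pcal_{j,k}$-block structure, the lower-order nature of the commutators, and the use of the $\Rcal$-bounds from Proposition~\ref{prop:Rcal-Bounds} together with Corollary~\ref{cor:R-uniform_Neumann}.

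One point of care: with your parametrix $R_\lambda$ as written, $R_\lambda f$ does \emph{not} lie in $\dom(A_B)$, precisely because of the boundary commutators $[\Bcal_j,\psi_k]$, so the identity $(\lambda + A_B)R_\lambda = I + T_\lambda$ is not literally an equation between operators on $\LL_p(\Omega;E)$. The paper sidesteps this by running the argument in the other direction: starting from a putative solution $u\in\dom(A_B)$, localising $\varphi_\nu u$, and expressing each piece via the local interior resolvents $R^{\lambda,\nu}_0$ \emph{and} the local boundary solution operators $S^{\lambda,\nu}_{j,k}$ applied to the boundary commutator data, yielding $u = R(\lambda)f + S(\lambda)u$ and hence $(\lambda+A_B)^{-1} = (I-S(\lambda))^{-1}R(\lambda)$. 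This is exactly the correction you anticipate in your last paragraph; you just need to build the $S^\lambda_{j,k}$-terms into the parametrix (or switch to the left-parametrix formulation) rather than leaving them inside a remainder $T_\lambda$ that is supposed to act on $\LL_p(\Omega;E)$.
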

  \begin{proof}
  The localisation procedure as presented in \cite[Section 8.2]{DeHiPr03} carries over almost literally; again we consider the operators $\Bcal_j(\DD) \Pcal_{j,k}$ for each pair $(j,k)$ separately, but the technique stays the same.
  \newline
  As a first step, one needs to find suitable (local) diffeomorphisms which can be used to \emph{flatten} the boundary. To this end, given any $\vec x_0 \in \partial \Omega$ there are local coordinates such that the inner normal vector $- \vec n(\vec x_0)$ to $\Omega$ at $\vec x_0$ corresponds to the $n+1$-st direction, and then, since $\Omega$ is a $\CC^{2m}$-domain, one finds open sets $U_1 \subseteq \R^n$ and $U_2 \subseteq \R$ such that $\vec x_0 \in U := U_1 \times U_2 \subseteq \R^{n+1}$, and there is a height function $h \in \CC^{2m}(\overline{U_1};\R)$ such that $\Omega \cap U = \{ (\vec x', y) \in U: \, y > h(\vec x') \}$ and $\partial \Omega \cap U = \{ (\vec x', y) \in U: \, y = h(\vec x') \}$ and $\Phi(\vec x',y) = (\vec x', y - h(\vec x'))^\mathsf{T}$ defines a $\CC^{2m}$-diffeomorphism between $U$ and $\tilde U := \Phi(U)$, which can be extended to a $\CC^{2m}$-diffeomorphism
   \[
    \tilde \Phi: \{ \vec x = (\vec x', y) \in \R^{n+1}: \, y > \tilde h(\vec x') \}
     \rightarrow \R_+^{n+1}
     \quad
     \text{with } \tilde \Phi|_U = \Phi.
   \]
  In the following, we often will not distinguish between $\Phi$ and its extension $\tilde \Phi$, if no confusion may arise.
  Moreover, such a diffeomorphism $\Phi$ can be chosen to be \emph{$\CC^{2m}$-admissible}, i.e.\ the normal direction to $\partial \Omega$ at $\vec x_0 \in \partial \Omega$ and the tangent space $T_{\partial \Omega;\vec x_0}$ to $\partial \Omega$ at $\vec x_0 \in \partial \Omega$ are transformed into the normal direction and the tangential space $T_{\R_+^{n+1};\Phi(\vec x_0)}$ at $\Phi(\vec x_0) \in \partial \R_+^{n+1}$ to $\R_+^{n+1}$.
  The latter property can be characterised by a block-diagonal form of the Jacobi matrix $\DD \Phi(\vec x_0)$, i.e.\
   \[
    \DD \Phi(\vec x_0)
     = \left( \begin{array}{cc} \bb H(\vec x_0) & 0 \\ 0 & d(\vec x_0) \end{array} \right),
   \]
  where $\bb H(\vec x_0) \in \R^{n \times n}$ is invertible and $d(\vec x_0) > 0$ is a positive scalar.
  Here, replacing $\Phi(\vec x)$ by $(\DD \Phi(\vec x_0))^{-1} \Phi(\vec x)$, we may w.l.o.g.\ assume that $(\DD \Phi)(\vec x_0) = \bb I_{n+1}$ is the identity matrix.
  Using the admissible $\CC^{2m}$-diffeomorphisms, we may now \emph{push forward} any differential equation on $\Omega$ by defining
   \[
    \Gcal^\Phi: u \mapsto \Gcal^\Phi u := v,
    \quad
    \Gcal^\Phi u(\vec y)
     = v(\vec y)
     := u(\Phi^{-1}(\vec y)),
     \quad
     \vec y \in \tilde U \cap \R_+^{n+1}.
   \]
  Given a multi-index $\vec \alpha \in \N_0^{n+1} \setminus \{\vec 0\}$, by the Leibniz formula for derivatives we then obtain that
   \[
    \DD^{\vec \alpha} (\Gcal^\Phi u)(\vec y)
     = \sum_{1 \leq \abs{\vec \gamma} \leq \abs{\vec \alpha}} q_{\vec \alpha, \vec \gamma}(\Phi;\vec y) (\DD^{\vec \gamma} u)(\Phi^{-1}(\vec y)),
   \]
  where each $q_{\vec \alpha,\vec \gamma}$ is a homogeneous polynomial of degree $\abs{\vec \gamma}$ in the derivatives $\DD^{\vec \beta} (\Phi^{-1})$ of order $\vec \beta \in \N_0^{n+1}$ such that $1 \leq \abs{\vec \beta} \leq \abs{\vec \alpha} - \abs{\vec \gamma} - 1$.
  Therefore, in this way the differential operators $\Acal(\vec x, \bb D) = \sum_{\abs{\vec \alpha} \leq 2m} a_{\vec  \alpha}(\vec x) \DD^{\vec \alpha}$ and $\Bcal_j(\vec x, \DD) = \sum_{k=0}^{m_j} \sum_{\abs{\vec \beta} \leq k} b_{j,k,\vec \beta}(\vec x) \DD^{\vec \beta} \Pcal_{j,k}$ will be pushed forward to differential operators of the same order
   \begin{align*}
    \Acal^\Phi(\vec y, \DD)
     &= \sum_{\abs{\vec \alpha} \leq 2m} a^\Phi_{\vec \alpha}(\vec y) \DD^{\vec \alpha},
     \\
    \Bcal_j^\Phi(\vec y, \DD)
     &= \sum_{k=0}^{m_j} \sum_{\abs{\vec \beta} \leq k} b^\Phi_{j,k,\vec \beta}(\vec y) \DD^{\vec \beta} \Pcal_{j,k},
     \quad
     j = 1, \ldots, m.
   \end{align*}
  Alternatively, we may write $\Acal^\Phi(\vec y, \DD) = \Gcal \Acal(\vec x, \DD) \Gcal^{-1}$ for $y \in \tilde U \cap \R_+^{n+1}$ and $\Bcal^\Phi_j(\vec y, \DD) = \Gcal \Bcal^\Phi_j(\vec x, \DD) \Gcal^{-1}$ for $y \in \partial \R_+^{n+1} \cap \tilde U$.
  Since
   \begin{align*}
    &\Acal^\Phi(\vec y, \DD) (\Gcal u)(\vec y)
     \\
     &= \big( \Gcal \sum_{\abs{\vec \alpha} \leq 2m} a_{\vec \alpha}(\vec x) \DD_{\vec x}^{\vec \alpha} \Gcal^{-1} \Gcal v \big)(\vec y)
     = \Gcal \sum_{\abs{\vec \alpha} \leq 2m} a_{\vec \alpha}(\Phi^{-1}(\vec y)) \DD_{\vec x}^{\vec \alpha}|_{\vec x = \Phi^{-1}(\vec y)} u(\Phi^{-1}(\vec y))
     \\
     &= \sum_{\abs{\vec \alpha} \leq 2m} a_{\vec \alpha}(\Phi^{-1}(\vec y)) \big( \DD \Phi (\Phi^{-1} y))^\mathsf{T} \DD_{\vec y} \big)^{\vec \alpha} (\Gcal u)(\vec y),  
   \end{align*}
  the principal part of the interior differential operator is given by
   \[
    \Acal^\Phi_\#(\vec y, \DD)
     = \sum_{\abs{\vec \alpha} = 2m} a_{\vec \alpha}(\Phi^{-1}(\vec y)) \big( \DD \Phi(\Phi^{-1}(\vec y))^\mathsf{T} \DD \big)^{\vec \alpha}
     = \Acal_\#(\Phi^{-1}(\vec y), \DD \Phi(\Phi^{-1}(\vec y))^\mathsf{T} \DD).
   \]
  Similarly, for the boundary operators we obtain that
   \[
    \Bcal^\Phi_j(\vec y, \DD)
     = \Gcal \big( \sum_{k=0}^{m_j} \sum_{\abs{\vec \beta} \leq k} b_{j,k,\vec \beta}(\vec x) \DD_{\vec x}^{\vec \beta} \Pcal_{j,k} \big) \Gcal^{-1}
     = \sum_{k=0}^{m_j} \sum_{\abs{\vec \beta} \leq k} b_{j,k,\vec \beta}(\Phi^{-1}(\vec y)) \big[ \DD \Phi(\Phi^{-1}(\vec y))^\mathsf{T} \DD \big]^{\vec \beta}
   \]
  and from this their principle parts
   \[
    \Bcal^\Phi_{j,\#}(\vec y, \vec \xi)
     = \sum_{k=0}^{m_j} \sum_{\abs{\vec \beta} = k} b_{j,k,\vec \beta}(\Phi^{-1}(\vec y)) \big[ \DD \Phi(\Phi^{-1}(\vec y))^\mathsf{T} \vec \xi \big]^{\vec \beta}
     = \Bcal_{j,\#}(\Phi^{-1}(\vec y), \DD \Phi(\Phi^{-1}(\vec y))^\mathsf{T} \vec \xi).
   \]
  With these relations at hand, we may check that the Lopatinskii--Shapiro condition is unaffected under $\CC^{2m}$-admissible transformations.
  By this, we mean the following.
  The Lopatinskii--Shapiro condition for the original problem reads:
  For every $\lambda \in \Sigma_{\pi-\phi}$, every point $\vec x \in \partial \Omega$ and every $\vec \xi \in \R^n$ such that $\vec \xi \cdot \vec n(\vec x) = 0$ for the outer normal vector $\vec n(\vec x_0)$ at $\vec x_0$, and every $\vec w = (w_1, \ldots, w_m)^\mathsf{T} \in E^m$, the initial boundary value problem
   \[
    \begin{cases}
     (\lambda + \Acal_\#(\vec x, \vec \xi + \ii \vec n \frac{\partial}{\partial \eta}) v(\eta)
      = 0,
      &\eta > 0,
      \\
     \Bcal_{j,\#}(\vec x, \vec \xi + \ii \vec n \frac{\partial}{\partial \eta}) v(0)
      = w_j,
      &j = 1, \ldots, m
    \end{cases}
   \]
  has a unique solution $v = v_{\vec x, \lambda, \vec \xi}(\vec w)$ in the class $v \in \CC_0(\R_+;E)$.
  The Lopatinskii--Shapiro condition for the transformed problem involves the initial value problem
   \[
    \begin{cases}
     (\lambda + \Acal^\Phi_\#(\vec y, \vec \xi', - \ii \frac{\partial}{\partial \sigma}) v^\Phi(\sigma)
      = 0,
      &\sigma > 0,
      \\
     \Bcal^\Phi_{j,\#}(\vec y, \vec \xi', - \ii \frac{\partial}{\partial \sigma}) v^\Phi(\sigma)
      = w^\Phi_j,
      &j = 1, \ldots, m.
    \end{cases}
   \]
  From the previous considerations, this means that
   \[
    \begin{cases}
     (\lambda + \Acal_\#(\Phi^{-1}(\vec y), \DD \Phi(\Phi^{-1}(\vec y))^\mathsf{T} (\vec \xi', \frac{\partial}{\partial \sigma})^\mathsf{T}) v^\Phi(\sigma)
      = 0,
      &\sigma > 0,
      \\
     \Bcal_{j,\#}(\Phi^{-1}(\vec y), \DD \Phi(\Phi^{-1}(\vec y))^\mathsf{T} (\vec \xi', \frac{\partial}{\partial \sigma})^\mathsf{T}) v^\Phi(\sigma)
      = w^\Phi_j,
      &j = 1, \ldots, m,
    \end{cases}
   \]
  where we may write $\DD \Phi(\Phi^{-1}(\vec y))^\mathsf{T} (\vec \xi', - \ii \frac{\partial}{\partial \sigma})^\mathsf{T} = \kappa(\vec y)  \big( \tilde {\vec \xi}(\Phi^{-1}(\vec y), \vec \xi') + \ii \vec n(\Phi^{-1}(\vec y)) \big)$ for some $\kappa (\vec y) > 0$ and $\tilde {\vec \xi} \in \R^n$.
  Since $\Phi$ is an admissible $\CC^{2m}$-diffeomorphism it holds that $\tilde {\vec \xi} \cdot \vec n = 0$.
  Therefore, the solutions to the original (at $\vec x$) and the transformed (at $\vec y = \Phi(\vec x)$) initial value problem are related via
   \[
    w^\Phi_{\vec y, \lambda, \vec \xi'}(\sigma)
     = w_{\Phi^{-1}(\vec y), \lambda, \tilde {\vec \xi}}(\frac{\sigma}{\kappa}),
     \quad \text{that is} \quad
    w_{\vec x, \lambda, \vec \xi}(\eta)
     = w^\Phi_{\Phi(\vec y), \lambda, \vec \xi'}(\kappa \eta),
   \]
  where $\kappa > 0$ is the norm of $\DD \Phi(\Phi^{-1}(\vec y))^\mathsf{T} \vec n = \DD \Phi(\vec x)^\mathsf{T} \vec n$.
  This shows, in particular, that the validity of the Lopatinskii--Shapiro condition is not affected by admissible $\CC^{2m}$-diffeomorphisms.

  Now, we may find such an admissible $\CC^{2m}$-diffeomorphism $\Phi_{\vec x_0}: U_{\vec x_0} \rightarrow \tilde U_{\vec x_0}$ for each boundary point $\vec x_0 \in \partial \Omega$, and moreover, since $U_{\vec x_0}$ is an open neighbourhood of $\vec x_0$, we find radii $r(\vec x_0) > 0$ such that $\Phi_{\vec x_0}^{-1} \big( B_{2r(\vec x_0)}(\vec y_0) \big) \subseteq U_{\vec x_0}$ for each $\vec y_0 = \Phi(\vec x_0)$ and, for some $\varepsilon_0 > 0$ which is given a priori, the coefficients for the principle parts of the pushed-forward operators have locally small oscillations in the sense that
  \begin{align*}
   \sum_{\abs{\vec \alpha}} \norm{a^\Phi_{\vec \alpha}(\vec y) - a^\Phi_{\vec \alpha}(\vec y_0)}
    &< \varepsilon_0,
    \\
   \sum_{k=0}^{m_j} \sum_{\abs{\vec \beta} = k} \norm{b^\Phi_{j,k,\vec \beta}(\vec y) - b^\Phi_{j,k,\vec \beta}(\vec y_0)}
    &< \varepsilon_0
    \quad
    \text{for all } \vec y \in B_{2r(\vec x_0)}(\vec y_0).
  \end{align*}
 Then, trivially
   \[
    \bigcup_{\vec x_0 \in \partial \Omega} \Phi_{\vec x_0}^{-1} \big( B_{r(\vec x_0)}(\vec x_0) \big) \supset \partial \Omega
   \]
  is an open cover for the compact boundary $\partial \Omega$, so that we may choose a finite subcover $\bigcup_{\nu=1}^N U_\nu$ with $U_\nu = \Phi_{x_\nu}^{-1}(B_{r(\vec x_\nu)}(\vec y_\nu))$ for some $\{ \vec x_1, \ldots, x_N \} \subseteq \partial \Omega$ and $\vec y_\nu := \Phi(\vec x_\nu)$, $\nu = 1, \ldots, N$.
  \newline
  For each of these $\nu = 1, \ldots, N$, we may then extend the top-order coefficients $a^\Phi_{\vec \alpha}(\vec y)$ on $B_{r(\vec x_\nu)}(\vec y_\nu) \cap \R_+^{n+1}$ to the half-space $\R_+^{n+1}$ by reflection at the surface $\partial B_{r(\vec x_\nu)}(\vec y_\nu)$, viz.\
   \begin{align*}
    a^\nu_{\vec \alpha}(\vec y)
     := \begin{cases}
      a^{\Phi_{\vec x_\nu}}_{\vec \alpha}(\vec y),
      &\vec y \in \overline{B_{r(\vec x_\nu)}(\vec y_\nu)} \cap \overline{\R_+^{n+1}},
      \\
      a^{\Phi_{\vec x_\nu}}_{\vec \alpha}(\vec y_\nu + \frac{r(\vec x_\nu)^2}{\abs{\vec y - \vec y_\nu}^2} (\vec y - \vec y_\nu)),
      &\vec y \in \overline{\R_+^{n+1}} \setminus \overline{B_{r(\vec x_\nu)}(\vec y_\nu)}
     \end{cases}
     \quad
     \text{for } \vec \alpha \in \N_0^{n+1} \text{ with } \abs{\vec \alpha} = 2m,
  \end{align*}
   and also extend the coefficients of the boundary symbols for all $j = 1, \ldots, m$ and $k = 0, \ldots, m_j$ according to
  \begin{align*}
    b^\nu_{j,k,\vec \beta}(\vec y)
     &= b^{\Phi_{\vec x_\nu}}(\vec y_\nu + \chi(\frac{\vec y - \vec y_\nu}{r(\vec x_\nu)}) (\vec y - \vec y_\nu)),
     \quad
     \vec y \in \R_+^{n+1}
   \end{align*}
  where $\chi \in \CC_0^\infty(\R^n)$ has compact support which is contained in $B_2(\vec 0)$, and is identically $1$ on $\overline{B_1(\vec 0)}$.
  Note that these definitions lead to top order coefficients on $\overline{\R_+^{n+1}}$ which still have small oscillations, i.e.\
   \[
    \sum_{\abs{\vec \alpha} = 2m} \norm{a^\nu_{\vec \alpha}(\vec y) - a^\nu_{\vec \alpha}(\vec y_\nu)},
    \quad
    \sum_{k=0}^{m_j} \sum_{\abs{\vec \beta} = k} \norm{b^\nu_{j,k,\vec \beta}(\vec y) - b^\nu_{j,k,\vec \beta}(\vec y_\nu)} < \varepsilon_0,
    \quad
    j = 1, \ldots, m, \, \vec y \in \overline{\R_+^{n+1}}.
   \]
  For any given $\lambda \in \Sigma_{\pi-\phi}$ and $f \in \LL_p(\Omega;E)$, let us consider the boundary value problem
   \[
    \label{eqn:ast}
    \begin{cases}
     \lambda u + \Acal(\vec x, \DD) u  = f,
      &\text{in } \Omega,
      \\
     \Bcal_j(\vec x, \DD) u = 0,
      &\text{on } \partial \Omega, \, j = 1, \ldots, m.
    \end{cases}
   \]
  We choose a partition of unity $(\varphi_\nu)_{\nu=1,\ldots,M} \subseteq \CC^\infty(\R^n)$ with $M \geq N$ and such that $0 \leq \varphi_\nu \leq 1$ for all $\nu = 1, \ldots, M$, and $\supp \varphi_\nu \subseteq U_\nu$ for $\nu = 1, \ldots, N$ whereas $\supp \varphi_\nu \subseteq \R^n \setminus \partial \Omega$ for $\nu = N+1, \ldots, M$.
  Then a function $u \in \WW_p^{2m}(\Omega;E)$ solves the boundary value problem \eqref{eqn:ast} if and only if for $\nu = 1, \ldots, N$
   \[
    \begin{cases}
     \lambda (\varphi_\nu u) + \varphi_\nu \Acal(\vec x, \DD) u
      = \varphi_\nu f
      &\text{in } \Omega \cap U_\nu,
      \\
     \varphi_\nu \Bcal_j(\vec x, \DD) u
      = 0
      &\text{on } \partial \Omega \cap U_\nu, \, j = 1, \ldots, m
    \end{cases}
   \]
  and for $\nu = N+1, \ldots, M$
   \[
    \lambda (\varphi_\nu u) + \varphi_\nu \Acal(\vec x, \DD) u
     = \varphi_\nu f
     \quad
     \text{on } \Omega.
   \]
  Note that in the latter case no boundary conditions are involved since $\varphi_\nu \equiv 0$ in a neighbourhood of $\partial \Omega$.
  For $\nu = N+1, \ldots, M$, we may write the equation equivalently as
   \[
    \lambda (\varphi_\nu u) + \Acal_\#(\vec x, \DD) (\varphi_\nu u)
     = \varphi_\nu f + \big[ \Acal_\#, \varphi_\nu \big] u - \varphi_\nu (\Acal(\vec x, \DD) - \Acal_\#(\vec x, \DD) ) u
     =: \varphi_\nu f + \Ccal_\nu(\vec x, \DD) u,
   \]
  where the latter terms on the right-hand side include the commutator $[\Acal_\#(\vec x, \DD), \varphi_\nu]$ between the principle part $\Acal_\#(\vec x, \DD)$ of the elliptic differential operator $\Acal(\vec x, \DD)$ and the multiplication operator for multiplication with $\varphi_\nu$, and $\Acal(\vec x, \DD) - \Acal_\#(\vec x, \DD)$ are the lower-order terms of $\Acal(\vec x, \DD)$.
  We may, therefore, for $\nu = N+1, \ldots, M$, denote by $A_\nu$ the $\LL_p(\R_+^{n+1};E)$-realisation of the operator
   \[
    \Acal^\nu(\vec x, \DD)
     = \sum_{\abs{\vec \alpha} = 2m} a^\nu_{\vec \alpha}(\vec x) \DD^\alpha
   \]
  and set $R^\lambda_\nu := (\lambda + A_\nu)^{-1}$ for the resolvent operator, if it exists.
  Then, $u$ has to respect the identity
   \[
    \varphi_\nu u
     = R^\lambda_\nu (\varphi_\nu f) + R^\lambda_\nu \Ccal_\nu(\vec x, \DD) u,
     \quad
     \nu = N+1, \ldots, M.
   \]
  On the other hand, for $\nu = 1, \ldots, N$, we use the local coordinates and the $\CC^{2m}$-diffeomorphisms introduced above and note that the pull-back operators $\Gcal_\nu$ corresponding to the diffeomorphisms $\Phi_\nu = \Phi_{\vec x_\nu}$, $\nu = 1, \ldots, N$, act as continuous operators between the Sobolev spaces $\WW_p^l(U_k;E)$ and $\WW_p^l(\R_+^{n+1};E)$, viz.\
   \[
    \Gcal_\nu:
     \quad
     u \mapsto u^{\Phi_\nu},
     \,
     u^{\Phi_\nu}(\vec y) = u(\Phi_\nu^{-1}(\vec y)),
     \quad
     \WW_p^l(U_\nu;E) \mapsto \WW_p^l(\R_+^{n+1};E),
     \quad
     \nu = 1, \ldots, N,
      \,
     l = 0, 1, \ldots, 2m.
   \]
  We set
   \begin{align*}
    \Acal^\nu(\vec y, \DD)
     &:= \sum_{\abs{\vec \alpha} = 2m} a^\nu_{\vec \alpha}(\vec y) \DD^{\vec \alpha},
     \\
    \Bcal^\nu_j(\vec y, \DD)
     &:= \sum_{k=0}^{m_j} b^\nu_{j,k,\vec \beta}(\vec y) \DD^{\vec \beta} \Pcal_{j,k},
     \quad
     j = 1, \ldots, m,
   \end{align*}
  and rewrite the corresponding boundary value problems for $\varphi_\nu u$, $\nu = 1, \ldots, N$, as
   \[
    \begin{cases}
     \lambda \Gcal_\nu (\varphi_\nu u) + \Acal^\nu(\vec y, \DD) \Gcal_\nu (\varphi_\nu u)
      &= \Gcal_\nu (\varphi_\nu f) + \Gcal_\nu \big( \big[ \Acal(\vec x, \DD), \varphi_\nu \big] - \Gcal_k^{-1} (\Acal^{\Phi_\nu}(\vec y, \DD) - \Acal^\nu(\vec y, \DD)) \Gcal_k \varphi_\nu \big) u,
      \\
      \Bcal^\nu_j(\vec y, \DD) \Gcal_\nu(\varphi_\nu u)
       &= \Gcal_\nu \big( \big[ \Bcal_j(\vec x, \DD), \varphi_\nu \big] - \Gcal_\nu^{-1} (\Bcal^{\Phi_\nu}_j(\vec y, \DD) - \Bcal^\nu_j(\vec y, \DD)) \Gcal_\nu \varphi_\nu \big) u.
    \end{cases}
   \]
  Since we imposed the condition that $b_{j,k,\vec \beta}(\ran \Pcal_{j,k}) \subseteq \ran(\Pcal_{j,k}) + \ldots + \ran(\Pcal_{m_j,k})$, on the right-hand side only lower order differential operators appear, i.e.\ the right-hand side includes only derivatives up to order $2m-1$ resp.\ up to order $k-1$ on every subspace $\ran \Pcal_{j,k}$, $k = 1, \ldots, m_j$.
  Hence, these terms can be considered as a small perturbation to the operators $\Acal^\nu(\vec y, \DD)$ and $\Bcal^\nu_j(\vec y, \DD)$, $j = 1, \ldots, m$, respectively. The solution then will satisfy the implicit relation
   \[
    \Gcal_\nu (\varphi_\nu u)
     = \tilde R^{\lambda,\nu}_0 \big( \Gcal_\nu (\varphi \nu u) + \Gcal_\nu \Ccal^\nu(\vec x, \DD) u \big)
      + \sum_{j=0}^m \sum_{k=0}^{m_j} \tilde S^{\lambda,\nu}_{j,k} \Gcal_\nu \Dcal^\nu_{j,k}(\vec x, \DD) u,
   \] 
  where we write
   \begin{align*}
    \Ccal^\nu(\vec x, \DD) u
     &:= \big[ \Acal(\vec x, \DD), \varphi_\nu \big]  u
      - \Gcal_\nu^{-1} (\Acal^{\Phi_\nu}(\vec y, \DD) - \Acal^\nu(\vec y, \DD)) \Gcal_\nu u,
      \\
     \Dcal^\nu_{j,k} u
      &:= \big[ \Bcal_{j,k}(\vec x, \DD), \varphi_\nu \big]
       - \Gcal_\nu^{-1} (\Bcal_{j,k}^{\Phi_\nu}(\vec y, \DD) - \Bcal^\nu_{j,k}(\vec y, \DD)) \Gcal_\nu u.
   \end{align*}
  From the solution and maximal regularity theory on the half-space we know that we may write
   \begin{align*}
    \tilde S^{\lambda,\nu}_{j,k} \Dcal^\nu_{j,k}(\vec x, \DD)
     &= \big( \tilde S^{\lambda, \nu, \mathrm{I}}_{j,k} + \tilde S^{\lambda, \nu, \mathrm{II}}_{j,k} \big) \Dcal^\nu_{j,k}(\vec x,\DD)
     \\
     &= \tilde T^{\lambda,\nu}_{j,k} (\abs{\lambda} + (- \Delta)^{2m})^{(2m-k)/2m} \Dcal^\nu_{j,k}(\vec x,\DD)
      \\ &\quad
      + \tilde R^{\lambda,\nu}_{j,k} (\abs{\lambda} + (- \Delta)^{2m})^{(2m-k-1)/2m} \partial_{n+1} \Dcal^\nu_{j,k}(\vec x,\DD)
   \end{align*}
  where the operators $\tilde T^{\lambda,\nu}_{j,k}$ and $R^{\lambda,\nu}_{j,k}$ satisfy $\Rcal$-bounds
   \begin{align*}
    \Rcal ( \{ \abs{\lambda}^{1 - \frac{\abs{\vec \alpha}}{2m}} \DD^{\vec \alpha} \tilde T^{\lambda, \nu}_{j,k}: \, \lambda \in \Sigma_{\pi-\phi}, \, \abs{\lambda} \geq \lambda_0, \, \abs{\vec \alpha} \leq 2m \} )
     &< \infty,
     \\
    \Rcal ( \{ \abs{\lambda}^{1 - \frac{\abs{\vec \alpha}}{2m}} \DD^{\vec \alpha} \tilde R^{\lambda, \nu}_{j,k}: \, \lambda \in \Sigma_{\pi-\phi}, \, \abs{\lambda} \geq \lambda_0, \, \abs{\vec \alpha} \leq 2m \} )
     &< \infty,
   \end{align*}
  for each fixed $j = 1, \ldots, m$, $k = 0, 1, \ldots, m_j$ and $\nu = 1, \ldots, N$.
  However, the set of admissible indices $(j,k,\nu)$ is finite, which implies that
   \begin{align*}
    \Rcal ( \{ \abs{\lambda}^{1 - \frac{\abs{\vec \alpha}}{2m}} \DD^{\vec \alpha} \tilde T^{\lambda, \nu}_{j,k}: \, \lambda \in \Sigma_{\pi-\phi}, \, \abs{\lambda} \geq \lambda_0, \, \abs{\vec \alpha} \leq 2m, \, 1 \leq j \leq m, \, 0 \leq k \leq m_j, \, 1 \leq \nu \leq N \} )
     &< \infty,
     \\
    \Rcal ( \{ \abs{\lambda}^{1 - \frac{\abs{\vec \alpha}}{2m}} \DD^{\vec \alpha} \tilde R^{\lambda, \nu}_{j,k}: \, \lambda \in \Sigma_{\pi-\phi}, \, \abs{\lambda} \geq \lambda_0, \, \abs{\vec \alpha} \leq 2m, \, 1 \leq j \leq m, \, 0 \leq k \leq m_j, \, 1 \leq \nu \leq N \} )
     &< \infty.
   \end{align*}
  Introducing
   \[
    T^{\lambda,\nu}_{j,k} := \Gcal_\nu^{-1} \tilde T^{\lambda,\nu}_{j,k} \Gcal_\nu
     \quad \text{and} \quad
    R^{\lambda,\nu}_{j,k} := \Gcal_\nu^{-1} \tilde R^{\lambda,\nu}_{j,k} \Gcal_\nu
   \]
  and
   \begin{align*}
    \Dcal^{\nu,I}_{j,k}(\vec x, \DD)
     &:= \Gcal_\nu^{-1} \big( (-\Delta)^{2m} + \abs{\lambda} \big)^{(2m-k)/2m} \Gcal_\nu \Dcal^\nu_{j,k}(\vec x, \DD),
     \\
    \Dcal^{\nu,II}_{j,k}(\vec x, \DD)
     &:= \Gcal_\nu^{-1} \big( (-\Delta) + \abs{\lambda} \big)^{(2m-k-1)/2m} \partial_{n+1} \Gcal_\nu \Dcal^\nu_{j,k}(\vec x, \DD)
   \end{align*}
  by pull-back we get (for $\nu = 1, \ldots, N$) the identities
   \begin{align*}
    \varphi_\nu u
     &= R^{\lambda,\nu}_0 \big( \varphi \nu u + \Ccal^\nu(\vec x, \DD) u \big)
      + \sum_{j=0}^m \sum_{k=0}^{m_j} \tilde S^{\lambda,\nu}_{j,k} \Dcal^\nu_{j,k}(\vec x, \DD) u
     \\
     &= R^{\lambda,\nu}_0 \big( \varphi \nu u + \Ccal^\nu(\vec x, \DD) u \big)
      + \sum_{j=0}^m \sum_{k=0}^{m_j} T^{\lambda,\nu}_{j,k} \Dcal^{\nu,I}_{j,k}(\vec x, \DD) u + R^{\lambda,\nu}_{j,k} \Dcal^{\nu,II}_{j,k}(\vec x, \DD) u.
   \end{align*}
  Summing over $\nu = 1, \ldots, M$, we arrive at
   \begin{align*}
    (\lambda + A_B)^{-1} f
     &= u
     = \sum_{\nu=1}^M \phi_\nu u
     \\
     &= \sum_{\nu=1}^M R^{\lambda,\nu} (\phi_\nu f + \Ccal^\nu(\vec x, \DD) u)
      + \sum_{\nu=1}^N \sum_{j=1}^m \sum_{k=0}^{m_j} (T^{\lambda,\nu}_{j,k} \Dcal^{\nu,I}_{j,k}(\vec x, \DD) + R^{\lambda,\nu}_{j,k} \Dcal^{\nu,II}_{j,k}(\vec x, \DD)) u
      \\
     &= R(\lambda) f + S(\lambda) u,
   \end{align*}
  where we set
   \[
    R(\lambda)
     = \sum_{\nu = 1}^M R^{\lambda,\nu}_0 \phi_\nu,
     \quad
    S(\lambda)
     = \sum_{\nu = 1}^N \sum_{j=1}^m \sum_{k=0}^{m_j} (T^{\lambda,\nu}_{j,k} \Dcal^{\lambda,\nu}_{j,k} + R^{\lambda,\nu} \Dcal^{\lambda,\nu}_{j,k}).
   \]
  Thus, provided $\norm{S(\lambda)} < 1$, we may solve for the resolvent operator by employing the Neumann series
   \[
    (\lambda + A_B)^{-1}
     = (I - S(\lambda))^{-1} R(\lambda) f
     = \sum_{l = 0}^\infty S(\lambda)^l R(\lambda) f.
   \]
  Here, each summand $S(\lambda)^l$, $l \geq 0$, is a sum of products of the form
   \[
    U_\lambda^0 \prod_{\eta = 1}^l V_\lambda^\eta U_\lambda^\eta,
   \]
  where $U_\lambda^\eta \in \{ R^{\lambda,\nu}, R^{\lambda,\nu}_{j,k}, T^{\lambda,\nu}_{j,k}: \, j = 1, \ldots, m, \, k = 0, \ldots, m_j \}$ is one of the resolvent or solution operators for the BVP, and $V_\lambda^\eta \in \{ \Ccal^\nu(\vec x, \DD), \Dcal^{\nu,I}_{j,k}, \Dcal^{\nu,II}_{j,k}: \, j = 1, \ldots, m, \, k = 0, \ldots, m_j \}$ are all lower-order operators, such that any product $U_\lambda^\eta V_\lambda^\eta$ is, in fact, a compact operator on $X$.
  Finally, we may employ Remark \ref{rem:Rcal-boundedness-properties} to conclude that for sufficiently small $\varepsilon > 0$ and sufficiently large $\lambda_0 \geq 0$, the family of operators given by such Neumann series $\{ \sum_{l=0}^\infty S(\lambda)^l R(\lambda): \, \lambda \in \Sigma_{\pi-\phi}, \, \abs{\lambda} \geq \lambda_0 \}$ not only is well-defined, but actually $\Rcal$-bounded.
  By Theorem \ref{thm:characterisation_Lp-max}, this implies that the abstract Cauchy problem
   \[
    \begin{cases}
     \frac{\dd}{\dd t} u(t) + A_B u(t)
      = f(t),
      &t \geq 0,
      \\
     u(0)
      = 0
    \end{cases}
   \]
  has $\LL_q$-maximal regularity for all $q \in (1, \infty)$.
 \end{proof}
  
 \section{Inhomogeneous initial data and anisotropic Sobolev spaces}
 \label{Sec:Inhomogeneous_IV}
 
 In \cite{DeHiPr07}, the authors extended their results in \cite{DeHiPr03} to the case of inhomogeneous initial data and anisotropic Sobolev spaces, i.e.\ distinct integrability parameters $p, q \in (1, \infty)$ for the time and spatial variable, resp., by identifying the optimal spaces for the data to obtain optimal $\LL_p$-$\LL_q$-estimates.
 These results again heavily rely on the results and techniques as in \cite{DeHiPr03}, especially vector-valued multiplier theorems on Banach spaces with property $\HTcal$ (i.e.\ UMD-spaces), Gagliardo-Nirenberg type estimates as well as optimal embedding results for Sobolev, Besov and Triebel-Lizorkin spaces. The methods used in \cite{DeHiPr07} translate more or less directly to the situation considered here: Obviously, the optimal regularity spaces have to be adjusted since the boundary operators have different orders $k$ in the subspaces $\ran (\Pcal_{j,k})$ corresponding to the projections $\Pcal_{j,k}$.
 We introduce the following assumptions for this section.
   \begin{enumerate}
    \item[{\bf (D)}]
     Assumptions on the data:
      \begin{enumerate}
       \item[(i)]
        $f \in \LL_p(J \times \Omega; E)$, where $J = [0,T]$ for some $T > 0$;
       \item[(ii)]
        $g_{j,k} \in \WW_p^{(1,2m) \cdot \kappa_k} (J \times \partial \Omega;\ran \Pcal_{j,k})$, where $\kappa_k = \frac{2m - k - \nicefrac{1}{p}}{2m}$, and we then set $g_j = \sum_{k=0}^{m_j} g_{j,k}$;
       \item[(iii)]
        $u_0 \in \BB_{p,p}^{2m(1-\nicefrac{1}{p})}(\Omega;E)$;
       \item[(iv)]
        if $\kappa_k > \nicefrac{1}{p}$, then $\Bcal_j(0,\vec x) \Pcal_{j,k} u_0(\vec x) = g_{j,k}(0,\vec x)$ for $\vec x \in \partial \Omega$.
      \end{enumerate}
    \item[{\bf (SD)}]
     Regularity assumptions on the interior symbol:
     \newline
     There are $r_l, s_l \geq p$ with $\frac{1}{s_l} + \frac{n}{2m r_l} < 1 - \frac{l}{2m}$ such that
      \begin{align*}
       a_{\vec \alpha}
        &\in \LL_{s_l}(J; [\LL_{r_l} + \LL_\infty](\Omega;\B(E))),
        &&\abs{\vec \alpha} = l < 2m,
        \\
       a_{\vec \alpha}
        &\in \CC_\mathrm{l}(J \times \overline{\Omega}; \B(E)),
        &&\abs{\vec \alpha} = 2m.
      \end{align*}
    \item[{\bf (SB)}]
     Regularity assumptions on the boundary symbols:
     \newline
     There are $s_{j,k,l}, r_{j,k,l} \geq p$ such that $\frac{1}{s_{j,k,l}} + \frac{n-1}{2m r_{j,k,l}} < \kappa_k + \frac{l-k}{2m}$ and
      \[
       b_{j,k,\vec \beta} \in \WW_{s_{j,k,l},r_{j,k,l}}^{(1,2m) \cdot \kappa_k}(J \times \partial \Omega; \B(E)),
       \quad
       \text{for each } \vec \beta \in \N_0^{n+1} \text{ of length }
       \abs{\vec \beta} = l \leq k \leq m_j.
      \]
     Moreover, $b_{j,k,\vec \beta}(t, \vec x)(\ran \Pcal_{j,k}) \subseteq \ran(\Pcal_{j,k}) + \ldots + \ran(\Pcal_{j,m_j})$ for a.e.\ $(t, \vec x) \in J \times \partial \Omega$ and $j = 1, \ldots, m$, $k = 0, 1, \ldots, m_j$ and $\vec \beta \in \N_0^{n+1}$ such that $\abs{\vec \beta} \leq k$.
    \item[{\bf (E)}]
     Ellipticity of the interior symbol:
     For all $t \in J$, $\vec x \in \overline{\Omega}$ and $\vec \xi \in \S^{n-1}$, the spectrum of the principle part of the interior symbol lies in the open right-half plane,
      \[
       \sigma(\Acal_\#(t,\vec x, \vec \xi)) \subseteq \C_0^+,
      \]
     i.e.\ $\Acal(t,\vec x,\DD)$ is \emph{normally elliptic}.
     If $\Omega$ is unbounded, the same condition is imposed at $\vec x = \infty$ for $\Acal_\#(t,\infty,\vec \xi) := \lim_{\abs{\vec x} \rightarrow \infty} \sum_{\abs{\vec \alpha} = 2m} a_{\vec \alpha}(t,\vec x, \vec \xi) \vec \xi^{\vec \alpha}$.
    \item[{\bf (LS)}]
     Lopatinskii-Shapiro condition:
     For all $t \in J$, $\vec x \in \partial \Omega$ and all $\vec \xi \in \R^n$ with $\vec \xi \cdot \vec n(\vec x) = 0$, and all $\lambda \in \overline{\C_0^+}$ such that $(\lambda, \vec \xi) \neq (0, \vec 0)$, the initial value problem
      \begin{align*}
       \lambda v(y) + \Acal_\#(t,\vec x,\vec \xi + \ii \vec n(\vec x) \tfrac{\partial}{\partial y})v(y)
        &= 0,
        &&y > 0,
        \\
       \Bcal_{j,\#}(t,\vec x, \vec \xi + \ii \vec n(\vec x) \frac{\partial}{\partial y})v(0)
       &= h_j,
       &&j = 1, \ldots, m,
      \end{align*}
     has a unique solution in the class $v \in \CC_0(\R_+;E)$.
   \end{enumerate}
 The theorem on $\LL_p$-$\LL_p$-optimal regularity then reads as follows.

 \begin{theorem}[$\LL_p$--$\LL_p$-optimal regularity]
  \label{thm:Lp-Lp-Optimal_Regularity}
  Let $E$ be a Banach space of class $\HTcal$ and $\Omega \subseteq \R^n$ be a domain with compact boundary of class $\partial \Omega \in \CC^{2m}$.
  Let $p \in (1, \infty)$ and suppose that assumptions {\bf (E)}, {\bf (LS)}, {\bf (SD)} and {\bf (SB)} hold true. Then the inhomogeneous parabolic initial-boundary value problem 
   \begin{align}
    \partial_t u + \Acal(t,\vec x,\DD)u
     &= f(t,\vec x),
     &&t \in J, \, \vec x \in \Omega,
     \nonumber \\
    \Bcal_j(t,\vec x,\DD)u
     &= g_j(t,\vec x),
     &&t \in J, \, \vec x \in \partial \Omega, \, j = 1, \ldots, m,
     \tag{\textrm{iIBVP}}
     \label{eqn:iIBVP}
     \\
    u(0,\vec x)
     &= u_0(\vec x),
     &&\vec x \in \Omega
     \nonumber
   \end{align}
  has a unique solution in the class
   \[
    u \in \WW_p^{(1,2)}(J \times \Omega; E)
   \]
  if and only if the data $f$, $\vec g$ and $u_0$ are subject to condition {\bf (D)}.
 \end{theorem}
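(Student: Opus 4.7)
The plan is to adapt the scheme of \cite{DeHiPr07}, accommodating the projections $\Pcal_{j,k}$ which split the boundary conditions into subcomponents of mixed differentiability orders. Necessity of {\bf (D)} will be established first by the standard trace theorems for anisotropic Sobolev spaces: given $u \in \WW_p^{(1,2m)}(J \times \Omega;E)$, the time trace $u(0,\cdot)$ belongs to the real interpolation space $(\LL_p(\Omega;E), \WW_p^{2m}(\Omega;E))_{1-1/p,p} = \BB_{p,p}^{2m(1-1/p)}(\Omega;E)$, yielding (D)(iii); the mixed space--time trace of $\Bcal_{j,k}(\cdot,\cdot,\DD)u$ at $\partial \Omega$ lies in $\WW_p^{(1,2m)\cdot \kappa_k}(J \times \partial \Omega;\ran \Pcal_{j,k})$ with $\kappa_k = (2m-k-1/p)/(2m)$, since $\Bcal_{j,k}$ is of order $k$ in space. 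Compatibility (D)(iv) is forced whenever $\kappa_k > 1/p$, since in that range both sides of the identity $(\Bcal_{j,k}u)|_{\partial \Omega}(0,\cdot) = \Bcal_{j,k}(0,\cdot,\DD)u_0$ are well defined.

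For sufficiency, the plan is to reduce in three steps to the homogeneous case already settled by Theorem \ref{thm:Lp-Lq_optimal_regularity_zero_iv_general_domain}. First, freeze coefficients at $t=0$ and define $u_1(t) = \ee^{-tA_B^{0}}u_0$, where $A_B^{0}$ is the $\Rcal$-sectorial generator supplied by Theorem \ref{thm:Lp-Lq_optimal_regularity_zero_iv_general_domain} applied to the frozen problem. The characterisation of the real interpolation space $(\LL_p(\Omega;E), \dom(A_B^{0}))_{1-1/p,p}$ as (a closed subspace of) $\BB_{p,p}^{2m(1-1/p)}(\Omega;E)$ delivers $u_1 \in \WW_p^{(1,2m)}(J\times\Omega;E)$, and the construction automatically respects the compatibility condition (D)(iv) at $t=0$.

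Second, and this is the step in which the mixed-order structure of the present manuscript requires specific attention, I would construct a boundary lift $u_2 \in \WW_p^{(1,2m)}(J\times\Omega;E)$ realising the adjusted data $\tilde g_{j,k} := g_{j,k} - \Bcal_{j,k}(\cdot,\cdot,\DD)u_1|_{\partial\Omega} \in \WW_p^{(1,2m)\cdot\kappa_k}(J\times\partial\Omega;\ran\Pcal_{j,k})$ with $\tilde g_{j,k}(0,\cdot) = 0$ whenever $\kappa_k > 1/p$. Localising through the admissible $\CC^{2m}$-diffeomorphisms of Section \ref{Sec:General_domains}, on each half-space chart I would invert the trace map
\[
 u \mapsto \big( \Bcal_{j,k}(\DD)u|_{\partial\R_+^{n+1}} \big)_{j,k}
\]
using the Poisson-type extension $\Ecal_\lambda$ of \eqref{eqn:Ecal_lambda} together with the operators $S^{\lambda,\mathrm{I}}_{j,k}$, $S^{\lambda,\mathrm{II}}_{j,k}$ of Proposition \ref{prop:Estimates_Solution_Operator}. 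The projections $\Pcal_{j,k}$ are essential here: they decompose the lift into $(j,k)$-components, each of which behaves like a classical boundary operator of homogeneous order $k$ and can be inverted with the correct anisotropic estimate. Temporal regularity is obtained by Laplace-transforming in $t$ and interpreting $\lambda$ as an operator-valued Fourier multiplier; the $\Rcal$-bounds of Proposition \ref{prop:Rcal-Bounds}, combined with the Kalton--Weis version of Mikhlin's theorem applicable because $E$ is of class $\HTcal$ (and hence $\LL_p(\Omega;E)$ is by Lemma \ref{lem:HTcal-L_p-HTcal}), close the argument.

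With $u_1, u_2$ in hand, the residual $w := u - u_1 - u_2$ must solve the reduced problem
\[
 \partial_t w + \Acal(t,\vec x,\DD)w = \tilde f
  \text{ in } J\times\Omega,
 \quad
 \Bcal_j(t,\vec x,\DD)w = 0
  \text{ on } J\times\partial\Omega,
 \quad
 w(0,\cdot) = 0,
\]
with $\tilde f := f - \partial_t(u_1+u_2) - \Acal(t,\vec x,\DD)(u_1+u_2) \in \LL_p(J\times\Omega;E)$; Theorem \ref{thm:Lp-Lq_optimal_regularity_zero_iv_general_domain} provides the unique such $w$, after absorbing the shift $\mu_\phi$ by an exponential time rescaling (harmless on the bounded interval $J$). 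The low regularity of the lower-order coefficients stipulated in {\bf (SD)} and {\bf (SB)} is incorporated through a perturbation argument based on Proposition \ref{prop:Perturbation_Theorem} together with Gagliardo--Nirenberg-type embeddings of mixed Lebesgue spaces into multiplier spaces on the anisotropic Sobolev scale, exactly as in \cite[Section 8]{DeHiPr07}. The principal obstacle I expect lies in the boundary lift of step two: one must simultaneously lift data of all differing orders $k$ while preserving the compatibility at $t=0$ whenever $\kappa_k > 1/p$. The decomposition induced by the projections $\Pcal_{j,k}$ is what renders this obstacle tractable, but the simultaneous matching across components and the correct handling of the temporal traces of $\Ecal_\lambda \tilde g_{j,k}$ will demand careful bookkeeping that goes beyond the homogeneous-order situation of \cite{DeHiPr07}.
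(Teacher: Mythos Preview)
Your overall architecture --- necessity via anisotropic trace theorems, sufficiency via a three-step reduction to the homogeneous problem of Theorem~\ref{thm:Lp-Lq_optimal_regularity_zero_iv_general_domain} --- is the right one and matches the strategy the paper inherits from \cite{DeHiPr07}. However, your Step~1 contains a genuine gap. You propose $u_1(t) = \ee^{-tA_B^0}u_0$ and invoke the identification of the trace space $(\LL_p(\Omega;E),\dom(A_B^0))_{1-1/p,p}$ with a closed subspace of $\BB_{p,p}^{2m(1-1/p)}(\Omega;E)$. But that closed subspace consists precisely of those $v$ satisfying the \emph{homogeneous} boundary conditions $\Bcal_{j,k}(0,\cdot,\DD)v|_{\partial\Omega}=0$ whenever $\kappa_k>1/p$. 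Condition {\bf (D)}(iv) only asserts $\Bcal_{j,k}(0,\cdot,\DD)u_0|_{\partial\Omega}=g_{j,k}(0,\cdot)$, which is generically nonzero, so $u_0$ need not lie in the trace space of $\dom(A_B^0)$ and the semigroup orbit $t\mapsto\ee^{-tA_B^0}u_0$ will \emph{not} lie in $\WW_p^{(1,2m)}(J\times\Omega;E)$. The sentence ``the construction automatically respects the compatibility condition'' is therefore circular: the compatibility you need is precisely what fails.

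The paper (following \cite{DeHiPr07}) avoids this obstruction by a different reduction of the initial datum: on each half-space chart one extends $u_0$ and $f$ to the full space $\R^n$ via a continuous extension operator $\Ecal$ and the trivial extension $\Ecal_0$, solves the \emph{full-space} Cauchy problem $(\mu+\partial_t+\Acal(\DD))u_1=\Ecal_0 f$, $u_1(0)=\Ecal u_0$ there (no boundary conditions to obstruct membership in the trace space), and then restricts. The boundary traces $\Bcal_{j,k}(\DD)u_1|_{\partial\R_+^n}$ are then well-defined elements of the correct anisotropic boundary space, and the compatibility {\bf (D)}(iv) guarantees that $\tilde g_{j,k}=g_{j,k}-\Bcal_{j,k}(\DD)u_1|_{\partial\R_+^n}$ vanishes at $t=0$ whenever $\kappa_k>1/p$, so your Step~2 can proceed. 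Your Steps~2 and~3 are otherwise sound and essentially coincide with the paper's route; only the handling of $u_0$ needs to be replaced by the full-space extension device.
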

 
 We refrain from giving a proof, but only refer to the proof of \cite[Theorem 2.1]{DeHiPr07} which can be modified in a similar manner as the proof of Theorem \ref{thm:Lp-Lq-optimality} below.
 \\
 For $\LL_p$--$\LL_q$-estimates with $p \neq q$, where $p \in (1, \infty)$ corresponds to the integrability parameter w.r.t.\ time $t \geq 0$ and $q \in (1, \infty)$ reflects integrability w.r.t.\ the spatial variable $\vec x \in \Omega$, a similar result holds true, but the smoothness conditions on the coefficients and on the data have to be slightly modified, cf.\ the corresponding result \cite[Theorem 2.3]{DeHiPr07} by Denk, Hieber and Pr\"uss.
  \begin{enumerate}
   \item[{\bf (D1)}]
    Assumptions on the data in case the $p \neq q$:
     \item[(i)]
      $f \in \LL_p(J;\LL_q(\Omega;E))$,
     \item[(ii)]
      $g_{j,k} \in \FF_{p,q}^{\kappa_k}(J;\LL_q(\partial \Omega;\ran \Pcal_{j,k})) \cap \LL_p(J;\BB_{q,q}^{2m\kappa_k}(\partial \Omega;\ran \Pcal_{j,k}))$ with $\kappa_k = \frac{2m-k-1/q}{2m}$, and we then set $g_j = \sum_{k=0}^{m_j} g_{j,k}$,
     \item[(iii)]
      $u_0 \in \BB_{q,p}^{2m(1-\nicefrac{1}{p})}(\Omega;E)$,
     \item[(iv)]
      if $\kappa_k > 1/q$, then $\Bcal_j(0,\vec x) \Pcal_{j,k}u_0(\vec x) = g_{j,k}(0,\vec x)$ for $\vec x \in \partial \Omega$, for each $j = 1, \ldots, m$ and $k = 0, 1, \ldots, m_j$.
   \item[{\bf (SD1)}]
    There are $s_l \geq p$ and $r_l \geq q$ with $\frac{1}{s_l} + \frac{n}{2m r_l} < 1 - \frac{l}{2m}$ such that
     \begin{align*}
      a_{\vec \alpha}
       &\in \LL_{s_l}(J; (\LL_{r_l} + \LL_\infty)(\Omega;\B(E)),
       &&\abs{\vec \alpha} = l < 2m,
       \\
      a_{\vec \alpha}
       &\in \CC_\mathrm{l}(J \times \overline{\Omega}; \B(E)),
       &&\abs{\vec \alpha} = 2m.
     \end{align*}
   \item[{\bf (SB1)}]
    There are $s_{j,k,l} \geq p$ and $r_{j,k,l} \geq q$ with $\frac{1}{s_{j,k,l}} + \frac{n-1}{2m r_{j,k,l}} < \kappa_k + \frac{k-l}{2m}$ such that
     \[
      b_{j,k,\vec \beta}
       \in \WW_{s_{j,k,l},r_{j,k,l}}^{(1,2m) \cdot \kappa_k}(J \times \partial \Omega; \B(E)))
       \quad
       \text{for each } \vec \beta \in \N_0^{n+1} \text{ of length }
       \abs{\vec \beta} = l \leq k \leq m_j.
     \]
    Moreover, $b_{j,k,\vec \beta}(t, \vec x)(\ran \Pcal_{j,k}) \subseteq \ran(\Pcal_{j,k}) + \ldots + \ran(\Pcal_{j,m_j})$ for a.e.\ $(t, \vec x) \in J \times \partial \Omega$ and $j = 1, \ldots, m$, $k = 0, 1, \ldots, m_j$ and $\vec \beta \in \N_0^n$ such that $\abs{\vec \beta} \leq k$.
  \end{enumerate}
  
  \begin{theorem}[$\LL_p$--$\LL_q$-optimal regularity]
  \label{thm:Lp-Lq-optimality}
  Let $E$ be a Banach space of class $\HTcal$ and $\Omega \subseteq \R^n$ be a domain with compact boundary of class $\partial \Omega \in \CC^{2m}$.
  Let $p,q \in (1, \infty)$ and suppose that assumptions {\bf (E)}, {\bf (LS)}, {\bf (D1)}, {\bf (SD1)} and {\bf (SB1)} hold true. Then the problem \eqref{eqn:iIBVP} has a unique solution in the class
   \[
    u \in \WW_{p,q}^{(1,2)}(J \times \Omega; E)
   \]
  if and only if the data $f$, $\vec g$ and $u_0$ are subject to conditions {\bf (D1)}.
  \end{theorem}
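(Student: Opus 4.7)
The plan is to follow the strategy of \cite[Theorem 2.3]{DeHiPr07} and adapt it to the present mixed-order boundary setting, relying throughout on the $\Rcal$-sectoriality and kernel estimates established in Sections \ref{Sec:Half-space_problem}--\ref{Sec:General_domains}. The argument splits into necessity and sufficiency, with the bulk of the work concentrated in the sufficiency part. The projections $\Pcal_{j,k}$ and the differing orders $k \leq m_j$ of the boundary operators $\Bcal_{j,k}$ force us to treat each component separately and to keep track of the anisotropic regularity on each range space $\ran \Pcal_{j,k}$; modulo this bookkeeping the arguments of \cite{DeHiPr07} transfer.

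For the necessity part, I would assume $u \in \WW_{p,q}^{(1,2) \cdot m}(J \times \Omega;E)$ and exploit anisotropic trace theory. The time trace $u_0 = u(0,\cdot)$ belongs to the real interpolation space $(\LL_q(\Omega;E),\WW_q^{2m}(\Omega;E))_{1-\nicefrac{1}{p},p} = \BB_{q,p}^{2m(1-\nicefrac{1}{p})}(\Omega;E)$, giving the asserted regularity of $u_0$. For each boundary datum $g_{j,k} = \Bcal_{j,k}(\cdot,\cdot,\DD) u|_{\partial \Omega}$ one applies the trace to a function of mixed regularity $\WW_p^{1 - k/2m}(J;\LL_q(\Omega;\ran \Pcal_{j,k})) \cap \LL_p(J;\WW_q^{2m - k}(\Omega;\ran \Pcal_{j,k}))$, which lands in $\FF_{p,q}^{\kappa_k}(J;\LL_q(\partial \Omega)) \cap \LL_p(J;\BB_{q,q}^{2m \kappa_k}(\partial \Omega))$ with $\kappa_k = (2m - k - \nicefrac{1}{q})/(2m)$, by the mixed trace characterisations used in \cite{DeHiPr07}. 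Whenever $\kappa_k > \nicefrac{1}{q}$, the time trace at $t=0$ of $g_{j,k}$ exists and must coincide with $\Bcal_j(0,\cdot)\Pcal_{j,k} u_0$ by continuity.

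For sufficiency, I would reduce the problem to Theorem \ref{thm:Lp-Lq_optimal_regularity_zero_iv_general_domain} in three steps. First, handle the initial datum: by (E) and (LS) combined with the localisation and perturbation arguments of Section \ref{Sec:General_domains}, the operator $A_B$ on $\LL_q(\Omega;E)$ is $\Rcal$-sectorial with $\Rcal$-angle less than $\nicefrac{\pi}{2}$, so (after a possible shift) $v(t) := e^{-t A_B} u_0$ lies in $\WW_{p,q}^{(1,2) \cdot m}(J \times \Omega;E)$ precisely because the trace space of this maximal regularity class is $\BB_{q,p}^{2m(1-\nicefrac{1}{p})}$. Subtracting $v$ we may assume $u_0 = 0$, at the cost of modifying each $g_{j,k}$ by $\Bcal_{j,k}(\cdot,\cdot,\DD)v|_{\partial \Omega}$, which inherits the required anisotropic boundary regularity. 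Second, for each pair $(j,k)$ extend the modified $g_{j,k}$ to a function $G_{j,k} : J \times \Omega \to \ran \Pcal_{j,k}$ in the maximal regularity class of order $(1 - k/2m, 2m-k)$ via a right inverse of the spatial-temporal trace operator; in the half-space this is realised by the Poisson semigroup associated with $L_\lambda^{1/2m}$ from Remark \ref{rem:generator} and is transferred to $\Omega$ by the localisation of Section \ref{Sec:General_domains}. Third, the remainder $w = u - v - \sum_{j,k} G_{j,k}$ solves a problem with zero initial data and boundary data $\tilde g_j := g_j - \sum_k \Bcal_{j,k}(\cdot,\cdot,\DD) G_{j,k} |_{\partial \Omega}$ which vanishes at $t=0$ whenever the compatibility condition is active; by construction $\tilde g_j$ lies in the same anisotropic class, and one invokes Theorem \ref{thm:Lp-Lq_optimal_regularity_zero_iv_general_domain} together with the $\Rcal$-boundedness of the operators $S^\lambda_{j,k}$ from Proposition \ref{prop:Rcal-Bounds} and the Kalton--Weis theorem to close the argument.

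The main obstacle is the second step: producing boundary extensions $G_{j,k}$ which respect both the mixed scaling $\kappa_k$ (depending on $k$) and the subspace structure imposed by $\Pcal_{j,k}$, and which match at $t=0$ whenever $\kappa_k > \nicefrac{1}{q}$. The correct framework is the theory of $E$-valued Triebel--Lizorkin and Besov spaces on $\HTcal$-spaces; one combines Kalton--Weis multipliers with the anisotropic trace identifications of \cite{DeHiPr07}, but the construction has to be carried out separately on each $\ran \Pcal_{j,k}$ because the natural weights $L_\lambda^{(2m-k)/2m}$ differ. Once this is in place, the kernel and $\Rcal$-bound estimates of Proposition \ref{prop:kernel_estimates_half-space} and Proposition \ref{prop:Rcal-Bounds}, transferred to $\Omega$ by the localisation of Section \ref{Sec:General_domains}, provide the required $\LL_p(J; \LL_q(\Omega;E))$-bounds on $\partial_t w + \Acal(\cdot,\cdot,\DD) w$ and thus the announced maximal regularity, with uniqueness following from Theorem \ref{thm:Lp-Lq_optimal_regularity_zero_iv_general_domain} applied to the difference of two solutions.
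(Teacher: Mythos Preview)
Your overall architecture matches the paper's, but the sufficiency part has two concrete problems.

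The reduction in your steps 2--3 does not type-check. You define $G_{j,k}$ as an extension of $g_{j,k}$ lying only in $\WW_{p,q}^{(1-k/2m,\,2m-k)}$ and then set $w = u - v - \sum G_{j,k}$; since such $G_{j,k}$ are not in $\WW_{p,q}^{(1,2m)}$, neither is $w$, and in any case the remainder still carries nontrivial boundary data $\tilde g_j$, so Theorem~\ref{thm:Lp-Lq_optimal_regularity_zero_iv_general_domain} (which concerns \emph{homogeneous} boundary conditions) does not apply to it. The paper never reduces to homogeneous boundary data: after killing $u_0$ via the full-space solution it establishes a half-space result with inhomogeneous boundary data directly (Proposition~\ref{prop:inhomogeneous-pq-half-space} and the proposition following it), writing $u_2 = \sum_{j,k} T_{j,k} h_{j,k}$ with the operators $T_{j,k}$ of Proposition~\ref{prop:Estimates_Solution_Operator}, and only then perturbs and localises to $\Omega$.

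The key missing idea is that the Poisson-type extension must be built from the \emph{parabolic} operator $L = \mu + \partial_t + (-\Delta)^m$ of \eqref{eqn:def-L}, not from the elliptic $L_\lambda = \abs{\lambda} + (-\Delta)^m$ of Remark~\ref{rem:generator} that you invoke. The paper sets $h_{j,k}(t,\vec x',y) = L^{(2m-k)/2m}\,\ee^{-y L^{1/2m}}\tilde g_{j,k}(t,\vec x')$, and the central technical step is showing $h_{j,k} \in \LL_{p,q}(\R_+ \times \R_+^n;E)$ from $g_{j,k} \in \FF_{p,q}^{\kappa_k}(\R_+;\LL_q) \cap \LL_p(\R_+;\BB_{q,q}^{2m\kappa_k})$. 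This requires the Dore--Venni theorem for the resolvent-commuting pair $(\partial_y, L^{1/2m})$ to obtain $\LL_q$-maximal regularity of $\partial_y + L^{1/2m}$, combined with an integration-by-parts identity in $y$ and \cite[Lemma 6.2]{DeHiPr07}. The passage from the $\lambda$-dependent symbol $L_\lambda$ to the time-inclusive operator $L$ incorporating $\partial_t$ is exactly what distinguishes Section~\ref{Sec:Inhomogeneous_IV} from Section~\ref{Sec:Half-space_problem}, and your sketch does not capture it.
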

  
  \textbf{Proof.}
  Let us sketch the strategy which has been employed in \cite{DeHiPr07} and comment on adjustments that are necessary for the proof to carry over to the situation considered here.
  As for the case of homogeneous initial data, i.e.\ $\vec u_0 = \vec 0$, the basic strategy is to consider step by step the full-space, the half-space and finally via localisation the general domain problem.
  As the full-space problem obviously does not involve any boundary conditions, by \cite[Proposition 6.1]{DeHiPr07}, we know that the full-space problem with constant coefficients admits a unique solution of \ref{eqn:iIBVP} in the class $\WW^{(1,2m)}_{p,q}(\R_+ \times \R^n;E)$ if and only if $f \in \LL_{p,q}(\R_+ \times \R^n;E)$ and $u_0 \in \BB_{q,p}^{2m (1 - \frac{1}{p})}(\R^n;E)$.
  For the next step, the initial-boundary value problem on the half-space, \cite[Proposition 6.4]{DeHiPr07} and its proof can easily be modified as follows:
 \begin{proposition}
 \label{prop:inhomogeneous-pq-half-space}
   Assume that the BVP associated to the operators $(\Acal(\DD), \Bcal_1(\DD), \ldots, \Bcal_m(\DD))$ satisfy conditions {\bf (E)} and {\bf (LS)} and let $\mu > 0$.
   Then the parabolic initial-boundary value problem
    \begin{align}
     (\mu + \partial_t + \Acal(\DD)) u
      &= f
      &&\text{in } \R_+ \times \R_+^n,
      \nonumber \\
     \Bcal_j(\DD) u
      &= g_j,
      &&\text{in } \R_+ \times \R^{n-1},
      \label{eqn:IBVP-half-space}
      \\
     u(t,\cdot)
      &= u_0
      &&\text{in } \R_+^n
    \end{align}
   has a (unique) solution in the class $\WW_{p,q}^{(1,2m)}(\R_+ \times \R_+^n;E)$ if and only if $f \in \LL_{p,q}(\R_+ \times \R_+^n;E)$, $u_0 \in \BB_{q,p}^{2m(1-\frac{1}{p})}(\R_+^n;E)$, $g_{j,k} = \Pcal_{j,k} g_j \in \FF_{p,q}^{1-\frac{1}{p}-\frac{k}{2m}}(\R_+; \LL_q(\R^{n-1};E)) \cap \LL_p(\R_+;\BB_{q,q}^{2m(1-\frac{1}{p})-k}(\R^{n-1};E))$ and satisfy the compatibility conditions
    \[
     \Bcal_{jk}(\DD) u_0|_{y=0}
      = g_{j,k}|_{t=0}
      \quad
      \text{whenever }
      (1-\frac{1}{q}) - \frac{k}{2m} > \frac{1}{q}.
    \]
 \end{proposition}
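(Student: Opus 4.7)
The plan is to follow the strategy of \cite[Proposition 6.4]{DeHiPr07} almost verbatim, making adjustments for the fact that the boundary data $g_j$ now decomposes as $g_j = \sum_{k=0}^{m_j} g_{j,k}$ with components $g_{j,k} = \Pcal_{j,k} g_j$ living in regularity classes of different orders. First I would establish necessity, which is the easier direction: assuming $u \in \WW_{p,q}^{(1,2m)}(\R_+ \times \R_+^n;E)$ solves \eqref{eqn:IBVP-half-space}, then $f = (\mu + \partial_t + \Acal(\DD))u \in \LL_{p,q}$ is immediate, the initial trace $u_0 = u(0,\cdot)$ belongs to $\BB_{q,p}^{2m(1-\nicefrac1p)}(\R_+^n;E)$ by the standard trace theorem for anisotropic Sobolev spaces, and the boundary traces $g_{j,k} = \Pcal_{j,k}\Bcal_j(\DD)u$ lie in the asserted Triebel--Lizorkin/Besov intersection because $\Bcal_{j,k}(\DD)$ drops exactly $k$ spatial derivatives, so the trace space of $\WW_{p,q}^{(1,2m)}$ reduced by $k$ spatial orders is precisely $\FF_{p,q}^{1-\nicefrac1p-\nicefrac{k}{2m}}(\R_+;\LL_q(\R^{n-1};E)) \cap \LL_p(\R_+;\BB_{q,q}^{2m(1-\nicefrac1p)-k}(\R^{n-1};E))$. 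The compatibility conditions appear whenever both time and boundary traces exist simultaneously, i.e.\ when $(1-\nicefrac1q)-\nicefrac{k}{2m} > \nicefrac1q$.

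For sufficiency I would reduce the problem in three standard steps. First, extend $u_0$ to a function $\tilde u_0 \in \BB_{q,p}^{2m(1-\nicefrac1p)}(\R^n;E)$ on the full space and solve the full-space Cauchy problem $\partial_t v_1 + (\mu + A_{\R^n})v_1 = 0$, $v_1(0) = \tilde u_0$; by the full-space theory (\cite[Proposition 6.1]{DeHiPr07} applied to the interior symbol with $\mathcal{H}^\infty$-calculus from Theorem \ref{thm:Bounded_Hinfty_Calculus}), $v_1 \in \WW_{p,q}^{(1,2m)}(\R_+ \times \R^n;E)$. Second, extend $f$ trivially by zero to $\R_+\times\R^n$ and solve the corresponding full-space problem with zero initial data to obtain $v_2 \in \WW_{p,q}^{(1,2m)}$. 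Subtracting $v_1 + v_2$ reduces the problem to the case $f = 0$, $u_0 = 0$, with modified boundary data $\tilde g_j := g_j - \Bcal_j(\DD)(v_1+v_2)|_{\partial\R_+^n}$. The key is to verify that these modified boundary data still lie in the right intersection space and, decomposed via $\Pcal_{j,k}$, still satisfy the compatibility conditions (now reduced, since the new initial trace vanishes).

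The remaining problem is purely the boundary-data problem with $f=0$ and $u_0 = 0$, and this is where the mixed-order structure becomes the main obstacle. Here I would invoke the solution operators $U^\lambda_{j,k}$ and $V^\lambda_{j,k}$ from \eqref{eqn:U_lambda}--\eqref{eqn:V_lambda}, combined with the Laplace transform in $t$ (or equivalently the operator $\partial_t + \mu$ interpreted as an $\Rcal$-sectorial operator on $\LL_p(\R_+;\LL_q(\R_+^n;E))$). By the Kalton--Weis theorem and the $\Rcal$-bounds established in Proposition \ref{prop:Rcal-Bounds}, the joint operator-valued symbol for each component $g_{j,k}$ defines a bounded map
\begin{equation*}
 \FF_{p,q}^{\kappa_k}(\R_+;\LL_q(\R^{n-1};E)) \cap \LL_p(\R_+;\BB_{q,q}^{2m\kappa_k}(\R^{n-1};E)) \longrightarrow \WW_{p,q}^{(1,2m)}(\R_+\times\R_+^n;E),
\end{equation*}
where $\kappa_k = 1 - \nicefrac1p - \nicefrac{k}{2m}$ (after absorbing the time trace; here the identification of the Triebel--Lizorkin class as the correct temporal space, cf.\ \cite[Theorem 3.6]{DeHiPr07}, is essential because the half-line setting breaks the usual Sobolev--Slobodetskii duality). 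Summing over $k = 0, \ldots, m_j$ and $j = 1, \ldots, m$ yields a solution operator acting on the prescribed data space. The compatibility conditions are exactly what is needed to guarantee that the homogeneous-zero extensions of $g_{j,k}$ beyond $t = 0$ remain in the correct function classes; without them, the temporal extension by zero would fail to belong to $\FF_{p,q}^{\kappa_k}$ when $\kappa_k > \nicefrac1q$.

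The hardest part is the careful bookkeeping of the temporal regularity, specifically verifying that the $\Rcal$-boundedness of $\DD^{\vec\alpha}U^\lambda_{j,k}$ and $\DD^{\vec\alpha}V^\lambda_{j,k}$ combined with Proposition \ref{prop:R-boundedness_sectors} yields the correct Triebel--Lizorkin--Besov temporal regularity (rather than just Sobolev--Slobodetskii regularity) for the solution, uniformly in the parameter $\rho = ((-\Delta)^m + \abs\lambda)^{\nicefrac1{2m}}$ which mixes time and tangential spatial variables. This is where one must use the operator-valued Triebel--Lizorkin embedding and mixed-derivative theorems exactly as in \cite[Sections 3 and 6]{DeHiPr07}, applied \emph{componentwise} in $k$; the projections $\Pcal_{j,k}$ decouple the different orders, so the analysis for each $(j,k)$ is independent and the sum converges because the index set is finite.
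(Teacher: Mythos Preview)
Your proposal is correct and follows essentially the same architecture as the paper's proof: necessity via trace theorems, reduction to $f=0$ and $u_0=0$ by solving a full-space problem, then the boundary-data problem handled componentwise in $(j,k)$ via the $\Rcal$-bounded solution operators. The only tactical difference is that the paper carries out the boundary step more concretely---rather than invoking Kalton--Weis abstractly on the symbols $U^\lambda_{j,k},V^\lambda_{j,k}$, it extends each $\tilde g_{j,k}$ to the half-space via the parabolic semigroup $\ee^{-yL^{1/2m}}$ with $L=\mu+\partial_t+(-\Delta)^m$, sets $h_{j,k}=L^{(2m-k)/2m}\ee^{-yL^{1/2m}}\tilde g_{j,k}$, and then applies the parabolic solution operators $T_{j,k}=\Lcal^{-1}T^\lambda_{j,k}\Lcal$ together with a Dore--Venni argument to show $h_{j,k}\in\LL_{p,q}$; this makes the passage from the Triebel--Lizorkin boundary space to $\WW_{p,q}^{(1,2m)}$ explicit rather than a black-box multiplier statement.
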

 \textbf{Proof.}
 \textit{Necessity of the regularity of the data:}
 The necessity part can again be splitted into necessary spatial regularity and necessary of the time regularity.
 For the spatial regularity, we may follow the lines of the proof for \cite[Proposition 6.4]{DeHiPr07} where by means of \cite[Proposition 6.1]{DeHiPr07} and taking boundary resp.\ time trace of a presupposed solution $u \in \WW_{p,q}^{(1,2m)}(\R_+ \times \R_+^n;E)$, we may conclude that $u_0 \in \BB_{q,p}^{2m(1- \frac{1}{p})}(\R_+^n;E)$ and $u|_{y=0} \in \LL_p(\R_+;\BB_{q,q}^{2m-\frac{1}{q}}(\partial \R_+^n;E))$, so that each $g_{j,k} = \Bcal_{jk}(\DD)u_0|_{y=0} \in \LL_p(\R_+; \BB_{q,q}^{2m-k-\frac{1}{q}}(\R^{n-1};E))$, $j = 1, \ldots, m$ and $k = 0, 1, \ldots, m_j$.
 \newline
 The necessity of the time regularity can be observed based on the important fact that since $E$ is a Banach space of class $\HTcal$ and $p \in (1, \infty)$ lies in the reflexive range, then the auxiliary space $\tilde E = \LL_q(\R^{n-1};E)$ is of class $\HTcal$ as well, see Lemma \ref{lem:HTcal-L_p-HTcal}.
 Moreover, since $q \in (1, \infty)$ we may identify $\LL_q(\R_+^n;E)$ with $\LL_q(\R_+;\tilde E)$ and $\WW_q^{2m}(\R_+^n;E)$ with a subspace of $\WW_q^{2m}(\R_+;\tilde E)$ via the relation $u(\vec x', y) = (u(\cdot, y))(\vec x')$.
 In this sense, $u \in \WW_{p,q}^{(1,2m)}(\R_+ \times \R_+^n;E)$ implies that $u \in \WW_p^1(\R_+;\tilde E)$.
 Considering the (resolvent-commutative) operators $A = \partial_y$ with $\dom(A) = \LL_p(\R_+;\WW_q^1(\R_+;\tilde E))$ and $B = (\partial_t)^{1/2m}$ on $\dom(B) = \mathring \WW_p^{1/2m}(\R_+;\LL_q(\R_+;\tilde E))$, after reducing the problem to the special case $u_0 = 0$, as in \cite{DeHiPr07} it follows for the functions $u_{\vec \alpha} := B^{2m-\abs{\vec \alpha}-1} \DD^{\vec \alpha} u$ that $u_{\vec \alpha}|_{y=0} \in \mathring F_{p,q}^{\frac{1}{2m}(1-\frac{1}{q})}(\R_+;\tilde E)$, thus $\DD^{\vec \alpha} u|_{y=0} \in \mathring \FF_{p,q}^{1-\frac{\abs{\vec \alpha}}{2m} - \frac{1}{2mq}}(\R_+;\tilde E) = \{ u \in \FF_{p,q}^{1-\frac{\abs{\vec \alpha}-\frac{1}{2mq}}{2m}}(\R_+;\tilde E): \, u(0) = 0 \}$ for each multi-index $\vec \alpha \in \N_0^n$ of length $\abs{\vec \alpha} \leq 2m-1$, hence, we find that $g_{j,k} = \Bcal_{jk}(\DD)u|_{y=0} \in \mathring \FF_{p,q}^{1-\frac{k}{2m}-\frac{1}{2mq}}(\R_+;\tilde E)$.
 \newline
 \textit{Sufficiency of the regularity of the data:}
 Similar as we might have done it for the necessity of the time regularity, we demonstrate how to reduce the problem to the zero-initial value problem: Consider the zero-extension operator in space \[ \Ecal_0 \in \B(\LL_p(\R_+;\LL_q(\R_+^n;E)),\LL_p(\R_+;\LL_q(\R^n;E))\] and any continuous extension operator $\Ecal \in \B(\BB_{q,p}^{2m(1-\frac{1}{p})}(\R_+^n;E),\BB_{q,p}^{2m(1-\frac{1}{p})}(\R^n;E))$, e.g.\ defined via higher order reflections.
 By \cite[Proposition 6.1]{DeHiPr07}, for any given data $f \in \LL_p(\R_+;\LL_q(\R_+^n))$ and $u_0 \in \BB_{q,p}^{2m(1-\frac{1}{p})}(\R_+^n;E)$, there is a (unique) solution $u_1 \in \WW_{p,q}^{(1,2m)}(\R_+ \times \R^n;E)$ to the full-space initial-value problem
  \begin{align*}
   (\mu + \partial_t + \Acal(\DD)) u_1
    &= \Ecal_0 f
    &&\text{in } \R_+ \times \R^n,
    \\
   u_1(0,\cdot)
    &= \Ecal u_0
    &&\text{in } \R^n.
  \end{align*}
 We now set $u_2 = u - u_1$, which reduces the problem to finding a function $u_2 \in \WW_{p,q}^{(1,2m)}(\R_+ \times \R_+^n)$ such that
  \begin{align}
   (\mu + \partial_t + \Acal(\DD)) u_2
    &= 0
    &&\text{in } \R_+ \times \R_+^n,
    \label{eqn:aux-1}
    \\
   \Bcal_j(\DD) u_2
    &= \tilde g_j
    &&\text{in } \R_+ \times \R_+^{n-1}, \, j = 1, \ldots, m,
    \\
   u_2(0,\cdot)
    &= 0
    &&\text{in } \R_+^n,
    \label{eqn:aux-2}
  \end{align}
 where we have set $\tilde g_j = \sum_{k=0}^{m_j} \Pcal_{j,k} \tilde g_j = \sum_{k=0}^{m_j} \tilde g_{j,k}$ for adjusted boundary data defined by
  \[
   \tilde g_{j,k}
    = g_{j,k} - \Bcal_j(\DD) u_1
    \in \FF_{p,q}^{(1-\frac{1}{q})-\frac{k}{2m}}(\R_+; \LL_q(\R^{n-1};E)) \cap \LL_p(\R_+; \BB_{q,q}^{2m(1-\frac{1}{q})-k}(\R^{n-1};E)).
  \]
 The compatibility conditions on the boundary and initial data tell us that $\tilde g_{j,k}(0,\cdot) = 0$, whenever the time trace exists, and, therefore, we may simply extend $\tilde g_{j,k}$ by $0$ (for time $t < 0$) to a function (still denoted by $\tilde g_{j,k}$) in the class $\LL_p(\R_+;\BB_{q,q}^{2m(1-\frac{1}{q})-k}(\R^{n-1};E))$.
 Similar to the procedure in \cite{DeHiPr07}, we next set
  \[
   h_{j,k}(t,\vec x',y)
    := L^{\frac{2m-k}{2m}} \ee^{- y L^{1/2m}} \tilde g_{j,k}(t,\vec x')
  \]
 for the parabolic operator
  \begin{equation}
   L
    = \mu + \partial_t + (- \Delta)^m
    \text{ defined on }
   \dom(L)
    = \mathring \WW_p^1(\R_+;\LL_q(\R^{n-1};E)) \cap \LL_p(\R_+;\WW_q^{2m}(\R^{n-1};E)).
    \label{eqn:def-L}
  \end{equation}
 Using the solution operators $T_{j,k} = \Lcal^{-1} T^\lambda_{j,k} \Lcal \in \B(\LL_p(\R_+;\LL_q(\R^n;E)), \WW_{p,q}^{(1,2)}(\R_+ \times \R_+^n;E))$ for the corresponding parabolic IBVP, we find the solution to the system \eqref{eqn:aux-1}--\eqref{eqn:aux-2} to be
  \[
   u_2
    = \sum_{j=1}^m \sum_{k=0}^{m_j} T_{j,k} h_{j,k}
    \in \WW_{p,q}^{(1,2m)}(\R_+ \times \R_+^n;E),
  \]
 where, by the proof of \cite[Proposition 6.4]{DeHiPr07}, the pseudo-differential operator $L^{\frac{2m-k}{2m}}$ maps continuously from $\mathring W_p^{1-\frac{k}{2m}}(\R_+;\LL_q(\R_+^n;E)) \cap \LL_p(\R_+;\WW_q^{2m-k}(\R_+;E))$ into $X := \LL_{p,q}(\R_+ \times \R_+^n;E)$.
 We are, thus, done, provided we can establish the following adjusted version of \cite[Proposition 4.5]{DeHiPr07}.
 \begin{proposition}
  Let $p, q \in (1, \infty)$, $\mu > 0$ and $E$ be a Banach space of class $\HTcal$.
  Assume that the differential operators $(\Acal(\DD), \Bcal_1(\DD), \ldots, \Bcal_m(\DD))$ satisfy conditions {\bf (E)}, {\bf (LS)}, {\bf (D1)}, {\bf (SB1)} and {\bf (SD1)}.
  Let \[ g_{j,k} \in \WW_{p,q}^{(1,2m) \cdot (1-\frac{1}{q})\frac{k}{2m}}(\R_+ \times \R^{n-1}; \ran(\Pcal_{j,k})) \] for $j = 1,\ldots, m$ and such that $g_{j,k}(0,\cdot) = 0$ whenever $(1-\frac{1}{q})\frac{k}{2m} > \frac{1}{p}$.
  Then the initial-boundary value problem
   \begin{align*}
    (\mu + \partial_t + \Acal(\DD)) u
     &= 0
     &&\text{in } \R_+ \times \R_+^n,
     \\
    \Bcal_j(\DD) u
     &= g_j
     = \sum_{k=0}^{m_j} g_{j,k}
     &&\text{on } \R_+ \times \partial \R_+^n,
     \tag{hIBVP}
     \label{eqn:hIBVP}
     \\
    u(0,\cdot)
     &= 0
     &&\text{on } \R_+^n
   \end{align*}
  admits a unique solution in the class $u \in \WW_{p,q}^{(1,2m)}(\R_+ \times \R_+^n; E)$, which continuously depends on $\vec g$.
 \end{proposition}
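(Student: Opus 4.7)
The proof plan is to follow the scheme of \cite[Proposition 4.5]{DeHiPr07}, adapting it to the mixed-order boundary conditions by treating each subspace $\ran(\Pcal_{j,k})$ separately via the projections. Since $E$ is of class $\HTcal$ and $q \in (1,\infty)$, by Lemma \ref{lem:HTcal-L_p-HTcal} the auxiliary space $\tilde E := \LL_q(\R^{n-1};E)$ is again of class $\HTcal$, which allows us to employ the vector-valued Mikhlin theorem and the Kalton--Weis theorem over this richer base space. Uniqueness follows from the already established maximal regularity result with homogeneous boundary data (Theorem \ref{thm:Lp-Lq_optimal_regularity_zero_iv_general_domain} applied to differences of solutions), so the real work is existence together with the continuous dependence estimate.

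For existence, I would construct the solution explicitly by setting
\[
  u = \sum_{j=1}^m \sum_{k=0}^{m_j} T_{j,k} h_{j,k},
  \qquad
  h_{j,k}(t,\vec x',y) = L^{\frac{2m-k}{2m}} \ee^{-y L^{1/2m}} g_{j,k}(t,\vec x'),
\]
where $L = \mu + \partial_t + (-\Delta)^m$ is the parabolic operator from \eqref{eqn:def-L} and $T_{j,k} = \Lcal^{-1} T^\lambda_{j,k} \Lcal$ is the time-Laplace-inverse of the elliptic solution operator of Proposition \ref{prop:Estimates_Solution_Operator}. The compatibility hypothesis (when $(1-\tfrac{1}{q})\tfrac{k}{2m} > \tfrac{1}{p}$) is exactly what is needed to trivially extend $g_{j,k}$ by zero to negative times without losing regularity, so that the extension lies in the correct anisotropic class on $\R \times \R^{n-1}$. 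The Kalton--Weis theorem, applied to the $\Rcal$-bounded family in Proposition \ref{prop:Rcal-Bounds} (lifted to time via Laplace inversion), yields that $T_{j,k}$ maps continuously $\LL_{p,q}(\R_+\times\R_+^n;E)$ into $\WW^{(1,2m)}_{p,q}(\R_+\times\R_+^n;E)$, so it suffices to verify that each $h_{j,k}$ lies in $\LL_{p,q}(\R_+\times\R_+^n;\ran\Pcal_{j,k})$ with a norm controlled by that of $g_{j,k}$ in the boundary space dictated by {\bf (D1)}.

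The main obstacle, then, is the boundary-lifting step: one must show that the map $g_{j,k} \mapsto L^{(2m-k)/2m} \ee^{-y L^{1/2m}} g_{j,k}$ acts continuously from
\[
  \FF_{p,q}^{\kappa_k}\bigl(\R_+;\LL_q(\R^{n-1};\ran\Pcal_{j,k})\bigr) \cap \LL_p\bigl(\R_+;\BB_{q,q}^{2m\kappa_k}(\R^{n-1};\ran\Pcal_{j,k})\bigr),
  \quad \kappa_k = \tfrac{2m-k-1/q}{2m},
\]
into $\LL_{p,q}(\R_+\times\R_+^n;\ran\Pcal_{j,k})$. This is precisely the half-order-of-time / full-order-in-tangential-space trace characterisation. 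The argument mirrors that in \cite[\S 4]{DeHiPr07}: write $L^{(2m-k)/2m} = L^{\alpha_1}\cdot L^{\alpha_2}$ with $\alpha_1 = 1 - \kappa_k - \tfrac{1}{2mq}$ and $\alpha_2 = (2m-k)/(2m) - \alpha_1$; use that $L \in \BIPcal(\LL_{p,q}(\R_+\times\R^{n-1};E))$ (since $\partial_t$ and $(-\Delta)^m$ each have bounded imaginary powers with commuting resolvents and sum of power angles strictly less than $\pi$, so \cite{PruSoh90}/Dore--Venni applies); identify domains of fractional powers with the intersection space above via the mixed derivation theorem; and finally use the standard estimate $\norm{L^{\alpha}\ee^{-y L^{1/2m}} h}_{\LL_p(\R_+;X)} \leq C_\alpha \norm{L^{\alpha - 1/(2m)} h}_X$ (valid for any $\alpha > 1/(2m)$) integrated against $y$, in the space $\LL_q$ tangentially.

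Because of the assumption $b_{j,k,\vec\beta}(\ran\Pcal_{j,k}) \subseteq \ran\Pcal_{j,k} + \ldots + \ran\Pcal_{j,m_j}$, the projections $\Pcal_{j,k}$ commute with $L^{\alpha}$ and $\ee^{-yL^{1/2m}}$ on their range (their symbols are scalar), so the construction decouples cleanly over $k$. Once continuity of the lifting is in place, we sum the finitely many terms $T_{j,k}h_{j,k}$ and obtain $u \in \WW^{(1,2m)}_{p,q}(\R_+\times\R_+^n;E)$ with the required bound. This completes the existence half; the necessity of the regularity assumptions on $g_{j,k}$ follows from the trace theory of $\WW^{(1,2m)}_{p,q}$, using exactly the vector-valued reformulation with $\tilde E$ as outlined in the proof of Proposition \ref{prop:inhomogeneous-pq-half-space}.
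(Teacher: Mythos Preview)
Your overall architecture coincides with the paper's: extend $g_{j,k}$ by zero in time using the compatibility condition, set $h_{j,k}=L^{(2m-k)/2m}\ee^{-yL^{1/2m}}g_{j,k}$, and produce the solution as $u=\sum_{j,k}T_{j,k}h_{j,k}$ via the Laplace-inverted half-space operators. So far this is exactly what the paper does, and your appeal to Proposition~\ref{prop:Rcal-Bounds} together with Kalton--Weis for the mapping properties of $T_{j,k}$ is correct.

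The gap is in the boundary-lifting step. Your ``standard estimate'' $\norm{L^{\alpha}\ee^{-yL^{1/2m}}h}_{\LL_p(\R_+;X)}\leq C_\alpha\norm{L^{\alpha-1/(2m)}h}_X$ is a real-interpolation trace estimate for analytic semigroups, and as such the integrability exponent in the $y$-variable must match the second index of the real interpolation space on the right. Here the $y$-integration must be in $\LL_q$ (since $y$ sits inside the spatial $\LL_q$-norm), whereas the base space carrying the time variable is $\LL_p$. For $p\neq q$ the resulting space $\LL_q(\R_+,\dd y;\LL_p(\R_+,\dd t;\tilde E))$ is \emph{not} the target $\LL_p(\R_+,\dd t;\LL_q(\R_+,\dd y;\tilde E))$, so the estimate you quote does not directly deliver $h_{j,k}\in\LL_{p,q}(\R_+\times\R_+^n;E)$. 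Likewise, identifying $\dom(L^{\kappa_k})$ with the Triebel--Lizorkin/Besov intersection via $\BIPcal$ gives a complex-interpolation identity, which is not the object appearing on the right of a real-interpolation trace bound with second parameter $q$.

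The paper resolves this order-of-integration obstruction by a different route: it splits into the commuting pieces $B=(\partial_t)^{1/2m}$ and $C=(-\Delta)^{1/2}$, invokes \cite[Lemma~6.2]{DeHiPr07} to place $B\ee^{-yB}g$ and $C\ee^{-yC}g$ separately in $\LL_{p,q}$, then uses a perturbation argument (Proposition~\ref{prop:Perturbation_Theorem}) together with Dore--Venni maximal regularity for $\partial_y+L^{1/2m}$ and a Duhamel representation followed by integration by parts to transfer these bounds to $L^{1/2m}\ee^{-yL^{1/2m}}g$. That detour is precisely what bridges the $p\neq q$ mismatch; your sketch would need to incorporate an argument of this type (or an equivalent device) to close.
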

\begin{proof}
 Since by assumption $g_{j,k}(0,\cdot) = 0$ whenever $\kappa_k := (1-\frac{1}{q})\frac{k}{2m} > \frac{1}{p}$, we may extend each function $g_{j,k}$ for $t < 0$ by zero (still denoted by $g_{j,k}$) to get $g_{j,k} \in \WW_{p,q}^{(\kappa_k, 2m \kappa_k)}(\R \times \R^{n-1};E)$.
 For $L = \mu + \partial_t + (-\Delta_{\R^{n-1}})^m$ as in \eqref{eqn:def-L}, we then find that $h_{j,k}(t, \vec x', y) := L^{\frac{2m-k}{2m}} \ee^{-y L^{1/2m}} g_{j,k}(t, \vec x')$ defines a function $h_{j,k} \in \LL_{p,q}(\R \times \R_+^n; \ran(\Pcal_{j,k}))$, so that by Proposition \ref{prop:Estimates_Solution_Operator} (an adjusted version of \cite[Lemma 4.3]{DeHiPr07}), the unique solution is given by $u = \sum_{j=1}^m \sum_{k=0}^{m_j} \Lcal^{-1} T^\lambda_{j,k} \Lcal h_{j,k}$ for
  \[
   (T^\lambda_{j,k} \tilde h_{j,k})(\vec x', y)
    := \int_0^\infty \int_{\R^{n-1}} T_{j,k}(\lambda, \vec x' - \vec {\tilde x}', y + \tilde y) \tilde h_{j,k}(\vec {\tilde x}', \tilde y) \, \dd \vec {\tilde x}' \, \dd \tilde y,
  \]
 where we recall that $T_{j,k}(\lambda, \vec x', y) = \big( \frac{\partial}{\partial y} K_{j,k}^\lambda - K_{j,k}^\lambda L_\lambda^{1/2m} \big)(\vec x', y)$.
 It remains to show that $u$ lies in the class $\WW_{p,q}^{(1,2m)}(\R_+ \times \R_+^n;E)$ provided $h_{j,k} = L^{\frac{2m-k}{2m}} \ee^{-y L^{1/2m}} g_{j,k} \in \mathring F_{p,q}^{\frac{1}{2m} (1 - \frac{1}{q})}(\R_+;\LL_q(\R_+^n;E)) \cap \LL_p(\R_+;\BB_{q,q}^{1-\frac{1}{q}}(\R^n;E))$.
 By $\LL_p$-maximal regularity of the operator $\partial_y + L^{1/2m}$, it suffices to prove that the map given by $\big( (t, (\vec x',y)) \mapsto L^{-k/2m} \ee^{-y L^{1/2m}} g(t,\vec x') \big)$ lies in $\LL_{p,q}(\R_+ \times \R_+^n)$, if \[ g \in \mathring F_{p,q}^{\frac{1}{2m} (1 - \frac{1}{q})}(\R_+;\LL_q(\R^{n-1};E)) \cap \LL_p(\R_+;\BB_{q,q}^{1-\frac{1}{q}}(\R^{n-1};E)).\]
 To show this property, we employ \cite[Lemma 6.2]{DeHiPr07} which shows that for such $g$, the functions defined by $\big( (t, (\vec x', y)) \mapsto B \ee^{- y B} g(t, \vec x') \big)$ and $\big( (t, (\vec x', y)) \mapsto C \ee^{- y C} g(t, \vec x') \big)$ belong to the space $\LL_{p,q}(\R_+ \times \R_+^{n+1};E)$, where $B = (\partial_t)^{1/2m}$ denotes the fractional time derivative with domain $\dom(B) = \WW_p^{\frac{1}{2m}}(\R_+; \LL_q(\R^{n-1};E))$ and $C = (- \Delta)^{1/2}$ denote the square of (minus) the (spatial) Laplace operator with domain $\dom(C) = \LL_p(\R_+; \BB_{q,q}^1(\R^{n-1};E))$.
 These operators commute on $\dom(L^{1/2m}) = \dom(B) \cap \dom(C)$.
 Moreover, since $B + C + 1$ is relatively bounded w.r.t.\ $L^{1/2m}$, by the perturbation theorem for $\Rcal$-sectorial operators Proposition \ref{prop:Perturbation_Theorem}, for sufficiently small $\eta > 0$, the perturbed operator $L^{1/2m} - \eta (B + C + 1)$ generates a bounded holomorphic semigroup on $\LL_{p,q}(\R_+ \times \R^{n-1};E)$.
 Also, since the operators $\partial_y$ (with $\dom(\partial_y) = \{ u = u(t, (\vec x', y)) \in \LL_{p,q}(\R_+ \times \R_+^n;E): \, \partial_y u \in \LL_{p,q}(\R_+ \times \R_+^n;E) \}$) and $L^{1/2m}$ are resolvent-commutative, by the Dore--Venni Theorem, cf.\ \cite[Corollary 1.6]{DeHiPr03}, the abstract Cauchy problem
  \[
   \begin{cases}
    \partial_y w + L^{1/2m} w
     = f,
     &y > 0,
     \\
    w(0,\cdot)
     = 0
   \end{cases}
  \]
 has $\LL_q$-maximal regularity as well.
 (Note that for the function $w$ we switched the order of the variables to $(y, t, \vec x')$.)
 Thus, for $f(y, (t, \vec x')) := (B + C + 1) \ee^{- \eta (B + C + 1) y} g(t, \vec x')$ we obtain a solution $w = w(y,(t,\vec x')) \in \WW_q^1(\R_+; \LL_{p,q}(\R_+ \times \R^{n-1}; E)) \cap \LL_q(\R_+; \WW_{p,q}^{(1,2)}(\R_+ \times \R^{n-1};E))$ for which, in particular, the Duhamel formula is valid:
  \[
   w(y,\cdot)
    = \int_0^y \ee^{- L^{1/2m} (y - \tilde y)} (B + C + 1) \ee^{- \eta (B + C + 1) \tilde y} g \dd \tilde y.
  \]
 To employ integration by parts in $y$, observe that
  \begin{align*}
   (B + C + 1) \ee^{- \eta (B + C + 1) \tilde y} g
    &= \ee^{- \eta (C + 1) \tilde y} B \ee^{-\eta B \tilde y} g
     + \ee^{- \eta (B + 1) \tilde y} C \ee^{-\eta C \tilde y} g
     + \ee^{- (B + C + 1) \tilde y} g
     \\
    &= \ee^{- \eta (C + 1) \tilde y} \frac{\partial}{\partial \tilde y} \ee^{-\eta B \tilde y} g
     + \ee^{- \eta (B + 1) \tilde y} \frac{\partial}{\partial \tilde y} \ee^{-\eta C \tilde y} g
     + \ee^{- (B + C + 1) \tilde y} g,
  \end{align*}
 and note that $B$ and $C$ commute on $\dom(B) \cap \dom(C)$, $(\ee^ {- \eta (B + \frac{1}{2}) \tilde y})_{\tilde y \geq 0}$ and $(\ee^ {- \eta (C + \frac{1}{2}) \tilde y})_{\tilde y \geq 0}$ are bounded semigroups on $\LL_{p,q}(\R_+ \times \R^{n-1};E)$.
 Therefore, each summand in the above representation of the function $(B + C + 1) \ee^{\eta (B + C + 1) \tilde y} g$ lies in $\LL_q(\R_+; \LL_{p,q}(\R_+ \times \R^{n-1};E)) \cap \CC(\R_+; \LL_{p,q}(\R_+ \times \R^{n-1};E)$ and, therefore, is a measurable function on $\R \times \R_+$ with values in $\LL_q(\R^{n-1};E)$.
Invoking the vector-valued version of Fubini's Theorem \cite[Theorem 6.16]{AE3} and the vector-valued characterisation of the spaces $\LL_q(\R; \LL_q(\R^{n-1};E))$ and $\LL_q(\R^n;E)$, see \cite[Bemerkung 6.19]{AE3}, we may interpret this as an element of $\LL_{p,q}(\R_+ \times \R^n; E)$ and via integration by parts we may conclude that
  \begin{align*}
   w(y,\cdot)
    &= - \frac{1}{\eta} \int_0^y L^{1/2m} \ee^{- L^{1/2m}(y - \tilde y)} \ee^{- \eta (B + C + 1) \tilde y} g \, \dd \tilde y
     - \frac{1}{\eta} \big[ \ee^{- L^{1/2m}(y - \tilde y)} \ee^{- \eta (B + C + 1) \tilde y} g \big]_{\tilde y = 0}^{\tilde y = y}
     \\
    &= \frac{1}{\eta} L^{1/2m} (B + C + 1)^{-1} w(y,\cdot)
     + \frac{1}{\eta} \big( \ee^{- L^{1/2m} y} - \ee^{- \eta (B + C + 1) y} \big) g,
  \end{align*}
 and, therefore, after rearranging the terms and multiplying by $L^{1/2m}$ from the left,
  \begin{align*}
   L^{1/2m} \ee^{- L^{1/2m} y} g
    &= L^{1/2m} \ee^{- \eta (B + C + 1) y} g
     + L^{1/2m} \big( \eta - L^{1/2m} (B + C + 1)^{-1}) w(y, \cdot)
     \\
    &= L^{1/2m} \ee^{- \eta (B + C + 1) y} g
     + \big( \eta - L^{1/2m} (B + C + 1)^{-1}) L^{1/2m} w(y, \cdot),
  \end{align*}
 also employing that $L_\lambda$ commutes with $B$ and $C$.
 Here, on the right-hand side $\ee^{- \eta (B + C + 1) y} g$ and $w \in \WW_{p,q}(\R_+ \times \R_+^{n+1};E)$, $L^{1/2m}$ maps $\LL_q(\R_+; \WW_{p,q}(\R_+ \times \R^{n-1};E))$ into $\LL_q(\R_+;\LL_{p,q}(\R_+ \times \R^{n-1};E))$ and $L^{1/2m} (B + C + 1)^{-1}$ is a bounded linear operator on $\LL_q(\R_+; \LL_{p,q}(\R_+ \times \R^{n-1};E))$ (which is isometrically isomorphic to $\LL_{p,q}(\R_+ \times \R_+^{n+1};E)$ via $\tilde w(t, (\vec x', y)) := w(y, (t, \vec x'))$, hence, we obtain that $\big( (t, (\vec x', y)) \mapsto L^{1/2m} \ee^{- L^{1/2m} y} g(t, \vec x') \big) \in \LL_{p,q}(\R_+ \times \R_+^n;E)$ as claimed.
\end{proof}

 This also finishes the proof of Theorem \ref{thm:Lp-Lq-optimality}, up to perturbation and localisation.
 We leave the latter, rather technical steps to the interested reader.

\section*{Acknowledgement}

I would like to thank Dieter Bothe for bringing up the interesting fast surface chemistry and fast sorption model considered in \cite{AugBot21} and \cite{AugBot21a} and for encouraging me to extend the appendix of a preprint version of the latter to this full manuscript.
Moreover, I would like to thank Robert Denk for his kind advice.
I would also like to thank the anonymous referee for their valuable suggestions that led to an improvement of the manuscript.

\section{Appendix}

\begin{proposition}
\label{prop:aux}
 Let $\lambda > 0$, $E$ be any Banach space and $p \in (1,\infty)$.
 Then, the operator $- A_\lambda$ defined as the $\LL_p(\R^n;E)$-realisation of the Fourier symbol $- (\lambda + \abs{\vec \xi}^{2m})^{1/2m}$ generates a bounded $\CC_0$-semigroup on $\LL_p(\R^n;E)$.
\end{proposition}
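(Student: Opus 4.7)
The plan is to prove this via a two-step argument that avoids any class $\HTcal$ (UMD) hypothesis on $E$, which is essential since the statement is for \emph{arbitrary} Banach spaces. First I would construct the bounded $\CC_0$-semigroup generated by $-\tilde A := -(\lambda \id + (-\Delta_{\R^n})^m)$ directly, via an explicit $\LL_1$-kernel and Young's inequality; then I would pass to the fractional power $A_\lambda = \tilde A^{1/(2m)}$ using the Bochner subordination principle, which is valid in every Banach space.

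For the first step, define the kernel
\[
 k_t(\vec x)
  := \frac{1}{(2\pi)^n} \int_{\R^n} \ee^{\ii \vec x \cdot \vec \xi}\, \ee^{-t \abs{\vec \xi}^{2m}} \, \dd \vec \xi,
  \qquad \vec x \in \R^n,\, t > 0.
\]
The scaling $k_t(\vec x) = t^{-n/(2m)} k_1(t^{-1/(2m)} \vec x)$ yields $\norm{k_t}_{\LL_1(\R^n)} = \norm{k_1}_{\LL_1(\R^n)} =: C_0$, and since $\vec \xi \mapsto \ee^{-\abs{\vec \xi}^{2m}}$ lies in $\Scal(\R^n)$, so does $k_1$, hence $C_0 < \infty$. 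Setting $T(t)u := \ee^{-\lambda t} (k_t \ast u)$ on $\LL_p(\R^n;E)$, Young's inequality for Bochner convolutions (which holds for any Banach space $E$) gives $\norm{T(t)}_{\B(\LL_p(\R^n;E))} \leq C_0 \ee^{-\lambda t}$. The semigroup law reduces by Fourier transform to $\ee^{-t \abs{\vec \xi}^{2m}} \cdot \ee^{-s \abs{\vec \xi}^{2m}} = \ee^{-(t+s) \abs{\vec \xi}^{2m}}$; strong continuity at $t = 0+$ follows from the approximate-identity property of $(k_t)_{t > 0}$ (using $\int_{\R^n} k_t = \Fcal k_t(\vec 0) = 1$); and the generator is identified as $-\tilde A$ by differentiating at $t = 0+$ on $\Scal(\R^n;E)$ and invoking density and closedness.

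For the second step, the Bochner subordination principle (classical; see e.g.\ Yosida or Pazy, \emph{Semigroups of Linear Operators \ldots}, Chap.~2, \S6) asserts that whenever $-B$ generates a bounded $\CC_0$-semigroup $(T(t))_{t \geq 0}$ on a Banach space $X$, then for every $\alpha \in (0,1)$ the operator $-B^\alpha$ generates a bounded $\CC_0$-semigroup via
\[
 T_\alpha(t) u
  = \int_0^\infty f_{t,\alpha}(s)\, T(s) u \, \dd s,
\]
where $f_{t,\alpha}$ is the one-sided $\alpha$-stable density. This result imposes no geometric hypothesis on $X$. Applied with $B = \tilde A$ and $\alpha = 1/(2m) \in (0,1)$, we obtain that $-\tilde A^{1/(2m)}$ generates a bounded $\CC_0$-semigroup on $\LL_p(\R^n;E)$.

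It remains to identify $\tilde A^{1/(2m)}$ with the $\LL_p$-realisation $A_\lambda$ of the Fourier symbol $(\lambda + \abs{\vec \xi}^{2m})^{1/(2m)}$. On Schwartz functions $u \in \Scal(\R^n;E)$, the Dunford calculus for the sectorial operator $\tilde A$ (whose resolvent $(s\id + \tilde A)^{-1}$ acts as Fourier multiplication by $(s + \lambda + \abs{\vec \xi}^{2m})^{-1}$) combined with the Balakrishnan representation
\[
 \tilde A^{1/(2m)} u
  = \frac{\sin(\pi/(2m))}{\pi} \int_0^\infty s^{1/(2m) - 1} \tilde A (s \id + \tilde A)^{-1} u \, \dd s
\]
and the scalar identity $\tfrac{\sin(\alpha \pi)}{\pi} \int_0^\infty s^{\alpha - 1} \tfrac{a}{s + a}\, \dd s = a^\alpha$ (for $a > 0$), yields $\Fcal(\tilde A^{1/(2m)} u)(\vec \xi) = (\lambda + \abs{\vec \xi}^{2m})^{1/(2m)} \Fcal u(\vec \xi) = \Fcal(A_\lambda u)(\vec \xi)$. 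Since both operators are closed and agree on the common core $\Scal(\R^n;E)$, they coincide. The main subtlety is this final consistency step — reconciling the two natural definitions of the fractional operator — but once the Balakrishnan formula is invoked it reduces to a scalar Fourier-space computation, and everything else rests on well-understood kernel estimates and classical subordination.
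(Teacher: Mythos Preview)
Your argument is correct and follows a genuinely different route from the paper. The paper proceeds via the Hille--Yosida theorem: it establishes uniform Mikhlin bounds $\lvert \vec\xi^{\vec\alpha} \partial_{\vec\xi}^{\vec\alpha} m_{\mu,k}(\vec\xi)\rvert \leq C_{\vec\alpha}$ for the scalar symbols $m_{\mu,k}(\vec\xi) = \mu^k(\mu + (\lambda + \lvert\vec\xi\rvert^{2m})^{1/2m})^{-k}$, uniformly in $\mu > 0$ and $k \in \N$, through an explicit structural formula for the iterated derivatives (Lemma~\ref{lem:aux-1}) combined with the combinatorial inequality $a^M b^N \leq \binom{M+N}{M}^{-1}(a+b)^{M+N}$ (Lemma~\ref{lem:aux-2}). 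You instead bypass all multiplier calculus: you build the semigroup for $\lambda + (-\Delta)^m$ from its Schwartz kernel via Young's inequality, then invoke Bochner subordination for the $\tfrac{1}{2m}$-th power. What this buys: your $\LL_1$-kernel route is manifestly valid for \emph{every} Banach space $E$, whereas the paper's assertion that ``the operator norm does not depend on the choice of $E$'' is delicate --- scalar Mikhlin multipliers do not in general extend to $\LL_p(\R^n;E)$ for non-UMD $E$ (the Hilbert transform being the standard obstruction), so that step tacitly presupposes that the resolvent kernels lie in $\LL_1(\R^n)$, which is exactly what your subordination argument delivers. Conversely, the paper's computation is entirely self-contained once one grants the multiplier theorem, and avoids importing the subordination machinery. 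One small remark on your final identification step: rather than asserting that $\Scal(\R^n;E)$ is a \emph{core} for the subordinate generator (which needs a separate invariance argument), it is cleaner to compare resolvents --- both $(\mu + \tilde A^{1/(2m)})^{-1}$ and $(\mu + A_\lambda)^{-1}$ act as the bounded Fourier multiplier with symbol $(\mu + (\lambda + \lvert\vec\xi\rvert^{2m})^{1/2m})^{-1}$ on the dense subspace $\Scal(\R^n;E)$, hence coincide on all of $\LL_p(\R^n;E)$.
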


\begin{proof}
 In view of the Hille-Yosida Theorem for semigroups of class $(M,\omega) = (M,0)$ \cite[Theorem II.3.8]{EnNa00}, we need to show that $\mu + A_\lambda$ is continuously invertible on $\LL_p(\R^n;E)$ for every $\mu > 0$ and there is a constant
  \begin{equation}
   \norm{ (\mu (\mu + A_\lambda)^{-1})^k }_{\B(\LL_p(\R^n;E))}
    \leq C
    \label{eqn:Aux-1}
  \end{equation}
 for some constant $C > 0$ independent of $\mu > 0$ and $k \in \N$.
 To this end, we note that the Fourier symbol of $\mu + A_\lambda$ is $\mu + (\lambda + \abs{\vec \xi}^{2m})^{1/2m}$, hence, it suffices to show that $\frac{\mu}{\mu + (\lambda + \abs{\vec \xi}^{2m})^{1/2m}}$ is a Fourier-multiplier on $\LL_p(\R^n;E)$ and the norm estimates \eqref{eqn:Aux-1} are valid.
 In view of the Mikhlin multiplier theorem, we will show that
  \begin{equation}
   \abs{ \vec \xi^{\vec \alpha} \partial_{\vec \xi}^{\vec \alpha} \big( \frac{\mu^k}{(\mu + (\lambda + \abs{\vec \xi}^{2m})^{1/2m})^k} \big) }
    \leq C_{\vec \alpha}
    \quad
    \text{for } \abs{\vec \alpha} \leq \lfloor \frac{n}{2} \rfloor + 1, \, k \in \N, \, \vec \xi \in \R^n, \, \mu > 0,
    \label{eqn:Aux-2}
  \end{equation}
 for constants $C_{\vec \alpha} > 0$ independent of $k \in \N$ and $\mu > 0$.
 In that case $(0, \infty) \subseteq \rho(- A_\lambda)$ and \eqref{eqn:Aux-1} is valid.
 Estimate \eqref{eqn:Aux-2} will follow by combining Lemma \ref{lem:aux-1} and Lemma \ref{lem:aux-2} below with the fact that any polynomial $p$ of order $k$ can be estimated by $c_n^p \binom{n+k}{k}$ for any $n \in \N$ and a suitable constant $c_n^p > 0$.
 Since the operator symbol is scalar, the classical, scalar version of the Mikhlin multiplier theorem can be invoked to show that $\norm{(\mu (\mu + A_\lambda)^{-1})^k}_{\B(\LL_p(\R^n)} \leq C$ for all $\mu > 0$ and $k \in \N_0$, as the operator norm does not depend on the choice of the Banach space $E \neq \{0\}$, i.e.\ it coincides with the operator norm for the corresponding scalar-valued operator, where $E$ is replaced by $\K$.
 Consequently, $- A_\lambda$ generates a bounded $\CC_0$-semigroup on $\LL_p(\R^n;E)$.
\end{proof}

\begin{lemma}
\label{lem:aux-1}
 For every $N \in \N$, $\Ical = (i_1, \ldots, i_N) \in \{1, \ldots, n \}^N$, there are $J_\Ical \in \N$, polynomials $p_{\Ical,j}$ of order $\leq N$, polynomials $q_{\Ical,j}$ of order $\leq N-1$ and exponents $\alpha_{\Ical,j}^{(l)} \in \N_0$, $l = 0, 1, \ldots, N$, $\beta_{\Ical,j} \in \N_0$ with
  \[
   \beta_{\Ical,j}
    = \sum_{l=0}^n \alpha_{\Ical,j}^{(l)} + \operatorname{deg}(q_{\Ical,j})
    \quad \text{and} \quad
   \beta_{\Ical,j} \leq 2m N
  \]
 such that
  \begin{align}
   &\big( \frac{\partial}{\partial \xi_{i_1}} \cdots \frac{\partial}{\partial \xi_{i_N}} \big) \frac{\mu^k}{(\mu + (\lambda + \abs{\vec \xi}^{2m})^{1/2m})^k}
    \nonumber \\
    &= \frac{\mu^k}{(\mu + (\lambda + \abs{\vec \xi}^{2m})^{1/2m})^{k+n}} \sum_{j=1}^{J_{\Ical}} p_{\Ical,j}(k) q_{\Ical,j}(\mu) \cdot \prod_{l=1}^d \xi_l^{\alpha_{\Ical,j}^{(l)}} \cdot \abs{\vec \xi}^{\alpha_{\Ical,j}^{(0)}} \cdot (\lambda + \abs{\vec \xi}^{2m})^{- \frac{\beta_{\Ical,j}}{2m}}.
    \label{eqn:Aux-3}
  \end{align}
\end{lemma}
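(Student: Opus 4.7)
The plan is to proceed by induction on the length $N = |\Ical|$ of the index tuple. For the base case $N=1$, write $\Ical = (i_1)$ and apply the chain rule directly: since
\[
 \partial_{\xi_{i_1}} \big( \lambda + \abs{\vec{\xi}}^{2m} \big)^{1/2m}
  = \xi_{i_1} \, \abs{\vec{\xi}}^{2m-2} \, \big( \lambda + \abs{\vec{\xi}}^{2m} \big)^{(1-2m)/2m},
\]
differentiation of $\mu^k (\mu + (\lambda + \abs{\vec{\xi}}^{2m})^{1/2m})^{-k}$ produces a single term fitting \eqref{eqn:Aux-3} with $J_{(i_1)}=1$, $p(k)=-k$ (degree at most $1$), $q(\mu)=1$ (degree at most $0$), $\alpha^{(i_1)}=1$, $\alpha^{(0)}=2m-2$, $\beta=2m-1$; the identity $\beta = \sum_l \alpha^{(l)} + \deg q$ and the bound $\beta \leq 2m$ are immediate.

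For the inductive step I will take an arbitrary summand on the right-hand side of \eqref{eqn:Aux-3} for $\Ical = (i_1, \ldots, i_N)$ and apply $\partial_{\xi_{i_{N+1}}}$. The Leibniz rule produces four types of contributions, according to which factor is differentiated: (a) the denominator $(\mu + (\lambda + \abs{\vec{\xi}}^{2m})^{1/2m})^{-(k+N)}$; (b) a monomial $\xi_l^{\alpha^{(l)}}$ with $l = i_{N+1}$; (c) the factor $\abs{\vec{\xi}}^{\alpha^{(0)}}$; or (d) the factor $(\lambda + \abs{\vec{\xi}}^{2m})^{-\beta/2m}$. Only in case (a) does the denominator exponent automatically rise to $k+N+1$, with $p(k)$ picking up a factor $-(k+N)$ (so its degree rises to at most $N+1$) and new factors $\xi_{i_{N+1}} \abs{\vec{\xi}}^{2m-2} (\lambda + \abs{\vec{\xi}}^{2m})^{-(2m-1)/2m}$ appearing; a direct check gives $\beta_{\mathrm{new}} = \beta + 2m - 1 \leq 2m(N+1)$ and preserves the balance $\beta_{\mathrm{new}} = \sum_l \alpha^{(l)}_{\mathrm{new}} + \deg q$.

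In cases (b), (c), (d) the denominator exponent remains $k+N$, and the crucial device will be to restore the required exponent $k+N+1$ by means of the trivial identity
\[
 1 = \frac{\mu + (\lambda + \abs{\vec{\xi}}^{2m})^{1/2m}}{\mu + (\lambda + \abs{\vec{\xi}}^{2m})^{1/2m}},
\]
splitting each such contribution into two subterms. The first subterm carries an extra $\mu$ in the numerator, which I will absorb into $q$ (raising $\deg q$ by one, to at most $N = (N+1)-1$, while $\beta$ stays unchanged); the second carries an extra $(\lambda + \abs{\vec{\xi}}^{2m})^{1/2m}$, which lowers $\beta$ by one. A case-by-case bookkeeping then confirms that the balance $\beta = \sum_l \alpha^{(l)} + \deg q$ is maintained in every subterm and that the bound $\beta \leq 2m(N+1)$ holds; the tight case is the first subterm of (d), where $\beta$ jumps by exactly $2m$. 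By linearity of differentiation, all contributions assemble into a representation of the claimed form with an appropriately enlarged index set $J_{(\Ical, i_{N+1})}$.

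The main obstacle is the simultaneous tracking of $\deg p$, $\deg q$, $\alpha^{(0)}$, the tuple $(\alpha^{(l)})_{l=1}^n$, and $\beta$ across all four cases: only case (a) advances the denominator exponent ``for free'', while each of (b), (c), (d) will require the $(\mu + \cdots)/(\mu + \cdots)$ splitting trick. That trick is precisely what will make $\deg q$ absorb the missing $\mu$, or make an extra $(\lambda + \abs{\vec{\xi}}^{2m})^{1/2m}$ available to lower $\beta$, in both cases keeping the relation $\beta = \sum_l \alpha^{(l)} + \deg q$ intact, which is the reason the statement encodes the apparently artificial constraints on the various exponents.
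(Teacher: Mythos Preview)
Your proposal is correct and follows exactly the inductive strategy of the paper's proof. You are in fact more explicit than the paper about the splitting device $1=\frac{\mu+(\lambda+|\vec\xi|^{2m})^{1/2m}}{\mu+(\lambda+|\vec\xi|^{2m})^{1/2m}}$ needed to raise the denominator exponent from $k+N$ to $k+N+1$ in cases (b)--(d), which the paper subsumes under ``rearranging the term''; your base-case value $\alpha^{(0)}=2m-2$ is also the correct one (the paper's $2m-1$ is a slip, as is easily seen from $\partial_{\xi_i}|\vec\xi|^{2m}=2m\,\xi_i\,|\vec\xi|^{2m-2}$).
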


\begin{proof}
 We proceed by induction over $N \in \N$.
 For $N = 1$, we have
  \begin{align*}
   \frac{\partial}{\partial \xi_{i_1}} \big( \frac{\mu^k}{(\mu + (\lambda + \abs{\vec \xi}^{2m})^{1/2m})^k} \big)
    = \frac{\mu^k}{(\mu + (\lambda + \abs{\vec \xi}^{2m})^{1/2m})^k} \big( - k \xi_{i_1} \abs{\vec \xi}^{2m-1} (\lambda + \abs{\vec \xi}^{2m})^{- \frac{2m-1}{2m}} \big),
  \end{align*}
 so that we may choose $\Ical = \{i_1\}$, $J_\Ical = 1$, $p_{\Ical,1}(k) = k$, $q_{\Ical,1}(\mu) = 1$, $\alpha_{\Ical,1}^{(l)} = \delta_{l,i_1}$ for $l = 1, \ldots, N$, $\alpha_{\Ical,1}^{(0)} = 2m - 1$ and $\beta_{\Ical,1} = 2m - 1$.
 \newline
 Now, provided the claim is valid for $\Ical = (i_1, \ldots, i_N) \in \{ 1, \ldots, n \}^N$, it follows for $(i_1, \ldots, i_N, i_{N+1})$ that
  \begin{align*}
   &\big( \frac{\partial}{\partial \xi_{i_1}} \cdots \frac{\partial}{\partial \xi_{i_{N+1}}} \big) \frac{\mu^k}{(\mu + (\lambda + \abs{\vec \xi}^{2m})^{1/2m})^k}
   \\
   &= \frac{\partial}{\partial \xi_{i_{N+1}}} \big( \frac{\mu^k}{(\mu + (\lambda + \abs{\vec \xi}^{2m})^{1/2m})^{k+n}} \sum_{j=1}^{J_{\Ical}} p_{\Ical,j}(k) q_{\Ical,j}(\mu) \cdot \prod_{l=1}^d \xi_l^{\alpha_{\Ical,j}^{(l)}} \cdot \abs{\vec \xi}^{\alpha_{\Ical,j}^{(0)}} \cdot (\lambda + \abs{\vec \xi}^{2m})^{- \frac{\beta_{\Ical,j}}{2m}} \big)
   \\
   &= \frac{\mu^k}{(\mu + (\lambda + \abs{\vec \xi}^{2m})^{1/2m})^{k+n+1}} (- (k + N)) \xi_{i_{N+1}} \abs{\vec \xi}^{2m-1} (\lambda + \abs{\vec \xi}^{2m})^{- \frac{2m-1}{2m}}
   \\ &\qquad
   \cdot \sum_{j=1}^{J_{\Ical}} p_{\Ical,j}(k) q_{\Ical,j}(\mu) \cdot \prod_{l=1}^d \xi_l^{\alpha_{\Ical,j}^{(l)}} \cdot \abs{\vec \xi}^{\alpha_{\Ical,j}^{(0)}} \cdot (\lambda + \abs{\vec \xi}^{2m})^{- \frac{\beta_{\Ical,j}}{2m}}
   \\
   &\quad
   + \frac{\mu^k}{(\mu + (\lambda + \abs{\vec \xi}^{2m})^{1/2m})^{k+n}} \sum_{j=1}^{J_{\Ical}} p_{\Ical,j}(k) q_{\Ical,j}(\mu) \cdot \big( \alpha_{\Ical,j}^{(i_{N+1})} \xi_{i_{N+1}}^{\alpha_{\Ical,j}^{(i_{N+1})} - 1} \prod_{l \neq i_{N+1}} \xi_l^{\alpha_{\Ical,j}^{(l)}} \cdot
   \\ &\qquad
   \abs{\vec \xi}^{\alpha_{\Ical,j}^{(0)}} \cdot (\lambda + \abs{\vec \xi}^{2m})^{- \frac{\beta_{\Ical,j}}{2m}}
   \\ &\qquad
   + \alpha_{\Ical,j}^{(0)} \prod_{l=1}^n \xi_l^{\alpha_{\Ical,j}^{(l)}} \cdot \xi_{i_{N+1}} \abs{\vec \xi}^{\alpha_{\Ical,j}^{(0)} - 2} \cdot (\lambda + \abs{\vec \xi}^{2m})^{- \frac{\beta_{\Ical,j}}{2m}}
   \\ &\qquad
   - \beta_{\Ical,j} \xi_{i_{N+1}} \abs{\vec \xi}^{2m-2} \prod_{l=1}^n \xi_l^{\alpha_{\Ical,j}^{(l)}} \cdot \abs{\vec \xi}^{\alpha_{\Ical,j}^{(0)}} \cdot (\lambda + \abs{\vec \xi}^{- \frac{\beta_{\Ical,j} + 2m}{2m}} \big)
  \end{align*}
 and rearranging the term, this shows that this partial derivative of order $N + 1$ has the form \eqref{eqn:Aux-3}.
 Now the claim follows by induction over $N$.
\end{proof}

\begin{lemma}
\label{lem:aux-2}
 For all $a, b \in [0, \infty)$ and $M, N \in \N$:
  \[
   a^M b^N
    \leq \frac{M! \cdot N!}{(M+N)!} (a + b)^{M + N}.
  \]
\end{lemma}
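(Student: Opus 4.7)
The plan is to invoke the binomial theorem and extract a single term. For $a,b \in [0,\infty)$ and $M,N \in \N$, the binomial expansion
\[
 (a+b)^{M+N}
  = \sum_{k=0}^{M+N} \binom{M+N}{k} a^k b^{M+N-k}
\]
is a sum of non-negative summands, so bounding the sum below by the $k = M$ summand yields
\[
 (a+b)^{M+N}
  \geq \binom{M+N}{M} a^M b^N
  = \frac{(M+N)!}{M! \, N!} a^M b^N.
\]
Rearranging gives the claimed inequality. No step of this argument is a genuine obstacle; the only thing to check is non-negativity of every summand, which is immediate since $a, b \geq 0$.
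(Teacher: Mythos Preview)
Your proof is correct and considerably more direct than the paper's. You simply expand $(a+b)^{M+N}$ via the binomial theorem and drop all but the $k=M$ summand, which is legitimate because every summand is non-negative for $a,b\ge 0$. The paper instead argues by calculus: it writes $a^M b^N$ as an $M$-fold iterated integral of its $M$-th $a$-derivative $M!\,b^N$, bounds $b^N \le (s_M+b)^N$ inside the integral, recognises $M!\,N!/(M+N)!\cdot \partial_{s_M}^M(s_M+b)^{M+N}$, and integrates back up using that the lower-order $a$-derivatives of $(a+b)^{M+N}$ are non-negative at $a=0$. Both routes are valid; yours is the standard combinatorial one-liner and is arguably the natural proof here, while the paper's integration argument is more laborious but perhaps chosen to mirror the iterated-derivative bookkeeping used elsewhere in the appendix.
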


\begin{proof}
 It holds that
  \begin{align*}
   \frac{\partial^k}{\partial a^k} a^M b^N
    &= \frac{M!}{(M-k+1)!} a^{M-k} b^N,
    \\
   \frac{\partial^k}{\partial a^k} (a + b)^{M + N}
    &= \frac{(M+N)!}{(M+N-k+1)!} (a + b)^{M + N - k},
    \quad
    k = 1, \ldots, M.
  \end{align*}
 Moreover, for $a = 0$,
  \[
   \left. \frac{\partial^k}{\partial a^k} a^M b^N \right|_{a=0}
    = 0
    \quad
    \text{for } k = 0, 1, \ldots, M-1,
  \]
 whereas
  \[
    \left. \frac{\partial^k}{\partial a^k} (a + b)^{M + N} \right|_{a=0}
    \geq 0
    \quad
    \text{for } k = 0, 1, \ldots, M-1.
  \]
 Therefore,
  \begin{align*}
   a^M b^N
    &= \int_0^a \int_0^{s_1} \cdots \int_0^{s_{M-1}} \frac{\partial^M}{\partial s_M^M} (s_M^M b^N) \, \dd s_{M_1} \cdots \dd s_1
    \\
    &= \int_0^a \int_0^{s_1} \cdots \int_0^{s_{M-1}} M! \cdot b^N \, \dd s_{M_1} \cdots \dd s_1
    \\
    &\leq \int_0^a \int_0^{s_1} \cdots \int_0^{s_{M-1}} M! \cdot (s_M + b)^N \, \dd s_{M_1} \cdots \dd s_1
    \\
    &= \int_0^a \int_0^{s_1} \cdots \int_0^{s_{M-1}} \frac{M! \cdot N!}{(M+N)!} \cdot \frac{\partial^M}{\partial s_M^M} (s_M + b)^{M+N} \, \dd s_{M_1} \cdots \dd s_1
    \\
    &\leq \frac{M! \cdot N!}{(M+N)!} (a + b)^{M + N}.
  \end{align*}
\end{proof}

\begin{thebibliography}{99}
 
  \bibitem{AdaFou03}
   R.A.~Adams, J.J.F.~Fournier:
   \textit{Sobolev Spaces.}
   Second Edition.
   Pure and Applied Mathematics (Amsterdam) 140.
   Elsevier/Academic Press,
   Amsterdam 2003.

  \bibitem{Ama83}
   H.~Amann:
   \textit{Anisotropic function spaces and maximal regularity for parabolic problems. Part 1.}
   Function spaces.
   Jind\v{r}ich Ne\v{c}as Center for Mathematical Modeling Lecture Notes 6.
   Matfyzpress,
   Prague 2009.
   
  \bibitem{Amann_1995}
   H.~Amann:
   \textit{Linear and Quasilinear Parabolic Problems.}
   Monographs in Mathematics 89,
   Birkhäuser,
   Basel,
   1995.
  
  \bibitem{AE3}
  H.~Amann, J.~Escher:
  \textit{Analysis 3.}
  Grundstudium Mathematik.
  Birkh\"auser,
  Basel Boston Berlin,
  2001.
  
  \bibitem{AugBot21}
   B.~Augner, D.~Bothe:
   \textit{The fast-sorption and fast-surface-reaction limit of a heterogeneous catalysis model.}
   Discrete Cont.\ Dyn.\ Syst.\ Series S 14 (2021), no.\ 2, 533--574.
   
  \bibitem{AugBot21a}
  B.~Augner, D.~Bothe:
  \textit{Analysis of some heterogeneous catalysis model with fast sorption and fast surface chemistry.}
  Journal of Evolution Equations (2021), DOI 10.1007/s00028-021-00692-4.

  \bibitem{Auscher_et_al_2015}
  P.~Auscher, N.~Badr, R.~Haller-Dintelmann, J.~Rehberg:
  \textit{The square root problem for second-order, divergence form operators with mixed boundary conditions on $L^p$.}
  J.\ Evol.\ Equ.\ 15 (2015),
  165--208.
 

  \bibitem{BeEgHa_2020}
  S.~Bechtel, M.~Egert, R.~Haller-Dintelmann:
  \textit{The Kato square root problem on locally uniform domains.}
  Adv.\ in Math.\ 375 (2020),
  107410.
 
  \bibitem{BerGil94}
   E.~Berkson, T.A.~Gillespie:
   \textit{Spectral decompositions and harmonic analysis on UMD spaces.}
   Studia Math.\ 112 (1994), no.\ 1,
   13--49. 

  \bibitem{BotKoePru13}
  D.~Bothe, M.~K\"ohne, J.~Pr\"uss:
  \textit{On a class of energy preserving boundary conditions for incompressible Newtonian flows.}
  SIAM J.\ Math.\ Anal.\ 45 (2013), no.\ 6, pp.\ 3768--3822.
   
  \bibitem{Bou83}
  J.~Bourgain:
  \textit{Some remarks on Banach spaces in which martingale difference sequences are unconditional.}
  Arkiv Mat.\ 21 (1983), no.\ 2,
  163--168.
  
  \bibitem{Bre11}
  H.~Brezis:
  \textit{Functional analysis, Sobolev spaces and partial differential equations.}
  Universitext.
  Springer,
  New York,
  2011.

  \bibitem{Bur83}
  D.L.~Burkholder:
  \textit{A geometric condition that implies the existence of certain singular integrals of Banach-space-valued functions.}
  In: Conference on harmonic analysis in honor of Antoni Zygmund, Vol. I, II (Chicago, Ill., 1981), 270--286,
  Wadsworth Math.\ Ser., Wadsworth, Belmont, CA, 1983.
  
  \bibitem{CouDuo00}
  T.~Coulhon, X.T.~Duong:
  \textit{Maximal regularity and kernel bounds: observations on a theorem by Hieber and Pr\"uss.}
  Adv.\ Differential Equations 5 (2000), no. 1-3, 343--368.

  \bibitem{CleDePSukWit00}
   P.~Cl\'ement, B.~de~Pagter, F.A.~Sukochev, H.~Witvliet:
   \textit{Schauder decomposition and multiplier theorems.}
   Studia Math.\ 138 (2000), no.\ 2,
   135--163. 
   
  \bibitem{PraGri75}
  G.~Da~Prato, P.~Grisvard:
  \textit{Sommes d'op\'erateurs lin\'eaires et \'equations diff\'erentielles op\'erationnelles.}
  J.\ Math.\ Pures Appl.\ (9) 54 (1975), no.\ 3,
  305--387.
   
  \bibitem{DeHiPr03}
   R.~Denk, M.~Hieber, J.~Pr\"uss:
   \textit{$\mathcal{R}$-boundedness, Fourier multipliers and problems of elliptic and parabolic type.}
   Mem.\ Amer.\ Math.\ Soc.\ 788 (2003).
   
  \bibitem{DeHiPr07}
   R.~Denk, M.~Hieber, J.~Pr\"uss:
   \textit{Optimal $\LL_p$-$\LL_q$ estimates for parabolic boundary value problems with inhomogeneous data.}
   Math.~Z.\ 257 (2007):
   193--224.
   
  \bibitem{DeKa13}
   R.~Denk, M.~Kaip:
   \textit{General parabolic mixed order systems in $\LL_p$ and applications.}
   Operator Theory: Advances and Applications 239,
   Birkh\"auser/Springer, 
   Cham,
   2013.
   
  \bibitem{DePrZa08}
   R.~Denk, J.~Pr\"uss, R.~Zacher:
   \textit{Maximal $\LL_p$-regularity of parabolic problems with boundary dynamics of relaxiation type.}
   J.~Funct.~Anal.\, 255 (2008):
   3149--3187.
  
  \bibitem{Sim64}
  L.~de~Simon:
  \textit{Un'applicazione della teoria degli integrali singolari allo studio delle equazioni differenziali lineari astratte del primo ordine.}
  Rend.\ Sem.\ Mat.\ Univ.\ Padova 34 (1964),
  205--223.  

  \bibitem{DorVen87}
  G.~Dore, A.~Venni:
  \textit{On the closedness of the sum of two closed operators.}
  Math.\ Z.\ 196 (1987), no.\ 2,
  189--201.

  \bibitem{Egert_2018}
  M.~Egert:
  \textit{$L^p$-estimates for the square root of elliptic systems with mixed boundary conditions.}
  J.\ of Diff.\ Equ.\ 265 (2018),
  1279--1323.
  
  \bibitem{EnNa00}
   K.-J.~Engel, R.~Nagel:
   \textit{One-parameter semigroups for linear evolution equations.}
   Graduate Texts in Mathematics 194.
   Springer-Verlag, New York, 2000.

  \bibitem{HiePru97}
  M.~Hieber, J.~Pr\"uss:
  \textit{Heat kernels and maximal $L^p$-$L^q$ estimates for parabolic evolution equations.}
  Comm.\ Partial Differential Equations 22 (1997), no.\ 9-10, 1647--1669.

  \bibitem{HvNVW_2016}
  T.~Hyt\"onen, J.~van~Neerven, M.~Veraar, L.~Weis:
  \textit{Analysis in Banach spaces. Volume I: Martingales and Littlewood-Paley Theory.}
  Ergebnisse der Mathematik und ihrer Grenzgebiete 3,
  Springer,
  Cham,
  2016.

  \bibitem{KalLan01}
  N.J.~Kalton, G.~Lancien:
  \textit{A solution to the problem of $L^p$-maximal regularity.}
  Math.\ Z.\ 235 (2000), no. 3, 559--568.

  \bibitem{KalWei01}
   N.J.~Kalton, L.~Weis:
   \textit{The $H^\infty$-calculus and sums of closed operators.}
   Math.\ Ann.\ 321 (2001), no.\ 2,
   319--345. 
   
  \bibitem{Kam15}
  F.~Kammerlander:
  Quasilineare parabolische Randwertprobleme.
  Masterthesis, University of Constance,
  2015.
  
  \bibitem{KrKlMaHeGa21}
  A.L.~Krause, V.~Klika, P.K.~Maini, D.~Headon, E.A.~Gaffney:
  \textit{Isolating Patterns in Open Reaction-Diffusion Systems.}
  Bulletin of Mathematical Biology 83 (2021), Art.\ No.\ 82, 35 pp.
   
  \bibitem{KunWei14}
   P.~Kunstmann, L.~Weis:
   \textit{Maximal $\LL_p$-regularity for Parabolic Equations, Fourier Multiplier Theorems and $H^\infty$-functional Calculus.}
   In: Functional analytic methods for evolution equations,
   65--311,
   Lecture Notes in Math.,
   1855, Springer,
   Berlin 2004.

  \bibitem{LaSoUr68}
   O.A.~Ladyzenskaya, V.A.~Solonnikov, N.N.~Ural'ceva:
   \textit{Linear and Quasi-linear Equations of Parabolic Type.}
   Trans.\ Math.\ Monographs 23,
   Amer.\ Math.\ Soc.,
   Providence 1968.
   
  \bibitem{Lam87}
  D.~Lamberton:
  \textit{\'Equations d'\'evolution lin\'eaires associ\'es \`a des semi-groupes de contractions dans les espaces $L^p$.}
  J.\ Funct.\ Anal.\ 72 (1987), no.\ 2, 252--262.
   
  \bibitem{LinTza79}
  J.~Lindenstrauss, L.~Tzafriri:
  \textit{Classical Banach spaces. II. Function spaces.}
  Ergebnisse der Mathematik und ihrer Grenzgebiete, 97,
  Springer-Verlag, Berlin-New York,
  1979.

  \bibitem{Lop53}
  Y.B.~Lopatinski\u{i}:
  \textit{On a method of reducing boundary problems for a system of differential equations of elliptic type to regular integral equations.}
  Ukrain.\ Mat.\ \v{Z}.\ 5 (1953),
  123--151.
   
  \bibitem{Lu95}
   A.~Lunardi:
   \textit{Analytic Semigroups and Optimal Regularity in Parabolic Problems.}
   Progress in Nonlinear Differential Equations and Theor Application 16,
   Birkh\"auser,
   Basel 1995.
   
  \bibitem{PruSoh90}
  J.~Pr\"uss, H.~Sohr:
  \textit{On operators with bounded imaginary powers in Banach spaces.}
  Math.\ Z.\ 203 (1990), no.\ 3,
  429--452.

  \bibitem{Sob64}
  P.E.~Sobolevskii:
  \textit{Coerciveness inequalities for abstract parabolic equations.}
  Dokl.\ Akad.\ Nauk SSSR 157 (1964),
  52--55.
  
  \bibitem{StrWei07}
  Z.~\v{S}trkalj, L.~Weis:
  \textit{On operator-valued Fourier multiplier theorems.}
  Trans.\ Amer.\ Math.\ Soc.\ 359 (2007), no. 8, 3529--3547.
  
  \bibitem{Tar07}
  L.~Tartar:
  \textit{An Introduction to Sobolev spaces and interpolation.}
  Lecture Notes of the Unione Matematica Italiana 3.
  Springer,
  Berlin; UMI Bologna,
  2007.

  \bibitem{Tolksdorff_2018}
  P.~Tolksdorff:
  \textit{$\mathcal{R}$-sectoriality of higher-order elliptic systems on general bounded domains.}
  J.\ Evol.\ Equ.\ 18 (2018),
  323--349.  
 
  \bibitem{Tri10}
  H.~Triebel:
  \textit{Theory of function spaces.}
  Reprint of the 1983 edition.
  Modern Birkh\"auser Classics.
  Birkh\"auser/Springer Basel AG,
  Basel,
  2010.

  \bibitem{Tri92}
  H.~Triebel:
  \textit{Theory of function spaces. II.}
  Monographs in Mathematics 84.
  Birkh\"auser Verlag,
  Basel,
  1992.

  \bibitem{Tri06}
  H.~Triebel:
  \textit{Theory of function spaces. III.}
  Monographs in Mathematics 100.
  Birkh\"auser Verlag,
  Basel,
  2006.

  \bibitem{Tri20}
  H.~Triebel:
  \textit{Theory of function spaces. IV.}
  Monographs in Mathematics 107.
  Birkh\"auser/Springer,
  Cham,
  2020.
  
  \bibitem{Wei01}
  L.~Weis:
  \textit{Operator-valued Fourier multiplier theorems and maximal Lp-regularity.}
  Math.\ Ann.\ 319 (2001), no.\ 4,
  735--758.
  
  \bibitem{Wid34}
  D.V.~Widder:
  \textit{On the inversion of the Laplace integral and the related moment problem.}
  Trans.\ Amer.\ Math.\ Soc.\ 36 (1934), no.\ 1,
  107--200.
   
  \bibitem{Zie89}
  W.~Ziemer:
  \textit{Weakly differentiable functions.}
  Graduate Texts in Mathematics 120,
  Springer-Verlag,
  New York,
  1989. 
    
 \end{thebibliography}
\end{document}